\documentclass[a4paper,11pt]{article}

\usepackage[english]{babel}
\usepackage[english]{translator}
\usepackage[T1]{fontenc}
\usepackage[utf8]{inputenc}

\usepackage{amsmath}
\usepackage{amssymb}
\usepackage{amsfonts}
\usepackage{amstext}
\usepackage{amsthm}
\usepackage{mathtools}
\usepackage{bbm}

\usepackage{graphicx}
\usepackage{xcolor}
\usepackage{psfrag}
\usepackage{pgfplots}
\usepackage{adjustbox}
\usepackage{subfig}
\usepackage{colortbl}

\usepackage{titlesec}
\usepackage{listings}
\usepackage{enumerate}
\usepackage{enumitem}
\usepackage[makeroom]{cancel}
\usepackage{todonotes}
\setlength{\marginparwidth}{2cm}
	
\definecolor{asparagus}{rgb}{0.53, 0.66, 0.42}
\definecolor{ballblue}{rgb}{0.13, 0.67, 0.8}
\definecolor{cadmiumgreen}{rgb}{0.0, 0.42, 0.24}
\definecolor{cobalt}{rgb}{0.0, 0.28, 0.67}
\definecolor{darklavender}{rgb}{0.45, 0.31, 0.59}
\definecolor{green(pigment)}{rgb}{0.0, 0.65, 0.31}
\definecolor{blue(ncs)}{rgb}{0.0, 0.53, 0.74}
\definecolor{brandeisblue}{rgb}{0.0, 0.44, 1.0}
\definecolor{darkterracotta}{rgb}{0.8, 0.31, 0.36}
\definecolor{cobalt}{rgb}{0.0, 0.28, 0.67} 
\definecolor{ceruleanblue}{rgb}{0.16, 0.32, 0.75}
\definecolor{dodgerblue}{rgb}{0.12, 0.56, 1.0} 

\definecolor{color0}{rgb}{0.12156862745098,0.466666666666667,0.705882352941177}
\definecolor{color1}{rgb}{1,0.498039215686275,0.0549019607843137}
\definecolor{color2}{rgb}{0.172549019607843,0.627450980392157,0.172549019607843}
\definecolor{color3}{rgb}{0.83921568627451,0.152941176470588,0.156862745098039}
\definecolor{color4}{rgb}{0.580392156862745,0.403921568627451,0.741176470588235}
\definecolor{color5}{rgb}{0.549019607843137,0.337254901960784,0.294117647058824}
\definecolor{color6}{rgb}{0.890196078431372,0.466666666666667,0.76078431372549}

\definecolor{myBlue3}{RGB}{60,124,155} 
\definecolor{myForestGreen}{RGB}{34,139,34}

\usepackage{ulem}

\newcommand{\gradEh}{\dot{E}_h}
\newcommand{\ba}{\boldsymbol a}
\newcommand{\bb}{\boldsymbol b}
\newcommand{\bx}{\boldsymbol x}
\newcommand{\by}{\boldsymbol y}

\newcommand{\zf}[1]{z_{#1}} 

\usepackage{hyperref}

\usepackage{geometry}
\geometry{verbose,a4paper,tmargin=35mm,bmargin=25mm,lmargin=27mm,rmargin=27mm}

\usepackage{tabularx}
\makeatletter
\def\hlinewd#1{%
\noalign{\ifnum0=`}\fi\hrule \@height #1 %
\futurelet\reserved@a\@xhline}
\makeatother

\hypersetup{
  pdffitwindow=false,
  pdfhighlight=/O,
  pdfnewwindow,
  colorlinks=true,
  citecolor=red,             
  linkcolor=blue,            
  menucolor=blue,            
  urlcolor=blue,             
  pdfpagemode=UseOutlines,
  bookmarksnumbered=true,
  linktocpage,
  pdfkeywords={},
  pdfcreator={pdflatex},
  pdfproducer={LaTeX with hyperref}
}

\newcommand{\C}{\mathbb{C}}               
\newcommand{\R}{\mathbb{R}}               

\renewcommand{\Re}{\mathrm{Re}\,}          


\renewcommand{\i}{\mathrm{i}} 
\newcommand{\ci}{\mathrm{i}} 
\renewcommand{\d}{\,\mathrm{d}}

\DeclareMathOperator{\diam}{diam}

\newcommand{\Ku}{K_{\tau,\kappa}(u)} 

\newcommand{\Lzero}{\mathcal{L}_{0,u}}
\newcommand{\Lone}{\mathcal{L}_1}

\newcommand{\uhp}{u_{h}} 

\newcommand{\orthiu}{(\ci u)^{\perp}}

\renewcommand{\div}{\mathrm{div} \,}		
\newcommand{\curl}{\mathrm{curl} \,}

\newcommand{\bfA}{\boldsymbol{A}}

\newcommand{\Ccoe}{\rho(\kappa)} 

\newcommand{\Rh}{R_{h}} 

\newcommand{\quotes}[1]{``#1''}

\newcommand{\GD}{\Gamma_{\rm D}}

\newcommand{\Einv}{E^{\prime\prime}(u)|_{\orthiu}^{-1}}

\theoremstyle{definition}
\newtheorem{definition}{Definition}[section]

\newtheorem{remark}[definition]{Remark}

\theoremstyle{plain}
\newtheorem{theorem}[definition]{Theorem}
\newtheorem{lemma}[definition]{Lemma}
\newtheorem{corollary}[definition]{Corollary}
\newtheorem{proposition}[definition]{Proposition}
\newtheorem{conclusion}[definition]{Conclusion}

\allowdisplaybreaks


\begin{document}

\begin{center}
{\Large The pollution effect for FEM approximations of the\\ Ginzburg--Landau equation\renewcommand{\thefootnote}{\fnsymbol{footnote}}\setcounter{footnote}{0}
 \hspace{-3pt}\footnote{P. Henning acknowledges the support by the Deutsche Forschungsgemeinschaft through the grant HE 2464/7-1.}}
\end{center}

\begin{center}
{\large Th\'eophile Chaumont-Frelet\footnote[1]{
Inria, Univ. Lille, CNRS, UMR 8524 -- Laboratoire Paul Painlev\'e, 59000 Lille, France, \\ e-mail: \textcolor{blue}{theophile.chaumont@inria.fr}.}},
{\large Patrick Henning\footnote[2]{Department of Mathematics, Ruhr University Bochum, DE-44801 Bochum, Germany, \\ e-mail: \textcolor{blue}{patrick.henning@rub.de}.}}\\[2em]
\end{center}

\begin{abstract}
In this paper, we investigate the approximation properties of solutions to the Ginzburg--Landau equation (GLE) in finite element spaces. Special attention is given to how the errors are influenced by coupling the mesh size $h$ and the polynomial degree $p$ of the finite element space to the size of the so-called Ginzburg-Landau material parameter $\kappa$. As observed in previous works, the finite element approximations to the GLE are suffering from a numerical pollution effect, that is, the best-approximation error in the finite element space converges under mild coupling conditions between $h$ and $\kappa$, whereas the actual finite element solutions possess poor accuracy in a large pre-asymptotic regime which depends on $\kappa$. In this paper, we provide a new error analysis that allows us to quantify the pre-asymptotic regime and the corresponding pollution effect in terms of explicit resolution conditions. In particular, we are able to prove that higher polynomial degrees reduce the pollution effect, i.e., the accuracy of the best-approximation is achieved under relaxed conditions for the mesh size. We provide both error estimates in the $H^1$- and the $L^2$-norm and we illustrate our findings with numerical examples.
\end{abstract}

\section{Introduction}
The Ginzburg-Landau equation (GLE) is a common model to describe superconductors  \cite{Ginzburg1955,Landau1965}, i.e., materials that allow to conduct electricity without electrical resistance. Mathematically, superconductors can be described by the order parameter $u : \Omega \rightarrow \C$ which is a complex function that acts on the body $\Omega \subset \R^d$ (for $d=2,3$) that is occupied by the superconducting material. From $u$, one can extract the density $|u|^2$ of superconducting charge carriers (so-called Cooper pairs). The density is a real-valued physical observable with $0 \le |u|^2 \le 1$, where $|u|^2=0$ represents the normal state (no superconducting charge carriers) and $|u|^2=1$ represents the perfectly superconducting state. The order parameter $u$ is given as a solution to the GLE which reads in its basic form, cf. \cite{DuGuPe},
\begin{align} 
\label{stationarGLE}	 (\tfrac{\i}{\kappa} \nabla  + \bfA )^{2} \hspace{1pt}u + (|u|^2-1) u &= 0  \qquad \mbox{in } \Omega,\\
\nonumber	 (\tfrac{\i}{\kappa} \nabla u + \bfA \, u) \cdot \mathbf{n} &=0 \qquad \mbox{on } \partial \Omega.
\end{align}
Here, $\i$ is the complex unit, $\kappa$ is a real-valued positive material parameter and $\bfA : \Omega \rightarrow \R^d$ is a vector potential for the magnetic field \,$\mathbf{H}=\curl \bfA$\, within the superconductor. Under the Coulomb gauge, the vector potential is chosen to satisfy $\div \bfA = 0$, together with the boundary condition $\bfA \cdot \mathbf{n} \vert_{\partial \Omega} = 0$. Equation \eqref{stationarGLE} is formally obtained as the first order optimality condition fulfilled by local and global minimizers of the Ginzburg--Landau energy.

The physically most interesting states of a superconductor are its mixed normal-superconducting states where the magnetic field penetrates the superconductor in \quotes{localized defects} (points for $d=2$, lines for $d=3$). These defects are vortices corresponding to circulating superconducting currents. The vortices can form complex structures and patterns known as Abrikosov vortex lattices \cite{Abr04}. The crucial parameter in this context is the material parameter $\kappa$. For fixed $\bfA \not= 0$ (and hence fixed magnetic field), the size of the individual vortices decreases as $\kappa$ increases, leading to a denser arrangement of vortices that has to be resolved numerically. At the same time, the diameter of the individual vortices is shrinking, cf. Figure \ref{fig:vortices_reference} in the numerical experiments for an illustration. Apparently, larger $\kappa$-values require finer computational meshes to resolve the vortices. Hence, $\kappa$ is related to some numerical resolution condition.

When solving the GLE \eqref{stationarGLE} in a finite element space $V_{h}$ of mesh size $h$ and polynomial degree $p$, we can ask: How do we have to select $h$ and $p$ relative to $\kappa$ to ensure meaningful numerical solutions? The question is easily answered for the best approximation, where we obtain with the stability estimate $\| u \|_{H^{s+1}(\Omega)} \lesssim \kappa^{s+1}$ for all sufficiently smooth solutions and a straightforward calculation that
\begin{align*}
\inf_{v_h \in V_{h}} \left(\|  u - v_h \|_{L^2(\Omega)} +\tfrac{1}{\kappa} \| \nabla u - \nabla v_h \|_{L^2(\Omega)}\right) \,\,\lesssim \,\, (h \kappa)^s,\qquad \mbox{for } s\le p.
\end{align*}
Hence, accurate numerical approximations require at least $h \kappa \lesssim 1$. However, in previous works \cite{BDH25,DoeHe24}, numerical experiments as well as analytical results for P1-FEM indicated that the practical resolution condition is much more severe when the actual finite element solution is considered and that there is a numerical pollution effect similar to what is known for Helmholtz equations with large wave number and related problems \cite{BaS97,bernkopf_chaumontfrelet_melenk_2025a,CFN20,lafontaine2022wavenumber,MeS10,Pet17}.

Error estimates on FEM approximations to the full Ginzburg--Landau equations (including an additional curl-curl problem for the vector potential) were first obtained in the seminal work by Du, Gunzburger and Peterson \cite{DuGuPe}. However, the parameter $\kappa$ was not yet traced in the estimates. The first attempt for $\kappa$-explicit error estimates for the GLE \eqref{stationarGLE} was given in \cite{DoeHe24}. In particular, it was proved that the finite element error behaves asymptotically as well as the $H^1$-best-approximation error. Later the findings were generalized to discrete multiscale spaces in \cite{BDH25} and \cite{DDH24}. Nevertheless, despite being $\kappa$-explicit in almost all arguments, a full tracing of the $\kappa$-dependency could not be achieved in the aforementioned works. This is due to the proof technique which exploits an absorption argument for higher order terms which in turn requires the mesh size to be \quotes{sufficiently small} without being able to quantify how small it actually needs to be. The reason is that the argument is based on abstract compactness results that just ensure convergence but no rates. Hence, a full $\kappa$-tracing is still open. Furthermore, the existing results on $\kappa$-dependencies are restricted to P1-FEM and it is unclear if potential resolution conditions can be relaxed in higher order FEM spaces.

It is worth to mention that there has been also significant work on FEM approximations for the time-dependent Ginzburg-Landau equation with vector potential, cf. \cite{Chen97,Du94b,Du97,GJX19,GaoS18,Li17,LiZ15,LiZ17,MaQ23}, however, there were no attempts to trace the dependency on $\kappa$ and to identify corresponding resolution conditions.\\[-0.5em]

In this work, we will give a significant extension of the previous results for the GLE \eqref{stationarGLE} by (i) providing a fully traced resolution condition under which the finite element error in $p$-FEM spaces behaves like the best-approximation error and (ii) proving that the resolution condition (for achieving the accuracy of the best-approximation) indeed relaxes with increasing polynomial degrees $p$. Furthermore (iii), compared to previous works, we obtain improved $L^2$-error estimates and our analysis is no longer restricted to global minimizers. 

To achieve this, we follow a new proof strategy that combines different techniques. On the one hand, we exploit that the GLE originates from an energy minimization problem, i.e. $u$ is a local minimizer of a (non-convex) energy functional $E(v)$. By the second order optimality condition, $E^{\prime\prime}(u)$ has a bounded inverse on some suitable subspace of $H^1(\Omega)$ and we can define the Galerkin-projection $\Rh$ associated with $E^{\prime\prime}(u)$ on the finite element space. We then split the error between an exact minimizer $u$ and a FEM minimizer $\uhp$ into $u-\uhp = (u-\Rh(u)) + (\Rh(u) - \uhp)$, investigating both error contributions individually.

For the first part, even though the operator $E^{\prime\prime}(u)$ was also used in previous works  \cite{DoeHe24,BDH25} the corresponding Galerkin-projection was avoided due to, at first glance, unfavourable $\kappa$-dependent stability bounds entering through $E^{\prime\prime}(u)$. We circumvent these issues by an abstract splitting technique proposed in \cite{CFN20} which allows us to decompose functions $E^{\prime\prime}(u)^{-1}g$ into a regular oscillatory contribution and a low-regularity contribution that behaves nicely for increasing $\kappa$-values. With this, we will be able to bound $u-\Rh(u)$ with optimal order estimates under suitable resolution conditions.

The second component of the error also requires dedicated new arguments.
To estimate $\Rh(u) - \uhp$, we establish an asymptotic superconvergence result with which we can bound $\Rh(u) - \uhp$ by $u-\uhp$ up to higher order contributions. The higher order terms are controlled by a Banach fixed-point argument which allows us to identify a resolution condition under which $\Rh(u) - \uhp$ becomes so small that it can be absorbed into $u-\uhp$ and ultimately yielding  $\|  u- \uhp \|_{H^1_{\kappa}(\Omega)} \lesssim  \| u - \Rh(u) \|_{H^1_{\kappa}(\Omega)}$ in a suitable $\kappa$-weighted $H^1$-norm.\\[0.5em]
{\it Overview.} In Section \ref{section:analytical-setting} we introduce the basic notation and the precise mathematical setting of the Ginzburg-Landau equation. Furthermore, we revisit some important results on the existence, local uniqueness, and regularity of corresponding solutions. In Section \ref{section:discrete-minimizers} we define the finite element approximations and we present our main result on quasi-optimality under a suitable resolution condition for the mesh size that relaxes with higher order finite element spaces. The proof of the main result is split into two major parts covering Section \ref{section-secE-shift-prop} and Section \ref{section:proofs:L2-H1-estimates}. Section \ref{section-secE-shift-prop} deals with the aforementioned splitting of solutions involving the operator $E^{\prime\prime}(u)$ and Section \ref{section:proofs:L2-H1-estimates} covers the core error estimates. In Section \ref{section:nonlinear-cg-method} we propose a nonlinear conjugate gradient method that is used for the numerical experiments presented afterwards in Section \ref{section:numerical-experiments}. In
Appendix \ref{appendix:section:quasi-interpolation}, we also present the construction and approximation properties of an Oswald-type quasi-interpolation operator adapted to curved meshes that is used as a tool in our error analysis.

\section{Analytical setting}
\label{section:analytical-setting}

\subsection{Sobolev spaces}

Throughout this manuscript, $\Omega \subset \mathbb{R}^d$ (for $d=2,3$)
denotes a fixed domain with a Lipschitz boundary $\partial \Omega$.
We assume for simplicity that the domain has a unit diameter,
which can always be achieved by rescaling.

For $2 \leq q \leq +\infty$, $L^q(\Omega)$ is the usual Lebesgue space of (complex-valued) integrable functions, and we denote by
$\|\cdot\|_{L^q(\Omega)}$ its usual norm. For $q=2$, we
equip $L^2(\Omega)$ with the real inner product 
$$ 
(v,w)_{L^2(\Omega)}:= \Re \Big( \int_{\Omega} v \, \overline{w} \d x \Big) 
\qquad \mbox{for } v,w \in L^2(\Omega),
$$
where $\overline{w}$ is the complex conjugate of $w$. This makes $L^2(\Omega)$
formally an $\R$-Hilbert space (consisting of complex-valued functions).
We also employ the same notation for vector fields, in which case
a dot product is used in the inner product.

For any integer $k \geq 1$, we employ the notation
$W^{k,q}(\Omega)$ for the usual Sobolev spaces of functions belonging to
$L^q(\Omega)$ together with all their partial derivatives of order less
than or equal to $k$. Classically, $|\cdot|_{W^{k,q}(\Omega)}$ is the semi-norm
of $W^{k,q}(\Omega)$ collecting the $L^q(\Omega)$ norms of the partial derivatives
of order $k$. We also employ the shorthand notation $H^1(\Omega) := W^{1,2}(\Omega)$
for the first-order Hilbert Sobolev space, which we equip with the real inner product given
by $(v,w)_{L^2(\Omega)} + (\nabla v,\nabla w)_{L^2(\Omega)}$ for $v,w \in H^1(\Omega)$.

Together with the standard Sobolev norm
\begin{equation*}
\|v\|_{W^{k,q}(\Omega)} := \sum_{\ell = 0}^k |v|_{W^{\ell,q}(\Omega)},
\end{equation*}
we will use the weighted version
\begin{equation*}
\|v\|_{W^{k,q}_\kappa(\Omega)} := \sum_{\ell = 0}^k \kappa^{-\ell} |v|_{W^{\ell,q}(\Omega)}.
\end{equation*}
The same convention applies to $\| v \|_{W^{k,\infty}_{\kappa}(\Omega)}$.

Finally, $W^{k,q}(\Omega,\mathbb{R}^d)$ denotes the space of
real-valued vector fields with every component belonging to $W^{k,q}(\Omega)$.

\subsection{PDE coefficients}

Throughout the manuscript we consider a vector field $\bfA \in L^\infty(\Omega,\mathbb{R}^d)$
representing a magnetic potential. We are free to choose a gauge condition, since only the
curl of $\bfA$ has physical relevance. We therefore assume without loss of generality that $\div \bfA = 0$ in $\Omega$, together with the boundary condition $\bfA \cdot \mathbf{n} = 0$ on $\partial \Omega$ (the Coulomb gauge with a homogeneous Neumann boundary condition, cf. \cite{DuGuPe}). 
We will sometimes require more
regularity on $\bfA$, and specifically, we demand that
$\bfA \in W^{p-1,\infty}(\Omega,\mathbb{R}^d)$.

The material parameter is constant and represented by the real number $\kappa$.
Since we are especially interested in the high $\kappa$ regime, we will assume without loss
of generality that $\kappa \geq 1$.

\begin{remark}[Piecewise smooth coefficient]
In principle, one could consider a piecewise smooth coefficient $\bfA$
without major modifications. For the sake of simplicity, we avoid doing
so here, but refer the reader to~\cite{CFN20} for details. Similarly,
(piecewise) smooth material coefficients $\kappa$ could be considered.
\end{remark}

\subsection{Hidden constant}
\label{subsection:hidden-constant}

Throughout this manuscript, we fix a polynomial degree $p \geq 1$, a constant $M > 0$ measuring
the shape-regularity of the finite element mesh, and an arbitrarily small parameter
$0 < \varepsilon \leq 1$, that is fixed throughout the manuscript. We will write $a \lesssim b$ if there is a constant $C>0$ solely depending
on $\Omega$, $\bfA$, $p$, $M$ and $\varepsilon$ such that $a \le C\, b$.
We also use the notation $a \gtrsim b$ if $a \ge C \, b$.
The hidden constant depends on $\bfA$ only through $\|\bfA\|_{W^{p-1,\infty}_{\kappa}(\Omega)}$,
and only through $\|\bfA\|_{L^\infty(\Omega)}$ in the results where we do not assume that
$\bfA$ is smooth. We note in particular that such a constant $C$ can be bounded independently
of $\kappa$ and the mesh size $h$. Physically, the dependency on $\bfA$ suggests that the amplitude
of the magnetic potential is moderate, but $\bfA$ is allowed to oscillate with frequency
at most $\kappa$.

At a limited number of places, if $\alpha,\beta,\gamma,\ldots$ are previously introduced symbols,
we will use the notation $a \lesssim_{\alpha,\beta,\gamma,\ldots} b$ to indicate that the constant
$C$ may also depend on $\alpha,\beta,\gamma,\ldots$.

\subsection{Functional inequalities}

In this section, we collect some useful inequalities linked
with Sobolev norms.

\begin{subequations}
\label{eq_product_rules}
We are first interested in the regularity of products of functions.
The following estimates are easily obtained by combining the product
rule for derivatives together with the H\"older inequality.
For all $m \geq 0$ and $s,t \in [2,+\infty]$, we have
\begin{equation}
\|uv\|_{W^{m,q}_\kappa(\Omega)}
\lesssim_{m,s,t}
\|u\|_{W^{m,s}_\kappa(\Omega)}
\|v\|_{W^{m,t}_\kappa(\Omega)}
\end{equation}
for all $u \in W^{m,s}(\Omega)$, $v \in W^{m,t}(\Omega)$ and
$1/q = 1/s+1/t$. Applying this inequality twice, for all $q \geq 1$,
we can further show that
\begin{equation}
\|uvw\|_{W^{m,q}_\kappa(\Omega)}
\lesssim_{m,q}
\|u\|_{W^{m,3q}_\kappa(\Omega)}
\|v\|_{W^{m,3q}_\kappa(\Omega)}
\|w\|_{W^{m,3q}_\kappa(\Omega)}
\end{equation}
for all $u,v,w \in W^{m,3q}(\Omega)$ and
\begin{equation}
\|uvw\|_{W^{m,q}_\kappa(\Omega)}
\lesssim_{m,q}
\|u\|_{W^{m,\infty}_\kappa(\Omega)}
\|v\|_{W^{m,\infty}_\kappa(\Omega)}
\|w\|_{W^{m,q}_\kappa(\Omega)}
\end{equation}
whenever $u,v \in W^{m,\infty}(\Omega)$ and $w \in W^{m,q}(\Omega)$.
\end{subequations}

We then turn to Sobolev inequalities for a single function.
By combining the fractional Sobolev embedding with a Banach
space real interpolation inequality (see e.g. \cite[Theorem 8.2]{nezza_palatucci_valdinoci_2012a},
\cite[Lemma 36.1]{tartar_2007a} and \cite[Lemma B.1]{mclean_2000a}), 
we have
\begin{equation}
\label{eq_injection_Linfty}
\|\phi\|_{L^\infty(\Omega)}
\lesssim_q
\|\phi\|_{L^q(\Omega)}^{1-s}\|\phi\|_{W^{1,q}(\Omega)}^{s}
\lesssim_q
\kappa^s \|\phi\|_{W^{1,q}_\kappa(\Omega)}
\end{equation}
for all $\phi \in W^{1,q}(\Omega)$ whenever $0 < s < 1$ and $sq > d$.
Similarly, we have the Gagliardo--Nirenberg inequalities
\begin{equation}
\label{eq_gagliardo_nirenberg_L4}
\|\phi\|_{L^4(\Omega)}
\lesssim
\|\phi\|_{L^2(\Omega)}^{1-d/4}\|\nabla \phi\|_{L^2(\Omega)}^{d/4}
\lesssim
\kappa^{d/4} \|\phi\|_{H^1_{\kappa}(\Omega)}
\end{equation}
for all $\phi \in H^1(\Omega)$, and
\begin{equation}
\label{eq_gagliardo_nirenberg_L2q}
\|\nabla \phi\|_{L^{2q}(\Omega)}
\lesssim
\|\phi\|_{L^\infty(\Omega)}^{1/2}\|\phi\|_{W^{2,q}(\Omega)}^{1/2}
\lesssim
\kappa \|\phi\|_{L^\infty(\Omega)}^{1/2}\|\phi\|_{W^{2,q}_{\kappa}(\Omega)}^{1/2}.
\end{equation}
for all $q \geq 1$ and $\phi \in W^{2,q}(\Omega)$, see e.g. \cite{BrezisMironescu18}.

\subsection{Minimization problem}

The (fixed-field) Ginzburg--Landau energy functional $E: H^1(\Omega) \rightarrow \R$ is defined by
\begin{equation} 
E(v) = \frac{1}{2} \int_{\Omega} \left|  \tfrac{\i}{\kappa} \nabla v + \bfA\, v \right|^2 + \frac{1}{2} (|v|^2-1)^2 \, \d x
\label{eq_GLenergy}
\end{equation}
and we seek the corresponding lowest energy states of the system, i.e., minimizers $u \in H^1(\Omega)$ with
\begin{align}
\label{minimizer-energy-def} 
E(u) \,\,= \min_{v \in H^1(\Omega)} E(v).
\end{align}
The existence of such global minimizers and their fundamental properties were first
established in \cite{DuGuPe}. In this work, we shall not only consider global minimizers, but also isolated local minimizers of $E$ (in a sense made precise below) and we will show that in a small neighborhood of any such isolated local minimizer of $E$
there is a corresponding discrete local minimizer in the finite element space, provided that the mesh is sufficiently fine. As a first step, we formulate the conditions fulfilled by minimizers and properly introduce what {\it isolation} means in this context.

\subsection{First and second order conditions for local and global minimizers}
If $u$ is a local minimizer of the energy functional $E$, it is classical that the first order condition for minimizers implies $E^{\prime}(u)=0$ and the second order condition implies a non-negative spectrum for $E^{\prime\prime}(u)$. Here, $E^{\prime}$ and $E^{\prime\prime}$ denote the first and second Fr\'echet derivatives of $E$ which can be computed as 
\begin{align}
\label{derivaitves-1}
 \langle E^\prime (v), w \rangle \,\,=\,\, ( \tfrac{\i}{\kappa} \nabla v + \bfA v , \tfrac{\i}{\kappa} \nabla w + \bfA w )_{L^2(\Omega)} + (\, (\vert v \vert^2 -1) v , w )_{L^2(\Omega)} 
\end{align}
and
\begin{align}
\label{derivaitves-2}
 \langle E^{\prime\prime} (v)z, w \rangle \,\,=\,\, ( \tfrac{\i}{\kappa} \nabla z+ \bfA z , \tfrac{\i}{\kappa} \nabla w + \bfA w  )_{L^2(\Omega)} + ( \, (2 \vert v \vert^2 \hspace{-2pt} -\hspace{-2pt}1) z  + v^2 \overline{z}  ,  w \,)_{L^2(\Omega)} 
\end{align}
for $v,w,z \in H^1(\Omega)$.

Exploiting the formula for $E^\prime$, the first order condition $E^{\prime}(u)=0$ is nothing but the Ginzburg--Landau equation in variational form, i.e., $u \in H^1(\Omega)$ solves
\begin{align*}
 ( \tfrac{\i}{\kappa} \nabla u + \bfA u , \tfrac{\i}{\kappa} \nabla v + \bfA v )_{L^2(\Omega)} + (\, (\vert u \vert^2 -1) u , v )_{L^2(\Omega)} 
 \,\,=\,\,0 \qquad \mbox{for all } v\in H^1(\Omega).
\end{align*}
With the Gauge condition $\div \bfA =0$ and $\bfA \cdot \mathbf{n}\vert_{\partial \Omega}  = 0$, the Ginzburg-Landau equation can be written in strong form as
\begin{align}
\label{GLE-strong-form}
 - \tfrac{1}{\kappa^2} \Delta u  + \tfrac{2\i}{\kappa} \bfA \cdot \nabla u  + |\bfA|^2 u + (|u|^2-1)u = 0
\end{align}
accompanied by the homogeneous Neumann boundary condition 
\begin{align*}
\nabla u \cdot \mathbf{n} = 0 \qquad \mbox{on } \partial \Omega.
\end{align*}

Turning now to the necessary second order condition for minimizers,
we recall that $E^{\prime\prime}(u)$ must have a nonnegative spectrum.
One might wonder if the spectrum is strictly positive, so that $E^{\prime\prime}(u)$
has a bounded inverse. However, this is unfortunately not the case and zero is in
fact an eigenvalue of $E^{\prime\prime}(u)$. This is caused by the invariance of
the energy under complex phase shifts or, in other words, the missing local uniqueness
of minimizers. This is easily seen by considering an arbitrary minimizer $u$ and the
corresponding class of gauge-transformed states $\exp(\omega \ci ) u$ for any
$\omega \in [-\pi,\pi)$. A simple calculation shows $E(\exp(\omega \ci ) u)=E(u)$
for all $\omega$ and hence, $\exp(\omega \ci ) u$ is another minimizer.
Practically, this is not an issue because all these minimizers share the same
density $|u|=|\exp(\omega \ci ) u|$ and can therefore be considered equivalent.
Nevertheless, this requires special attention in the error analysis.

One of the direct implications is the aforementioned singularity of $E^{\prime\prime}(u)$.
In fact, if we move from $u$ along the circle line parametrized by $\gamma : \omega \mapsto \exp(\omega \ci ) u$, then the energy is constant along that line and the corresponding directional derivatives must vanish. In $u$ (i.e. $\omega=0$), the corresponding tangential direction is $\gamma^{\prime}(0)= \ci u$ and we therefore expect $E^{\prime\prime}(u)\ci u = 0$. Indeed, using \eqref{derivaitves-2}, a direct calculation shows
\begin{eqnarray*}
 \langle E^{\prime\prime} (u)\ci u, v \rangle &=& ( \tfrac{\i}{\kappa} \nabla (\ci u)+ \bfA (\ci u) , \tfrac{\i}{\kappa} \nabla v + \bfA v  )_{L^2(\Omega)} + ( \, (\vert u \vert^2 \hspace{-2pt} -\hspace{-2pt}1) (\ci u)  + u^2 \overline{\ci u} +\vert u \vert^2 \ci u ,  v \,)_{L^2(\Omega)} \\
 &\overset{\eqref{derivaitves-1}}{=}& \underbrace{\langle E^{\prime}(\ci u) , v\rangle}_{=0} + 2 ( \underbrace{\Re( \overline{u} \ci u)}_{=0} u , v )_{L^2(\Omega)} \,\,\, = \,\,\, 0
\end{eqnarray*}
for any $v\in H^1(\Omega)$. A realistic sufficient second order condition for local minimizers must therefore exclude the orthogonal complement of the kernel, i.e., the $L^2$-orthogonal complement of $\ci u$ in $H^1(\Omega)$ given by
\begin{align*}
\orthiu := \{ \, v \in H^1(\Omega) \mid (\i u, v)_{L^2(\Omega)} =0 \, \}. 
\end{align*}
Note that $u \in \orthiu$ since $(\cdot,\cdot)_{L^2(\Omega)}$ only includes real parts. In summary, we can hence work with the following sufficient second order condition as suggested in previous works (cf. \cite{DoeHe24,BDH25}):
\begin{enumerate}[resume,label={(A\arabic*)}]
\item\label{A4}  The state $u\in H^1(\Omega)$ is a local minimizer of $E$ that satisfies the necessary first order condition $E^{\prime}(u)=0$ and the sufficient second order condition
$$
\langle E^{\prime\prime}(u) v , v \rangle >0 \qquad \mbox{for all } v \in \orthiu \setminus \{ 0 \}.
$$
\end{enumerate}
Condition \ref{A4} implies that zero is a simple eigenvalue of $E^{\prime\prime}(u)$ and that the remaining spectrum is positive. Hence, such a local minimizer is isolated from any other local minimizer of $E$.

\begin{remark}[Physically relevant states]
Not every critical point of the Ginzburg--Landau energy represents an observable physical state. Only energetically favorable solutions can be expected to be physically realizable. In particular, global minimizers of $E$ correspond to the most stable equilibrium configurations and are therefore observed in practice. Isolated local minimizers satisfying Assumption~\ref{A4} are typically also physically relevant as metastable states, since they generally possess a sufficiently large basin of attraction and are dynamically stable. In contrast, saddle points or highly unstable critical points do not usually appear in experiments, except possibly as transient states during dynamical evolution. Hence, by restricting our analysis to states fulfilling Assumption~\ref{A4} (i.e., isolated local minimizers), we focus precisely on the physically most relevant configurations.
\end{remark}

Exploiting \ref{A4}, it can be proved that $E^{\prime\prime}(u)$ induces a coercive operator:
\begin{lemma}
\label{coercivity_secE_u}
If~\ref{A4} holds true, then the constant
\begin{align}
\label{Ccoe_gtrsim_1}
\Ccoe \,\, := \,\, \max_{v \in \orthiu \setminus \{0\}} \frac{\|v\|_{H^1_{\kappa}(\Omega)}^2}{\langle E^{\prime\prime}(u) v , v \rangle} 
\,\, \gtrsim \,\, 1
\end{align}
is finite, and only depends on $u$ and $\kappa$.
\end{lemma}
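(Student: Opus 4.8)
\textit{Proof proposal.} The statement bundles two assertions: that $\Ccoe$ is finite, which is equivalent to a coercivity inequality $\langle E^{\prime\prime}(u)v,v\rangle \ge c\,\|v\|_{H^1_{\kappa}(\Omega)}^2$ for all $v\in\orthiu$ and some $c=c(u,\kappa)>0$; and that $\Ccoe\gtrsim 1$, which is the converse (easy) direction. My starting point would be to evaluate \eqref{derivaitves-2} at $z=w=v$ and base point $u$ and record the identity
\[
\langle E^{\prime\prime}(u)v,v\rangle
= \Big\| \tfrac{\i}{\kappa}\nabla v + \bfA v \Big\|_{L^2(\Omega)}^2
+ \int_{\Omega} (2|u|^2-1)\,|v|^2 \d x
+ \Re \int_{\Omega} u^2\, \overline{v}^2 \d x,
\]
which splits the quadratic form into a nonnegative leading term and two lower-order terms driven by $\|v\|_{L^2(\Omega)}^2$. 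Throughout I would use the classical bound $\|u\|_{L^\infty(\Omega)}\le 1$ for local minimizers (recalled in this section) together with $\|\bfA\|_{L^\infty(\Omega)}\lesssim 1$.

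For the bound $\Ccoe\gtrsim 1$: from the identity above and the triangle inequality $\tfrac{1}{\kappa}\|\nabla v\|_{L^2(\Omega)}\le \|\tfrac{\i}{\kappa}\nabla v+\bfA v\|_{L^2(\Omega)}+\|\bfA\|_{L^\infty(\Omega)}\|v\|_{L^2(\Omega)}$, each of the three terms is bounded by $\lesssim\|v\|_{H^1_{\kappa}(\Omega)}^2$, so $\langle E^{\prime\prime}(u)v,v\rangle\lesssim\|v\|_{H^1_{\kappa}(\Omega)}^2$ for every $v\in H^1(\Omega)$. Since $\orthiu$ is an infinite-dimensional closed subspace of $H^1(\Omega)$ it contains nonzero elements, and taking the supremum of the reciprocal ratio gives $\Ccoe\gtrsim 1$.

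The finiteness of $\Ccoe$ is the substantive part, and I would establish it by a standard contradiction-and-compactness argument. Assuming no such $c>0$ exists, I would pick $(v_n)\subset\orthiu$ with $\|v_n\|_{H^1_{\kappa}(\Omega)}=1$ and $\langle E^{\prime\prime}(u)v_n,v_n\rangle\to 0$. Since $\kappa$ is fixed, $(v_n)$ is bounded in $H^1(\Omega)$, so after passing to a subsequence $v_n\rightharpoonup v$ weakly in $H^1(\Omega)$ and, by the Rellich--Kondrachov theorem, $v_n\to v$ strongly in $L^2(\Omega)$. As $\orthiu$ is a closed subspace, it is weakly closed, hence $v\in\orthiu$. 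Now $\tfrac{\i}{\kappa}\nabla v_n+\bfA v_n\rightharpoonup\tfrac{\i}{\kappa}\nabla v+\bfA v$ in $L^2(\Omega)$ (a weakly convergent plus a strongly convergent sequence), so weak lower semicontinuity of the norm handles the leading term, while the lower-order terms converge because $\||v_n|^2-|v|^2\|_{L^1(\Omega)}\le\|v_n-v\|_{L^2(\Omega)}\|v_n+v\|_{L^2(\Omega)}\to 0$ (and likewise $\|v_n^2-v^2\|_{L^1(\Omega)}\to 0$), with $u^2,(2|u|^2-1)\in L^\infty(\Omega)$. This yields $0=\lim_n\langle E^{\prime\prime}(u)v_n,v_n\rangle\ge\langle E^{\prime\prime}(u)v,v\rangle\ge 0$, so $\langle E^{\prime\prime}(u)v,v\rangle=0$ with $v\in\orthiu$, and \ref{A4} forces $v=0$. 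Then $v_n\to 0$ in $L^2(\Omega)$, and substituting back into the identity gives $\|\tfrac{\i}{\kappa}\nabla v_n+\bfA v_n\|_{L^2(\Omega)}^2\to 0$, hence $\tfrac{1}{\kappa}\|\nabla v_n\|_{L^2(\Omega)}\to 0$; together with $\|v_n\|_{L^2(\Omega)}\to 0$ this contradicts $\|v_n\|_{H^1_{\kappa}(\Omega)}=1$. The constant $c$ thus produced depends only on $u$ (through the form $\langle E^{\prime\prime}(u)\cdot,\cdot\rangle$ and the validity of \ref{A4}) and on $\kappa$, and $\Ccoe=1/c$.

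I expect the compactness step to be the only delicate point: it is where the isolation of the zero eigenvalue of $E^{\prime\prime}(u)$, i.e. condition \ref{A4}, is genuinely used, via the compact embedding of $H^1(\Omega)$ into $L^2(\Omega)$. The price is that the argument is purely qualitative and yields no explicit size for $c$ (equivalently for $\Ccoe=\rho(\kappa)$) in terms of $\kappa$; quantifying that dependence is precisely the task of the resolution conditions derived later, which is why at this stage one only claims finiteness together with the trivial lower bound $\Ccoe\gtrsim 1$.
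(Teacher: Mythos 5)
Your proposal is correct and matches the paper's treatment: the bound $\Ccoe\gtrsim 1$ is obtained exactly as in the paper, from the $\kappa$-uniform continuity $\langle E^{\prime\prime}(u)v,v\rangle\lesssim\|v\|_{H^1_\kappa(\Omega)}^2$ (using $|u|\le 1$ and $\bfA\in L^\infty$). For finiteness the paper simply cites \cite[Proposition 2.2]{BDH25} rather than giving an argument, and your contradiction-and-compactness proof (weak $H^1$ limit, Rellich, weak lower semicontinuity of the leading term, strong convergence of the lower-order terms, and \ref{A4} to force $v=0$) is precisely the standard qualitative argument underlying that reference; as you note, it yields no quantitative control on $\rho(\kappa)$, which is consistent with the paper's remark that the $\kappa$-dependence of $\rho(\kappa)$ is unknown.
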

A proof that $\rho(\kappa)$ is finite can be found in \cite[Proposition 2.2]{BDH25}.
Note that the explicit dependency of $\Ccoe$ on $\kappa$ is unknown, but numerical experiments
suggest a polynomial growth $\kappa^{\alpha}$ for some $\alpha>0$ (with typically
$\alpha = \mathcal{O}(1)$, cf. \cite{DoeHe24,BDH25}). The fact that $\rho(\kappa) \gtrsim 1$
is a direct consequence of the uniform continuity of $E^{\prime\prime}(u)$, see bound~\eqref{continuity_E} in Lemma~\ref{lemma:stability-garding} below.

As we will see, the operator $E^{\prime\prime}(u)$ plays a key role in the error analysis
which we will revisit in detail in Section \ref{section-secE-shift-prop}.

\begin{remark}\label{remark-gauge-choice}
We stress that for any $v \in H^1(\Omega)$, the condition $v \in \orthiu$
simply amounts to choosing a particular convention for the phase of $v$.
Indeed, if $v \in H^1(\Omega)$ is arbitrary, then for $\alpha \in \C$
\begin{equation*}
(\alpha v,\ci u)_{L^2(\Omega)} = \Re \int_\Omega \, \alpha\, v\,\overline{\ci u} = - \Re \left( \ci\alpha \int_\Omega v \overline{u} \right)
\end{equation*}
and we can always choose $\alpha \in \C$ with $|\alpha| = 1$ so that $\alpha v \in \orthiu$. To be precise, we can select $\alpha= \tfrac{\int_{\Omega} \overline{v} u}{\left|\int_{\Omega} \overline{v} u\right|}$ to obtain $(\alpha v,\ci u)_{L^2(\Omega)}=0$. 
Hence, requiring that $u_h \in \orthiu$ for a finite element solution $u_h$
is simply a convenient choice of phase convention to easily provide error estimates.
In practice, this condition does not need to be fulfilled, and the solution obtained
is accurate, up to a possible phase shift.
\end{remark}

\subsection{$\kappa$-dependent stability estimates}

Next, we shall establish $\kappa$-dependent stability estimates for any solution to the GLE $E^{\prime}(u)=0$.
The following $\kappa$-dependent $H^1$-bounds are established for global minimizers
in \cite{DuGuPe,DDH24,DoeHe24}. Here, we show that such bounds are actually valid for any
critical point. The proof only needs slight modifications and is reported here for completeness.

\begin{theorem} \label{estimates_u}
Assume that $u \in H^1(\Omega)$ is a critical point, i.e. $E^\prime(u) = 0$. Then, we have
	 \begin{align}
	 \label{stability-estimates-u}
		\vert u(x) \vert \,\leq\, 1 \quad \mbox{ a.e. in }  \Omega
		\qquad\hspace{10pt}
		\mbox{and}\hspace{10pt}
		\qquad  \tfrac{1}{\kappa} \| \nabla u \|_{L^2(\Omega)} \,\lesssim\, \| u\|_{L^2(\Omega)} \,\lesssim\, 1.
	\end{align}
\end{theorem}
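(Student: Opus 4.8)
The plan is to read off both estimates from the weak Euler--Lagrange equation $\langle E'(u),v\rangle=0$, which holds for every $v\in H^1(\Omega)$ by the hypothesis $E'(u)=0$, by inserting two carefully chosen test functions. The pointwise bound $|u|\le 1$ is of maximum-principle type and is the only place that requires genuine work; the weighted $H^1$-bound then follows almost immediately by testing against $v=u$.

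For the pointwise bound I would argue as follows. Since $z\mapsto|z|$ is Lipschitz, $|u|\in H^1(\Omega)$; I set $\phi:=(|u|-1)_+\in H^1(\Omega)$ and $G(s):=(1-s^{-1})_+$, so that the candidate test function $v:=G(|u|)\,u$ satisfies $|v|=\phi$. The key technical point, and the main obstacle, is to verify that $v\in H^1(\Omega)$ despite the fact that for $d\ge 2$ one does not know a priori that $u\in L^\infty(\Omega)$: writing $\nabla v=\bigl(G'(|u|)\,u\bigr)\nabla|u|+G(|u|)\,\nabla u$, both factors $G(|u|)$ and $G'(|u|)\,u=\one_{\{|u|>1\}}\,|u|^{-2}\,u$ are bounded (the latter by $1$), so $v\in H^1(\Omega)$ indeed. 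Inserting $v$ into $0=\langle E'(u),v\rangle$ and using the Leibniz identity $\tfrac{\i}{\kappa}\nabla v+\bfA v=\tfrac{\i}{\kappa}\,u\,\nabla\bigl(G(|u|)\bigr)+G(|u|)\bigl(\tfrac{\i}{\kappa}\nabla u+\bfA u\bigr)$ together with $2\,\Re(\overline{u}\,\nabla u)=\nabla(|u|^2)=2|u|\,\nabla|u|$, the covariant-gradient contribution collapses to $\int_\Omega\bigl[\,G(|u|)\,\bigl|\tfrac{\i}{\kappa}\nabla u+\bfA u\bigr|^2+\tfrac{1}{\kappa^2}\,|u|^{-1}\,\bigl|\nabla|u|\bigr|^2\,\one_{\{|u|>1\}}\bigr]\d x\ge 0$, the nonnegativity coming from $G\ge 0$; this is precisely why this particular $G$ is chosen, since it turns the cross-term into a nonnegative multiple of $\bigl|\nabla|u|\bigr|^2$. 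The nonlinear term equals $\int_\Omega(|u|^2-1)\,G(|u|)\,|u|^2\d x$, whose integrand equals $(|u|-1)^2(|u|+1)|u|\ge 2\phi^2$ on $\{|u|>1\}$ and vanishes on $\{|u|\le 1\}$. Hence $0=\langle E'(u),v\rangle\ge 2\,\|\phi\|_{L^2(\Omega)}^2$, which forces $\phi\equiv 0$, i.e.\ $|u|\le 1$ a.e.\ in $\Omega$.

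The remaining estimates are then routine. First $\|u\|_{L^2(\Omega)}^2=\int_\Omega|u|^2\le|\Omega|\lesssim 1$, since $\Omega$ has unit diameter. Testing the equation with $v=u$ gives $0=\bigl\|\tfrac{\i}{\kappa}\nabla u+\bfA u\bigr\|_{L^2(\Omega)}^2+\int_\Omega(|u|^2-1)|u|^2\d x$, and since $0\le|u|\le 1$ implies $(1-|u|^2)|u|^2\le|u|^2$, we obtain $\bigl\|\tfrac{\i}{\kappa}\nabla u+\bfA u\bigr\|_{L^2(\Omega)}\le\|u\|_{L^2(\Omega)}$. The triangle inequality and $\bfA\in L^\infty(\Omega)$ then yield $\tfrac1\kappa\|\nabla u\|_{L^2(\Omega)}\le\bigl\|\tfrac{\i}{\kappa}\nabla u+\bfA u\bigr\|_{L^2(\Omega)}+\|\bfA\|_{L^\infty(\Omega)}\|u\|_{L^2(\Omega)}\lesssim\|u\|_{L^2(\Omega)}$, which is the claim. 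I would close by noting that the whole argument only uses $E'(u)=0$ and never the minimality of $u$ — this is the ``slight modification'' compared to the maximum-principle proofs for global minimizers in \cite{DuGuPe,DoeHe24}, which exploit that truncating $u$ onto the ball $\{|z|\le 1\}$ does not increase $E$ — so that the estimates hold for every critical point.
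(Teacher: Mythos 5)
Your proposal is correct and follows essentially the same route as the paper: the weighted $H^1$-bound is obtained identically by testing the weak equation with $v=u$ and using the triangle inequality with $\bfA\in L^\infty(\Omega)$, and the $L^2$-bound follows from $|u|\le 1$ and $|\Omega|\lesssim 1$. The only difference is that for the pointwise bound $|u|\le 1$ the paper merely cites the analogous argument in \cite[Prop.~3.11]{DuGuPe}, whereas you write it out in full via the test function $G(|u|)u$ with $G(s)=(1-s^{-1})_+$; your verification that $v\in H^1(\Omega)$, the sign of the cross term, and the lower bound $(|u|^2-1)G(|u|)|u|^2\ge 2\phi^2$ on $\{|u|>1\}$ are all correct.
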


\begin{proof}
The bound $0 \le |u| \le 1$ for any critical point with $E^{\prime}(u)=0$ can be proved analogously to the corresponding result in \cite[Proposition 3.11]{DuGuPe}. For the $H^1$-bounds we slightly deviate from \cite{DDH24} where the proof exploits $E(u) \le E(0)$, which is only guaranteed for minimizers. Instead, we directly use the Ginzburg--Landau equation. From $E^{\prime}(u)=0$ we obtain with \eqref{derivaitves-1} that
\begin{align*}
 \| \tfrac{\i}{\kappa} \nabla u + \bfA u \|_{L^2(\Omega)}^2 \,\,=\,\, (\, (1-\vert u \vert^2 ) u , u )_{L^2(\Omega)} \,\, \le \,\, \| u \|_{L^2(\Omega)}^2.
\end{align*}
Hence, we have the remaining two estimates as 
\begin{align*}
 \tfrac{1}{\kappa} \| \nabla u \|_{L^2(\Omega)} \,\,&\le\,\, 
  \| u \|_{L^2(\Omega)} + \| \bfA u \|_{L^2(\Omega)} 
 \,\, \le\,\,(1+ \|  \bfA  \|_{L^{\infty}(\Omega)} ) \, \| u \|_{L^2(\Omega)} 
 \,\,\le\,\, (1+ \|  \bfA  \|_{L^{\infty}(\Omega)} ) \sqrt{|\Omega|} .
\end{align*}
\end{proof}

By demanding more smoothness on $\bfA$ and $\Omega$, we can establish additional
bounds on critical points. Similar estimates were obtained previously in \cite{DDH24},
but they are extended here to higher order Sobolev spaces.

\begin{theorem}
\label{theorem_high_reg_u}
Assume that $\bfA \in W^{p-1,\infty}(\Omega)$ and that $\partial \Omega$
is of class $C^{p+1}$. Then, for all $u \in H^1(\Omega)$ such that $E^\prime(u) = 0$,
we have $u \in W^{p+1,4}(\Omega) \cap H^1(\Omega)$ with
\begin{equation}
\label{Wpq-estimates}
\|u\|_{W^{p+1,4}_\kappa(\Omega)} \lesssim 1
		\qquad\hspace{10pt}
		\mbox{and}\hspace{10pt}\qquad
\|u\|_{W^{p-1,\infty}_\kappa(\Omega)} \,\, \lesssim \,\, 
\kappa^{\varepsilon/(2p)},
\end{equation}
where we recall from Section \ref{subsection:hidden-constant}, that $\varepsilon$ is an arbitrarily small (but fixed) parameter. In addition, the estimate
\begin{align*}
\|u\|_{H^{p+1}_\kappa(\Omega)} \,\,\lesssim \,\, \kappa^{\varepsilon/2} \, \| u \|_{L^2(\Omega)}
\end{align*}
holds true.
\end{theorem}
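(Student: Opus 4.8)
The plan is to run an elliptic regularity bootstrap on the strong form \eqref{GLE-strong-form}, rewritten as a Neumann problem for the Laplacian,
\begin{equation*}
-\Delta u = \kappa^2\big((1-|u|^2)u-|\bfA|^2u\big) - 2\ci\kappa\,\bfA\cdot\nabla u =: \kappa^2 F(u) \quad\text{in }\Omega,\qquad \nabla u\cdot\mathbf{n}=0 \ \text{on }\partial\Omega .
\end{equation*}
Since $\partial\Omega$ is of class $C^{p+1}$, the Calder\'on--Zygmund estimate $\|v\|_{W^{k,q}(\Omega)}\lesssim_{k,q}\|\Delta v\|_{W^{k-2,q}(\Omega)}+\|v\|_{L^q(\Omega)}$ holds for the Neumann Laplacian for all $2\le k\le p+1$ and $1<q<\infty$; multiplying by $\kappa^{-k}$, using $\kappa\ge1$, inserting $\Delta u=-\kappa^2F(u)$ and $\|u\|_{L^q(\Omega)}\lesssim1$ from Theorem~\ref{estimates_u}, it becomes the core recursion
\begin{equation*}
\|u\|_{W^{k,q}_\kappa(\Omega)} \;\lesssim_{k,q}\; \|F(u)\|_{W^{k-2,q}_\kappa(\Omega)} + \|u\|_{W^{k-1,q}_\kappa(\Omega)} + 1 .
\end{equation*}
Using the $\kappa$-weighted product rules~\eqref{eq_product_rules} together with $\bfA\in W^{p-1,\infty}(\Omega)$ (whose weighted norm enters only the hidden constant, since $k-2\le p-1$) and $\|u\|_{L^\infty(\Omega)}\le1$, every term of $F(u)$ is controlled \emph{without any spurious power of $\kappa$}: the cubic term gives $\||u|^2u\|_{W^{k-2,q}_\kappa(\Omega)}\lesssim\|u\|_{W^{k-2,3q}_\kappa(\Omega)}^3$, while the magnetic contributions give $\lesssim\|u\|_{W^{k-1,q}_\kappa(\Omega)}$, so that
\begin{equation*}
\|u\|_{W^{k,q}_\kappa(\Omega)} \;\lesssim_{k,q}\; \|u\|_{W^{k-2,3q}_\kappa(\Omega)}^3 + \|u\|_{W^{k-1,q}_\kappa(\Omega)} + 1 .
\end{equation*}

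\textbf{The $W^{p+1,4}$-estimate.} I would exploit this recursion through a two-level induction in which the integrability exponent is allowed to grow. First, I show $\|u\|_{W^{1,q}_\kappa(\Omega)}\lesssim1$ for \emph{every} $q\in[2,\infty)$: starting from $q=2$ (Theorem~\ref{estimates_u}), one elliptic step gives $\|u\|_{W^{2,q}_\kappa(\Omega)}\lesssim1$, and then the Gagliardo--Nirenberg inequality~\eqref{eq_gagliardo_nirenberg_L2q} applied to $\phi=u$, using $\|u\|_{L^\infty(\Omega)}\le1$, yields $\|\nabla u\|_{L^{2q}(\Omega)}\lesssim\kappa$, i.e.\ $\|u\|_{W^{1,2q}_\kappa(\Omega)}\lesssim1$; iterating over $q,2q,4q,\dots$ covers all exponents. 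With this in hand, the recursion and an induction on $k$ give $\|u\|_{W^{k,q}_\kappa(\Omega)}\lesssim1$ for all $q\in[2,\infty)$ and $k=1,\dots,p+1$, the exponent $3q$ of the cubic term at each step being again a finite exponent for which the claim holds. Specializing to $k=p+1$, $q=4$ proves the first estimate.

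\textbf{The $W^{p-1,\infty}$- and $H^{p+1}$-estimates.} Applying the sharp embedding~\eqref{eq_injection_Linfty} to each derivative $D^\ell u$, $\ell\le p-1$, gives $\|u\|_{W^{p-1,\infty}_\kappa(\Omega)}\lesssim_q\kappa^s\|u\|_{W^{p,q}_\kappa(\Omega)}\lesssim_q\kappa^s$ for any $s\in(d/q,1)$; choosing $q$ large enough that $d/q<\varepsilon/(2p)$ and $s\le\varepsilon/(2p)$ yields $\|u\|_{W^{p-1,\infty}_\kappa(\Omega)}\lesssim\kappa^{\varepsilon/(2p)}$, the hidden constant depending on $q=q(\varepsilon,p,d)$ and hence being admissible; the same argument with a smaller target exponent gives $\|u\|_{W^{m,\infty}_\kappa(\Omega)}\lesssim\kappa^{\varepsilon/(4p)}$ for all $m\le p-1$. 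For the last estimate I would rerun the elliptic bootstrap in $L^2$-based spaces while keeping $\|u\|_{L^2(\Omega)}$ on the right-hand side; since every term of $F(u)$ vanishes to first order at $u=0$ there is no additive constant, and the cubic term now contributes $\||u|^2u\|_{H^{k-2}_\kappa(\Omega)}\lesssim\|u\|_{W^{k-2,\infty}_\kappa(\Omega)}^2\|u\|_{H^{k-2}_\kappa(\Omega)}\lesssim\kappa^{\varepsilon/(2p)}\|u\|_{H^{k-2}_\kappa(\Omega)}$, so that, starting from $\|u\|_{H^1_\kappa(\Omega)}\lesssim\|u\|_{L^2(\Omega)}$ (Theorem~\ref{estimates_u}), the $p$ induction steps lose at most $\kappa^{\varepsilon/2}$, giving $\|u\|_{H^{p+1}_\kappa(\Omega)}\lesssim\kappa^{\varepsilon/2}\|u\|_{L^2(\Omega)}$.

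\textbf{Main obstacle.} The delicate point throughout is the $\kappa$-bookkeeping: the elliptic bootstrap must be made essentially lossless in $\kappa$, and the only step that genuinely costs a power of $\kappa$ is the passage from $L^q$- to $L^\infty$-control of derivatives of $u$ — unavoidable because $\nabla u$ is, a priori, merely in $L^q$ — which is precisely why the integrability exponent is driven up, so that the cost $\kappa^{d/q}$ of that step via~\eqref{eq_injection_Linfty} is an arbitrarily small power of $\kappa$, kept below $\varepsilon/(2p)$ (resp.\ $\varepsilon/(4p)$). All other potential sources of $\kappa$-growth are neutralized by the structure available from the excerpt: the bound $\|u\|_{L^\infty(\Omega)}\le1$ linearizes the cubic nonlinearity and makes the $\|\phi\|_{L^\infty}^{1/2}$ factor in~\eqref{eq_gagliardo_nirenberg_L2q} free of charge; the $\kappa$-weighted product rules~\eqref{eq_product_rules} carry no extra powers of $\kappa$; and the smoothness of $\bfA$ (with $\|\bfA\|_{W^{p-1,\infty}_\kappa(\Omega)}$ in the hidden constant) does the same for the magnetic terms. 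One must also be careful that the elliptic estimates used are the ones valid up to the boundary on a $C^{p+1}$ domain, and that the constants depend on the finitely many exponents $q,3q,9q,\dots$ occurring in the cubic terms, which is harmless since $p$ is fixed.
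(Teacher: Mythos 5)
Your proposal is correct and follows essentially the same route as the paper's proof: an elliptic bootstrap for the Neumann problem combined with the $\kappa$-weighted product rules, the pointwise bound $|u|\le 1$, and the Gagliardo--Nirenberg inequality \eqref{eq_gagliardo_nirenberg_L2q} to drive the integrability exponent up, followed by the Sobolev embedding \eqref{eq_injection_Linfty} with a large exponent for the $W^{p-1,\infty}_\kappa$ bound and a second $L^2$-based bootstrap retaining $\|u\|_{L^2(\Omega)}$ for the final estimate. The only differences are organizational (the order of the nested inductions on $k$ and $q$), and your $\kappa$-bookkeeping matches the paper's.
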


\begin{proof}
We start by noting that $u$ solves the PDE
\begin{equation*}
-\Delta u
=
- 2\i\kappa \bfA \cdot \nabla u  - \kappa^2|\bfA|^2 u - \kappa^2 |u|^2u+ \kappa^2 u =: F
\end{equation*}
in $\Omega$, together with the Neumann boundary condition $\nabla u \cdot \mathbf{n} = 0$
on $\partial \Omega$. The remainder of the proof will therefore largely hinge on
the regularity shift
\begin{equation}
\label{prel_reg_shift}
\|u\|_{W^{\ell+1,q}(\Omega)} \lesssim_q \|F\|_{W^{\ell-1,q}(\Omega)} + \| u \|_{L^q(\Omega)}
\end{equation}
which holds for all $q \in [1,+\infty)$ and $1 \leq \ell \leq p$ provided
that the right-hand side is finite. Such regularity shift indeed does hold
true due to our assumption on $\partial \Omega$, see \cite[Theorem 15.2]{agmon_douglis_nirenberg_1959a} and \cite[Theorem on p. 143]{CantorMatovsky85}.
An easy consequence is that we also have
\begin{equation}
\label{tmp_reg_shift}
\|u\|_{W^{\ell+1,q}_\kappa(\Omega)}
\lesssim_{\ell,q} \kappa^{-2} \|F\|_{W^{\ell-1,q}_\kappa(\Omega)}
+
\| u \|_{W^{1,q}_{\kappa}(\Omega)}
\end{equation}
uniformly in $\kappa$. We first show that for any $q \in [1,+\infty)$, we have
\begin{equation}
\label{tmp_induction_regu}
\|u\|_{W^{\ell,q}_\kappa(\Omega)} \lesssim_{\ell,q} 1
\end{equation}
for all $\ell \leq p+1$ by induction on $\ell$.

Let us observe that \eqref{tmp_induction_regu} holds true for $\ell = 0$
due to \eqref{stability-estimates-u}.

For the case $\ell = 1$,
we proceed by induction on $q$. We first see that
\begin{equation*}
\|F\|_{L^2(\Omega)}
\lesssim
\kappa \|\bfA\|_{L^\infty(\Omega)}\|\nabla u\|_{L^2(\Omega)}
+
\kappa^2\|\bfA\|_{L^\infty(\Omega)}^2 \|u\|_{L^2(\Omega)}
+
\kappa^2 (1+\|u\|_{L^\infty(\Omega)}^2)\|u\|_{L^2(\Omega)}
\lesssim
\kappa^2
\end{equation*}
due to \eqref{stability-estimates-u}.
This shows that the result holds true for $q = 2$, and in fact
for any $1 \leq q \leq 2$. Furthermore, thanks to the regularity
shift \eqref{prel_reg_shift}, we have $u \in W^{2,2}(\Omega)$ with
$\| u \|_{W^{2,2}(\Omega)} \lesssim \kappa^2$. Let us now assume that
the result has been established for some $q \geq 2$. Proceeding as above, we have
\begin{equation*}
\|F\|_{L^{2q}(\Omega)}\,
\lesssim\,
\kappa  \|\nabla u\|_{L^{2q}(\Omega)} + \kappa^2
\end{equation*}
using the $L^\infty(\Omega)$ bound on $u$.
We now apply the Gagliardo-Nirenberg inequality in \eqref{eq_gagliardo_nirenberg_L2q},
to show that
\begin{equation*}
\|F\|_{L^{2q}(\Omega)}
\lesssim_q
\kappa \|u\|_{L^\infty(\Omega)}^{1/2}\|u\|_{W^{2,q}(\Omega)}^{1/2} + \kappa^2
\lesssim_q
\kappa^2
\end{equation*}
using again the $L^\infty(\Omega)$ bound on $u$ and the induction hypothesis $\|u\|_{W^{2,q}(\Omega)} \lesssim_q \kappa^2$ 
in combination with the regularity shift \eqref{prel_reg_shift}. This now shows
the result for all exponents less than $2q$, which concludes the proof by induction on $q$
for the case $\ell=1$.

We now assume that \eqref{tmp_induction_regu} holds true up to some $\ell \leq p$ and for any $q \in [1,+\infty)$. We show that it still holds true for $\ell+1$ and again any $q \in [1,+\infty)$. 
In view of \eqref{tmp_reg_shift}, we simply need to show that
\begin{equation*}
\|F\|_{W^{\ell-1,q}_{\kappa} (\Omega)} \lesssim_{\ell,q} \kappa^2.
\end{equation*}
Using the product rules in \eqref{eq_product_rules} and the induction assumption, we have
\begin{align*}
&\|F\|_{W_\kappa^{\ell-1,q}(\Omega)}
\\
&\lesssim_{\ell,q}
\kappa \|\bfA \cdot \nabla u\|_{W^{\ell-1,q}_\kappa(\Omega)}
+
\kappa^2 \||\bfA|^2 u\|_{W^{\ell-1,q}_\kappa(\Omega)}
+
\kappa^2 \||u|^2 u\|_{W^{\ell-1,q}_\kappa(\Omega)} + \kappa^2 \|u\|_{W^{\ell-1,q}_{\kappa}(\Omega)}
\\
&\lesssim_{\ell,q}
\kappa \|\bfA\|_{W^{\ell-1,\infty}_\kappa(\Omega)}\|\nabla u\|_{W^{\ell-1,q}_\kappa(\Omega)}
+
 \kappa^2  \left(  (\|\bfA\|_{W^{\ell-1,\infty}_\kappa(\Omega)}^2 + 1) \|u\|_{W^{\ell-1,q}_\kappa(\Omega)}
+
 \|u\|_{W^{\ell-1,3q}_\kappa(\Omega)}^3 \right)
\\
&\lesssim_{\ell,q}
\kappa^2 \|\bfA\|_{W^{\ell-1,\infty}_\kappa(\Omega)}\|u\|_{W^{\ell,q}_\kappa(\Omega)}
+
 \kappa^2  \left(   (\|\bfA\|_{W^{\ell-1,\infty}_\kappa(\Omega)}^2 + 1) \|u\|_{W^{\ell-1,q}_\kappa(\Omega)}
+
\|u\|_{W^{\ell-1,3q}_\kappa(\Omega)}^3 \right)
\\
&
\lesssim_{\ell,q}
\kappa^2,
\end{align*}
from which the desired bound on $F$ follows. The regularity shift in \eqref{tmp_reg_shift}
finishes the argument.

With \eqref{tmp_induction_regu} established,
the first estimate in \eqref{Wpq-estimates} readily follows by taking $q = 4$.
For the second estimate in \eqref{Wpq-estimates}, we simply employ the Sobolev embedding
in \eqref{eq_injection_Linfty}, and write that
\begin{equation*}
\|\phi\|_{L^\infty(\Omega)}
\lesssim
\kappa^{\varepsilon/2p}
\|\phi\|_{W^{1,q_\varepsilon}_{\kappa}(\Omega)}
\end{equation*}
for all $\phi \in W^{1,q_\varepsilon}(\Omega)$ with $q_\varepsilon := 2p(d+1)/\varepsilon$,
$s_\varepsilon =\varepsilon/(2p)$ so that $s_\varepsilon q_\varepsilon > d$.
A direct consequence is that
\begin{equation*}
\|u\|_{W^{p-1,\infty}_\kappa(\Omega)}
\lesssim
\kappa^{\varepsilon/2p} \|u\|_{W^{p,q_\varepsilon}_\kappa(\Omega)}
\lesssim
\kappa^{\varepsilon/2p}
\end{equation*}
due to \eqref{tmp_induction_regu}.

Similarly to what we did above, we will show that
\begin{equation*}
\|u\|_{H^{\ell+1}_\kappa(\Omega)} \,\,\, \lesssim_\ell \,\,\, \kappa^{\varepsilon \, \ell/(2p)} \|u\|_{L^2(\Omega)}
\end{equation*}
for $0 \leq \ell \leq p$ by induction on $\ell$. The result then follows with $\ell=p$. 
For $\ell = 0$, the result was established in Theorem \ref{estimates_u}.
Assuming the result for some $1 \leq \ell \leq p-1$, we are going to show
that it is still valid for $\ell+1$. 
Following the proof of the first two estimates above,
it suffices to show that
\begin{equation*}
\|F\|_{H^{\ell-1}_\kappa(\Omega)}
\lesssim
\kappa^2\kappa^{ \varepsilon \, \ell/(2p)}\|u\|_{L^2(\Omega)}.
\end{equation*}
We easily do so using the product rules in \eqref{eq_product_rules} and the induction
hypothesis as follows:
\begin{eqnarray*}
\lefteqn{\|F\|_{H^{\ell-1}_\kappa(\Omega)}
\,\,\,\lesssim\,\,\,
\kappa \|\bfA \cdot \nabla u\|_{H^{\ell-1}_\kappa(\Omega)}
+
\kappa^2 \||\bfA|^2 u\|_{H^{\ell-1}_\kappa(\Omega)} + \kappa^2 \|u\|_{H^{\ell-1}_\kappa(\Omega)}
+
\kappa^2\||u|^2 u\|_{H^{\ell-1}_\kappa(\Omega)} }
\\
&\lesssim&
\kappa \|\bfA\|_{W^{\ell-1,\infty}_\kappa(\Omega)}\|\nabla u\|_{H^{\ell-1}_\kappa(\Omega)}
+
\kappa^2 \|\bfA\|_{W^{\ell-1,\infty}_\kappa(\Omega)}^2\|u\|_{H^{\ell-1}_\kappa(\Omega)}
\\
&\enspace&\quad +
\kappa^2 \|u\|_{H^{\ell-1}_\kappa(\Omega)}
+
\kappa^2\|u\|_{W^{\ell-1,\infty}_\kappa(\Omega)}^2\|u\|_{H^{\ell-1}_\kappa(\Omega)}
\\
&\overset{\eqref{Wpq-estimates}}{\lesssim}&
\kappa^2 \|\bfA\|_{W^{\ell-1,\infty}_\kappa(\Omega)}\|u\|_{H^{\ell}_\kappa(\Omega)}
+
\kappa^2 (\|\bfA\|_{W^{\ell-1,\infty}_\kappa(\Omega)}^2 + 1) \|u\|_{H^{\ell-1}_\kappa(\Omega)}
+
\kappa^2 \kappa^{\varepsilon/p} \|u\|_{H^{\ell-1}_\kappa(\Omega)}
\\
&\lesssim&
\kappa^2 \|u\|_{H^{\ell}_{\kappa}(\Omega)} + \kappa^2 \kappa^{\varepsilon/p} \|u\|_{H^{\ell-1}_\kappa(\Omega)}
\,\,\,\lesssim\,\,\,
\kappa^2 ( \kappa^{(\ell-1) \varepsilon/(2p)}
+
\kappa^{\varepsilon/p} \kappa^{(\ell-2) \varepsilon/(2p)} ) \|u\|_{L^2(\Omega)} \\
&\lesssim&
\kappa^2\kappa^{\ell \varepsilon/(2p)}\|u\|_{L^2(\Omega)}.
\end{eqnarray*}
\end{proof}

\section{Discrete minimizers in finite element spaces and $H^1_{\kappa}$-optimality}
\label{section:discrete-minimizers}
In this section, we describe the finite element discretization of the Ginzburg-Landau minimization problem and present our main result regarding optimality of the corresponding discrete minimizers in the $H^1_{\kappa}$-norm.

\subsection{Finite element setting}
In the following we consider a shape-regular, conforming partition $\mathcal{T}_h$ of $\Omega$
into (curved) simplices $T \in \mathcal{T}_h$ of (maximum) mesh size
$h:=\max\{ \diam(T)\,|\,T\in \mathcal{T}_{h}\}$. Each mesh element $K \in \mathcal{T}_h$
is obtained from a fixed reference simplex $\widehat{K}$ through a bi-Lipschitz mapping
$\mathcal{F}_K: \widehat{K} \to K$.

The corresponding $\mathbb{P}_p$ Lagrange finite element space is given by
\begin{align}
\label{def-Vh}
V_h
:=
\{
v_h \in H^1(\Omega) \, | \,\, v_h \vert_{K} \circ \mathcal{F}_K
\in \mathbb{P}_p(\widehat{K}) \mbox{ for all } K \in \mathcal{T}_h
\},
\end{align}
where $\mathbb{P}_p(\widehat{K})$ is the set of polynomials of total degree
less than or equal to $p$. In the following, we assume
that there exists a constant $M > 0$ and a projection operator
$I_h: H^1(\Omega) \to V_{h}$ such that
\begin{subequations}
\label{eq_assumption_interpolation}
\begin{equation}
|I_h(v)|_{H^1(\Omega)} \leq M|v|_{H^1(\Omega)}
\end{equation}
for all $v \in H^1(\Omega)$, and
\begin{equation}
h^{-1} \|v-I_h(v)\|_{L^q(\Omega)} + |v-I_h(v)|_{W^{1,q}(\Omega)}
\leq
M h^\ell \| v \|_{W^{\ell+1,q}(\Omega)}
\end{equation}
for $0 \leq \ell \leq p$ and $2 \leq q \leq 4$.
\end{subequations}

The existence of such an interpolation operator is standard for meshes consisting of straight
elements (see e.g. \cite{ern_guermond_2017a}), but has not been addressed for curved elements to the best of our knowledge.
In Appendix~\ref{appendix:section:quasi-interpolation} below, we show that such a condition can
indeed be fulfilled with a constant $M$ independent of $h$ under natural regularity assumptions
on the element mappings $\mathcal{F}_K$. In turn, such mappings may be constructed under the
natural smoothness assumption that $\partial \Omega$ is of class
$C^{p+1}$, see~\cite{bernardi_1989a,chaumontfrelet_spence_2024a,lenoir_1986a}.

For simplicity, we will further assume that $h\kappa \leq 1$.
This assumption is only made to simplify expressions like
$h\kappa + 1 \lesssim 1$. In addition, it is in fact not restrictive,
since we will impose stronger conditions on the mesh size later on.

\subsection{Discrete minimizer}

In what follows, we either consider a discrete global minimizer $\uhp \in V_{h}$ with the property 
\begin{align}
\label{discrete-minimizer-energy-def} 
E(\uhp) \,\,= \min_{v_h \in V_{h}} E(v_h)
\end{align}
or a discrete local minimizer with $\langle E^{\prime}(\uhp), v_h \rangle = 0$ for all $v_h \in V_{h}$.
Global discrete minimizers always exist in finite-dimensional spaces $V_{h}$ (cf. \cite[Lemma 3.2]{BDH25}) which directly follows from the non-negativity of $E(v)$ and the inequality $\|v\|_{H^1_{\kappa}(\Omega)}^2 \lesssim 1 + E(v)$ for all $v \in H^1(\Omega)$. 

However, just like continuous minimizers in $H^1(\Omega)$, $\uhp$ cannot be unique and any complex phase shift of $\uhp$, i.e., $\exp(\ci \omega) \uhp$ for $\omega \in [-\pi,\pi)$, is a (local) minimizer on the same energy level. This has to be considered when comparing some $\uhp$ with an exact local minimizer $u$.

Assuming that $\bfA \in W^{p-1,\infty}(\Omega)$ and that $\partial \Omega$ is $C^{p+1}$ and
using the $\kappa$-dependent bounds in Theorems \ref{estimates_u} and
\ref{theorem_high_reg_u} together with \eqref{eq_assumption_interpolation},
we can conclude that the $H^1_{\kappa}$-best-approximation error for any critical point $u$ (and therefore also for any local or global minimizer) satisfies 
\begin{align*}
\min_{v_h \in V_{h} }  \| u - v_h \|_{H^1_{\kappa}(\Omega)}
\,\,\, \lesssim \,\,\, 
h^{\ell+1}   \|  u  \|_{H^{\ell+1}(\Omega)}
+
\kappa^{-1} h^{\ell}  \| u \|_{H^{\ell+1}(\Omega)}
 \,\,\, \lesssim \,\,\, 
(1+h\kappa) \, (h \kappa)^{\ell} 
\end{align*}
provided $u \in H^{\ell+1}(\Omega)$ and $\ell \le p$. This shows that the mesh resolution
condition $h \kappa \leq 1$ we imposed above is natural
for obtaining meaningful numerical approximations. However, as we will see later, there is
a numerical pollution effect and the condition $h \kappa \lesssim 1$ is no longer sufficient
for $\uhp$ to be a reasonable approximation of $u$ in the $L^2$- or $H^1_{\kappa}$-norm. 
On the contrary, the discrete energy level of global minimizers is not affected by the pollution
effect. This is seen by the following result.
\begin{proposition}[Error in the minimal energy]
\label{prop-energy-error}
Assume that $\bfA \in W^{p-1,\infty}(\Omega)$ and $\partial \Omega$ is $C^{p+1}$.
Then it holds
\begin{align*}
0 \,\, \le \,\, \min_{u_h \in V_h} E(\uhp) - \min_{u \in H^1(\Omega)} E(u ) \,\, \lesssim \,\, (h \kappa)^{2p}.
\end{align*}
\end{proposition}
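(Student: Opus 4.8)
The plan is to sandwich the error in the minimal energy between two quantities that are both controlled by the best-approximation error. The lower bound is immediate: since $u$ is a global minimizer over all of $H^1(\Omega)$ and $V_h \subset H^1(\Omega)$, we trivially have $\min_{u_h \in V_h} E(u_h) \geq \min_{u \in H^1(\Omega)} E(u)$. For the upper bound, let $u$ be an exact global minimizer and let $v_h \in V_h$ be its $H^1_\kappa$-best approximation. Then $\min_{u_h \in V_h} E(u_h) \leq E(v_h)$, so it suffices to estimate $E(v_h) - E(u)$ and show this is $\lesssim (h\kappa)^{2p}$.

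The key step is a second-order Taylor expansion of $E$ around the minimizer $u$. Writing $e_h := v_h - u$, we have $E(v_h) = E(u) + \langle E'(u), e_h \rangle + \tfrac12 \langle E''(u) e_h, e_h \rangle + R$, where $R$ collects the remaining terms. Since $E$ is a quartic polynomial in $v$ (the energy density involves $|v|^2$ quadratically, hence $(|v|^2-1)^2$ quartically, and the gradient term quadratically), the Taylor expansion terminates exactly, and $R$ consists of cubic and quartic terms in $e_h$. The first-order term vanishes because $E'(u) = 0$ by condition \ref{A4}. The second-order term is bounded by the continuity of $E''(u)$: using bound~\eqref{continuity_E} from Lemma~\ref{lemma:stability-garding}, $\langle E''(u) e_h, e_h \rangle \lesssim \|e_h\|_{H^1_\kappa(\Omega)}^2$. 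For the remainder $R$, the cubic and quartic terms involve products like $\int_\Omega |u| |e_h|^3$ and $\int_\Omega |e_h|^4$; these are controlled using the Gagliardo--Nirenberg inequality~\eqref{eq_gagliardo_nirenberg_L4}, which gives $\|e_h\|_{L^4(\Omega)} \lesssim \kappa^{d/4} \|e_h\|_{H^1_\kappa(\Omega)}$, together with the $L^\infty$-bound $|u| \leq 1$ from Theorem~\ref{estimates_u}. Thus $R \lesssim \kappa^{3d/4}\|e_h\|_{H^1_\kappa(\Omega)}^3 + \kappa^{d}\|e_h\|_{H^1_\kappa(\Omega)}^4$, and in particular $E(v_h) - E(u) \lesssim \|e_h\|_{H^1_\kappa(\Omega)}^2 + (\text{higher order})$.

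Finally, one invokes the best-approximation estimate stated just before the proposition: for $u \in H^{p+1}(\Omega)$ (guaranteed by Theorem~\ref{theorem_high_reg_u} under the smoothness assumptions on $\bfA$ and $\partial\Omega$) and $\ell = p$, one has $\|e_h\|_{H^1_\kappa(\Omega)} = \min_{v_h \in V_h}\|u - v_h\|_{H^1_\kappa(\Omega)} \lesssim (1 + h\kappa)(h\kappa)^p \lesssim (h\kappa)^p$, using $h\kappa \leq 1$. Substituting this into the bound for $E(v_h) - E(u)$ yields the dominant term $\|e_h\|_{H^1_\kappa(\Omega)}^2 \lesssim (h\kappa)^{2p}$; the cubic and quartic contributions are of even higher order in $h\kappa$ (after absorbing the $\kappa$-powers from Gagliardo--Nirenberg, using $h\kappa \leq 1$ and $\kappa \geq 1$ so that, e.g., $\kappa^{3d/4}(h\kappa)^{3p}$ is bounded by $(h\kappa)^{2p}$ provided $p$ is large enough relative to $d$ — and if not, one should more carefully balance these powers, which is the one place requiring attention). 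The main obstacle is therefore not conceptual but bookkeeping: ensuring the $\kappa$-weights picked up from the Gagliardo--Nirenberg inequalities in the cubic and quartic remainder terms are genuinely absorbed by the powers of $h\kappa$ coming from the best-approximation estimate, so that they do not spoil the clean $(h\kappa)^{2p}$ rate; this is where the smallness assumption $h\kappa \leq 1$ is essential.
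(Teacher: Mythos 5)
Your overall strategy coincides with the paper's: sandwich the energy gap via $0 \le E(\uhp) - E(u) \le E(w_h) - E(u)$ for some computable $w_h \in V_h$ close to $u$, expand the (quartic) energy around the critical point $u$ so that the first-order term vanishes, bound the quadratic term by $\|u-w_h\|_{H^1_\kappa(\Omega)}^2$, and treat the cubic and quartic remainders as higher order. The genuine gap is in how you control those remainders. You estimate them through the Gagliardo--Nirenberg inequality \eqref{eq_gagliardo_nirenberg_L4}, which converts $\|e_h\|_{L^4(\Omega)}$ into $\kappa^{d/4}\|e_h\|_{H^1_\kappa(\Omega)}$ and leaves you with terms like $\kappa^{3d/4}(h\kappa)^{3p}$. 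You yourself note this only beats $(h\kappa)^{2p}$ \emph{provided $p$ is large enough relative to $d$}; for the relevant low-order cases it does not. For instance, for $p=1$, $d=2$ the cubic term is $\kappa^{3/2}(h\kappa)^{3}$, and $\kappa^{3/2}(h\kappa)^{3}\lesssim (h\kappa)^{2}$ requires $h\kappa^{5/2}\lesssim 1$, a strictly stronger resolution condition than the assumed $h\kappa\le 1$. So, as written, the argument does not deliver the stated $(h\kappa)^{2p}$ bound uniformly in $\kappa$, and the issue you flag as ``bookkeeping'' is in fact the crux of the proposition.

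The fix, which is what the paper does, is to avoid Gagliardo--Nirenberg here entirely: take $w_h = I_h(u)$ (the quasi-interpolant) rather than the abstract $H^1_\kappa$-best approximation, so that the cubic and quartic remainders are bounded directly by $\|u-I_h(u)\|_{L^3(\Omega)}^3$ and $\|u-I_h(u)\|_{L^4(\Omega)}^4$, and then invoke the $L^q$-interpolation estimates in \eqref{eq_assumption_interpolation} for $2\le q\le 4$ together with the regularity $\|u\|_{W^{p+1,4}_\kappa(\Omega)}\lesssim 1$ from Theorem~\ref{theorem_high_reg_u}. These give the full extra power of $h$, namely $\|u-I_h(u)\|_{L^q(\Omega)}\lesssim (h\kappa)^{p+1}$ for $q\in\{2,3,4\}$, so the remainder is $\lesssim (h\kappa)^{3(p+1)}+(h\kappa)^{4(p+1)}\le (h\kappa)^{2p}$ under $h\kappa\le 1$, with no stray powers of $\kappa$. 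In short: the $L^q$-approximation property of $I_h$ of order $h^{p+1}$ (rather than the order-$h^p$ bound in $H^1_\kappa$) is what makes the cubic and quartic terms genuinely higher order; the Gagliardo--Nirenberg route trades a power of $h$ for a power of $\kappa$ and destroys the $\kappa$-uniformity. Your choice of $w_h$ as the $H^1_\kappa$-best approximation also obstructs this fix, since no $L^q$-error bounds are available for it; this is precisely why the paper works with $I_h(u)$.
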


\begin{proof}
Let $u$ and $u_h$ respectively denote any global minimizers in $H^1(\Omega)$ and $V_h$.
Since $u_h \in V_{h}$ is a discrete global minimizer, we have $E(u) \le E(\uhp) \le E(\hspace{1pt}I_h(u)\hspace{1pt})$, and hence 
\begin{align*}
0 \le E(\uhp) - E(u) \le  E(\hspace{1pt}I_h(u)\hspace{1pt}) - E(u).
\end{align*}
Analogously to the proof of \cite[Lemma 5.9]{DoeHe24} it holds
\begin{eqnarray*}
 E(\hspace{1pt}I_h(u)\hspace{1pt}) - E(u) 
&\lesssim& \| u - I_h(u) \|_{H^1_{\kappa}(\Omega)}^2 +  \| u - I_h(u) \|_{L^2(\Omega)}^2
+ \| u - I_h(u) \|_{L^4(\Omega)}^4.
\end{eqnarray*}
The interpolation error estimates for $I_h$ in \eqref{eq_assumption_interpolation}
together with the stability bounds for $u$ from Theorem \ref{theorem_high_reg_u}
finish the proof.
\end{proof}

\subsection{Main results and $H^1_{\kappa}$-optimality}
In this section, we present the main result of this paper, which is formulated in
Theorem \ref{theorem:main-result} below. Essentially, the theorem states that if
the polynomial degree $p$ and the mesh size $h$ of the finite element space $V_{h}$
are selected relative to $\kappa$ with the resolution condition
$\kappa^{d/2}\rho(\kappa)(h\kappa)^p \,\, \ll \,\, 1$ 
for a local minimizer $u \in H^1(\Omega)$ satisfying \ref{A4}, then
there is a discrete (local) minimizer $u_h \in V_{h}$, such that the error
$\| u - u_h \|_{H^1_{\kappa}(\Omega)}$ converges  to zero with optimal order
$\mathcal{O}((h\kappa)^p)$. Similarly, a corresponding $L^2$-error estimate holds.
\begin{theorem}
\label{theorem:main-result}
Assume that $\bfA \in W^{p-1,\infty}(\Omega)$ and $\partial \Omega$ is
$C^{p+1}$ and consider a (local) minimizer $u\in H^1(\Omega)$ of $E$
satisfying \ref{A4}. Then, there exists a constant $K_\star \gtrsim 1$ such that if
\begin{align}
\label{resolution-condition-h-kappa}
\kappa^{d/2}\rho(\kappa)(h\kappa)^p \,\, \leq \,\, K_\star,
\end{align}
then there exists a discrete (local) minimizer $\uhp \in V_{h}$ of $E$ with 
\begin{eqnarray*}
\langle E^{\prime}(\uhp) , v_h \rangle &=& 0 \hspace{93pt} \mbox{ for all } v_h \in V_{h}
\qquad
\mbox{and} \\
\langle E^{\prime\prime}(\uhp) v_h , v_h \rangle &\gtrsim& \rho(\kappa)^{-1} \| v_h \|_{H^1_{\kappa}(\Omega)}^2 \hspace{20pt} \mbox{ for all } v_h \in V_{h}\cap \orthiu
\end{eqnarray*}
such that
\begin{align}
\label{final-H1-estimate}
\| u - \uhp \|_{H^1_{\kappa}(\Omega)} \,\,\, \lesssim \,\,\, (h \kappa)^{p} 
\end{align}
and
\begin{align}
\label{final-L2-estimate}
\| u - \uhp \|_{L^2(\Omega)} \,\,\,\, \lesssim \,\,  \kappa^{\varepsilon} \, (h \kappa)^{p+1} \,\,
 + \,\, \kappa^{d/2} \rho(\kappa)\,(h \kappa)^{2p}.
\end{align}
\end{theorem}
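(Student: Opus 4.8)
The plan is to follow the two-part error splitting $u - \uhp = (u - \Rh(u)) + (\Rh(u) - \uhp)$ that the introduction advertises, where $\Rh$ is the Galerkin projection associated with the bilinear form $\langle E^{\prime\prime}(u)\cdot,\cdot\rangle$ onto $V_h \cap \orthiu$. For the first term, I would invoke the splitting technique of \cite{CFN20}: given $g$, decompose $E^{\prime\prime}(u)^{-1}g = \phi_{\mathrm{osc}} + \phi_{\mathrm{reg}}$ where $\phi_{\mathrm{osc}}$ solves a coercive (elliptic) problem with the full $\kappa$-dependent oscillatory data and is therefore well-approximated by a standard FEM with constant independent of $\rho(\kappa)$, while $\phi_{\mathrm{reg}}$ has enhanced regularity (one extra derivative, gaining a factor $h\kappa$) at the cost of the factor $\rho(\kappa)$. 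Combined with the high-regularity bounds on $u$ from Theorem~\ref{theorem_high_reg_u} and the interpolation estimates \eqref{eq_assumption_interpolation}, a duality (Aubin--Nitsche) argument then yields $\| u - \Rh(u)\|_{H^1_\kappa(\Omega)} \lesssim (h\kappa)^p$ and $\|u - \Rh(u)\|_{L^2(\Omega)} \lesssim \kappa^\varepsilon (h\kappa)^{p+1} + \kappa^{d/2}\rho(\kappa)(h\kappa)^{2p}$ — these are exactly the two right-hand sides appearing in \eqref{final-H1-estimate}--\eqref{final-L2-estimate}. The factor $\kappa^{d/2}$ enters through the Gagliardo--Nirenberg inequality \eqref{eq_gagliardo_nirenberg_L4} needed to control the cubic nonlinearity in $E^{\prime\prime}$.

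For the second term $\Rh(u) - \uhp$, I would first construct $\uhp$ itself by a fixed-point argument: define the map $\Phi$ sending $v_h \in V_h\cap\orthiu$ to the solution of the linearized (around $u$) discrete problem with residual $E^{\prime}(v_h) - E^{\prime}(u) - E^{\prime\prime}(u)(v_h - u)$ (the quadratic/cubic remainder) on the right-hand side; the discrete inf-sup stability of $\langle E^{\prime\prime}(u)\cdot,\cdot\rangle$ on $V_h\cap\orthiu$ — which must be transferred from the continuous coercivity \eqref{Ccoe_gtrsim_1} to the discrete level via a standard Schatz-type / compactness-free argument using the $\kappa$-explicit duality estimate just obtained, and this is where the resolution condition \eqref{resolution-condition-h-kappa} first appears — gives $\Phi$ a Lipschitz constant strictly less than one on a ball of radius $\sim (h\kappa)^p$ around $\Rh(u)$, provided $\kappa^{d/2}\rho(\kappa)(h\kappa)^p \leq K_\star$. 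Banach's theorem produces a unique discrete critical point $\uhp$ in that ball, and its positive-definiteness $\langle E^{\prime\prime}(\uhp)v_h,v_h\rangle \gtrsim \rho(\kappa)^{-1}\|v_h\|^2_{H^1_\kappa}$ follows because $E^{\prime\prime}$ depends continuously (with the cubic-nonlinearity product rules and Gagliardo--Nirenberg) on the argument, and $\uhp$ is $(h\kappa)^p$-close to $u$.

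The superconvergence step then shows $\|\Rh(u) - \uhp\|_{H^1_\kappa(\Omega)}$ is a \emph{higher-order} quantity: subtracting the equations for $\Rh(u)$ and $\uhp$, testing against $\Rh(u)-\uhp$, and using the Galerkin orthogonality of $\Rh$ together with the discrete coercivity, the difference is controlled by $\rho(\kappa)$ times the nonlinear residual, which by the product rules is $\lesssim \kappa^{d/2}\|u - \uhp\|_{H^1_\kappa}(\|u-\uhp\|_{H^1_\kappa} + \text{l.o.t.})$ — i.e. quadratic in the error. Absorbing this into the triangle inequality $\|u-\uhp\|_{H^1_\kappa} \leq \|u-\Rh(u)\|_{H^1_\kappa} + \|\Rh(u)-\uhp\|_{H^1_\kappa}$ under \eqref{resolution-condition-h-kappa} gives $\|u-\uhp\|_{H^1_\kappa} \lesssim \|u-\Rh(u)\|_{H^1_\kappa} \lesssim (h\kappa)^p$, proving \eqref{final-H1-estimate}; feeding this back into the $L^2$ duality estimate for $u-\uhp$ (again Aubin--Nitsche, now with the nonlinear residual) yields \eqref{final-L2-estimate}.

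The main obstacle I anticipate is making the transfer from continuous coercivity of $E^{\prime\prime}(u)$ to discrete inf-sup stability on $V_h\cap\orthiu$ fully $\kappa$- and $\rho(\kappa)$-explicit: the usual Schatz argument relies on an Aubin--Nitsche estimate that here is itself polluted, so one must carefully track how the $\kappa^{d/2}\rho(\kappa)(h\kappa)^p$ threshold emerges, and simultaneously handle the side constraint $v_h\in\orthiu$ (the kernel direction $\ci u$ is \emph{not} in $V_h$, so one works with a discrete near-kernel and either projects it out or shows it contributes only higher-order terms). A secondary technical nuisance is the curved-mesh interpolation operator, but that is quarantined in the appendix.
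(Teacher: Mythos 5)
Your proposal follows essentially the same route as the paper: the splitting $u-\uhp=(u-\Rh(u))+(\Rh(u)-\uhp)$, the \cite{CFN20}-type regularity decomposition of $\Einv$ combined with a Schatz/Aubin--Nitsche argument for the Ritz projection, a Banach fixed-point construction of $\uhp$ near $u$, and a superconvergence-plus-absorption step under the resolution condition. The only notable differences are bookkeeping: the paper runs its Newton-type fixed point on the continuous space $\orthiu$ via an auxiliary operator $E_h^{\prime}$ (rather than on $V_h\cap\orthiu$) and deliberately tracks an $L^4$-bound on $u-\uhp$ alongside the $H^1_\kappa$-bound to avoid an extra $\kappa^{d/4}$ loss in the absorption, whereas your version compensates for working purely in $H^1_\kappa$ by taking the fixed-point ball of radius $\sim(h\kappa)^p$ directly; both choices yield the stated threshold $\kappa^{d/2}\rho(\kappa)(h\kappa)^p\lesssim 1$.
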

The proof of the theorem takes place in several steps. The essential preparations are introduced in Section \ref{section-secE-shift-prop} and the main proofs follow in Section \ref{section:proofs:L2-H1-estimates}.

Before we start with the proofs, let us briefly discuss the main findings of Theorem \ref{theorem:main-result}. First of all, the theorem predicts a numerical pollution effect (later observed in our numerical experiments), where the mesh size $h$ needs to fulfill the stronger resolution condition $\kappa^{d/2}\rho(\kappa)(h\kappa)^p \lesssim 1$ before the numerical approximations $\uhp$ start to converge with the expected rate of $O((h\kappa)^p)$ in the $H^1_{\kappa}$-norm to the correct minimizer $u$. Since $\rho(\kappa)$ is a priori unknown, the precise necessary resolution for $h$ is also a priori unknown. However, the theorem also shows that whatever the growth of $\rho(\kappa)$ w.r.t. $\kappa$, the resolution condition improves with the polynomial degree $p$ of the finite element space $V_{h}$. For example, for $d=2$ and $p=1$, the resolution condition becomes $h \rho(\kappa) \kappa^2 \lesssim 1$, whereas it reduces significantly to $ h \rho(\kappa)^{1/2}  \kappa^{3/2} \lesssim  1$ for $p=2$. Hence, higher order finite element spaces can effectively reduce the pollution effect for the Ginzburg--Landau equation. It remains open if the factor $\kappa^{d/2}$ in \eqref{resolution-condition-h-kappa} is sharp or if it can be removed with different proof techniques. 

Note that in the previous work \cite{DoeHe24} error estimates for $p=1$ were derived under the assumption that $h$ is \textit{sufficiently small}, with at least 
$\rho(\kappa)(h\kappa)\lesssim 1$. At first glance, this looks like a sharper condition. However, the precise meaning of 
\textit{sufficiently small} could not be quantified because the proof relied on a 
compactness argument (passing to the limit as $h\to 0$ to conclude convergence 
up to phase shifts). In the present work we make this smallness condition 
explicit by showing that $\kappa^{d/2}\,\rho(\kappa)\,(h\kappa) \lesssim 1$ is sufficient. Thus, our results extend and strengthen those of \cite{DoeHe24} 
even in the case $p=1$.

Finally, Theorem \ref{theorem:main-result} also establishes an $L^2$-error estimate of optimal order in $h$. However, the estimate still appears suboptimal for $p=1$ with respect to its $\kappa$-dependency. In fact, for $p=1$, \eqref{final-L2-estimate} essentially reduces to
\begin{align*}
 \| u - \uhp \|_{L^2(\Omega)} \,\,\,\, \lesssim  \,\,\,\, (\kappa^{\varepsilon}  + \kappa^{d/2} \rho(\kappa))\, (h \kappa)^{2}  \,\,\,\, \lesssim  \,\,\,\,  \kappa^{d/2} \rho(\kappa) \, (h \kappa)^{2}, 
\end{align*}
where $\rho(\kappa)$ appears as a multiplicative factor. For larger $p$, the term $\,\, \kappa^{d/2} \rho(\kappa)\,(h \kappa)^{2p}$ is of higher order and the scaling with $\rho(\kappa)$ vanishes asymptotically in the same way as for the $H^1_{\kappa}$-estimate. To be precise, for $\kappa^{d/2} \rho(\kappa)\,(h \kappa)^{p-1}\lesssim 1$, estimate \eqref{final-L2-estimate} becomes asymptotically optimal with
\begin{eqnarray*}
 \| u - \uhp \|_{L^2(\Omega)} &\lesssim& \, \kappa^{\varepsilon} \, (h \kappa)^{p+1}.
\end{eqnarray*}
The remaining part of the paper is devoted to the proof of Theorem \ref{theorem:main-result}. Furthermore, we will present corresponding numerical experiments to illustrate the pollution effect and how it is reduced in higher order FE spaces.

\section{The operator $E^{\prime\prime}(u)$ and the shift property}
\label{section-secE-shift-prop}
As an essential preparation for the error analysis, this section is concerned with various auxiliary results regarding the properties of the operator $E^{\prime\prime}(u)$ and its inverse. For that, recall from Lemma \ref{coercivity_secE_u} that $E^{\prime\prime}(u)$ is coercive on $\orthiu$, however, with a coercivity constant $\rho(\kappa)^{-1}$ that scales unfavourably with $\kappa$. For that reason, we will mainly exploit a G{\aa}rding inequality in our analysis, where the arising constants do no longer involve $\kappa$. In the following, we collect all the necessary auxiliary results. We start with the uniform continuity of $E^{\prime\prime}(u)$ in the $H^1_{\kappa}$-norm and the aforementioned G{\aa}rding inequality.
\begin{lemma}\label{lemma:stability-garding}
Consider a critical point $u \in H^1(\Omega)$ with $E^{\prime}(u)=0$. Then
$E^{\prime\prime}(u)$ is continuous on $H^1(\Omega)$ w.r.t.\ $\| \cdot \|_{H^1_\kappa(\Omega)}$
with a continuity constant independent of $\kappa$, i.e.,
\begin{align}
\label{continuity_E}
\langle E^{\prime\prime}(u) v, w \rangle \,\, \lesssim \,\, \| v \|_{H^1_{\kappa}(\Omega)}  \| w \|_{H^1_{\kappa}(\Omega)}
\qquad
\mbox{for all } v,w \in H^1(\Omega)
\end{align}
and, for all $v\in H^1(\Omega)$, we have the G{\aa}rding inequality
\begin{align*}
\langle E^{\prime\prime} (w)v, v \rangle \,\,\ge \,\,
\tfrac{1}{2} \| v \|_{H^1_{\kappa}(\Omega)}^2
-
\left (\tfrac{3}{2} + \|\bfA\|_{L^{\infty}(\Omega)}^2\right ) \| v\|_{L^2(\Omega)}^2.
\end{align*}
\end{lemma}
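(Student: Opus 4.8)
The plan is to prove both claims directly from the explicit formula \eqref{derivaitves-2} for $E^{\prime\prime}$, combined with the $L^\infty$-bound $|u|\le 1$ from Theorem \ref{estimates_u}. For the continuity bound \eqref{continuity_E}, I would expand $\langle E^{\prime\prime}(u)v,w\rangle$ into its two pieces. The first is the sesquilinear form $(\tfrac{\i}{\kappa}\nabla v+\bfA v,\tfrac{\i}{\kappa}\nabla w+\bfA w)_{L^2(\Omega)}$; by Cauchy--Schwarz this is bounded by $\|\tfrac{\i}{\kappa}\nabla v+\bfA v\|_{L^2(\Omega)}\|\tfrac{\i}{\kappa}\nabla w+\bfA w\|_{L^2(\Omega)}$, and each factor is controlled by $\tfrac{1}{\kappa}\|\nabla v\|_{L^2(\Omega)}+\|\bfA\|_{L^\infty(\Omega)}\|v\|_{L^2(\Omega)}\lesssim\|v\|_{H^1_\kappa(\Omega)}$ (recall the hidden constant is allowed to depend on $\|\bfA\|_{L^\infty(\Omega)}$). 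The second piece $((|u|^2-1)z+u^2\overline z+|u|^2z,w)_{L^2(\Omega)}$ with $z=v$: since $|u|\le 1$ a.e., the prefactor multiplying $v$ and $\overline v$ is bounded pointwise by $3$, so this term is $\lesssim\|v\|_{L^2(\Omega)}\|w\|_{L^2(\Omega)}\le\|v\|_{H^1_\kappa(\Omega)}\|w\|_{H^1_\kappa(\Omega)}$. Adding the two gives \eqref{continuity_E}, with a constant depending only on $\|\bfA\|_{L^\infty(\Omega)}$ and $\Omega$, hence $\kappa$-independent.

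For the G\aa{}rding inequality I would again use \eqref{derivaitves-2}, now with $v=w$ and the critical point replaced by an arbitrary $w$ (the statement quantifies over $E^{\prime\prime}(w)$, not $E^{\prime\prime}(u)$, so I will not invoke $|w|\le 1$; instead the negative zeroth-order contributions are simply dumped into the $\|v\|_{L^2(\Omega)}^2$ term). The gradient part is
\[
\Big\|\tfrac{\i}{\kappa}\nabla v+\bfA v\Big\|_{L^2(\Omega)}^2
=\tfrac{1}{\kappa^2}\|\nabla v\|_{L^2(\Omega)}^2+2\,\Re\big(\tfrac{\i}{\kappa}\nabla v,\bfA v\big)_{L^2(\Omega)}+\|\bfA v\|_{L^2(\Omega)}^2,
\]
and I would absorb the cross term using Young's inequality: $2|\Re(\tfrac{\i}{\kappa}\nabla v,\bfA v)_{L^2(\Omega)}|\le \tfrac12\tfrac{1}{\kappa^2}\|\nabla v\|_{L^2(\Omega)}^2+2\|\bfA\|_{L^\infty(\Omega)}^2\|v\|_{L^2(\Omega)}^2$, so the gradient part is $\ge\tfrac12\tfrac{1}{\kappa^2}\|\nabla v\|_{L^2(\Omega)}^2-2\|\bfA\|_{L^\infty(\Omega)}^2\|v\|_{L^2(\Omega)}^2$ (discarding the nonnegative $\|\bfA v\|_{L^2}^2$). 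For the zeroth-order part, $(|w|^2z+v^2\overline z+\ldots)$ with $z=v$: the terms $|w|^2|v|^2$ and $\Re(w^2\overline v^2)$ are bounded below by $-2|w|^2|v|^2\ge$ (no useful lower bound without a bound on $w$), so I would simply write $\langle (\text{zeroth order})v,v\rangle\ge -\|v\|_{L^2(\Omega)}^2+\Re\int(|w|^2v+w^2\overline v+|w|^2v)\overline v$ and then bound the remaining integrand — wait, that integrand can be negative. The clean route: the zeroth-order form equals $\int(|w|^2-1)|v|^2+\Re(w^2\overline v^2)+|w|^2|v|^2\,dx$, and $\Re(w^2\overline v^2)+|w|^2|v|^2\ge 0$ pointwise (since $\Re(w^2\overline v^2)\ge-|w|^2|v|^2$), so this form is $\ge\int(|w|^2-1)|v|^2\,dx\ge-\|v\|_{L^2(\Omega)}^2$. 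Combining, $\langle E^{\prime\prime}(w)v,v\rangle\ge\tfrac12\tfrac{1}{\kappa^2}\|\nabla v\|_{L^2(\Omega)}^2-(1+2\|\bfA\|_{L^\infty(\Omega)}^2)\|v\|_{L^2(\Omega)}^2$, and adding $\tfrac12\|v\|_{L^2(\Omega)}^2-\tfrac12\|v\|_{L^2(\Omega)}^2$ turns the gradient term into $\tfrac12\|v\|_{H^1_\kappa(\Omega)}^2$ at the cost of an extra $\tfrac12\|v\|_{L^2(\Omega)}^2$, giving exactly the stated constant $\tfrac32+\|\bfA\|_{L^\infty(\Omega)}^2$ provided the cross-term is split as $\tfrac12\tfrac{1}{\kappa^2}\|\nabla v\|^2+\tfrac12\|\bfA\|_{L^\infty}^2\|v\|^2$ (reusing $\|\bfA v\|_{L^2}^2\le\|\bfA\|_{L^\infty}^2\|v\|_{L^2}^2$ to pay for it).

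The only mildly delicate point — and hence the main thing to get right — is the bookkeeping of constants in the Young-inequality split of the magnetic cross term, so that the leftover $\|v\|_{L^2(\Omega)}^2$ coefficient matches $\tfrac32+\|\bfA\|_{L^\infty(\Omega)}^2$ exactly; the right choice is to use the full $\|\bfA v\|_{L^2(\Omega)}^2$ term (rather than discarding it) to absorb the cross term, leaving $\tfrac12\tfrac1{\kappa^2}\|\nabla v\|_{L^2(\Omega)}^2$ untouched and no extra magnetic penalty beyond $\|\bfA\|_{L^\infty(\Omega)}^2\|v\|_{L^2(\Omega)}^2$, then spending $\tfrac12\|v\|_{L^2(\Omega)}^2$ to upgrade the gradient term to the weighted $H^1_\kappa$-norm and $\|v\|_{L^2(\Omega)}^2$ for the $(|w|^2-1)$ term. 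Everything else is a routine application of Cauchy--Schwarz, Young's inequality, and the pointwise inequality $\Re(w^2\overline v^2)+|w|^2|v|^2\ge0$; no compactness or regularity beyond $|u|\le1$ is needed, and in particular the continuity constant is manifestly $\kappa$-independent because the $\kappa$-weights in $\|\cdot\|_{H^1_\kappa(\Omega)}$ are exactly matched to the $\tfrac1\kappa$ factors appearing in $E^{\prime\prime}$.
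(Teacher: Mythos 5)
Your proof is correct and follows essentially the same route as the paper: continuity via Cauchy--Schwarz and $|u|\le 1$, and the G\aa{}rding bound via the split $|\tfrac{\i}{\kappa}\nabla v+\bfA v|^2\ge\tfrac12\kappa^{-2}|\nabla v|^2-|\bfA v|^2$ (your Young-inequality absorption using the full $\|\bfA v\|_{L^2}^2$ term is exactly this pointwise inequality in integrated form), the bound $\Re(w^2\overline v^2)\ge-|w|^2|v|^2$, and adding and subtracting $\tfrac12\|v\|_{L^2(\Omega)}^2$ to assemble the weighted norm. The intermediate false start is harmless since your final consolidated bookkeeping reproduces the stated constant $\tfrac32+\|\bfA\|_{L^\infty(\Omega)}^2$.
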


\begin{proof}
The proof of the continuity is straightforward using $|u|\le 1$ a.e. and the G{\aa}rding inequality follows with $| \tfrac{\ci}{\kappa} \nabla v + \bfA v |^2 \ge \tfrac{1}{2} \tfrac{1}{\kappa^{2}} |\nabla v |^2 -  | \bfA v|^2$ as
\begin{eqnarray*}
 \langle E^{\prime\prime} (w)v, v \rangle &\overset{\eqref{derivaitves-2}}{=}& 
 \int_{\Omega} | \tfrac{\ci}{\kappa} \nabla v + \bfA v |^2  
 \,+\, (2 \vert w \vert^2 \hspace{-2pt} -\hspace{-2pt}1) |v|^2  + \Re( w^2 \overline{v}^2) \\
 &\ge&  \int_{\Omega}  \tfrac{1}{2} \tfrac{1}{\kappa^{2}} |\nabla v |^2 \,-\,  | \bfA v|^2 \,+\, ( \vert w \vert^2 \hspace{-2pt} -\hspace{-2pt}1) |v|^2 \\
  &\ge& \int_{\Omega}  \tfrac{1}{2} \tfrac{1}{\kappa^{2}} |\nabla v |^2 +  \tfrac{1}{2} | v |^2  
  \,+\, (  \vert w \vert^2 - | \bfA|^2  - \tfrac{3}{2}) |v|^2.
\end{eqnarray*}
\end{proof}

Assuming that $u$ is a local minimizer satisfying~\ref{A4},
the Lax-Milgram lemma ensures that for each $\psi \in (\orthiu)^\star$,
there exists a unique $\Einv \psi \in \orthiu$
such that
\begin{equation*}
\langle E^{\prime\prime}(u) (\Einv \psi),v \rangle
=
\langle \psi, v \rangle
\end{equation*}
for all $v \in \orthiu$.
Let us recall a well-posedness and regularity result formulated in
\cite[Corollary 2.9]{DoeHe24} which essentially follows from the coercivity
in Lemma \ref{coercivity_secE_u}. 
\begin{proposition}
Consider a local minimizer $u \in H^1(\Omega)$ of $E$ satisfying \ref{A4}.
Then, for each $f \in L^2(\Omega)$, it holds
\begin{align}
\label{secEest-H1}
\| \Einv f \|_{H^1_{\kappa}(\Omega)} \,\, \lesssim \,\,  \Ccoe \, \| f\|_{L^2(\Omega)}.
\end{align}
Furthermore, if $\partial \Omega$ is $C^{2}$ (or if $\Omega$ is convex), then we have
$\Einv f  \in H^2(\Omega)$ and it holds
\begin{align}
\label{secEest-H2}
\| \Einv f \|_{H^2_{\kappa}(\Omega)}  \,\, \lesssim \,\, \Ccoe \, \| f\|_{L^2(\Omega)}.
\end{align}
\end{proposition}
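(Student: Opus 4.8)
The plan is to establish the two estimates separately. Bound \eqref{secEest-H1} follows at once from the coercivity in Lemma~\ref{coercivity_secE_u}: writing $z := \Einv f$ and testing the defining identity $\langle E^{\prime\prime}(u) z, v\rangle = (f,v)_{L^2(\Omega)}$ with $v = z \in \orthiu$, estimate \eqref{Ccoe_gtrsim_1} together with $\kappa \ge 1$ (so that $\|z\|_{L^2(\Omega)} \le \|z\|_{H^1_\kappa(\Omega)}$) gives
\begin{equation*}
\Ccoe^{-1} \| z \|_{H^1_\kappa(\Omega)}^2 \,\le\, \langle E^{\prime\prime}(u) z, z\rangle \,=\, (f,z)_{L^2(\Omega)} \,\le\, \| f \|_{L^2(\Omega)} \| z \|_{H^1_\kappa(\Omega)},
\end{equation*}
and dividing by $\| z \|_{H^1_\kappa(\Omega)}$ yields the claim.

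For the $H^2_\kappa$-bound \eqref{secEest-H2} the idea is to read the variational problem as an elliptic Neumann problem with $L^2$ data and to invoke classical regularity. The first step is to extend the defining identity from test functions in $\orthiu$ to all of $H^1(\Omega)$. Since $u \not\equiv 0$ under~\ref{A4}, any $v \in H^1(\Omega)$ decomposes as $v = v^\perp + \lambda\,\ci u$ with $v^\perp \in \orthiu$ and $\lambda = \| u \|_{L^2(\Omega)}^{-2}(\ci u, v)_{L^2(\Omega)} \in \R$; using that the bilinear form $\langle E^{\prime\prime}(u)\cdot,\cdot\rangle$ is symmetric (immediate from \eqref{derivaitves-2}) and that $E^{\prime\prime}(u)(\ci u) = 0$ (shown above), one obtains $\langle E^{\prime\prime}(u) z, \ci u\rangle = 0$ and hence $\langle E^{\prime\prime}(u) z, v\rangle = (g,v)_{L^2(\Omega)}$ for all $v \in H^1(\Omega)$, where $g := f - \| u \|_{L^2(\Omega)}^{-2}(f,\ci u)_{L^2(\Omega)}\,\ci u$ is an $L^2(\Omega)$-orthogonal projection of $f$, so that $\| g \|_{L^2(\Omega)} \le \| f \|_{L^2(\Omega)}$.

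Next I would expand $\langle E^{\prime\prime}(u) z, v\rangle$ by means of \eqref{derivaitves-2}, integrate the terms involving $\bfA$ by parts using the gauge conditions $\div\bfA = 0$ and $\bfA\cdot\mathbf{n} = 0$, and collect everything to recognise that $z$ is the weak solution of the Neumann problem $-\tfrac{1}{\kappa^2}\Delta z = \tilde g$ in $\Omega$, $\nabla z\cdot\mathbf{n} = 0$ on $\partial\Omega$, with $\tilde g := g - \tfrac{2\ci}{\kappa}\bfA\cdot\nabla z - |\bfA|^2 z - (2|u|^2 - 1)z - u^2\overline{z}$. By the first part $z \in H^1(\Omega)$, and since $|u| \le 1$ a.e.\ (Theorem~\ref{estimates_u}) and $\bfA \in L^\infty(\Omega)$, the datum $\tilde g$ lies in $L^2(\Omega)$ with $\|\tilde g\|_{L^2(\Omega)} \lesssim \|f\|_{L^2(\Omega)} + \|z\|_{H^1_\kappa(\Omega)} \lesssim \Ccoe\,\|f\|_{L^2(\Omega)}$, using \eqref{secEest-H1} and $\Ccoe \gtrsim 1$. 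Classical $H^2$-regularity for the Neumann Laplacian on a $C^2$ (or convex) domain — applied to the real and imaginary parts of $z$, for which the weak equation decouples into two scalar real Neumann problems — then gives $z \in H^2(\Omega)$ with $|z|_{H^2(\Omega)} \lesssim \kappa^2\|\tilde g\|_{L^2(\Omega)} + \|z\|_{L^2(\Omega)} \lesssim \kappa^2\Ccoe\,\|f\|_{L^2(\Omega)}$; dividing by $\kappa^2$ and adding \eqref{secEest-H1} yields $\|z\|_{H^2_\kappa(\Omega)} = \|z\|_{H^1_\kappa(\Omega)} + \kappa^{-2}|z|_{H^2(\Omega)} \lesssim \Ccoe\,\|f\|_{L^2(\Omega)}$, which is \eqref{secEest-H2}.

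The main obstacle is bookkeeping rather than conceptual: the problem is only $\R$-linear because of the conjugate term $u^2\overline{z}$, so it a priori encodes a $2\times 2$ real elliptic system rather than a scalar complex equation. This is harmless — the principal part is the decoupled Laplacian acting on $\Re z$ and $\Im z$, so the system is elliptic with natural Neumann conditions, and once $z \in H^1(\Omega)$ is available the conjugate term is merely an $L^2$ datum — but it forces one to genuinely extend the test space from $\orthiu$ to $H^1(\Omega)$ (this is where symmetry of $E^{\prime\prime}(u)$ and the kernel identity $E^{\prime\prime}(u)(\ci u)=0$ are used) and to track the powers of $\kappa$ carefully through the rescaled regularity estimate; one should also refrain from writing the strong form with $\Delta z$ until $z \in H^2(\Omega)$ has actually been established, applying the $H^2$-regularity theorem directly to the weak Neumann formulation with $L^2$ right-hand side. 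Since the statement here coincides with \cite[Corollary~2.9]{DoeHe24}, one could alternatively simply cite it.
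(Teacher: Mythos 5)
Your proof is correct. The paper does not actually prove this proposition --- it cites \cite[Corollary~2.9]{DoeHe24} and remarks that the result ``essentially follows from the coercivity in Lemma~\ref{coercivity_secE_u}'' --- so your write-up supplies the argument that the paper delegates to the reference. Both halves are sound: the $H^1_\kappa$ bound is exactly the Lax--Milgram/coercivity estimate, and your route to the $H^2_\kappa$ bound (extend the test space from $\orthiu$ to all of $H^1(\Omega)$ via the decomposition $v=v^\perp+\lambda\,\ci u$, using symmetry of $E^{\prime\prime}(u)$ and $E^{\prime\prime}(u)(\ci u)=0$, then read off a Neumann problem for $-\tfrac{1}{\kappa^2}\Delta z$ with an $L^2$ right-hand side controlled by $\Ccoe\|f\|_{L^2(\Omega)}$, and apply the $H^2$ shift) is precisely the mechanism the paper itself deploys later in the proof of Proposition~\ref{proposition-shift}, where the same $v=v^\perp+\alpha_v(\ci u)$ trick converts a problem posed on $\orthiu$ into a genuine Neumann problem on $H^1(\Omega)$. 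Two small points worth recording if you write this out in full: the compatibility condition for the pure Neumann problem ($\int_\Omega \kappa^2\tilde g=0$) is automatic, since constants belong to $H^1(\Omega)$ and may be inserted as test functions; and your caution about the $\R$-linear (not $\C$-linear) structure is well placed but indeed harmless, since the principal part decouples into two real scalar Laplacians while the conjugate term $u^2\overline{z}$ is absorbed into the $L^2$ datum once the first estimate is in hand.
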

Following~\cite{CFN20}, the main goal of this section is to refine the above result by deriving an expansion of $\Einv f$ in terms of $\kappa$ of the form 
$$
\Einv f  \,\,=\,\, \sum_{j=0}^{\ell-1} \kappa^j z_j  \,\, + \,\, r_{\ell},
$$
where the regularity of $z_j$ increases with $j$, that is $z_j \in H^{j+2}(\Omega)$ with
$| z_j |_{H^{j+2}(\Omega)} \lesssim \kappa^{j+2} \|f\|_{L^2(\Omega)}$ independent
of $\Ccoe$, and where $r_{\ell} \in H^{\ell+2}(\Omega)$ is a rest term fulfilling
$| r_{\ell} |_{H^{\ell+2}(\Omega)} \lesssim \Ccoe \kappa^{\ell+2} \|f\|_{L^2(\Omega)}$. In a first step towards such an expansion, we split $E^{\prime\prime}(u)$ according to its powers in $\kappa$, where we obtain the following result.
\begin{lemma}\label{lemma-form-of-secE}
For $u \in H^1(\Omega)$, we have
\begin{align*}
\langle E^{\prime\prime}(u) v , w \rangle = \tfrac{1}{\kappa^2} ( \nabla v ,\nabla w )_{L^2(\Omega)}
+ \tfrac{1}{\kappa} (  \Lone(\nabla v) ,  w )_{L^2(\Omega)}
+ ( \hspace{1pt}\Lzero(v) , w \hspace{1pt} )_{L^2(\Omega)},
\end{align*}
for all $v,w \in H^1(\Omega)$, where 
\begin{align}
\label{def-Lu}
\Lone(\nabla v):= 2 \hspace{1pt}\ci \, \nabla v \cdot \bfA
\qquad
\mbox{and}
\qquad
 \Lzero(v)\,\,:=\,\, |\bfA|^2v + (|u|^2-1)v + 2 \hspace{1pt}\Re\hspace{-1pt}(\overline{u} v) u.
\end{align}
\end{lemma}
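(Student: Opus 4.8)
The plan is simply to start from the defining formula \eqref{derivaitves-2} for $\langle E^{\prime\prime}(u)z,w\rangle$ (with $z$ renamed $v$) and expand the first term by bilinearity, sorting the resulting summands according to their explicit power of $\kappa$. Writing $\tfrac{\i}{\kappa}\nabla v + \bfA v$ and $\tfrac{\i}{\kappa}\nabla w + \bfA w$ and multiplying out the $L^2$-inner product gives four contributions: the $\kappa^{-2}$ term $\tfrac1{\kappa^2}(\i\nabla v,\i\nabla w)_{L^2(\Omega)} = \tfrac1{\kappa^2}(\nabla v,\nabla w)_{L^2(\Omega)}$ (using $\i\overline{\i}=1$); two cross terms at order $\kappa^{-1}$; and the $\kappa^0$ term $(\bfA v,\bfA w)_{L^2(\Omega)} = (|\bfA|^2 v,w)_{L^2(\Omega)}$, where $\bfA$ is real-valued. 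The two cross terms are $\tfrac1\kappa(\i\nabla v,\bfA w)_{L^2(\Omega)}$ and $\tfrac1\kappa(\bfA v,\i\nabla w)_{L^2(\Omega)}$; using the definition of the real inner product, the second equals $\tfrac1\kappa\Re\int_\Omega \bfA v\,\overline{\i\nabla w} = -\tfrac1\kappa\Re\int_\Omega \i\,\bfA v\cdot\overline{\nabla w}$, and an integration by parts (legitimate since $\div\bfA = 0$ and $\bfA\cdot\mathbf n = 0$ on $\partial\Omega$) moves the derivative back onto $v$, turning it into $\tfrac1\kappa\Re\int_\Omega \i\,\overline{w}\,\bfA\cdot\nabla v = \tfrac1\kappa(\i\,\bfA\cdot\nabla v,w)_{L^2(\Omega)}$. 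Combining this with the first cross term yields exactly $\tfrac1\kappa(2\i\,\bfA\cdot\nabla v,w)_{L^2(\Omega)} = \tfrac1\kappa(\Lone(\nabla v),w)_{L^2(\Omega)}$.

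It then only remains to collect the $\kappa$-independent pieces. The expansion of $|\tfrac\i\kappa\nabla\cdot + \bfA\cdot|^2$ already contributed $(|\bfA|^2 v,w)_{L^2(\Omega)}$, and the purely algebraic second half of \eqref{derivaitves-2} contributes $((|u|^2-1)v + u^2\overline v + |u|^2 v,w)_{L^2(\Omega)}$. One rewrites $u^2\overline v + |u|^2 v = u^2\overline v + u\overline u\,v$; since $\Re(\overline u v) = \tfrac12(\overline u v + u\overline v)$, we have $2\Re(\overline u v)\,u = u\overline u v + u^2\overline v$, so the two terms combine into $2\Re(\overline u v)\,u$. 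Adding the $|\bfA|^2 v$ term gives precisely $\Lzero(v)$ as defined in \eqref{def-Lu}. Assembling the three pieces produces the claimed identity.

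There is essentially no obstacle here: the only step requiring a little care is the integration by parts that symmetrizes the two first-order cross terms, where one must invoke the Coulomb gauge conditions $\div\bfA = 0$ and $\bfA\cdot\mathbf n|_{\partial\Omega} = 0$ to discard the boundary term and the lower-order term $(\div\bfA)\,v\,\overline w$. Everything else is a bookkeeping exercise in separating powers of $\kappa$ and using that $(\cdot,\cdot)_{L^2(\Omega)}$ retains only real parts together with $\bfA$ being real-valued. I would present the computation as a short displayed chain of equalities rather than prose.
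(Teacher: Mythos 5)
Your proposal is correct and follows essentially the same route as the paper's proof: expand the quadratic term in \eqref{derivaitves-2} by bilinearity, symmetrize the two first-order cross terms via integration by parts using $\div \bfA = 0$ and $\bfA\cdot\mathbf{n}|_{\partial\Omega}=0$, and combine $u^2\overline{v}+|u|^2v=2\Re(\overline{u}v)u$ for the zero-order part. The only difference is cosmetic (the paper applies the algebraic identity before expanding the gradient term rather than after), so nothing further is needed.
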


\begin{proof}
With \eqref{derivaitves-2} and $u^2 \overline{v} +\vert u \vert^2 v = 2\, \Re(\overline{u} v) u$ it holds
\begin{align*}
 \langle E^{\prime\prime} (u)v, w \rangle \,\,=\,\, ( \tfrac{\i}{\kappa} \nabla v+ \bfA v , \tfrac{\i}{\kappa} \nabla w + \bfA w  )_{L^2(\Omega)} + ( \, (\vert u \vert^2 \hspace{-2pt} -\hspace{-2pt}1) v  +  2\, \Re(\overline{u} v) u ,  w \,)_{L^2(\Omega)}. 
\end{align*}
It remains to check the first term, for which we obtain 
\begin{eqnarray*}
\lefteqn{  ( \tfrac{\i}{\kappa} \nabla v+ \bfA v , \tfrac{\i}{\kappa} \nabla w + \bfA w  )_{L^2(\Omega)}  \,\,=\,\,
\Re \int_{\Omega} ( \tfrac{\i}{\kappa} \nabla v+ \bfA v ) \cdot \overline{( \tfrac{\i}{\kappa} \nabla w + \bfA w  )} }\\
%
 %
  &=&  \tfrac{1}{\kappa^2} \Re \int_{\Omega}  \nabla v  \cdot \overline{\nabla w }
 - \Re \int_{\Omega} \tfrac{\i}{\kappa} \bfA v \cdot \overline{ \nabla w}
 + \Re \int_{\Omega} \tfrac{\i}{\kappa} \nabla v \cdot \bfA \, \overline{w }
 + \Re \int_{\Omega} |\bfA|^2 v  \overline{ w } \\
%
 %
 %
  &=&  \tfrac{1}{\kappa^2} \Re \int_{\Omega}  \nabla v  \cdot \overline{\nabla w }
 + 2 \, \Re \int_{\Omega} \tfrac{\i}{\kappa} \nabla v \cdot \bfA \, \overline{w }
 + \Re \int_{\Omega} |\bfA|^2 v  \overline{ w } \\
  &=& \Re \int_{\Omega}  \left( - \tfrac{1}{\kappa^2} \Delta v  + \tfrac{2\i}{\kappa} \nabla v \cdot \bfA + |\bfA|^2 v  \right) \overline{ w },
\end{eqnarray*}
where we used $\bfA \cdot \mathbf{n} \vert_{\partial \Omega} =0 $ and $\div \bfA = 0$ in the penultimate step.
\end{proof}

\begin{proposition}
\label{proposition-shift}
Consider a critical point $u \in H^1(\Omega)$ such that $E^{\prime}(u)=0$.
Then, for $1 \leq \ell \le p$, \,\, $\widetilde{f} \in H^{\ell-1}(\Omega)$\, and \,\,$\widetilde{g} \in H^{\ell}(\Omega)$\, we let $\widetilde{u} \in \orthiu$ denote the solution to
\begin{align}
\label{eqn-tilde-u-shift}
(\nabla \widetilde{u} , \nabla v )_{L^2(\Omega)} 
+
\kappa^2 ( \widetilde{u} , v )_{L^2(\Omega)} 
&=
- \kappa^2 (\Lzero (\widetilde{f}) -\widetilde{f} , v)_{L^2(\Omega)} - \kappa \,( \Lone(\nabla  \widetilde{g})  , v)_{L^2(\Omega)} 
\end{align}
for all $v\in \orthiu$ and where $\Lzero$ and $\Lone$ are given by \eqref{def-Lu}.
Then, $\widetilde{u} \in H^{\ell+1}(\Omega)$
with
\begin{equation}
\label{regularity-estimate-tilde-u}
\| \widetilde{u} \|_{H^{\ell+1}_\kappa(\Omega)} 
\lesssim
\kappa^{\varepsilon/p}
\left (
\|\widetilde{f}\|_{H^{\ell-1}_{\kappa}(\Omega)} + \|\widetilde{g}\|_{H^{\ell}_{\kappa}(\Omega)}
\right ).
\end{equation}
\end{proposition}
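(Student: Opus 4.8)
The plan is to establish \eqref{regularity-estimate-tilde-u} by an induction on the regularity index $\ell$, treating \eqref{eqn-tilde-u-shift} as a (constrained) Helmholtz-type problem with shifted sign, $-\Delta \widetilde u + \kappa^2 \widetilde u = G$, where the right-hand side $G := -\kappa^2(\Lzero(\widetilde f) - \widetilde f) - \kappa\,\Lone(\nabla \widetilde g)$ is controlled in $H^{\ell-1}_\kappa(\Omega)$ in terms of the data. First I would record that, because the bilinear form $(\nabla\cdot,\nabla\cdot)_{L^2} + \kappa^2(\cdot,\cdot)_{L^2}$ is coercive on all of $H^1(\Omega)$ with the $\kappa$-weighted norm (constant independent of $\kappa$), the variational problem \eqref{eqn-tilde-u-shift} posed on $\orthiu$ is a genuine Lax--Milgram problem: there is a unique $\widetilde u \in \orthiu$ and the base case $\ell=1$ of the estimate, namely $\|\widetilde u\|_{H^1_\kappa(\Omega)} \lesssim \kappa^{-1}\|G\|_{L^2(\Omega)} \lesssim \|\widetilde f\|_{L^2(\Omega)} + \|\widetilde g\|_{H^1_\kappa(\Omega)}$, follows by testing with $v = \widetilde u$ (and using Lemma \ref{lemma-form-of-secE}'s explicit forms of $\Lzero,\Lone$ together with the stability bounds $|u|\le 1$ and $\|u\|_{W^{p-1,\infty}_\kappa(\Omega)} \lesssim \kappa^{\varepsilon/(2p)}$ from Theorem \ref{theorem_high_reg_u} to bound $\|\Lzero(\widetilde f)\|_{L^2}$, plus the product rules \eqref{eq_product_rules} and the smoothness $\bfA \in W^{p-1,\infty}(\Omega)$).

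For the inductive step, the key point is that the constraint $\widetilde u \in \orthiu$, i.e. the single linear condition $(\ci u, \widetilde u)_{L^2(\Omega)} = 0$, can be removed at the price of a rank-one correction that does not affect higher-order norms in a harmful way. Concretely, the Lax--Milgram solution $\widetilde u$ on $\orthiu$ satisfies, for \emph{all} $v \in H^1(\Omega)$,
\begin{equation*}
(\nabla\widetilde u,\nabla v)_{L^2(\Omega)} + \kappa^2(\widetilde u,v)_{L^2(\Omega)} = \langle G, v\rangle + \lambda\,(\ci u, v)_{L^2(\Omega)}
\end{equation*}
for a Lagrange multiplier $\lambda \in \R$; testing with $v = \ci u$ and using that $\ci u$ solves $E''(u)\ci u = 0$ shows that $\lambda$ is controlled, $|\lambda| \lesssim \|G\|_{L^2(\Omega)}$ (using $|u|\le1$ and $\kappa^{-1}\|\nabla u\|_{L^2}\lesssim 1$ from Theorem \ref{estimates_u}). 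Hence $\widetilde u$ solves an \emph{unconstrained} Helmholtz problem $-\Delta \widetilde u + \kappa^2 \widetilde u = G + \lambda\,\ci u =: \widehat G$ with homogeneous Neumann data, and the standard $\kappa$-explicit regularity shift for this operator — the same shift \eqref{prel_reg_shift}–\eqref{tmp_reg_shift} used in the proof of Theorem \ref{theorem_high_reg_u}, now applied to $-\Delta\widetilde u = \widehat G - \kappa^2\widetilde u$ — upgrades $\widetilde u \in H^\ell_\kappa$ (induction hypothesis) to $\widetilde u \in H^{\ell+1}_\kappa$ provided one controls $\|\widehat G\|_{H^{\ell-1}_\kappa(\Omega)}$.

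So the bulk of the work is estimating $\|\widehat G\|_{H^{\ell-1}_\kappa(\Omega)} \lesssim \kappa^{\varepsilon/p}\kappa^2(\|\widetilde f\|_{H^{\ell-1}_\kappa} + \|\widetilde g\|_{H^\ell_\kappa})$, which is a direct computation: for the $\Lone$ term one has $\kappa\,\|2\ci\,\nabla\widetilde g\cdot\bfA\|_{H^{\ell-1}_\kappa} \lesssim \kappa^2\|\bfA\|_{W^{\ell-1,\infty}_\kappa}\|\widetilde g\|_{H^\ell_\kappa}$ by the product rule; for the $\Lzero$ term one expands $\Lzero(\widetilde f) = |\bfA|^2\widetilde f + (|u|^2-1)\widetilde f + 2\Re(\overline u\widetilde f)u$ and bounds each piece in $H^{\ell-1}_\kappa$ using \eqref{eq_product_rules}, $\bfA\in W^{p-1,\infty}$, and $\|u\|_{W^{p-1,\infty}_\kappa(\Omega)}\lesssim\kappa^{\varepsilon/(2p)}$ — the only place the $\kappa^{\varepsilon/p}$ factor enters, coming from the quadratic appearance of $u$ in the last two terms (each factor of $u$ contributing $\kappa^{\varepsilon/(2p)}$). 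The Lagrange-multiplier contribution $\lambda\,\ci u$ is bounded in $H^{\ell-1}_\kappa$ by $|\lambda|\,\|u\|_{H^{\ell-1}_\kappa}\lesssim \|G\|_{L^2}\cdot 1 \lesssim \kappa^2(\|\widetilde f\|_{L^2}+\|\widetilde g\|_{H^1_\kappa})$ using Theorem \ref{theorem_high_reg_u}, which is dominated by the target. The main obstacle I anticipate is bookkeeping the $\kappa$-powers cleanly across the induction — ensuring the $\kappa^{\varepsilon/p}$ prefactor does not accumulate with $\ell$ (it should not, since the high-$L^\infty$-norm bound on $u$ already carries the full $\kappa^{\varepsilon/(2p)}$ at every order $\le p-1$, and $\ell-1 \le p-1$), and handling the $\Lone$ term's one derivative on $\widetilde g$ correctly so that only $\|\widetilde g\|_{H^\ell_\kappa}$, not $\|\widetilde g\|_{H^{\ell+1}_\kappa}$, appears. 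A minor technical point is that the regularity shift \eqref{prel_reg_shift} requires $\partial\Omega$ of class $C^{p+1}$, which is available here as a standing assumption for the results that invoke this proposition.
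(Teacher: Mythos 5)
Your proposal is correct and follows essentially the same route as the paper: a base $H^1_\kappa$-stability estimate by testing with $\widetilde u$, removal of the $\orthiu$-constraint via a rank-one correction $\lambda\,\ci u$ (the paper obtains exactly this term by decomposing test functions $v = v^\perp + \alpha_v\,\ci u$, with your $\lambda$ equal to its $\alpha$), a product-rule bound on the resulting right-hand side in $H^{\ell-1}_\kappa$ using $\|u\|_{W^{\ell-1,\infty}_\kappa(\Omega)} \lesssim \kappa^{\varepsilon/(2p)}$, and the $\kappa$-explicit Neumann regularity shift \eqref{tmp_shift_RD}. The only cosmetic differences are that you organize the regularity gain as an explicit induction on $\ell$ (the paper packages this into the inductively-proven shift estimate and applies it once) and that the fact $E''(u)\ci u = 0$ is not actually needed to bound $\lambda$ — Cauchy--Schwarz on $(\nabla\widetilde u, \nabla(\ci u))_{L^2(\Omega)}$ together with $\|\nabla u\|_{L^2(\Omega)} \lesssim \kappa\|u\|_{L^2(\Omega)}$ suffices, which is what the paper does.
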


\begin{proof}
We first establish the following stability estimate
\begin{align}
\label{stability-estimate-tilde-u}
\|\widetilde{u}\|_{H^1_\kappa(\Omega)}
\lesssim
\| \widetilde{f} \|_{L^2(\Omega)} + \|\widetilde{g} \|_{H^1_{\kappa}(\Omega)}
\end{align}
by taking $v = \widetilde{u}$ in \eqref{eqn-tilde-u-shift}. Here, we used
$\|\Lzero(\widetilde{f})\|_{L^2(\Omega)} \lesssim \|\widetilde{f}\|_{L^2(\Omega)}$
due to the estimate $|u| \leq 1$.

Our goal is then to apply regularity shifts for the Neumann problem.
More precisely, we will use that if $\psi \in H^1(\Omega)$ satisfies
$\kappa^2 \psi-\Delta \psi \in H^{\ell-1}(\Omega)$
and $\nabla \psi \cdot \mathbf{n} = 0$ on $\partial \Omega$, then
\begin{equation}
\label{tmp_shift_RD}
\|\psi\|_{H^{\ell+1}_\kappa(\Omega)} \,\,\, \lesssim \,\,\, \kappa^{-2}\|\kappa^2\psi-\Delta \psi\|_{H^{\ell-1}_\kappa(\Omega)} 
\end{equation}
for all $1 \leq \ell \leq p$. The estimate is completely standard without the $\kappa^2 \psi$
term in the right-hand side (see e.g. \cite[Thrm.~IV.2.4]{Mikhailov}), and the present
version is easily obtained by inductive application of the standard shift result.

Let us now verify that $\tilde{u}$ solves such a Neumann problem. For shortness, we introduce
\begin{align*}
\widetilde{F}_{\kappa} := - \kappa^2 \Lzero (\widetilde{f}) + \kappa^2 \widetilde{f} - \kappa \, \Lone( \nabla  \widetilde{g})   \in L^2(\Omega),
\end{align*}
so that \eqref{eqn-tilde-u-shift} becomes
$\kappa^2(\widetilde{u},v) + (\nabla \widetilde{u} , \nabla v )_{L^2(\Omega)} = (\widetilde{F}_{\kappa} , v )_{L^2(\Omega)}$
for all $ v\in \orthiu$.
In order to transform this equation to a regular Neumann problem on $H^1(\Omega)$,
it is natural to decompose an arbitrary test function $v \in H^1(\Omega)$ uniquely
in $v = v^{\perp}+ \alpha_v (\ci u)$, where $ v^{\perp} \in \orthiu$ is the $L^2$-projection
of $v$ onto $\orthiu$ and $\alpha_v := (v,\ci u)_{L^2(\Omega)} \| u \|_{L^2(\Omega)}^{-2}$
(cf. \cite{DoeHe24}). This decomposition can be now used in
$( \nabla \widetilde{u} , \nabla v )_{L^2(\Omega)}$ to obtain 
\begin{align*}
(\nabla \widetilde{u} ,  \nabla v)_{L^2(\Omega)}
&= (\nabla \widetilde{u} ,  \nabla v^{\perp} )_{L^2(\Omega)} + \alpha_v (\nabla \widetilde{u} ,  \nabla (\ci u) )_{L^2(\Omega)} \\
&= ( - \kappa^2 \widetilde{u}  + \widetilde{F}_{\kappa} , v^{\perp} )_{L^2(\Omega)}  + \alpha_v (\nabla \widetilde{u} ,  \nabla (\ci u) )_{L^2(\Omega)} \\
&= (  \widetilde{F}_{\kappa}  - \kappa^2 \widetilde{u} , v )_{L^2(\Omega)}  -  \alpha_v ( \widetilde{F}_{\kappa} - \kappa^2 \widetilde{u} , \ci u)_{L^2(\Omega)} + \alpha_v (\nabla \widetilde{u} ,  \nabla (\ci u) )_{L^2(\Omega)} \\
&= \left(  \widetilde{F}_{\kappa}  - \kappa^2 \widetilde{u}  - \frac{( \widetilde{F}_{\kappa} , \ci u)_{L^2(\Omega)}}{\| u\|_{L^2(\Omega)}^2} \ci u + \frac{(\nabla \widetilde{u} ,  \nabla (\ci u) )_{L^2(\Omega)}}{\| u\|_{L^2(\Omega)}^2} \ci u  , v \right)_{L^2(\Omega)}.
\end{align*}
Hence, defining 
\begin{equation*}
F_{\kappa} := \widetilde{F}_{\kappa} + \alpha \ci u,
\end{equation*}
where
\begin{equation*}
\alpha := \frac{(\nabla \widetilde{u} ,  \nabla (\ci u) )_{L^2(\Omega)} - (\widetilde{F}_{\kappa} , \ci u)_{L^2(\Omega)} }{\| u \|_{L^2(\Omega)}^2},
\end{equation*} 
we verified that $\widetilde{u} \in \orthiu \subset H^1(\Omega)$ solves
\begin{align}
\label{tildeu-Neumann-problem}
\kappa^2(\widetilde{u},v)_{L^2(\Omega)} + (\nabla \widetilde{u} ,  \nabla v)_{L^2(\Omega)} = (F_{\kappa},v)_{L^2(\Omega)} \qquad \mbox{for all } v\in H^1(\Omega),
\end{align}
and it remains to estimate the Sobolev norms of $F_{\kappa}$.

On the one hand, using \eqref{stability-estimates-u}
and \eqref{stability-estimate-tilde-u}, we have
\begin{equation*}
|\alpha|
\leq
\frac{
\|\nabla \widetilde{u}\|_{L^2(\Omega)}\|\nabla u\|_{L^2(\Omega)}
+
\|\widetilde F_\kappa\|_{L^2(\Omega)} \|u\|_{L^2(\Omega)}
}{\|u\|_{L^2(\Omega)}^2}
\leq
 \frac{\kappa^2}{\|u\|_{L^2(\Omega)}} \left (
\| \widetilde{f} \|_{L^2(\Omega)} + \| \widetilde{g} \|_{H^1_\kappa(\Omega)}
\right ).
\end{equation*}
As a result, using again \eqref{stability-estimates-u}, it follows that
\begin{equation*}
\|\alpha \ci u\|_{H^1_\kappa(\Omega)}
\lesssim
\kappa^2
\frac{\|u\|_{H^1_\kappa(\Omega)}}{\|u\|_{L^2(\Omega)}}
\left (
\| \widetilde{f} \|_{L^2(\Omega)} + \| \widetilde{g} \|_{H^1_\kappa(\Omega)}
\right )
\lesssim
\kappa^2
\left (
\| \widetilde{f} \|_{L^2(\Omega)} + \| \widetilde{g} \|_{H^1_\kappa(\Omega)}
\right ).
\end{equation*}
On the other hand, using the product rules from \eqref{eq_product_rules}, we obtain
\begin{eqnarray*}
\| \widetilde{F}_{\kappa} \|_{H^{\ell-1}_{\kappa} (\Omega)}
&=&
\kappa^2\| \Lzero (\widetilde{f}) - \widetilde{f} + \kappa^{-1} \, \Lone( \nabla  \widetilde{g}) \|_{H^{\ell-1}_{\kappa}(\Omega)} \\
&\lesssim&
\kappa^2 \left (
(1+\| u \|_{W^{\ell-1,\infty}_{\kappa}(\Omega)}^2 ) \|\widetilde f\|_{H^{\ell-1}_\kappa(\Omega)} + \|\widetilde{g}\|_{H^\ell_\kappa(\Omega)}
\right ) \\
&\overset{\eqref{Wpq-estimates}}{\lesssim}&
\kappa^2 \kappa^{\varepsilon/p} \left ( \|\widetilde f\|_{H^{\ell-1}_\kappa(\Omega)} + \|\widetilde{g}\|_{H^\ell_\kappa(\Omega)}
\right ),
\end{eqnarray*}
which concludes the proof due to \eqref{tmp_shift_RD}.
\end{proof}

We are now ready to prove a splitting of $z$ into low-regularity contributions $z_j$ that do not depend on $\rho(\kappa)$ and a high-regularity remainder $r_{\ell}$ that scales with $\rho(\kappa)$.
Here, we closely follow the procedure introduced in \cite{CFN20}.

\begin{theorem}\label{theorem-splitting-z}
Assume that $\bfA \in W^{p-1,\infty}(\Omega)$ and $\partial \Omega$ is $C^{p+1}$,
and consider a local minimizer $u \in H^1(\Omega)$ of $E$ satisfying \ref{A4}. Then, for
all $f\in L^2(\Omega)$, we have
\begin{equation*}
\Einv f \,\,\,=\,\,\, \sum_{j=0}^{p-2} z_j \,+\, r_{p-1},
\end{equation*}
where, for $j = 0,\dots,p-2$ we define recursively $z_j \in \orthiu$ by
\begin{align*}
(\nabla z_0 , \nabla v )_{L^2(\Omega)} + \kappa^2 \, (z_0 , v )_{L^2(\Omega)} = \kappa^2 \, ( f , v)_{L^2(\Omega)} 
\qquad\mbox{for all } v\in \orthiu
\end{align*}
and
\begin{align*}
(\nabla z_1 , \nabla v )_{L^2(\Omega)} + \kappa^2 \, (z_1 , v )_{L^2(\Omega)} 
= - \kappa \, (\Lone(\nabla z_0) , v  )_{L^2(\Omega)} 
\qquad\mbox{for all } v\in \orthiu
\end{align*}
and
\begin{align*}
(\nabla z_j , \nabla v )_{L^2(\Omega)} + \kappa^2 \, (z_j , v )_{L^2(\Omega)} 
=  - \kappa^2 \, (\Lzero(z_{j-2}) -z_{j-2}, v  )_{L^2(\Omega)}  - \kappa \, (\Lone(\nabla z_{j-1}) , v  )_{L^2(\Omega)}
\end{align*}
for all $v\in \orthiu$. Then we have $z_j \in H^{j+2}(\Omega)$ and 
\begin{align*}
\| z_j \|_{H^{j+2}_{\kappa}(\Omega)} \,\, \lesssim \,\, \kappa^{\varepsilon/p} \, \| f\|_{L^2(\Omega)}
\end{align*}
and with 
\begin{align*}
r_{p-1} = z - \sum_{j=0}^{p -2} z_j, \qquad
\mbox{or equivalently } r_{p-1} = r_{p-2} - z_{p-2}
\end{align*}
we have $r_{p-1} \in H^{p+1}(\Omega)$ and
\begin{align*}
\| r_{p-1} \|_{H^{p+1}_{\kappa}(\Omega)} \,\, \lesssim \,\, \kappa^{\varepsilon} \, \rho(\kappa) \, \| f\|_{L^2(\Omega)}.
\end{align*}
\end{theorem}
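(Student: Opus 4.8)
The plan is to treat the regular parts $z_j$ and the rough remainder $r_{p-1}$ by different means: the $z_j$ by a short induction on $j$ that feeds on Proposition~\ref{proposition-shift}, and $r_{p-1}$ by first identifying the (comparatively smooth) datum it solves against and then bootstrapping it up to $H^{p+1}(\Omega)$ with the Neumann regularity shift. Each application of Proposition~\ref{proposition-shift}, Theorem~\ref{theorem_high_reg_u}, or the Sobolev bounds below costs a factor $\kappa^{c\varepsilon}$, and only boundedly many (at most $O(p)$) such factors will be accumulated; I therefore run the whole argument with $\varepsilon$ replaced by a sufficiently small $\varepsilon_0 = \varepsilon_0(p,\varepsilon)$, so that the final exponents are at most $\varepsilon/p$ for the $z_j$ and at most $\varepsilon$ for $r_{p-1}$.

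For the $z_j$ (which are well-defined by Lax--Milgram on $\orthiu$ with the coercive form $(\nabla\cdot,\nabla\cdot)+\kappa^2(\cdot,\cdot)$), I show by induction on $0 \le j \le p-2$ that $z_j \in H^{j+2}(\Omega)$ with $\|z_j\|_{H^{j+2}_\kappa(\Omega)} \lesssim \kappa^{\varepsilon/p}\|f\|_{L^2(\Omega)}$. For $j=0$: testing the defining relation with $z_0$ gives $\|z_0\|_{H^1_\kappa(\Omega)} \lesssim \|f\|_{L^2(\Omega)}$, and rewriting the equation as a genuine Neumann problem on $H^1(\Omega)$ (adding a correction term $\alpha\,\ci u$ exactly as in the proof of Proposition~\ref{proposition-shift}) and invoking the Neumann shift \eqref{tmp_shift_RD} with $\ell=1$ yields $z_0 \in H^2(\Omega)$ with $\|z_0\|_{H^2_\kappa(\Omega)} \lesssim \|f\|_{L^2(\Omega)}$. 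For $1 \le j \le p-2$, I observe that $z_j$ is exactly the solution of \eqref{eqn-tilde-u-shift} for the choice $\ell = j+1$, $\widetilde{f} = z_{j-2}$ (with the convention $z_{-1} := 0$), and $\widetilde{g} = z_{j-1}$; since $1 \le j+1 \le p-1 \le p$ and, by the induction hypothesis, $z_{j-2} \in H^j(\Omega)$ and $z_{j-1} \in H^{j+1}(\Omega)$, Proposition~\ref{proposition-shift} gives $z_j \in H^{j+2}(\Omega)$ with $\|z_j\|_{H^{j+2}_\kappa(\Omega)} \lesssim \kappa^{\varepsilon_0/p}\big(\|z_{j-2}\|_{H^j_\kappa(\Omega)} + \|z_{j-1}\|_{H^{j+1}_\kappa(\Omega)}\big)$; inserting the induction hypothesis closes the induction.

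Next, I derive the equation for $r_{p-1} = z - \sum_{j=0}^{p-2} z_j$. Writing $z = \Einv f$ via Lemma~\ref{lemma-form-of-secE} (multiplying by $\kappa^2$ and using $\kappa^2\Lzero(z) = \kappa^2 z + \kappa^2(\Lzero-\id)(z)$), summing the defining relations of the $z_j$, and subtracting, the key point is that $\Lone$ and $\Lzero-\id$ are $\R$-linear, so that the telescoping leaves only $\Lone(\nabla r_{p-1}) + \Lone(\nabla z_{p-2})$ and $(\Lzero-\id)(r_{p-1}) + (\Lzero-\id)(z_{p-2} + z_{p-3})$ on the right-hand side (with $z_{p-3} := 0$ when $p=2$). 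Moving the $r_{p-1}$-contributions back to the left and recognising $\kappa^2\langle E^{\prime\prime}(u) r_{p-1}, v\rangle$ through Lemma~\ref{lemma-form-of-secE}, one finds that $r_{p-1} \in \orthiu$ solves $\langle E^{\prime\prime}(u) r_{p-1}, v\rangle = (g_\star, v)_{L^2(\Omega)}$ for all $v \in \orthiu$, where $g_\star := -\kappa^{-1}\Lone(\nabla z_{p-2}) - (\Lzero-\id)(z_{p-2} + z_{p-3})$; hence $r_{p-1} = \Einv g_\star$. (For $p=1$ this is vacuous, $r_0 = z = \Einv f$, and the claim is \eqref{secEest-H2} with $\partial\Omega \in C^2$.) Since $z_{p-2} \in H^p(\Omega)$, $z_{p-3} \in H^{p-1}(\Omega)$, and $\bfA, u \in W^{p-1,\infty}(\Omega)$ (the latter by Theorem~\ref{theorem_high_reg_u}), the product rules \eqref{eq_product_rules} give $g_\star \in H^{p-1}(\Omega)$ with $\|g_\star\|_{H^{p-1}_\kappa(\Omega)} \lesssim \kappa^{\varepsilon/p}\|f\|_{L^2(\Omega)}$ after rescaling.

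It remains to bootstrap the regularity of $r_{p-1} = \Einv g_\star$. I prove by induction on $0 \le k \le p-1$ that $g \in H^k(\Omega)$ implies $\Einv g \in H^{k+2}(\Omega)$ with $\|\Einv g\|_{H^{k+2}_\kappa(\Omega)} \lesssim \rho(\kappa)\,\kappa^{c_k\varepsilon_0}\|g\|_{H^k_\kappa(\Omega)}$; the base case $k=0$ is \eqref{secEest-H2}. For the step, set $w = \Einv g$, which already lies in $H^{k+1}(\Omega)$ by the induction hypothesis; Lemma~\ref{lemma-form-of-secE} recasts $w$ as the solution of a Neumann problem $\kappa^2 w - \Delta w = F$ (again with an $\alpha\,\ci u$ correction) with $F = \kappa^2 g - \kappa\Lone(\nabla w) - \kappa^2(\Lzero-\id)(w) + \alpha\,\ci u$; the product rules, the elementary bound $\|\Lone(\nabla w)\|_{H^k_\kappa(\Omega)} \lesssim \kappa\|w\|_{H^{k+1}_\kappa(\Omega)}$, the estimates of Theorems~\ref{estimates_u} and \ref{theorem_high_reg_u} on $\|u\|_{W^{k,\infty}_\kappa(\Omega)}$ and $\|u\|_{H^{k}_\kappa(\Omega)}$, and the induction hypothesis then give $\|F\|_{H^k_\kappa(\Omega)} \lesssim \kappa^2\rho(\kappa)\,\kappa^{c_k'\varepsilon_0}\|g\|_{H^k_\kappa(\Omega)}$, whereupon the Neumann shift \eqref{tmp_shift_RD} with $\ell = k+1 \le p$ closes the induction. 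Taking $k = p-1$, $g = g_\star$, and collecting the boundedly many $\kappa^{\varepsilon_0}$-factors yields $\|r_{p-1}\|_{H^{p+1}_\kappa(\Omega)} \lesssim \rho(\kappa)\,\kappa^{\varepsilon}\|f\|_{L^2(\Omega)}$. I expect this last bootstrap to be the main obstacle: one must verify that the Neumann shift never stalls --- it does not, because once $w \in H^{k+1}(\Omega)$ the term $\Lone(\nabla w)$ has exactly the $H^k$-regularity needed at level $k$ --- deal with the $\orthiu$-to-$H^1(\Omega)$ reduction and its $\alpha\,\ci u$ correction uniformly in $k$, and keep track of the $\kappa^{\varepsilon_0}$-powers across the $O(p)$ steps of both inductions so that the final exponents come out as $\varepsilon/p$ for the $z_j$ and $\varepsilon$ for $r_{p-1}$, which is what forces the rescaling of $\varepsilon$ at the outset.
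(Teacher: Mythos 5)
Your treatment of the $z_j$ coincides with the paper's (recursive application of Proposition~\ref{proposition-shift}), but your handling of the remainder is organized genuinely differently, and it is correct. The paper never telescopes all the way: it shows that each $r_\ell$ solves the \emph{same} shifted Neumann problem \eqref{eqn-tilde-u-shift} with data $(\widetilde f,\widetilde g)=(r_{\ell-2},r_{\ell-1})$, seeds the recursion with $\|r_1\|_{H^2_\kappa}\lesssim\rho(\kappa)\|f\|_{L^2}$ and $\|r_1\|_{H^3_\kappa}\lesssim\kappa^{\varepsilon/p}\rho(\kappa)\|f\|_{L^2}$ via \eqref{secEest-H1}--\eqref{secEest-H2}, and then gains one order per step by reusing Proposition~\ref{proposition-shift} verbatim, accumulating at most $p-1$ factors $\kappa^{\varepsilon/p}$. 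You instead collapse the recursion into the single identity $r_{p-1}=\Einv g_\star$ with $g_\star=-\kappa^{-1}\Lone(\nabla z_{p-2})-(\Lzero-\id)(z_{p-2}+z_{p-3})$ --- which I checked is algebraically correct, since $r_{p-2}=r_{p-1}+z_{p-2}$ and $r_{p-3}=r_{p-1}+z_{p-2}+z_{p-3}$ let you absorb the $r_{p-1}$-contributions into $\kappa^2\langle E''(u)r_{p-1},\cdot\rangle$ via Lemma~\ref{lemma-form-of-secE} --- and then prove, as a separate induction not present in the paper, a higher-order regularity shift $\|\Einv g\|_{H^{k+2}_\kappa}\lesssim\rho(\kappa)\kappa^{O(\varepsilon)}\|g\|_{H^k_\kappa}$ for $k\le p-1$. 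That auxiliary lemma is provable exactly as you sketch (the $\kappa\Lone(\nabla w)$ term loses precisely one derivative, so the Neumann shift \eqref{tmp_shift_RD} never stalls, and $\rho(\kappa)$ stays linear because it enters only through the induction hypothesis). The trade-off: the paper's route keeps every step inside the already-established Proposition~\ref{proposition-shift}, whose left-hand operator is the benign $-\Delta+\kappa^2$; your route isolates a clean, reusable elliptic-regularity statement for $E''(u)\vert_{\orthiu}^{-1}$ and makes transparent that $\rho(\kappa)$ enters only through a single application of $\Einv$ to smooth data, at the cost of redoing the $\orthiu$-to-$H^1(\Omega)$ reduction (the $\alpha\,\ci u$ correction) and the right-hand-side estimates with the full operator at every level $k$. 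Both give the stated exponents after your (legitimate) initial rescaling of $\varepsilon$.
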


\begin{proof}
First we note that $z_j \in H^{j+2}(\Omega)$ and the corresponding regularity estimate follows by a recursive application of Proposition \ref{proposition-shift}. 
Applying Lemma \ref{lemma-form-of-secE} to $z:=\Einv f$, i.e. the solution $z \in \orthiu$ to $\langle E^{\prime\prime}(u)z , v\rangle= ( f, v)_{L^2(\Omega)} $ for all $v\in \orthiu$, yields
\begin{align*}
 ( \nabla z ,\nabla v )_{L^2(\Omega)}
+  \kappa^2 ( z ,  v )_{L^2(\Omega)}
=  - \,\kappa (  \Lone(\nabla z) ,  v )_{L^2(\Omega)}
 - \kappa^2 ( \hspace{1pt}\Lzero(z) , v \hspace{1pt} )_{L^2(\Omega)} 
+ \kappa^2 ( f + z , v )_{L^2(\Omega)}.
\end{align*}
To prove the desired result for arbitrary $\ell \in \{ 1, \dots, p-1\}$, we plug $r_{\ell} = z - \sum\limits_{j=0}^{\ell -1} z_j$ into the above characterization of $z$. 
For $\ell=1$, i.e. $r_1= z - z_0$, we obtain
\begin{eqnarray*}
\lefteqn{ ( \nabla r_1 ,\nabla v )_{L^2(\Omega)}
+  \kappa^2 ( r_1 ,  v )_{L^2(\Omega)}
\,\,=\,\,  ( \nabla (z-z_0) ,\nabla v )_{L^2(\Omega)}
+  \kappa^2 ( z-z_0 ,  v )_{L^2(\Omega)} } \\
&=&  - \kappa^2 ( \hspace{1pt}\Lzero(z) - z, v \hspace{1pt} )_{L^2(\Omega)}  - \,\kappa (  \Lone(\nabla z) ,  v )_{L^2(\Omega)}.\hspace{100pt}
\end{eqnarray*}
For $r_2=r_1 -  z_1$ we obtain
\begin{eqnarray*}
 \lefteqn{ ( \nabla r_2 ,\nabla v )_{L^2(\Omega)}
+  \kappa^2 ( r_2 ,  v )_{L^2(\Omega)}
\,\,=\,\,
 ( \nabla r_1 - \nabla z_1 ,\nabla v )_{L^2(\Omega)}
+  \kappa^2 ( r_1 -  z_1 ,  v )_{L^2(\Omega)}  } \\
&=&
- \,\kappa \, (  \Lone(\nabla z) ,  v )_{L^2(\Omega)}
 - \kappa^2 ( \hspace{1pt}\Lzero(z) , v \hspace{1pt} )_{L^2(\Omega)} 
+ \kappa^2 (  z , v )_{L^2(\Omega)} 
+ \kappa \, (\Lone(\nabla z_0) , v  )_{L^2(\Omega)}  \\
&=&
- \,\kappa \, (  \Lone(\nabla r_1) ,  v )_{L^2(\Omega)}
 - \kappa^2 ( \hspace{1pt}\Lzero(z) - z , v \hspace{1pt} )_{L^2(\Omega)}.
\end{eqnarray*}
For $r_{\ell} =  r_{\ell-1} - z_{\ell-1}$ we obtain the equation inductively: Let $r_0:= z$ and let, for $\ell \ge 3$ hold 
\begin{eqnarray*}
  ( \nabla r_{\ell-1} ,\nabla v )_{L^2(\Omega)}
+  \kappa^2 ( r_{\ell-1} ,  v )_{L^2(\Omega)}
\,=\,
- \,\kappa \, (  \Lone(\nabla r_{\ell-2}) ,  v )_{L^2(\Omega)}
 - \kappa^2 ( \hspace{1pt}\Lzero(r_{\ell-3}) - r_{\ell-3} , v \hspace{1pt} )_{L^2(\Omega)}.
\end{eqnarray*}
Then we obtain
\begin{eqnarray*}
 \lefteqn{ ( \nabla r_{\ell} ,\nabla v )_{L^2(\Omega)}
+  \kappa^2 ( r_{\ell} ,  v )_{L^2(\Omega)}
\,\,=\,\,
 ( \nabla r_{\ell-1} - \nabla z_{\ell-1} ,\nabla v )_{L^2(\Omega)}
+  \kappa^2 ( r_{\ell-1} - z_{\ell-1} ,  v )_{L^2(\Omega)}  } \\
&=&
- \,\kappa \, (  \Lone(\nabla r_{\ell-2}) ,  v )_{L^2(\Omega)}
 - \kappa^2 ( \hspace{1pt}\Lzero(r_{\ell-3}) - r_{\ell-3} , v \hspace{1pt} )_{L^2(\Omega)} \\
 &\enspace&\quad
 + \kappa^2 \, (\Lzero(z_{\ell-3}) - z_{\ell-3}, v  )_{L^2(\Omega)}  +  \kappa \, (\Lone(\nabla z_{\ell-2}) , v  )_{L^2(\Omega)} \\
 &=&
- \,\kappa \, (  \Lone(\nabla r_{\ell-2} - \nabla z_{\ell-2} ) ,  v )_{L^2(\Omega)}
 - \kappa^2 ( \hspace{1pt}\Lzero(r_{\ell-3}-z_{\ell-3}) - (r_{\ell-3}-z_{\ell-3}) , v \hspace{1pt} )_{L^2(\Omega)}\\
  &=&
- \,\kappa \, (  \Lone(\nabla r_{\ell-1} ) ,  v )_{L^2(\Omega)}
 - \kappa^2 ( \hspace{1pt}\Lzero(r_{\ell-2}) - r_{\ell-2} , v \hspace{1pt} )_{L^2(\Omega)}.
\end{eqnarray*}
From the equation for $r_1$ we have
\begin{align*}
\| r_{1} \|_{H^2_{\kappa}(\Omega)} \,\,\lesssim\,\,  \| z \|_{H^1_{\kappa}(\Omega)}
\,\, \overset{\eqref{secEest-H1}}{\lesssim} \ \, \rho(\kappa) \, \| f\|_{L^2(\Omega)}
\end{align*}
and by applying Proposition \ref{proposition-shift} to the equation for $r_1$, we also obtain
\begin{align*}
\| r_{1} \|_{H^3_{\kappa}(\Omega)} & \,\,\overset{\eqref{regularity-estimate-tilde-u}}{\lesssim}\,\, 
\kappa^{\varepsilon/p}  \| z \|_{H^2_{\kappa}(\Omega)} 
\,\, \overset{\eqref{secEest-H2}}{\lesssim} \ \, \kappa^{\varepsilon/p} \, \rho(\kappa) \, \| f\|_{L^2(\Omega)}.
\end{align*}
For $r_2$ we obtain analogously with Proposition \ref{proposition-shift} 
\begin{align*}
\| r_{2} \|_{H^4_{\kappa}(\Omega)} &\,\,\overset{\eqref{regularity-estimate-tilde-u}}{\lesssim}\,\,  \kappa^{\varepsilon/p} \left(  \| z\|_{H^2_{\kappa}(\Omega)} + \| r_1 \|_{H^{3}_{\kappa}(\Omega)}
\right) \,\,\lesssim\,\, \kappa^{\varepsilon/p} \, (1 + \kappa^{\varepsilon/p}) \, \rho(\kappa) \, \| f\|_{L^2(\Omega)}.
\end{align*}
Repeating the argument recursively at most $p-1$ times and using $(\kappa^{\varepsilon/p})^{p-1}\le \kappa^{\varepsilon}$ proves 
\begin{align*}
\| r_{\ell} \|_{H^{\ell+2}_{\kappa}(\Omega)} &\,\,\lesssim\,\, \kappa^{\varepsilon} \rho(\kappa) \,\| f\|_{L^2(\Omega)}.
\end{align*}
The result is obtained for $\ell=p-1$.
\end{proof}

\section{Proofs of the $L^2$ and $H^1_{\kappa}$-error estimates}
\label{section:proofs:L2-H1-estimates}
With the considerations in Section \ref{section-secE-shift-prop} regarding the shift-property of the operator $E^{\prime\prime}(u)$, we are now prepared to turn towards the error estimates for the minimizers. This takes place in three steps. In the first step (Section \ref{subsection-Ritz-projection-secE}), we will introduce the Ritz projection $\Rh$ based on $E^{\prime\prime}(u)$ and quantify the corresponding projection error, as well as the error between $\Rh(u)$ and a discrete minimizer $\uhp$. In the second step (Section \ref{subsection-preliminary-error-bounds}), we will derive uniform $\kappa$-bounds for the distance between an exact local minimizer $u$ and the closest discrete local minimizer $\uhp$. These uniform bounds are suboptimal, but will be sufficient to get control over higher order error contributions that enter through the nonlinearity of the Ginzburg--Landau equation. Finally, in step three (Section \ref{subsection:proof:main-result}), we combine all our findings to derive the desired error estimates between $u$ and $\uhp$ in the $L^2$- and $H^1_{\kappa}$-norm.

\subsection{Approximation factor}

We are now ready to quantify the worst-case best-approximation error $\eta_{h}$ associated
with solutions to problems involving the invertible operator $E^{\prime\prime}(u)\vert_{\orthiu}$.
Specifically, we define the worst-case best-approximation error by
\begin{align}
\label{def-eta-h-p}
\eta_{h}
:=
\sup_{\substack{f \in L^2(\Omega) \\ \|f\|_{L^2(\Omega)} = 1}}
\min_{v_h \in V_{h} \cap \orthiu}
\| \Einv f  -  v_h \|_{H^1_{\kappa}(\Omega)}.
\end{align}

\begin{theorem}\label{theorem:eta-hp-estimate}
Let $u \in H^1(\Omega)$ denote a local minimizer of $E$ such that \ref{A4} is fulfilled.
Then,
\begin{equation*}
\lim_{h \to 0} \eta_h = 0.
\end{equation*}
In addition, if $\bfA \in W^{p-1,\infty}(\Omega)$ and $\partial \Omega$ is $C^{p+1}$, then
it holds that
\begin{align}
\label{est-eta-h-p}
\eta_{h} \,\,\lesssim\,\,
\kappa^{\varepsilon} \left( \, h \kappa + \rho(\kappa)(h \kappa)^{p} \, \right).
\end{align}
\end{theorem}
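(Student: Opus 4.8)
The plan is to prove the two assertions separately and by rather different means. For the quantitative bound \eqref{est-eta-h-p}, the idea is to insert the $\kappa$-explicit splitting of $\Einv$ from Theorem~\ref{theorem-splitting-z} into the quasi-interpolation operator $I_h$ and to carefully track the powers of $h$ and $\kappa$. For the qualitative limit $\eta_h \to 0$, which must hold already under \ref{A4} alone (no smoothness of $\bfA$ or $\partial\Omega$, hence no rates at our disposal), the idea is instead a standard compactness/approximability argument.

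\emph{Quantitative bound.} Fix $f \in L^2(\Omega)$ with $\|f\|_{L^2(\Omega)} = 1$ and set $w := \Einv f$. By Theorem~\ref{theorem-splitting-z} we may write $w = \sum_{j=0}^{p-2} z_j + r_{p-1}$ with $z_j \in H^{j+2}(\Omega)$, $\|z_j\|_{H^{j+2}_\kappa(\Omega)} \lesssim \kappa^{\varepsilon/p}$, and $r_{p-1} \in H^{p+1}(\Omega)$, $\|r_{p-1}\|_{H^{p+1}_\kappa(\Omega)} \lesssim \kappa^\varepsilon \Ccoe$. I would approximate $w$ by $I_h w \in V_h$ term by term: applying \eqref{eq_assumption_interpolation} with $q=2$ and $\ell = j+1$ (resp.\ $\ell = p$), converting unweighted into weighted Sobolev norms at the cost of $\kappa^{j+2}$ (resp.\ $\kappa^{p+1}$), and using $h\kappa \le 1$, one obtains
\[
\|z_j - I_h z_j\|_{H^1_\kappa(\Omega)} \lesssim (h\kappa)^{j+1}\kappa^{\varepsilon/p},
\qquad
\|r_{p-1} - I_h r_{p-1}\|_{H^1_\kappa(\Omega)} \lesssim (h\kappa)^{p}\kappa^{\varepsilon}\Ccoe.
\]
Summing the geometric series $\sum_{j=0}^{p-2}(h\kappa)^{j+1} \lesssim h\kappa$ and bounding $\kappa^{\varepsilon/p} \le \kappa^\varepsilon$ yields $\|w - I_h w\|_{H^1_\kappa(\Omega)} \lesssim \kappa^\varepsilon\big(h\kappa + \Ccoe(h\kappa)^p\big)$. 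It then remains to enforce the phase-convention constraint, i.e.\ to land in $V_h \cap \orthiu$ rather than just in $V_h$: since $w \in \orthiu$ we have $(I_h w, \ci u)_{L^2(\Omega)} = (I_h w - w, \ci u)_{L^2(\Omega)}$, which is controlled by $\|I_h w - w\|_{H^1_\kappa(\Omega)}$, while $(I_h(\ci u), \ci u)_{L^2(\Omega)}$ stays bounded away from zero and $\|I_h(\ci u)\|_{H^1_\kappa(\Omega)}$ stays bounded; hence the rank-one correction $I_h w - \lambda\, I_h(\ci u)$, with $\lambda$ chosen so the result lies in $\orthiu$, changes the error only by a bounded factor. (For meshes too coarse for this to apply, \eqref{est-eta-h-p} is trivial, its right-hand side then being $\gtrsim \Ccoe \gtrsim \eta_h$ by \eqref{secEest-H1} applied with $v_h = 0$.) Taking the supremum over $f$ gives \eqref{est-eta-h-p}.

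\emph{Qualitative limit.} Here only \ref{A4} is used, and the crucial point is that $B := \Einv$ is \emph{compact} from $L^2(\Omega)$ into $H^1(\Omega)$. It is bounded into $H^1(\Omega)$ by \eqref{secEest-H1}, so if $f_n \rightharpoonup f$ weakly in $L^2(\Omega)$, then $\{B f_n\}$ is bounded in $H^1(\Omega)$ and, by Rellich's embedding theorem, $B f_n \to B f$ in $L^2(\Omega)$; testing the defining equation of $e_n := B(f_n - f) \in \orthiu$ against $e_n$ gives $\langle E^{\prime\prime}(u)e_n, e_n\rangle = (f_n - f, e_n)_{L^2(\Omega)} \to 0$, and the G{\aa}rding inequality of Lemma~\ref{lemma:stability-garding} then forces $\|e_n\|_{H^1_\kappa(\Omega)} \to 0$. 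Consequently $\mathcal{K} := \{\Einv f : \|f\|_{L^2(\Omega)} \le 1\}$ is precompact in $H^1(\Omega)$. Since $\bigcup_h V_h$ is dense in $H^1(\Omega)$ (apply \eqref{eq_assumption_interpolation} to the dense subspace $H^2(\Omega)$) and the $\orthiu$-correction above costs only a bounded factor, $\min_{v_h \in V_h \cap \orthiu}\|w - v_h\|_{H^1_\kappa(\Omega)} \to 0$ for every fixed $w \in H^1(\Omega)$. A standard covering argument then upgrades this to uniform convergence over $\mathcal{K}$: covering $\mathcal{K}$ by finitely many $H^1$-balls of radius $\delta$ and choosing $h$ small enough to approximate each centre within $\delta$, one gets $\eta_h \lesssim \delta$, and $\delta \to 0$ concludes.

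\emph{Main obstacle.} The power-counting in the first part is routine once Theorem~\ref{theorem-splitting-z} is available, and the phase constraint is a minor bookkeeping matter. The substantive step is the compactness of $\Einv$ into $H^1(\Omega)$, which is what makes the pointwise-to-uniform upgrade in the second part legitimate; it is precisely here that the G{\aa}rding inequality from Lemma~\ref{lemma:stability-garding}---rather than the $\kappa$-unfavourable coercivity estimate---is indispensable, in combination with Rellich's embedding theorem.
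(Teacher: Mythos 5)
Your proposal is correct and follows essentially the same route as the paper: the quantitative bound via the splitting of Theorem~\ref{theorem-splitting-z}, term-by-term interpolation, and the rank-one correction $I_h(v)\mapsto I_h(v)-\lambda I_h(\ci u)$ to land in $V_h\cap\orthiu$; and the qualitative limit via compactness of $\Einv$ from $L^2(\Omega)$ into $H^1(\Omega)$. The only (immaterial) difference is that you establish this compactness through Rellich plus the G{\aa}rding inequality, whereas the paper deduces it from the compact embedding $L^2(\Omega)\hookrightarrow (H^1(\Omega))'$ combined with continuity of $\Einv$ on $(H^1(\Omega))'$.
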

\begin{proof}
The fact that $\eta_h \to 0$ as $h \to 0$ without further regularity assumption
is a simple consequence of the fact that the injection $\Einv(L^2(\Omega)) \hookrightarrow H^1(\Omega)$
is compact. Indeed $\Einv$ is continuous from $(H^1(\Omega))^\prime \to H^1(\Omega)$ and the
injection $L^2(\Omega) \hookrightarrow (H^1(\Omega))^\prime$ is compact.

We then focus on \eqref{est-eta-h-p}.
For $f \in L^2(\Omega)$, we consider $\zf{} := \Einv f  \in \orthiu$.
To exploit Theorem \ref{theorem-splitting-z} we let $\zf{j} \in \orthiu$
denote the solutions to
\begin{eqnarray*}
(\nabla \zf{0} , \nabla v )_{L^2(\Omega)} + \kappa^2 \, (\zf{0}, v )_{L^2(\Omega)} &=& \kappa^2 \, ( f , v)_{L^2(\Omega)} \\[0.3em]
(\nabla \zf{1} , \nabla v )_{L^2(\Omega)} + \kappa^2 \, (\zf{1}, v )_{L^2(\Omega)} 
&=& - \kappa \, (\Lone(\nabla \zf{ 0 }) , v  )_{L^2(\Omega)}  \\[0.3em]
(\nabla \zf{j} , \nabla v )_{L^2(\Omega)} + \kappa^2 \, (\zf{j} , v )_{L^2(\Omega)} 
&=&  - \kappa^2 \, (\Lzero( \zf{j-2}) - \zf{j-2}, v  )_{L^2(\Omega)}  - \kappa \, (\Lone(\nabla \zf{j-1}) , v  )_{L^2(\Omega)}
\end{eqnarray*}
for all $v\in \orthiu$. Setting
\begin{align*}
r_{p-1} := \zf{} - \sum_{j=0}^{p -2} \zf{j},
\end{align*}
Theorem \ref{theorem-splitting-z} yields $\zf{j} \in H^{j+2}(\Omega)$ and $r_{p-1} \in H^{p+1}(\Omega)$ with 
\begin{align*}
\|  \zf{j} \|_{H^{j+2}_{\kappa}(\Omega)}  \,\, \lesssim\,\,
\kappa^{\varepsilon/p} \, \| f \|_{L^2(\Omega)}
\,\,\lesssim\,\, \kappa^\varepsilon \|f\|_{L^2(\Omega)}
\qquad
\mbox{and}
\qquad
\| r_{p-1} \|_{H^{p+1}_{\kappa}(\Omega)} \,\,\lesssim \,\,
\kappa^{\varepsilon} \, \rho(\kappa) \, \| f \|_{L^2(\Omega)}.
\end{align*}
We now construct an interpolation operator $I_h^{\perp} : \orthiu \rightarrow V_{h} \cap \orthiu$ by setting
\begin{align*}
I_h^{\perp} (v) := I_h (v) - \frac{(I_h(v) , \ci u )_{L^2(\Omega)} }{ (I_h(\ci u) , \ci u )_{L^2(\Omega)} } \, I_h(\ci u)
\end{align*}
for all $v \in \orthiu$. Since $h \kappa \leq 1$ by assumption, for every $v \in \orthiu$,
we have
\begin{eqnarray*}
\| v - I_h^{\perp} (v) \|_{H^1_{\kappa}(\Omega)} 
&\lesssim& \| v -I_h (v) \|_{H^1_{\kappa}(\Omega)}.
\end{eqnarray*}
The argument is e.g. elaborated in \cite[Proof Lem.~4.6]{BDH25} and exploits $\tfrac{1}{\kappa} \| \nabla u \|_{L^2(\Omega)} \lesssim \| u\|_{L^2(\Omega)}$.

With this, we obtain
\begin{eqnarray*}
\lefteqn{
\inf_{v_h \in V_{h} \cap \orthiu}  \|\zf{}  -  v_h\|_{H^1_{\kappa}(\Omega)} 
  \,\,\leq\,\,
\|\zf{} - I_h^\perp(\zf{})\|_{H^1_{\kappa}(\Omega)} 
  \,\,\lesssim\,\,
\| \zf{} -  I_h(\zf{}) \|_{H^1_{\kappa}(\Omega)} } \\
&=& \| r_{p-1} + \sum\limits_{j=0}^{p -2} \zf{j}  - I_h(r_{p-1}) - \sum\limits_{j=0}^{p -2} I_h(\zf{j})  \|_{H^1_{\kappa}(\Omega)} \\
&\le& \| r_{p-1} - I_h(r_{p-1})\|_{H^1_{\kappa}(\Omega)} + \sum\limits_{j=0}^{p -2}  \| \zf{j}  -  I_h(\zf{j})  \|_{H^1_{\kappa}(\Omega)} \\
&\lesssim&  h^{p} \kappa^{p} \| r_{p-1} \|_{H^{p+1}_{\kappa}(\Omega)} + \sum\limits_{j=0}^{p-2} h^{j+1} \kappa^{j+1} \| \zf{j} \|_{H^{j+2}_{\kappa}(\Omega)} \\
&\lesssim&
(h \kappa)^{p} \kappa^{\varepsilon} \, \rho(\kappa) \, \| f \|_{L^2(\Omega)} + \sum\limits_{j=0}^{p-2} h^{j+1}  \kappa^{j+1} \kappa^{\varepsilon}\, \| f \|_{L^2(\Omega)}
\\
&\lesssim&
\kappa^{\varepsilon}
\left( h \, \kappa + (h \, \kappa)^{p}  \, \rho(\kappa)  \right) \, \|f\|_{L^2(\Omega)}.
\end{eqnarray*}
This finishes the proof.
\end{proof}

\subsection{Estimates for the $E^{\prime\prime} (u)$-Ritz projection}
\label{subsection-Ritz-projection-secE}
The Ritz projection associated with the elliptic operator
$E^{\prime\prime} (u) \vert_{\orthiu}$ will play a central role in
what follows. For any $v \in \orthiu$, we define $\Rh(v) \in V_{h} \cap \orthiu$
by requiring that
\begin{align}
\label{eq_def_Rh}
\langle E^{\prime\prime} (u) \Rh(v) , v_h \rangle = \langle E^{\prime\prime} (u) \,v , v_h \rangle  \qquad \mbox{for all } v_h \in V_{h} \cap \orthiu.
\end{align}
Note that $\Rh$ is well-defined by the coercivity of $E^{\prime\prime}(u)$ on $\orthiu$.
The remainder of this section is dedicated to corresponding error estimates that
are important for our analysis.

We start by showing an error estimate for the Ritz projection $R_h$.
Since this projection corresponds to a coercive bilinear form with a ``small''
coercivity constant, naive arguments lead to error estimates with unfavorable
$\kappa$ dependence. Since on the other hand, this bilinear form satisfies a G\aa rding
inequality with favorable constants, we can employ the standard ``Schatz argument''
\cite{Sch74} to obtain sharp estimate for sufficiently fine meshes.
 
\begin{lemma}\label{lemma:estimate-ritz-projection}
Consider a local minimizer $u \in H^1(\Omega)$ of $E$ satisfying \ref{A4}.
Recall $\eta_{h}$ from \eqref{def-eta-h-p}. There exists $\eta_\star \gtrsim 1$
(i.e. independent of $\kappa$ and $h$) such that if
$\eta_{h} \leq \eta_\star$
then, for any $v \in \orthiu$, it holds
\begin{align*}
\| v - \Rh(v) \|_{H^1_{\kappa}(\Omega)} \,\, \lesssim \,\, \inf_{v_h \in V_{h} }  \| v - v_h \|_{H^1_{\kappa}(\Omega)}.
\end{align*}
and
\begin{align*}
\| v - \Rh(v) \|_{L^2(\Omega)} &\lesssim \, \eta_{h} \,
\| v - \Rh(v)  \|_{H^1_{\kappa}(\Omega)}.
\end{align*}
\end{lemma}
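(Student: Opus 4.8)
The plan is to run the classical Schatz argument for the $H^1_\kappa$-estimate and then a standard Aubin--Nitsche duality argument for the $L^2$-estimate. Throughout, $\Rh(v) \in V_h \cap \orthiu$ is characterized by Galerkin orthogonality with respect to $E^{\prime\prime}(u)$, namely $\langle E^{\prime\prime}(u)(v-\Rh(v)), v_h\rangle = 0$ for all $v_h \in V_h \cap \orthiu$. Write $e := v - \Rh(v)$.

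For the $H^1_\kappa$-estimate, first I would invoke the G\aa{}rding inequality from Lemma \ref{lemma:stability-garding}, which gives $\tfrac12 \|e\|_{H^1_\kappa(\Omega)}^2 \le \langle E^{\prime\prime}(u)e,e\rangle + C_G\|e\|_{L^2(\Omega)}^2$ with $C_G := \tfrac32 + \|\bfA\|_{L^\infty(\Omega)}^2$. By Galerkin orthogonality, $\langle E^{\prime\prime}(u)e,e\rangle = \langle E^{\prime\prime}(u)e, v - v_h\rangle$ for any $v_h \in V_h \cap \orthiu$, and the continuity bound \eqref{continuity_E} turns this into $\lesssim \|e\|_{H^1_\kappa(\Omega)}\inf_{v_h}\|v-v_h\|_{H^1_\kappa(\Omega)}$; a Young inequality absorbs the resulting $\|e\|_{H^1_\kappa(\Omega)}^2$ factor. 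It then remains to control $\|e\|_{L^2(\Omega)}$ by $\eta_h\,\|e\|_{H^1_\kappa(\Omega)}$ — which is exactly the second assertion of the lemma — so that the smallness condition $\eta_h \le \eta_\star$ (with $\eta_\star$ chosen so that $2 C_G\,\eta_\star^2 \le \tfrac14$, say) lets one absorb the $\|e\|_{L^2(\Omega)}^2$ term on the left as well, leaving $\|e\|_{H^1_\kappa(\Omega)}^2 \lesssim \inf_{v_h}\|v - v_h\|_{H^1_\kappa(\Omega)}^2$. Note the infimum over $V_h\cap\orthiu$ can be replaced by the infimum over all of $V_h$ using the operator $I_h^\perp$ exactly as in the proof of Theorem \ref{theorem:eta-hp-estimate} (or simply because the bound is stated with an unconstrained infimum and a smaller space gives a larger infimum, which is fine since we want an upper bound).

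For the $L^2$-estimate (the duality argument), set $f := e/\|e\|_{L^2(\Omega)} \in L^2(\Omega)$ (assuming $e\ne 0$, else trivial) and let $z := \Einv f \in \orthiu$ be the dual solution, so that $\langle E^{\prime\prime}(u)w, z\rangle = (w,f)_{L^2(\Omega)}$ for all $w \in \orthiu$ — here I would use symmetry of $E^{\prime\prime}(u)$, which is clear from \eqref{derivaitves-2} since all terms are symmetric in $z,w$ after taking real parts. Taking $w = e$ gives $\|e\|_{L^2(\Omega)} = (e,f)_{L^2(\Omega)} = \langle E^{\prime\prime}(u)e, z\rangle$. By Galerkin orthogonality, subtract any $z_h \in V_h \cap \orthiu$ to obtain $\|e\|_{L^2(\Omega)} = \langle E^{\prime\prime}(u)e, z - z_h\rangle \lesssim \|e\|_{H^1_\kappa(\Omega)}\,\|z - z_h\|_{H^1_\kappa(\Omega)}$ by \eqref{continuity_E}; taking the infimum over $z_h \in V_h\cap\orthiu$ and using the definition \eqref{def-eta-h-p} of $\eta_h$ (with $\|f\|_{L^2(\Omega)}=1$) bounds $\inf_{z_h}\|z-z_h\|_{H^1_\kappa(\Omega)} \le \eta_h$. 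This yields $\|e\|_{L^2(\Omega)} \lesssim \eta_h\,\|e\|_{H^1_\kappa(\Omega)}$, completing the proof.

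The main subtlety, rather than obstacle, is the circular-looking dependence: the $H^1_\kappa$-bound needs the $L^2$-bound, and the $L^2$-bound needs a finite $\|e\|_{H^1_\kappa(\Omega)}$. This is resolved cleanly because $\Rh(v)$ exists unconditionally (coercivity of $E^{\prime\prime}(u)$ on $\orthiu$, Lemma \ref{coercivity_secE_u}), so $\|e\|_{H^1_\kappa(\Omega)}$ is a priori finite — though only with the bad constant $\rho(\kappa)$; the duality argument for $L^2$ holds for that a priori finite quantity, is then fed back into the G\aa{}rding/absorption step, and the final $H^1_\kappa$-bound comes out with $\kappa$-independent constants. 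The only genuinely technical points are the harmless passage between infima over $V_h$ and over $V_h\cap\orthiu$ via $I_h^\perp$, and verifying the symmetry of $E^{\prime\prime}(u)$ for the duality identity; both are routine.
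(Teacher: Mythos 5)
Your proof is correct and follows essentially the same Schatz/Aubin--Nitsche argument as the paper; the only cosmetic difference is that the paper folds the dual solution $\xi$ directly into the G\aa{}rding step via $\langle E^{\prime\prime}(u)(\Rh(v)-v+\xi),\Rh(v)-v\rangle$, whereas you prove the $L^2$ duality bound first and then feed it into the absorption, which is logically equivalent. One small caution: your parenthetical alternative for passing from $\inf_{v_h\in V_h\cap\orthiu}$ to $\inf_{v_h\in V_h}$ is backwards --- the smaller space gives the \emph{larger} infimum, so the bound you derive (in terms of $\inf_{V_h\cap\orthiu}$) does not trivially imply the stated one (in terms of $\inf_{V_h}$); your primary justification via $I_h^\perp$ is the correct one and is exactly what the paper uses.
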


\begin{proof}
First, the G{\aa}rding inequality in Lemma \ref{lemma:stability-garding} yields
\begin{align*}
\|  v - \Rh(v)  \|_{H^1_{\kappa}(\Omega)}^2
\,\,\lesssim\,\,
\langle E^{\prime\prime} (u) ( v - \Rh(v) ) ,  v - \Rh(v)  \rangle
+ \|  v - \Rh(v)  \|_{L^2(\Omega)}^2.
\end{align*}
If $\xi \in \orthiu$ solves
\begin{align*}
\langle E^{\prime\prime} (u) \xi , \phi \rangle  = (  \Rh(v) - v  , \phi )_{L^2(\Omega)} \qquad \mbox{for all } \phi \in \orthiu,
\end{align*}
then we obtain for arbitrary $\xi_h, v_h \in V_{h} \cap \orthiu$ that
\begin{eqnarray*}
\lefteqn{
\| \Rh(v)-v \|_{H^1_{\kappa}(\Omega)}^2 \,\,\lesssim\,\, \langle E^{\prime\prime} (u) (\Rh(v)-v +\xi ) , \Rh(v)-v \rangle }\\
&=&  \langle E^{\prime\prime} (u) (v_h -v ) , \Rh(v)-v \rangle + \langle E^{\prime\prime} (u) (\xi -\xi_h ) , \Rh(v)-v \rangle \\
&\lesssim& \left( \|   v_h -v \|_{H^1_{\kappa}(\Omega)} +  \|  \xi -\xi_h \|_{H^1_{\kappa}(\Omega)} \right)  \|  \Rh(v)-v  \|_{H^1_{\kappa}(\Omega)}.
\end{eqnarray*}
The definition of $\eta_h$ in \eqref{def-eta-h-p} yields
\begin{eqnarray*}
\| \Rh(v)-v \|_{H^1_{\kappa}(\Omega)} 
&\lesssim& \inf_{v_h \in V_{h} \cap \orthiu}  \|   v_h -v \|_{H^1_{\kappa}(\Omega)} +  \eta_{h} \, \|  \Rh(v)-v  \|_{L^2(\Omega)}.
\end{eqnarray*} 
Since $ \|  \Rh(v)-v  \|_{L^2(\Omega)} \le \| \Rh(v)-v \|_{H^1_{\kappa}(\Omega)}$ and since (analogously as in the proof of  Theorem \ref{theorem:eta-hp-estimate})
\begin{align*}
\inf_{v_h \in V_{h} \cap \orthiu}  \|   v_h -v \|_{H^1_{\kappa}(\Omega)} 
\lesssim \inf_{v_h \in V_{h} }  \|   v_h -v \|_{H^1_{\kappa}(\Omega)},
\end{align*}
we obtain
\begin{eqnarray*}
\left(1-  \frac{1}{2}\frac{\eta_{h}}{\eta_\star} \right) \| \Rh(v)-v \|_{H^1_{\kappa}(\Omega)} 
&\lesssim& \inf_{v_h \in V_{h} }  \|   v_h -v \|_{H^1_{\kappa}(\Omega)}
\end{eqnarray*} 
for some generic $\eta_\star \gtrsim 1$.
For the $L^2$-estimates we write
\begin{eqnarray*}
\lefteqn{
\| \Rh(v) - v \|_{L^2(\Omega)}^2 \,\,\,=\,\,\,  \langle E^{\prime\prime}(u) (\xi ) , \Rh(v) - v \rangle }\\
&=& \inf_{\xi_h \in V_{h} \cap \orthiu} \langle E^{\prime\prime}(u) (\xi -\xi_h) , \Rh(v) - v \rangle 
\,\,\,\lesssim\,\,\,  \eta_{h} \, \|  \Rh(v)-v  \|_{L^2(\Omega)} \| \Rh(v)-v  \|_{H^1_{\kappa}(\Omega)}.
\end{eqnarray*}
\end{proof}
Next, we establish an estimate for the error between a discrete minimizer $u_h$ and the Ritz projection $\Rh(u)$ of an exact minimizer $u$. 
\begin{lemma}
\label{lemma:H1-est-Rh}
Assume that $u \in H^1(\Omega)$ is a (local) minimizer of $E$ 
satisfying \ref{A4} and let $u_h \in V_h \cap \orthiu$ be an arbitrary discrete critical point such that
$\langle E^{\prime}(u_h),v_h\rangle = 0 $ for all $v_h \in V_{h}$ (cf. Remark \ref{remark-gauge-choice}). 
It holds
\begin{eqnarray*}
\lefteqn{\| \Rh(u)-u_h \|_{H^1_{\kappa}(\Omega)}}
\\
&\lesssim&
\eta_{h} \| \Rh(u)-u_h \|_{L^2(\Omega)}
+
\rho(\kappa) \left (
\| u-u_h\|_{L^4(\Omega)}^2
+
\kappa^{d/4} \| u-u_h\|_{L^4(\Omega)}^3
\right ).
\end{eqnarray*}
\end{lemma}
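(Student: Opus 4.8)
The plan is to exploit the second-order condition~\ref{A4} through the operator $E^{\prime\prime}(u)$, together with the fact that both $u$ and $u_h$ are critical points. I set $e_h := \Rh(u) - u_h$; since $\Rh$ takes values in $V_h\cap\orthiu$ and $u_h\in V_h\cap\orthiu$ by hypothesis, we have $e_h\in V_h\cap\orthiu$. Because $E$ is a quartic functional, $E^{\prime}$ has an exact Taylor expansion about $u$, and a direct computation from~\eqref{derivaitves-1}--\eqref{derivaitves-2} will give, for all $w\in H^1(\Omega)$,
\begin{equation*}
\langle E^{\prime}(u) - E^{\prime}(u_h), w\rangle \,=\, \langle E^{\prime\prime}(u)(u-u_h), w\rangle \,+\, (N(u-u_h), w)_{L^2(\Omega)},
\end{equation*}
where $N(z) := -2u|z|^2 - \overline{u}\,z^2 + |z|^2 z$ collects the quadratic and cubic remainders, so that $|N(z)| \lesssim |u|\,|z|^2 + |z|^3$ pointwise. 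Testing with $w = v_h\in V_h$ and using $E^{\prime}(u) = 0$ and $\langle E^{\prime}(u_h), v_h\rangle = 0$, and then the defining relation~\eqref{eq_def_Rh} of $\Rh$ (which yields $\langle E^{\prime\prime}(u)(\Rh(u)-u), v_h\rangle = 0$ for $v_h\in V_h\cap\orthiu$), I obtain the Galerkin-type identity
\begin{equation*}
\langle E^{\prime\prime}(u)\,e_h, v_h\rangle \,=\, -(N(u-u_h), v_h)_{L^2(\Omega)} \qquad \text{for all } v_h\in V_h\cap\orthiu.
\end{equation*}

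The quickest route is then to test this with $v_h = e_h$ and invoke the coercivity of $E^{\prime\prime}(u)$ on $\orthiu$ from Lemma~\ref{coercivity_secE_u}, which gives $\rho(\kappa)^{-1}\|e_h\|_{H^1_{\kappa}(\Omega)}^2 \le \langle E^{\prime\prime}(u)e_h, e_h\rangle = -(N(u-u_h), e_h)_{L^2(\Omega)}$. I would estimate the right-hand side by H\"older's inequality, keeping the quadratic part $|u|\,|z|^2$ paired with an $L^2$ factor (using $|u|\le 1$) and the cubic part $|z|^2 z$ paired with an $L^4$ factor,
\begin{equation*}
|(N(z),e_h)_{L^2(\Omega)}| \,\lesssim\, \|z\|_{L^4(\Omega)}^2\,\|e_h\|_{L^2(\Omega)} \,+\, \|z\|_{L^4(\Omega)}^3\,\|e_h\|_{L^4(\Omega)},
\end{equation*}
and bound $\|e_h\|_{L^4(\Omega)}\lesssim\kappa^{d/4}\|e_h\|_{H^1_{\kappa}(\Omega)}$ by the Gagliardo--Nirenberg inequality~\eqref{eq_gagliardo_nirenberg_L4}, together with $\|e_h\|_{L^2(\Omega)}\le\|e_h\|_{H^1_{\kappa}(\Omega)}$. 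Dividing by $\|e_h\|_{H^1_{\kappa}(\Omega)}$ (the statement being trivial if it vanishes) produces $\|e_h\|_{H^1_{\kappa}(\Omega)} \lesssim \rho(\kappa)\big(\|u-u_h\|_{L^4(\Omega)}^2 + \kappa^{d/4}\|u-u_h\|_{L^4(\Omega)}^3\big)$, which is in fact slightly stronger than the asserted bound; the term $\eta_h\|\Rh(u)-u_h\|_{L^2(\Omega)}$ in the statement is then added trivially.

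If one prefers to mirror the Schatz argument of Lemma~\ref{lemma:estimate-ritz-projection} and obtain the stated $\eta_h$-weighted form directly, the same conclusion follows by combining the G{\aa}rding inequality of Lemma~\ref{lemma:stability-garding}, namely $\|e_h\|_{H^1_{\kappa}(\Omega)}^2\lesssim\langle E^{\prime\prime}(u)e_h,e_h\rangle + \|e_h\|_{L^2(\Omega)}^2$, with the dual function $\xi := \Einv e_h\in\orthiu$, which satisfies $\|e_h\|_{L^2(\Omega)}^2 = \langle E^{\prime\prime}(u)e_h,\xi\rangle$ (by symmetry of $E^{\prime\prime}(u)$) and $\|\xi\|_{H^1_{\kappa}(\Omega)}\lesssim\rho(\kappa)\|e_h\|_{L^2(\Omega)}$ by~\eqref{secEest-H1}; one then inserts the Galerkin identity, picks $\xi_h$ as the best $V_h\cap\orthiu$-approximation of $\xi$ so that $\|\xi-\xi_h\|_{H^1_{\kappa}(\Omega)}\le\eta_h\|e_h\|_{L^2(\Omega)}$ by~\eqref{def-eta-h-p}, uses the continuity~\eqref{continuity_E} for the remaining pairings, and closes the estimate by a final absorption that does \emph{not} require $\eta_h$ to be small. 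Either way, the only real obstacle is the bookkeeping of the cubic nonlinearity: the cubic remainder must be tested against an $L^4$ factor, which unavoidably costs the factor $\kappa^{d/4}$ through Gagliardo--Nirenberg, while the quadratic remainder has to stay against $L^2$ (using $|u|\le 1$) to avoid a further power of $\kappa$, and one must ensure that $\rho(\kappa)$ --- entering via the coercivity constant, respectively via the dual solution $\xi$ --- multiplies only the genuinely quadratic and cubic quantities $\|u-u_h\|_{L^4(\Omega)}^2$ and $\|u-u_h\|_{L^4(\Omega)}^3$, and not $\|e_h\|_{H^1_{\kappa}(\Omega)}$ itself.
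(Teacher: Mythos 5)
Your proposal is correct, and your primary route is genuinely different from --- and in fact more economical than --- the paper's. The paper runs a Schatz-type duality argument: it starts from the G{\aa}rding inequality of Lemma~\ref{lemma:stability-garding}, introduces the dual solution $\xi = \Einv(\Rh(u)-u_h)$, splits into a consistency term (which produces the $\eta_h\|\Rh(u)-u_h\|_{L^2(\Omega)}$ contribution) and a nonlinear term $\langle E^{\prime\prime}(u)(u-u_h),\Rh(u)-u_h+\xi_h\rangle$, and the factor $\rho(\kappa)$ enters only at the very end through the stability bound~\eqref{secEest-H1} for $\xi$. You instead test the Galerkin identity $\langle E^{\prime\prime}(u)e_h,v_h\rangle=-(N(u-u_h),v_h)_{L^2(\Omega)}$ (valid since $\langle E^{\prime\prime}(u)(\Rh(u)-u),v_h\rangle=0$ by~\eqref{eq_def_Rh} and since both $u$ and $u_h$ are critical points, cf.\ the identity for $\langle E^{\prime\prime}(u)(u-u_h),v_h\rangle$ used in the paper) directly with $v_h=e_h\in V_h\cap\orthiu$ and invoke the coercivity constant $\rho(\kappa)^{-1}$ from Lemma~\ref{coercivity_secE_u}; the H\"older/Gagliardo--Nirenberg bookkeeping of $N$ is identical in both proofs and your placement of the $L^2$ versus $L^4$ pairings is exactly right. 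Because the right-hand side of this lemma already carries a factor $\rho(\kappa)$, nothing is lost by using coercivity here (unlike in Lemma~\ref{lemma:estimate-ritz-projection}, where the duality argument is essential to avoid a $\rho(\kappa)$-dependent quasi-optimality constant), and your bound is in fact slightly stronger: it contains no $\eta_h\|\Rh(u)-u_h\|_{L^2(\Omega)}$ term, so the absorption step in Conclusion~\ref{conclusion:estimate-uh-Rhu} and its hypothesis $\eta_h\le\eta_\star$ become superfluous. The stated inequality then follows trivially since the omitted term is nonnegative. Your secondary sketch is essentially the paper's own proof.
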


\begin{proof}
We start as in the proof of Lemma \ref{lemma:estimate-ritz-projection} to estimate
$\Rh(u)-u_h$. Lemma \ref{lemma:stability-garding} yields
\begin{align*}
\|  \Rh(u)-u_h  \|_{H^1_{\kappa}(\Omega)}^2
\,\,\lesssim\,\,
\langle E^{\prime\prime} (u) ( \Rh(u)-u_h  ) ,  \Rh(u)-u_h \rangle
\,+\,
\|  \Rh(u) - u_h  \|_{L^2(\Omega)}^2
\end{align*}
and accordingly we let $\xi \in \orthiu$ denote the unique solution to
\begin{align*}
\langle E^{\prime\prime} (u) \xi , v \rangle
=
(  \Rh(u)-u_h  , v )_{L^2(\Omega)} \qquad \mbox{for all } v \in \orthiu.
\end{align*}
Recalling that $\langle E^{\prime\prime} (u) \, \cdot , \cdot \rangle$ is symmetric, we conclude for arbitrary $\xi_h \in V_{h} \cap \orthiu$ that
\begin{eqnarray*}
\lefteqn{
\| \Rh(u)-u_h \|_{H^1_{\kappa}(\Omega)}^2 \,\,\lesssim\,\, \langle E^{\prime\prime} (u) (\Rh(u)-u_h+\xi) , \Rh(u)-u_h \rangle }\\
&=&  \langle E^{\prime\prime} (u) \xi , \Rh(u)-u_h \rangle + \langle E^{\prime\prime} (u) (\Rh(u)-u_h) , \Rh(u)-u_h \rangle \\
&=&   \underbrace{\langle E^{\prime\prime} (u) (\xi  - \xi_h) , \Rh(u)-u_h \rangle}_{=:\,\mbox{I}}  + \underbrace{ \langle E^{\prime\prime} (u) (\Rh(u)-u_h + \xi_h) , u-u_h \rangle}_{=:\,\mbox{II}}.
\end{eqnarray*}
We now estimate the two terms separately. For that, let us select $\xi_h \in V_{h} \cap \orthiu$ through the $H^1_{\kappa}$-orthogonal projection, i.e.,
\begin{align*}
\| \xi - \xi_h \|_{H^1_{\kappa}(\Omega)} \,\, = \,\, \inf_{ v_h  \in V_{h} \cap \orthiu} \| \xi - v_h \|_{H^1_{\kappa}(\Omega)}.
\end{align*}
{\it Term I.}  
The definition of $\eta_h$ in \eqref{def-eta-h-p} yields directly
\begin{align*}
|\langle E^{\prime\prime} (u) (\xi  - \xi_h) , \Rh(u)-u_h \rangle| \,\, &\lesssim \,\, 
\| \xi  - \xi_h \|_{H^1_{\kappa}(\Omega)} \|  \Rh(u)-u_h \|_{H^1_{\kappa}(\Omega)} \\
\,\,&\le\,\, \eta_{h}  \|  \Rh(u)-u_h \|_{H^1_{\kappa}(\Omega)} \, \| \Rh(u)-u_h \|_{L^2(\Omega)}.
\end{align*}
{\it Term II.} We start with rewriting
\begin{equation*}
\langle E^{\prime\prime} (u)v_h,( u-u_h) \rangle
=
\langle E^{\prime\prime} (u)( u-u_h) ,v_h \rangle
\end{equation*}
for  arbitrary $v_h \in V_{h} \cap \orthiu$. Using the continuous and the discrete Ginzburg-Landau equations we have
\begin{align*}
\langle E^{\prime}(u) , v_h \rangle \,\,=\,\, \langle E^{\prime}(u_h) , v_h \rangle  \,\, =\,\,  0 \quad \mbox{for all } v_h\in V_{h}.
\end{align*}
We therefore obtain from \eqref{derivaitves-1} and \eqref{derivaitves-2} (cf. \cite[Lem.~5.5]{DoeHe24})
\begin{eqnarray*}
\lefteqn{ \langle E^{\prime\prime} (u)( u-u_h) ,v_h \rangle \,\,\,=\,\,\, \langle E^{\prime\prime} (u) u -  E^{\prime\prime} (u_h) u_h  ,v_h \rangle  
+ \langle ( E^{\prime\prime} (u_h) - E^{\prime\prime} (u) ) u_h  ,v_h \rangle } \\
&=& 2 ( |u|^2 u , v_h )_{L^2(\Omega)}  +  ( |u_h|^2 u_h , v_h )_{L^2(\Omega)}  - (2|u|^2 u_h + u^2 \overline{u_h} , v_h )_{L^2(\Omega)}  \\
&=&  ( 2 u |u-u_h|^2 + (u-u_h)^2 \overline{u} - |u-u_h|^2 (u-u_h)  , v_h )_{L^2(\Omega)} .
\end{eqnarray*}
We conclude with $v_h = \Rh(u) - u_h + \xi_h \in V_{h} \cap \orthiu$ that
\begin{eqnarray*}
\lefteqn{ |\langle E^{\prime\prime} (u)( u-u_h) ,\Rh(u) - u_h + \xi_h \rangle| }\\
&\lesssim&   \| u-u_h\|_{L^4(\Omega)}^2 \, \| \Rh(u) - u_h + \xi_h \|_{L^2(\Omega)} + \| u-u_h\|_{L^4(\Omega)}^3  \, \| \Rh(u) - u_h + \xi_h \|_{L^4(\Omega)}.
\end{eqnarray*}
In order to estimate $\| \Rh(u) - u_h + \xi_h \|_{L^4(\Omega)}$
we exploit the Gagliardo--Nirenberg inequality for $d=2,3$,
which yields for any $v \in H^1(\Omega)$ that
\begin{align*}
\| v \|_{L^4(\Omega)} \,\lesssim\, \| \nabla v \|_{L^2(\Omega)}^{d/4}  \, \|  v \|_{L^2(\Omega)}^{1-d/4}
\qquad
\mbox{and hence}
\qquad
\| v \|_{L^4(\Omega)}^2 
\,\lesssim\, \kappa^{d/2} \| v \|_{H^1_{\kappa}(\Omega)}^2.
\end{align*}
We conclude
\begin{eqnarray*}
\lefteqn{ |\langle E^{\prime\prime} (u)( u-u_h) ,\Rh(u) - u_h + \xi_h \rangle| }\\
&\lesssim&  \left( \| u-u_h\|_{L^4(\Omega)}^2 + \kappa^{d/4} \| u-u_h\|_{L^4(\Omega)}^3 \right) \, \| \Rh(u) - u_h + \xi_h \|_{H^1_{\kappa}(\Omega)}
 \\
&\lesssim&  \left( \| u-u_h\|_{L^4(\Omega)}^2 + \kappa^{d/4} \| u-u_h\|_{L^4(\Omega)}^3 \right) \,
\left( \, \| \Rh(u) - u_h \|_{H^1_{\kappa}(\Omega)}  + \| \xi \|_{H^1_{\kappa}(\Omega)} \right) \\
&\lesssim&  \left( \| u-u_h\|_{L^4(\Omega)}^2 + \kappa^{d/4} \| u-u_h\|_{L^4(\Omega)}^3 \right) \,\, (1+ \rho(\kappa))\,
\| \Rh(u) - u_h \|_{H^1_{\kappa}(\Omega)} .
\end{eqnarray*}
The estimate $\Ccoe \gtrsim 1$ as per~\eqref{Ccoe_gtrsim_1}, finishes the proof.
\end{proof}
In order to replace one of the $L^4$-error contributions in Lemma \ref{lemma:H1-est-Rh} by a contribution in the $H^1_{\kappa}$-norm, we can again use the Gagliardo-Nirenberg interpolation inequality.
Note that it is important to only replace one $L^4$-contribution by an $H^1_{\kappa}$-term and to keep the remaining ones. Together with some $L^4$-bounds that follow later in Theorem \ref{theorem:existence-of-local-discrete-minimizer}, this will allow us to remove some artificial $\kappa$-dependencies which would otherwise remain. With this in mind, we formulate the following super-convergence result.
\begin{conclusion}[Asymptotic superconvergence]
\label{conclusion:estimate-uh-Rhu}
Assume that $u \in H^1(\Omega)$ is a (local) minimizer of $E$ 
and $u_h \in V_h \cap \orthiu$ is an arbitrary discrete critical point such that
$\langle E^{\prime}(u_h),v_h\rangle = 0 $ for all $v_h \in V_{h}$.
Then there exists $\eta_\star \gtrsim 1$ such that if $\eta_h \leq \eta_\star$ we have
\begin{eqnarray*}
  \| \Rh(u)-u_h \|_{H^1_{\kappa}(\Omega)} 
&\lesssim&
  \rho(\kappa) \left( \kappa^{d/4} \| u-u_h\|_{L^4(\Omega)} +  \kappa^{d/2} \| u-u_h\|_{L^4(\Omega)}^2 \right)  \,  \| u-u_h\|_{H^1_{\kappa}(\Omega)}.
\end{eqnarray*}

\end{conclusion}

\begin{proof}
By Lemma \ref{lemma:H1-est-Rh} we have
\begin{eqnarray*}
\lefteqn{ \| \Rh(u)-u_h \|_{H^1_{\kappa}(\Omega)}  }\\
&\leq & \frac{1}{2\eta_\star} \left (\eta_h  \| \Rh(u)-u_h \|_{L^2(\Omega)}
 + \rho(\kappa) \hspace{-2pt}\left(  \| u-u_h\|_{L^4(\Omega)}^2 + \kappa^{d/4} \| u-u_h\|_{L^4(\Omega)}^3 \right)
\right )
\end{eqnarray*}
for some $\eta_\star \gtrsim 1$. Due to our assumption on $\eta_{h}$,
we can absorb the $\| \Rh(u)-u_h \|_{L^2(\Omega)}$-contribution and the estimate reduces to
\begin{eqnarray}
\label{proof-conclusion-uh-Rhugh-est}
 \| \Rh(u)-u_h \|_{H^1_{\kappa}(\Omega)}  
&\lesssim&
\rho(\kappa) \hspace{-2pt}\left(  \| u-u_h\|_{L^4(\Omega)}^2 + \kappa^{d/4} \| u-u_h\|_{L^4(\Omega)}^3 \right).
\end{eqnarray}
Using again the Gagliardo--Nirenberg inequality $\| v \|_{L^4(\Omega)} \,\lesssim\, \kappa^{d/4} \| v \|_{H^1_{\kappa}(\Omega)}$ on one of the $L^4$-terms, the $H^1_{\kappa}$ estimate \eqref{proof-conclusion-uh-Rhugh-est} becomes
\begin{eqnarray*}
 \| \Rh(u)-u_h \|_{H^1_{\kappa}(\Omega)} 
&\lesssim&  \, \rho(\kappa) \left( \kappa^{d/4} \| u-u_h\|_{L^4(\Omega)} +  \kappa^{d/2} \| u-u_h\|_{L^4(\Omega)}^2 \right) \, \| u-u_h\|_{H^1_{\kappa}(\Omega)}.
\end{eqnarray*}
\end{proof}
Lemma \ref{lemma:estimate-ritz-projection} and Conclusion \ref{conclusion:estimate-uh-Rhu} finish the $H^1$-estimate, if we can show that the higher order contribution $ \rho(\kappa) \left( \kappa^{d/4} \| u-u_h\|_{L^4(\Omega)} +  \kappa^{d/2} \| u-u_h\|_{L^4(\Omega)}^2 \right)  \,  \| u-u_h\|_{H^1_{\kappa}(\Omega)}$ is negligible provided that $u$ and $u_h$ are related in a suitable way.

\begin{remark}
The above proof technique relies on a regularity splitting, and we have followed
the approach of~\cite{CFN20} to obtain such a splitting. Other approaches are possible,
and we refer in particular to~\cite{bernkopf_chaumontfrelet_melenk_2025a,MeS10}.
We emphasize, however, that all these works for the Helmholtz equation assume fixed coefficients,
whereas the coefficients appearing in $E''(u)$ depend on~$\kappa$ through the exact solution~$u$.
We have chosen to extend the methodology proposed in~\cite{CFN20} because it provided
the most natural framework for our setting, and have not investigated whether
the other techniques in~\cite{bernkopf_chaumontfrelet_melenk_2025a,MeS10}
can be similarly extended to oscillating coefficients.

We further note that, for the Helmholtz equation, so-called pre-asymptotic error
estimates can be obtained under weaker resolution conditions~\cite{chaumontfrelet_spence_2024a,galkowski2025sharp,li2025higher},
in regimes where the finite element solution is not (uniformly in~$\kappa$) quasi-optimal.
Such pre-asymptotic estimates rely on refined G{\aa}rding inequalities
(so far established only for fixed coefficients) and are outside the scope of the present work.
\end{remark}

\subsection{Preliminary error bounds}
\label{subsection-preliminary-error-bounds}
In this section we want to derive a preliminary (pessimistic) estimate for the error $ \| u-u_h\|_{H^1_{\kappa}(\Omega)}$. More precisely, we want to identify a resolution condition such that it holds 
\begin{align*}
\rho(\kappa) \left( \kappa^{d/4} \| u-u_h\|_{L^4(\Omega)} +  \kappa^{d/2} \| u-u_h\|_{L^4(\Omega)}^2 \right)  \,  \| u-u_h\|_{H^1_{\kappa}(\Omega)} 
\, \,\ll \, \,
\| u-u_h\|_{H^1_{\kappa}(\Omega)}.
\end{align*}
If such an estimate is available, we can later absorb the term in the final estimates. In order to obtain the desired bound, we take inspiration from \cite{PoRa94} where an abstract formalism for the solution of nonlinear differential equations is developed. However, we cannot directly use the findings and arguments of \cite{PoRa94} in our work, as this would result in far too pessimistic estimates (with respect to $\kappa$) to be applicable in our setting.

In the first step towards the desired estimates, we introduce an alternative characterization of a discrete local minimizer.
For this we will employ a linearization of $E'$ around $u$ which we denote by $\gradEh$. To be precise, the linearized approximation 
$\gradEh : H^1(\Omega) \to H^1(\Omega)^{\ast}$ 
is defined by
\begin{align*}
\langle \gradEh(v) , w \rangle := \langle E^{\prime}(v) , \Rh(w) \rangle + \langle E^{\prime\prime}(u) v,w-  \Rh(w) \rangle
\end{align*}
for all $v,w \in H^1(\Omega)$, where $\Rh$ is the Ritz projection introduced in~\eqref{eq_def_Rh}. 
Note that the dot in $\gradEh$ is purely notational and serves to indicate that it approximates the derivative $E^\prime$.
\begin{lemma}
\label{lemma:characterization:Ehprime}
Consider a (local) minimizer $u\in H^1(\Omega)$ of $E$ such that \ref{A4} holds. Then,
\begin{align*}
u_h \in \orthiu  \,\,\,\, \mbox{solves} \,\,\,\, \gradEh (u_h)= 0 
\end{align*}
if and only if
\begin{align*}
u_h \in  V_{h} \cap \orthiu \qquad 
\mbox{and} \qquad
 \langle E^{\prime}(u_h),v_h \rangle = 0 \quad \mbox{for all } v_h \in  V_{h} \cap \orthiu. 
\end{align*}
\end{lemma}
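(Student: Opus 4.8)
The plan is to read off the statement directly from the definition of the linearization $E_h^{\prime}$ by testing it against carefully chosen directions, using the $E^{\prime\prime}(u)$-orthogonal splitting of $\orthiu$ induced by the Ritz projection $\Rh$. Set $W_h := \{\, w \in \orthiu \mid \langle E^{\prime\prime}(u) w, v_h\rangle = 0 \text{ for all } v_h \in V_h \cap \orthiu \,\}$, so that $W_h = \ker \Rh$. Since $E^{\prime\prime}(u)$ is symmetric, continuous and coercive on $\orthiu$ by \ref{A4} and Lemma~\ref{coercivity_secE_u}, it defines a genuine inner product there, for which one has the orthogonal direct sum $\orthiu = (V_h \cap \orthiu) \oplus W_h$ and, more importantly, $W_h^{\perp} = V_h \cap \orthiu$ inside $\orthiu$; the nontrivial inclusion here uses coercivity to annihilate the $W_h$-component of any element orthogonal to $W_h$. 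Three elementary facts will be used throughout: $\langle E^{\prime\prime}(u)\cdot,\cdot\rangle$ is symmetric, $\Rh$ restricts to the identity on $V_h \cap \orthiu$ (again by coercivity), and $E^{\prime\prime}(u)(\ci u) = 0$, so that on the natural extension of $\Rh$ to $H^1(\Omega)$ (setting $\Rh(\ci u) := 0$) the functional $E_h^{\prime}(v)$ annihilates $\ci u$ for every $v$; hence $E_h^{\prime}(u_h) = 0$ is equivalent to $\langle E_h^{\prime}(u_h), w\rangle = 0$ for all $w \in \orthiu$.

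For the implication ``discrete critical point $\Rightarrow$ zero of $E_h^{\prime}$'', assume $u_h \in V_h \cap \orthiu$ with $\langle E^{\prime}(u_h), v_h\rangle = 0$ for all $v_h \in V_h \cap \orthiu$. Given an arbitrary $w \in \orthiu$, decompose $w = \Rh(w) + (w - \Rh(w))$ with $\Rh(w) \in V_h \cap \orthiu$ and $w - \Rh(w) \in W_h$. In the identity $\langle E_h^{\prime}(u_h), w\rangle = \langle E^{\prime}(u_h), \Rh(w)\rangle + \langle E^{\prime\prime}(u) u_h, w - \Rh(w)\rangle$, the first term vanishes because $\Rh(w)$ is an admissible discrete test function, and the second equals $\langle E^{\prime\prime}(u)(w - \Rh(w)), u_h\rangle = 0$ by symmetry and because $u_h \in V_h \cap \orthiu$ is $E^{\prime\prime}(u)$-orthogonal to $W_h$. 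Hence $E_h^{\prime}(u_h) = 0$.

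For the converse, assume $u_h \in \orthiu$ with $E_h^{\prime}(u_h) = 0$. The crucial move is to test first with $w \in W_h$: then $\Rh(w) = 0$, so the defining identity collapses to $0 = \langle E^{\prime\prime}(u) u_h, w\rangle$; as this holds for every $w \in W_h$ and $u_h \in \orthiu$, the equality $W_h^{\perp} = V_h \cap \orthiu$ forces $u_h \in V_h \cap \orthiu$, in particular $u_h \in V_h$. Now test with $v_h \in V_h \cap \orthiu$: since $\Rh(v_h) = v_h$, the term $\langle E^{\prime\prime}(u) u_h, v_h - \Rh(v_h)\rangle$ disappears and the identity reduces to $0 = \langle E^{\prime}(u_h), v_h\rangle$, which is exactly the discrete Ginzburg--Landau equation on $V_h \cap \orthiu$.

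The whole argument is essentially bookkeeping once the splitting is set up; the only points deserving real care — and which I would write out in full — are the characterization $W_h^{\perp} = V_h \cap \orthiu$ (not merely $\supseteq$), which is precisely where the coercivity of $E^{\prime\prime}(u)$ on $\orthiu$ from \ref{A4} is indispensable, and fixing the convention that renders $E_h^{\prime}(v)$ a well-defined element of $H^1(\Omega)^{\star}$ and legitimizes restricting attention to test functions in $\orthiu$.
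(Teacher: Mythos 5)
Your proof is correct and follows essentially the same route as the paper's: the ``$\Leftarrow$'' direction is the Galerkin orthogonality of $\Rh$ combined with symmetry of $E^{\prime\prime}(u)$, and the ``$\Rightarrow$'' direction uses that $\Rh$ is the $\langle E^{\prime\prime}(u)\cdot,\cdot\rangle$-orthogonal projection together with coercivity from \ref{A4} to conclude $u_h \in V_h \cap \orthiu$. Your write-up is merely more explicit (the splitting $\orthiu = (V_h\cap\orthiu)\oplus W_h$, the identification $W_h^{\perp}=V_h\cap\orthiu$, and the reduction of test functions to $\orthiu$ via $E^{\prime\prime}(u)(\ci u)=0$) than the paper's terse version, which labels the first direction obvious and compresses the second into one line.
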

\begin{proof}
The direction \quotes{$\Leftarrow$} is obvious. Conversely, let  $u_h \in \orthiu$ solve $\gradEh (u_h)= 0$, then for all $w_h \in V_{h} \cap \orthiu$ we have $ \langle E^{\prime}(u_h) , w_h \rangle = 0$. This in turn implies $\langle E^{\prime\prime}(u) u_h,w-  \Rh(w) \rangle =0$ for all $w \in \orthiu$ and hence, since $\Rh$ is the $\langle E^{\prime\prime}(u)\cdot,\cdot\rangle$-orthogonal projection and \ref{A4} holds, we have $u_h \in V_{h} \cap \orthiu$.
\end{proof}
Our goal is to identify a neighborhood of $u$ such that, in that neighborhood, there is a unique solution $u_h \in \orthiu$ with $\gradEh (u_h)= 0$. The size of the neighborhood will allow us to give a pessimistic estimate for $\| u - u_h \|_{H^1_{\kappa}(\Omega)}$ provided that another resolution condition is fulfilled that remains to be identified.

Before we can prove the existence of a suitable neighborhood, we require two more auxiliary results.
The first one collects various properties of $\gradEh^{\hspace{1pt}\prime}$ and its inverse.

\begin{lemma}\label{lemma:properties_Eh_sec}
Consider a (local) minimizer $u \in H^1(\Omega)$ that fulfills \ref{A4}.
It holds that
\begin{align}
\label{eq_identity_Ehpp}
\langle \gradEh^{\hspace{1pt}\prime}(v) w , z \rangle = \langle E^{\prime\prime}(v) w , \Rh(z) \rangle + \langle E^{\prime\prime}(u) w ,z-  \Rh(z) \rangle
\end{align}
for all $v,w,z \in \orthiu$. In particular, we have
\begin{align*}
\langle \gradEh^{\hspace{1pt}\prime}(u) w , z \rangle = \langle E^{\prime\prime}(u) w , z \rangle
\end{align*}
and hence coercivity of $\langle \gradEh^{\hspace{1pt}\prime}(u) \,\cdot , \cdot \rangle$ on $\orthiu$ with
\begin{align*}
\langle \gradEh^{\hspace{1pt}\prime}(u) w , w \rangle \,\,\ge\,\, \rho(\kappa)^{-1} \| w \|_{H^1_{\kappa}(\Omega)}^2 \qquad
\mbox{for all } w\in \orthiu.
\end{align*}
Furthermore, we have 
\begin{align}
\label{id-u-rhu}
E^{\prime\prime}(u)\vert_{\orthiu}^{-1} \gradEh (u) \,\,=\,\, u - \Rh(u).
\end{align}
\end{lemma}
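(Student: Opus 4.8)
The plan is to read all four assertions directly off the definition of $E_h^\prime$, using only the bilinearity and symmetry of $\langle E^{\prime\prime}(u)\cdot,\cdot\rangle$ together with the Galerkin relation \eqref{eq_def_Rh} defining $\Rh$; no estimates enter.

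I would start with the identity \eqref{eq_identity_Ehpp}. Since $E_h^{\prime\prime}(v)$ is by convention the derivative of the map $v\mapsto E_h^\prime(v)$, for $v,w,z\in\orthiu$ we have
\begin{align*}
\langle E_h^{\prime\prime}(v)w,z\rangle
&=\frac{d}{dt}\Big|_{t=0}\langle E_h^\prime(v+tw),z\rangle\\
&=\frac{d}{dt}\Big|_{t=0}\langle E^\prime(v+tw),\Rh(z)\rangle
+\frac{d}{dt}\Big|_{t=0}\langle E^{\prime\prime}(u)(v+tw),z-\Rh(z)\rangle.
\end{align*}
The first term differentiates to $\langle E^{\prime\prime}(v)w,\Rh(z)\rangle$ by the definition of the second Fr\'echet derivative $E^{\prime\prime}$ in \eqref{derivaitves-2}; the second term is affine in its first slot, hence differentiates to $\langle E^{\prime\prime}(u)w,z-\Rh(z)\rangle$. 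This is \eqref{eq_identity_Ehpp}. Specialising to $v=u$ and using that $\langle E^{\prime\prime}(u)w,\cdot\rangle$ is linear, the two contributions recombine as $\langle E^{\prime\prime}(u)w,\Rh(z)\rangle+\langle E^{\prime\prime}(u)w,z-\Rh(z)\rangle=\langle E^{\prime\prime}(u)w,z\rangle$, which is the asserted identity $\langle E_h^{\prime\prime}(u)w,z\rangle=\langle E^{\prime\prime}(u)w,z\rangle$. The coercivity statement is then immediate: for $w\in\orthiu$ it reads $\langle E_h^{\prime\prime}(u)w,w\rangle=\langle E^{\prime\prime}(u)w,w\rangle\ge\Ccoe^{-1}\|w\|_{H^1_\kappa(\Omega)}^2$, the inequality being precisely the definition of $\Ccoe$ in \eqref{Ccoe_gtrsim_1}.

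It remains to prove \eqref{id-u-rhu}. First I would note that $u-\Rh(u)\in\orthiu$, since $u\in\orthiu$ and $\Rh(u)\in V_h\cap\orthiu$, so it suffices to check that $\langle E^{\prime\prime}(u)(u-\Rh(u)),v\rangle=\langle E_h^\prime(u),v\rangle$ for every $v\in\orthiu$. Because $E^\prime(u)=0$, the definition of $E_h^\prime$ collapses to
\begin{equation*}
\langle E_h^\prime(u),v\rangle=\langle E^{\prime\prime}(u)u,v-\Rh(v)\rangle=\langle E^{\prime\prime}(u)u,v\rangle-\langle E^{\prime\prime}(u)u,\Rh(v)\rangle,
\end{equation*}
so the only thing left to verify is the symmetry-type identity $\langle E^{\prime\prime}(u)u,\Rh(v)\rangle=\langle E^{\prime\prime}(u)\Rh(u),v\rangle$. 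This is the one step that is more than bookkeeping, and I would obtain it by a double use of \eqref{eq_def_Rh} together with the symmetry of $\langle E^{\prime\prime}(u)\cdot,\cdot\rangle$: testing the relation defining $\Rh(u)$ with the discrete function $\Rh(v)\in V_h\cap\orthiu$ gives $\langle E^{\prime\prime}(u)u,\Rh(v)\rangle=\langle E^{\prime\prime}(u)\Rh(u),\Rh(v)\rangle$, by symmetry this equals $\langle E^{\prime\prime}(u)\Rh(v),\Rh(u)\rangle$, and testing the relation defining $\Rh(v)$ with $\Rh(u)\in V_h\cap\orthiu$ turns it into $\langle E^{\prime\prime}(u)v,\Rh(u)\rangle=\langle E^{\prime\prime}(u)\Rh(u),v\rangle$. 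Substituting back yields $\langle E_h^\prime(u),v\rangle=\langle E^{\prime\prime}(u)(u-\Rh(u)),v\rangle$ for all $v\in\orthiu$, which is \eqref{id-u-rhu}. I do not expect any genuine obstacle in this lemma; the only place requiring a little care is keeping track of which function plays the role of test function at each of the two invocations of the Galerkin identity.
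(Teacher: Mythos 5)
Your proof is correct and follows essentially the same route as the paper, which likewise treats \eqref{eq_identity_Ehpp} and the coercivity as direct consequences of the definitions and only details \eqref{id-u-rhu} via $E^\prime(u)=0$ together with two applications of the Galerkin identity \eqref{eq_def_Rh} and the symmetry of $\langle E^{\prime\prime}(u)\cdot,\cdot\rangle$. Your rearrangement of the two invocations of \eqref{eq_def_Rh} (moving $\Rh$ between slots via symmetry) is just a different bookkeeping of the same argument.
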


\begin{proof}
All properties follow by straightforward calculations. Only identity \eqref{id-u-rhu} requires verification. As the coercivity ensures that $\gradEh^{\hspace{1pt}\prime}(u)\vert_{\orthiu}^{-1}$ exists, we let $\phi_u := E^{\prime\prime}(u)\vert_{\orthiu}^{-1} \gradEh (u)  \in \orthiu$
to shorten notation. By definition, we have
\begin{align*}
\langle E^{\prime\prime}(u) \phi_u , v \rangle = \langle \gradEh (u) , v \rangle \qquad \mbox{for all } v\in \orthiu.
\end{align*}
Using the expression of $\gradEh (u)$, this yields
\begin{align*}
\langle E^{\prime\prime}(u) \phi_u , v \rangle &\,\,=\,\, \langle E^{\prime}(u) , \Rh(v) \rangle + \langle E^{\prime\prime}(u) u,v-  \Rh(v) \rangle \,\,=\,\,\langle E^{\prime\prime}(u) u,v-  \Rh(v) \rangle\\
 &\,\,=\,\, \langle E^{\prime\prime}(u) (u- \Rh(u)),v-  \Rh(v) \rangle \,\,=\,\, \langle E^{\prime\prime}(u) (u- \Rh(u)),v \rangle.
\end{align*}
Since this holds for arbitrary $v\in \orthiu$ and since both $\phi_u \in \orthiu$ and $u- \Rh(u)\in \orthiu$, we conclude $\phi_u=u- \Rh(u)$.
\end{proof}
Next, we quantify the continuity of $\gradEh^{\hspace{1pt}\prime}$.
\begin{lemma}
\label{lemma:continuity-secEh}
Let $u \in H^1(\Omega)$ denote a (local) minimizer of $E$ such that \ref{A4} holds.
If $\eta_h  \leq \eta_\star$ as per Lemma \ref{lemma:estimate-ritz-projection},
then for arbitrary $v,w,z \in \orthiu$ it holds
 \begin{equation}
\label{eq_cont_Eh_L4}
  \langle ( \gradEh^{\hspace{1pt}\prime}(u) - \gradEh^{\hspace{1pt}\prime}(z))  w , v \rangle 
 \lesssim \,  \left( \, \| u-z \|_{L^4(\Omega)} \, 
 +  \kappa^{d/4} \, \| u-z \|_{L^4(\Omega)}^2 \right) \| w \|_{L^4(\Omega)} \, \| v \|_{H^1_{\kappa}(\Omega)}
 \end{equation}
and
 \begin{equation}
\label{eq_cont_Eh_H1}
 \langle (\gradEh^{\hspace{1pt}\prime}(u) -  \gradEh^{\hspace{1pt}\prime}(z))  w , v \rangle 
 \lesssim \,  \left( \, \kappa^{d/4} \, \| u-z \|_{L^4(\Omega)} \, 
 +  \kappa^{d/2} \, \| u-z \|_{L^4(\Omega)}^2 \right) \| w \|_{H^1_{\kappa}(\Omega)} \, \| v \|_{H^1_{\kappa}(\Omega)}.
 \end{equation}
\end{lemma}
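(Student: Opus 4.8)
The plan is to reduce everything, via the identity \eqref{eq_identity_Ehpp} of Lemma~\ref{lemma:properties_Eh_sec}, to a bound on the difference of the \emph{continuous} second derivatives. Writing \eqref{eq_identity_Ehpp} once with base point $u$ and once with base point $z$ and subtracting, the two contributions $\langle E^{\prime\prime}(u)w,\,v-\Rh(v)\rangle$ cancel and one is left with the exact identity
$$
\langle (E^{\prime\prime}_h(u)-E^{\prime\prime}_h(z))\,w,\,v\rangle \;=\; \langle (E^{\prime\prime}(u)-E^{\prime\prime}(z))\,w,\,\Rh(v)\rangle .
$$
Thus the finite element space only enters through $\Rh(v)$, whose $H^1_\kappa$-stability $\|\Rh(v)\|_{H^1_\kappa(\Omega)}\lesssim\|v\|_{H^1_\kappa(\Omega)}$ I would read off from Lemma~\ref{lemma:estimate-ritz-projection} (valid since $\eta_h\le\eta_\star$) by taking the trivial competitor $v_h=0$; crucially this bound carries no factor $\rho(\kappa)$.

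Next I would insert the explicit formula \eqref{derivaitves-2}. The bilinear term $(\tfrac{\i}{\kappa}\nabla\,\cdot\,+\bfA\,\cdot\,,\tfrac{\i}{\kappa}\nabla\,\cdot\,+\bfA\,\cdot\,)_{L^2(\Omega)}$ does not depend on the base point and drops out, so only the zeroth-order part survives:
$$
\langle (E^{\prime\prime}(u)-E^{\prime\prime}(z))\,w,\,\phi\rangle \;=\; \bigl(\,2(|u|^2-|z|^2)\,w + (u^2-z^2)\,\overline{w},\;\phi\,\bigr)_{L^2(\Omega)} .
$$
Since $z$ is an arbitrary element of $\orthiu$, no a priori bound on $z$ is available, so I would substitute $z=u-e$ with $e:=u-z$. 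Then $|u|^2-|z|^2 = 2\Re(\overline{u}e)-|e|^2$ and $u^2-z^2 = 2ue-e^2$, so the integrand splits into a part linear in $e$, bounded pointwise by $|e|\,|w|$ (here using $|u|\le 1$ a.e.\ from Theorem~\ref{estimates_u}), plus a part quadratic in $e$, bounded pointwise by $|e|^2\,|w|$.

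Finally I would estimate the two parts by Hölder's inequality. For the linear part, $\int_\Omega |e|\,|w|\,|\Rh(v)| \le \|e\|_{L^4(\Omega)}\|w\|_{L^4(\Omega)}\|\Rh(v)\|_{L^2(\Omega)} \lesssim \|u-z\|_{L^4(\Omega)}\|w\|_{L^4(\Omega)}\|v\|_{H^1_\kappa(\Omega)}$. For the quadratic part, $\int_\Omega |e|^2\,|w|\,|\Rh(v)| \le \|e\|_{L^4(\Omega)}^2\|w\|_{L^4(\Omega)}\|\Rh(v)\|_{L^4(\Omega)}$, and then the Gagliardo--Nirenberg inequality \eqref{eq_gagliardo_nirenberg_L4} together with the $H^1_\kappa$-stability of $\Rh$ gives $\|\Rh(v)\|_{L^4(\Omega)}\lesssim\kappa^{d/4}\|v\|_{H^1_\kappa(\Omega)}$, producing the factor $\kappa^{d/4}\|u-z\|_{L^4(\Omega)}^2$. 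Summing the two estimates yields \eqref{eq_cont_Eh_L4}, and \eqref{eq_cont_Eh_H1} follows by one further application of \eqref{eq_gagliardo_nirenberg_L4} in the form $\|w\|_{L^4(\Omega)}\lesssim\kappa^{d/4}\|w\|_{H^1_\kappa(\Omega)}$.

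There is no genuine obstacle here; the proof is essentially bookkeeping. The two points requiring attention are (i) performing the substitution $z=u-e$ so that the final bound involves only $\|u-z\|_{L^4(\Omega)}$ and never a norm of $z$ itself, and (ii) choosing the Hölder exponents so that the powers of $\kappa$ come out as exactly $\kappa^{d/4}$ and $\kappa^{d/2}$ and, in particular, so that no $\rho(\kappa)$ creeps in — which is why the stability of $\Rh$ must be taken from the Schatz-type estimate of Lemma~\ref{lemma:estimate-ritz-projection} rather than from the coercivity bound \eqref{secEest-H1}.
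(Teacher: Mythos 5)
Your proposal is correct and follows essentially the same route as the paper: both reduce via the identity \eqref{eq_identity_Ehpp} to bounding $\langle (E^{\prime\prime}(u)-E^{\prime\prime}(z))w,\Rh(v)\rangle$, use $|u|\le 1$ and H\"older in $L^4$, and then invoke Gagliardo--Nirenberg together with the $H^1_\kappa$-stability of $\Rh$ from Lemma~\ref{lemma:estimate-ritz-projection}. The only (cosmetic) difference is that the paper organizes the computation of $E^{\prime\prime}(u)-E^{\prime\prime}(z)$ as an exact two-term Taylor expansion of $E_h^{\prime\prime}$ around $u$, whereas you expand the quartic nonlinearity directly with $z=u-e$; the resulting pointwise bounds are identical.
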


\begin{proof}
Let $v,w \in \orthiu$, according to \eqref{eq_identity_Ehpp} from Lemma \ref{lemma:properties_Eh_sec} it holds
\begin{align*}
\langle \gradEh^{\hspace{1pt}\prime}(z) w , v \rangle = \langle E^{\prime\prime}(z) w , \Rh(v) \rangle + \langle E^{\prime\prime}(u) w ,v-  \Rh(v) \rangle.
\end{align*}
Hence, for all $r \in \orthiu$, we have
\begin{eqnarray}
\label{Taylor-secEh}
\lefteqn{ \langle \gradEh^{\hspace{1pt}\prime\prime}(z) r , (w,v) \rangle \,\,\,=\,\,\, \langle E^{\prime\prime\prime}(z) r , ( w , \Rh(v)) \rangle } \\
\nonumber&=&  2 \, \int_{\Omega}  \Re\hspace{-1pt}( z \overline{r}) \, \Re\hspace{-1pt}( w \overline{\Rh(v)}) + \Re\hspace{-1pt}( w \overline{z}) \, \Re\hspace{-1pt}(r \overline{\Rh(v)}) + \Re\hspace{-1pt}( r \overline{w} ) \, \Re\hspace{-1pt}(z \overline{\Rh(v)}) \,\mbox{\normalfont d}x.
\end{eqnarray}
Note that $\langle\gradEh^{\hspace{1pt}\prime\prime}(z) r , (w,v) \rangle$ is symmetric w.r.t. to the second and third argument, i.e.
\begin{eqnarray*}
\langle \gradEh^{\hspace{1pt}\prime\prime}(z) r , (w,v) \rangle &=& \langle \gradEh^{\hspace{1pt}\prime\prime}(z) w , (r,v) \rangle.
\end{eqnarray*}
We shall exploit this in the following calculations. Also note that the third derivative $\gradEh^{\hspace{1pt}\prime\prime\prime}(z)$ is constant (i.e. independent of $z$) with
\begin{eqnarray*}
\langle \gradEh^{\hspace{1pt}\prime\prime\prime}(z) q , (r,w,v) \rangle &=& \langle E^{\prime\prime\prime}(q) r , ( w , \Rh(v)) \rangle.
\end{eqnarray*}
By Taylor-expanding $\gradEh^{\hspace{1pt}\prime}(z)$ in $u$ we therefore obtain the exact representation
\begin{eqnarray*}
\lefteqn{ \langle \gradEh^{\hspace{1pt}\prime}(z) w , v \rangle }\\ 
&=&  \langle \gradEh^{\hspace{1pt}\prime}(u) w , v \rangle +
 \langle \gradEh^{\hspace{1pt}\prime\prime}(u) (z-u),  (w , v ) \rangle
 + \tfrac{1}{2}  \langle \gradEh^{\hspace{1pt}\prime\prime\prime} (u) (z-u)  , (z-u , w , v) \rangle \\
 &=&  \langle \gradEh^{\hspace{1pt}\prime}(u) w , v \rangle +
 \langle E^{\prime\prime\prime}(u) (z-u),  (w , \Rh(v)) \rangle
 + \tfrac{1}{2}  \langle E^{\prime\prime\prime}(z-u) (z-u)  ,( w , \Rh(v) ) \rangle.
\end{eqnarray*}
With \eqref{Taylor-secEh} and $\| u \|_{L^{\infty}(\Omega)} \le 1$ we conclude
\begin{eqnarray*}
\lefteqn{ \langle \left( \gradEh^{\hspace{1pt}\prime}(u) -  \gradEh^{\hspace{1pt}\prime}(z) \right) w , v \rangle }\\ 
 &=&  \langle E^{\prime\prime\prime}(u) (u-z),  (w , \Rh(v)) \rangle
 - \tfrac{1}{2}  \langle E^{\prime\prime\prime}(z-u) (z-u)  ,( w , \Rh(v) ) \rangle \\
 &\le& 6  \| u -z  \|_{L^4(\Omega)} \| w  \|_{L^4(\Omega)} \| \Rh(v)  \|_{L^2(\Omega)}
 + 3 \| u -z  \|_{L^4(\Omega)}^2 \| w  \|_{L^4(\Omega)} \| \Rh(v)  \|_{L^4(\Omega)}.
\end{eqnarray*}
With the Gagliardo--Nirenberg estimate
$\| \Rh(v) \|_{L^4(\Omega)} \,\lesssim\, \kappa^{d/4} \| \Rh(v) \|_{H^1_{\kappa}(\Omega)}$ we further obtain
\begin{eqnarray*}
\lefteqn{ \langle \left( \gradEh^{\hspace{1pt}\prime}(u) - \gradEh^{\hspace{1pt}\prime}(z) \right) w , v \rangle }\\ 
 &\lesssim&  
 \| u -z  \|_{L^4(\Omega)}  \| w  \|_{L^4(\Omega)} \| \Rh(v)  \|_{L^2(\Omega)}
 +   
 \kappa^{d/4}
  \| u -z  \|_{L^4(\Omega)}^2   \| w \|_{L^4(\Omega)}   \| \Rh(v)  \|_{H^1_{\kappa}(\Omega)} \\
 &\le& \left( 
 \| u -z  \|_{L^4(\Omega)} + \kappa^{d/4}
  \| u -z  \|_{L^4(\Omega)}^2 \right) \| w  \|_{L^4(\Omega)}   \| \Rh(v)  \|_{H^1_{\kappa}(\Omega)}.
\end{eqnarray*}
The $H^1_{\kappa}$-stability of $\Rh$ for $\eta_{h} \leq \eta_\star$
according to Lemma \ref{lemma:estimate-ritz-projection}
then finishes the proof of \eqref{eq_cont_Eh_L4}. The second estimate
in \eqref{eq_cont_Eh_H1} then simply follows from \eqref{eq_cont_Eh_L4} by another
application of the Gagliardo--Nirenberg estimate on $w$.
\end{proof}
We are now ready to prove the existence of a unique zero of $\gradEh $
in a neighborhood of any local minimizer $u$ that satisfies \ref{A4}.

\begin{theorem}
\label{theorem:existence-of-local-discrete-minimizer}
Consider a local minimizer $u \in H^1(\Omega)$ of $E$ satisfying \ref{A4}.
Then, there exist constants $\tau_\star,c_\star \gtrsim 1$
such that for all $0 < \tau \leq \tau_\star$, if 
\begin{align}
\label{def-cstar}
\kappa^{d/2} \, \rho(\kappa) \, \|u-R_h(u)\|_{H^1_\kappa(\Omega)}
\,\,\le \,\,
c_\star \, \tau
\end{align}
then there exists a unique $u_h \in V_{h} \cap \orthiu$ with
\begin{align*}
\rho(\kappa) \, \| u_h - u \|_{H^1_{\kappa}(\Omega)} \,\,\le\,\,  \tau
\qquad
\mbox{and}
\qquad
\kappa^{d/4}\,\rho(\kappa) \, \| u_h - u \|_{L^4(\Omega)} \,\, \le \, \tau
\end{align*}
such that 
\begin{align*}
\langle E^{\prime}(u_h) , v_h \rangle = 0 \,\,\mbox{ for all } v_h \in V_{h}.
\end{align*}
Finally, it also holds
\begin{align}
\label{secEuh-coercive}
\langle E^{\prime\prime}(u_h) v_h , v_h \rangle \,\, \gtrsim \,\, \rho(\kappa)^{-1} \| v_h \|_{H^1_{\kappa}(\Omega)}^2
\qquad \mbox{for all } v_h \in V_{h} \cap \orthiu.
\end{align}
i.e., $u_h$ is a (local) minimizer of $E$ in $V_{h}$.
\end{theorem}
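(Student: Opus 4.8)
The plan is a Banach fixed-point argument for the reformulated problem $E_h^{\prime}(u_h)=0$, which by Lemma~\ref{lemma:characterization:Ehprime} is equivalent to $u_h\in V_{h}\cap\orthiu$ being a discrete critical point. Since $E_h^{\prime\prime}(u)\vert_{\orthiu}=E^{\prime\prime}(u)\vert_{\orthiu}$ (Lemma~\ref{lemma:properties_Eh_sec}) is coercive with constant $\rho(\kappa)^{-1}$, we have $\|\Einv g\|_{H^1_\kappa(\Omega)}\lesssim\rho(\kappa)\,\|g\|_{(\orthiu)^\star}$, so I would introduce
\[
T(w) := w - \Einv E_h^{\prime}(w), \qquad w\in\orthiu,
\]
whose fixed points are exactly the zeros of $E_h^{\prime}$. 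Two structural facts drive all estimates: identity~\eqref{id-u-rhu} gives $T(u)=\Rh(u)$, and the fundamental theorem of calculus together with $E_h^{\prime\prime}(u)\vert_{\orthiu}=E^{\prime\prime}(u)\vert_{\orthiu}$ gives, for $z_t:=w_2+t(w_1-w_2)$,
\[
T(w_1)-T(w_2)=-\Einv\int_0^1\big(E_h^{\prime\prime}(z_t)-E_h^{\prime\prime}(u)\big)(w_1-w_2)\,\mathrm{d}t,
\]
the linear term cancelling against $w_1-w_2$.

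I would run the iteration on the set $B:=\{\,w\in\orthiu:\rho(\kappa)\|w-u\|_{H^1_\kappa(\Omega)}\le\tau,\ \kappa^{d/4}\rho(\kappa)\|w-u\|_{L^4(\Omega)}\le\tau\,\}$, equipped with the $H^1_\kappa(\Omega)$-metric; $B$ is nonempty ($u\in B$) and closed in $H^1(\Omega)$ since $H^1(\Omega)\hookrightarrow L^4(\Omega)$ for $d\le3$, hence complete. Writing $\delta_\star:=\tau/(\kappa^{d/4}\rho(\kappa))$, every $w\in B$ satisfies $\|w-u\|_{L^4(\Omega)}\le\delta_\star$, so along any segment $z_t$ with endpoints in $B$ one has $\|u-z_t\|_{L^4(\Omega)}\le\delta_\star$. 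Applying the $H^1_\kappa$-continuity bound~\eqref{eq_cont_Eh_H1} to the displayed difference formula gives a Lipschitz constant $\lesssim\rho(\kappa)\big(\kappa^{d/4}\delta_\star+\kappa^{d/2}\delta_\star^2\big)=\tau+\tau^2/\rho(\kappa)\lesssim\tau$, which is $\le\tfrac12$ once $\tau\le\tau_\star$ for some $\kappa$-independent $\tau_\star$, so $T$ is a contraction on $B$.

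For the self-mapping $T(B)\subseteq B$ I would split $T(w)-u=\big(T(w)-T(u)\big)+\big(\Rh(u)-u\big)$. The $\Rh(u)-u$ part is absorbed by hypothesis~\eqref{def-cstar}: the $H^1_\kappa$-contribution directly, and the $L^4$-contribution via $\kappa^{d/4}\rho(\kappa)\|\Rh(u)-u\|_{L^4(\Omega)}\lesssim\kappa^{d/2}\rho(\kappa)\|\Rh(u)-u\|_{H^1_\kappa(\Omega)}\lesssim c_\star\tau$ — this is exactly where the factor $\kappa^{d/2}$ in~\eqref{def-cstar} is needed — so choosing $c_\star$ small makes both at most $\tau/2$. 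For $T(w)-T(u)$, the $H^1_\kappa$-contribution is handled by the contraction bound, while for the $L^4$-contribution I would estimate $\|T(w)-T(u)\|_{H^1_\kappa(\Omega)}$ from the difference formula using the $L^4$-continuity bound~\eqref{eq_cont_Eh_L4}, whose extra factor $\|w-u\|_{L^4(\Omega)}\le\delta_\star$ supplies the compensating $\kappa^{-d/4}$, so that after one Gagliardo--Nirenberg step $\kappa^{d/4}\rho(\kappa)\|T(w)-T(u)\|_{L^4(\Omega)}\lesssim\tau^2$; shrinking $\tau_\star$ this is again $\le\tau/2$. Hence $T:B\to B$ is a contraction, Banach's theorem yields a unique fixed point $u_h\in B$ with $E_h^{\prime}(u_h)=0$, and Lemma~\ref{lemma:characterization:Ehprime} upgrades it to $u_h\in V_{h}\cap\orthiu$ with $\langle E^{\prime}(u_h),v_h\rangle=0$ for all $v_h\in V_{h}\cap\orthiu$; the two quantitative bounds are precisely $u_h\in B$. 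To extend the Galerkin identity to all $v_h\in V_{h}$, I would use $\langle E^{\prime}(u_h),\ci u_h\rangle=0$ (a one-line computation from~\eqref{derivaitves-1}) and the decomposition $v_h=(v_h-\beta\,\ci u_h)+\beta\,\ci u_h$ with $\beta:=(v_h,\ci u)_{L^2(\Omega)}/(u_h,u)_{L^2(\Omega)}$, well defined because $(u_h,u)_{L^2(\Omega)}\neq0$ for $u_h$ close to $u$, noting $v_h-\beta\,\ci u_h\in V_{h}\cap\orthiu$. Finally, for~\eqref{secEuh-coercive}: since $\Rh$ is the identity on $V_{h}\cap\orthiu$, \eqref{eq_identity_Ehpp} gives $\langle E^{\prime\prime}(u_h)v_h,v_h\rangle=\langle E_h^{\prime\prime}(u_h)v_h,v_h\rangle$ for $v_h\in V_{h}\cap\orthiu$, and combining $\langle E_h^{\prime\prime}(u)v_h,v_h\rangle=\langle E^{\prime\prime}(u)v_h,v_h\rangle\ge\rho(\kappa)^{-1}\|v_h\|_{H^1_\kappa(\Omega)}^2$ with~\eqref{eq_cont_Eh_H1} and the $B$-bounds $\kappa^{d/4}\|u-u_h\|_{L^4(\Omega)}\le\tau/\rho(\kappa)$, $\kappa^{d/2}\|u-u_h\|_{L^4(\Omega)}^2\le\tau^2/\rho(\kappa)^2$ yields $\langle E^{\prime\prime}(u_h)v_h,v_h\rangle\ge\rho(\kappa)^{-1}(1-C(\tau+\tau^2))\|v_h\|_{H^1_\kappa(\Omega)}^2\ge\tfrac12\rho(\kappa)^{-1}\|v_h\|_{H^1_\kappa(\Omega)}^2$.

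The main obstacle is the $\kappa$-bookkeeping: one must make sure that each smallness requirement on $\tau$ and $c_\star$ collapses to a $\kappa$-independent constant. The decisive trick is to use the two continuity estimates of Lemma~\ref{lemma:continuity-secEh} in the right places — the $H^1_\kappa$-version~\eqref{eq_cont_Eh_H1} for the contraction and for the final coercivity (where $\kappa^{d/4}\|u-u_h\|_{L^4(\Omega)}$ exactly matches the $L^4$-constraint defining $B$), and the $L^4$-version~\eqref{eq_cont_Eh_L4} whenever an $L^4$-norm of $T(w)-T(u)$ must be bounded (its extra $L^4$-factor being what defeats the loss from Gagliardo--Nirenberg) — together with exploiting the extra $\kappa^{d/2}$ in~\eqref{def-cstar} precisely to absorb $\kappa^{d/4}\rho(\kappa)\|\Rh(u)-u\|_{L^4(\Omega)}$. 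Everything else is routine once $B$ and $T$ are in place.
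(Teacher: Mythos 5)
Your proposal is correct and follows essentially the same route as the paper: the same Newton-type map $T(z)=z-\Einv E_h^{\prime}(z)$ on the same neighborhood $\Ku$, the identity \eqref{id-u-rhu} to reduce the self-mapping property to hypothesis \eqref{def-cstar}, and Lemma~\ref{lemma:continuity-secEh} deployed in exactly the places the paper uses it (the $H^1_\kappa$-version \eqref{eq_cont_Eh_H1} for the contraction and the final coercivity, the $L^4$-version \eqref{eq_cont_Eh_L4} to compensate the Gagliardo--Nirenberg loss when verifying the $L^4$-constraint). The one place you genuinely deviate is the last step, extending $\langle E^{\prime}(u_h),v_h\rangle=0$ from $V_h\cap\orthiu$ to all of $V_h$: the paper argues via phase invariance of the energy (any $v_h$ can be rotated into $\orthiu$ without changing $E$, so the constrained local minimizer is an unconstrained one and the first-order condition holds on all of $V_h$), whereas you use $\langle E^{\prime}(u_h),\ci u_h\rangle=0$ together with the explicit decomposition $v_h=(v_h-\beta\,\ci u_h)+\beta\,\ci u_h$. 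Both work; yours is slightly more direct and avoids invoking the minimizer characterization, at the (mild) price of needing $(u_h,u)_{L^2(\Omega)}\neq 0$, which the neighborhood bound supplies.
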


\begin{proof}
Let $0 < \tau \leq \tau_\star$ (with an upper bound on $\tau_\star \gtrsim 1$
to be specified later) and consider the following neighborhood of $u$:
\begin{align*}
\Ku \,:=\, \{
z \in \orthiu 
\, | \,
\rho(\kappa) \| u - z \|_{H^1_{\kappa}(\Omega)} \le \tau
\,\mbox{ and }\,
\kappa^{d/4} \rho(\kappa) \| u - z \|_{L^4(\Omega)} \le \tau \, \}.
\end{align*}
The set $\Ku$ is closed in $\orthiu$ because of the continuous embedding of $H^1(\Omega)$ into $L^4(\Omega)$
(here, $d\le 3$). We consider the Newton-type (fixed-point) function $F : \Ku \rightarrow \orthiu$ given by
\begin{align*}
F(z) := z - E^{\prime\prime}(u)_{\vert \orthiu}^{-1} \gradEh (z),
\end{align*}
which is motivated by the local quadratic convergence of Newton's method. 
For a sufficiently small $\tau_\star$ we want to prove that
$F$ has a unique fixed-point $u_h \in \Ku$ by using the Banach fixed-point
theorem. For that, we need to identify a resolution condition (depending on
$h$ and $\kappa$) such that $F$ is a contraction, i.e.
$\| F(w) - F(z)\|_{H^1_{\kappa}(\Omega)} \le L \|  w - z \|_{H^1_{\kappa}(\Omega)}$
on $\Ku$ for some $L<1$ and
$F(\,\Ku\,) \subset \Ku$. 

We start by deriving a condition for $\tau_\star$ such that the contraction property holds. Let $w,z \in \Ku$ be arbitrary. In this setting, we write 
$$
\gamma(s):=F(s\,w + (1-s)z) \qquad \mbox{and hence } F(w) - F(z) = \int_{0}^1 \gamma^{\prime}(s)\, \mbox{d}s.
$$
Using the fact that $F(z) = \gradEh^{\hspace{1pt}\prime}(u)_{\vert \orthiu}^{-1} (\gradEh^{\hspace{1pt}\prime}(u)z - \gradEh (z))$ we obtain
\begin{align*}
\gamma^{\prime}(s) &= \gradEh^{\hspace{1pt}\prime}(u)_{\vert \orthiu}^{-1} ( \gradEh^{\hspace{1pt}\prime} (u)(w-z) - \gradEh^{\hspace{1pt}\prime}(s\,w + (1-s)z)(w-z) ).
\end{align*}
Since the coercivity of $\gradEh^{\hspace{1pt}\prime}(u)$ with constant $\rho(\kappa)^{-1}$ implies for all $\mathcal{F} \in (\orthiu)^{\ast}$ that
\begin{align*}
\| \gradEh^{\hspace{1pt}\prime}(u)^{-1} \mathcal{F} \|_{H^1_{\kappa}(\Omega)} \le \rho(\kappa) \sup_{v \in \orthiu} \frac{ \langle \mathcal{F} , v \rangle }{\| v \|_{H^1_{\kappa}(\Omega)} },
\end{align*}
we conclude with Lemma \ref{lemma:continuity-secEh}
\begin{eqnarray*}
\lefteqn{ \| F(w) - F(z) \|_{H^1_{\kappa}(\Omega)} \,\,\le\,\, \rho(\kappa) \int_{0}^1 \sup_{v \in \orthiu} \frac{\langle ( \gradEh^{\hspace{1pt}\prime}(u) - \gradEh^{\hspace{1pt}\prime}(s\,w + (1-s)z)) \, (w-z)  , v \rangle }{\| v \|_{H^1_{\kappa}(\Omega)} } \,\mbox{d}s } \\
&\lesssim& \rho(\kappa) \, \sup_{s \in [0,1]} \left( \kappa^{d/4} \, \| u - s\,w - (1-s)z \|_{L^4(\Omega)} +  \kappa^{d/2}  \, \| u - s\,w - (1-s)z \|_{L^4(\Omega)}^2 \right) \, \|w-z  \|_{H^1_{\kappa}(\Omega)} \\
&\lesssim&
\rho(\kappa) \,
( \rho(\kappa)^{-1}\tau + \rho(\kappa)^{-2} \tau^2)
\, \|w-z  \|_{H^1_{\kappa}(\Omega)}
\,\,\,\lesssim\,\,\, (1+\tau^\star) \tau \, \|w-z\|_{H^1_\kappa(\Omega)}
\end{eqnarray*}
since $\rho(\kappa) \gtrsim 1$. We
conclude that for a sufficiently small $\tau_\star$ (independent of
$\kappa$ and $h$) it holds 
\begin{eqnarray*}
 \| F(w) - F(z) \|_{H^1_{\kappa}(\Omega)} 
 &\le&
\tfrac{1}{2} \|w-z  \|_{H^1_{\kappa}(\Omega)},
\end{eqnarray*}
i.e., $F$ is a contraction.

It remains to show $\rho(\kappa) \| u - F(z) \|_{H^1_\kappa(\Omega)}\le \tau$ and $\kappa^{d/4} \rho(\kappa) \| u - F(z) \|_{L^4(\Omega)}\le \tau$ if $z \in \Ku$. For this, consider $ u - F(z)$ for arbitrary $z \in \orthiu$. As before, we have
\begin{align*}
F(u) - F(z) \,\, = \,\, 
\int_{0}^1  \gradEh^{\hspace{1pt}\prime}(u)^{-1} ( \gradEh^{\hspace{1pt}\prime}(u)(u-z) - \gradEh^{\hspace{1pt}\prime}(s\,u + (1-s)z)(u-z) )\, \mbox{d}s.
\end{align*}
As \eqref{id-u-rhu} implies $u-F(u)= E^{\prime\prime}(u)\vert_{\orthiu}^{-1} \gradEh (u) = u - \Rh(u)$ we have altogether
\begin{eqnarray*}
u - F(z) &=& u - F(u) + F(u) - F(z) \\
&=& u - \Rh(u) \,+\, E^{\prime\prime}(u)\vert_{\orthiu}^{-1}  \int_{0}^1 \left( \gradEh^{\hspace{1pt}\prime}(u)  - \gradEh^{\hspace{1pt}\prime}(s\,u +(1-s)z) \right)(u-z) \, \mbox{d}s.
\end{eqnarray*}
Using the short-hand notation
\begin{equation*}
\varepsilon := \| u - \Rh(u) \|_{H^1_{\kappa}(\Omega)},
\end{equation*}
we obtain again with Lemma \ref{lemma:continuity-secEh} and $u  -(s\,u +(1-s)z) = (1-s)\,(u -z)$ that
\begin{eqnarray*}
\lefteqn{ \rho(\kappa) \| u - F(z) \|_{H^1_\kappa(\Omega)} } \\
&\le& \rho(\kappa) \| u - \Rh(u) \|_{H^1_\kappa(\Omega)}  +   \rho(\kappa)^{2} \sup_{v \in \orthiu} \frac{| 
 \int_{0}^1 \left( \langle \gradEh^{\hspace{1pt}\prime}(u)  - \gradEh^{\hspace{1pt}\prime}(s\,u +(1-s)z) \right)(u-z) , v \rangle \, \mbox{d}s |}{\| v \|_{H^1_{\kappa}(\Omega)}} \\
&\lesssim& \,
\rho(\kappa)\|u-R_h(u)\|_{H^1_\kappa(\Omega)} 
+ \rho(\kappa)^{2} \, \left( \kappa^{d/4} \, \| u -z \|_{L^4(\Omega)} \,
+ \kappa^{d/2} \, \| u - z \|_{L^4(\Omega)}^2 \,\right)  \, \|u-z  \|_{H^1_{\kappa}(\Omega)}  \\
&\lesssim& \,
\rho(\kappa)\varepsilon
+
(\tau + \rho(\kappa)^{-1} \tau^2) \tau
\,\,\,\lesssim\,\,\,
(c_\star + \tau_\star+\tau_\star^2) \tau
\end{eqnarray*}
where we used the condition involving $c_\star $ in \eqref{def-cstar}, i.e., that it holds $\varepsilon \leq c_\star \kappa^{-d/2} \rho(\kappa)^{-1} \tau \lesssim c_\star \tau$.
We thus obtain for all sufficiently small $c_\star$ and $\tau_\star$ that
\begin{equation*}
\rho(\kappa)\, \| u - F(z) \|_{H^1_\kappa(\Omega)} 
\,\,\le\,\, \tau.
\end{equation*}
It remains to show $\kappa^{d/4}\rho(\kappa)\| u - F(z) \|_{L^4(\Omega)}  \le \tau$.
For this, we use again the Gagliardo-Nirenberg inequality from \eqref{eq_gagliardo_nirenberg_L4}
together with the previous $H^1_\kappa$-estimate to obtain
\begin{eqnarray*}
\lefteqn{ \kappa^{d/4}\rho(\kappa) \| u - F(z) \|_{L^4(\Omega)} }\\
&\le&
\kappa^{d/4}\rho(\kappa) \, \| u - \Rh(u) \|_{L^4(\Omega)}
\\
&\enspace&+\,\,
\kappa^{d/4}\rho(\kappa) \, \| E^{\prime\prime}(u)\vert_{\orthiu}^{-1}  \int_{0}^1 \left( \gradEh^{\hspace{1pt}\prime}(u)  - \gradEh^{\hspace{1pt}\prime}(s\,u +(1-s)z) \right)(u-z) \, \mbox{d}s \|_{L^4(\Omega)} \\
&\lesssim&
\kappa^{d/2} \rho(\kappa) \, \varepsilon 
+ \kappa^{d/2} \,\rho(\kappa)^{2} \, 
\int_{0}^1  \sup_{v \in \orthiu} \frac{\langle \left( \gradEh^{\hspace{1pt}\prime}(u)  - \gradEh^{\hspace{1pt}\prime}(s\,u +(1-s)z) \right)(u-z) , v \rangle \, \mbox{d}s }{ \| v\|_{H^1_{\kappa}(\Omega)}}  \, \mbox{d}s
\\
&\overset{\eqref{eq_cont_Eh_L4}}{\lesssim}& 
 \kappa^{d/2} \rho(\kappa)\, \varepsilon
+ \rho(\kappa)^{2} \, \kappa^{d/2} \, 
\left( \, \| u -z \|_{L^4(\Omega)}^2
+ \kappa^{d/4} \, \| u - z \|_{L^4(\Omega)}^3 \right) \\
&\overset{\eqref{def-cstar}}{\lesssim}&
c_\star \tau + \tau^2 + \rho(\kappa)^{-1} \tau^3
\,\,\,\lesssim\,\,\,
(c_\star + \tau_\star + \tau_\star^2) \,\tau.
\end{eqnarray*}
We then have for all sufficiently small $c_\star$ and $\tau_{\star}$ that 
\begin{eqnarray*}
 \kappa^{d/4}\rho(\kappa) \, \| u - F(z) \|_{L^4(\Omega)} \,\, \le \,\, \tau.
 \end{eqnarray*}
Consequently, $F$ is a contraction on $\Ku$ with $F(\,\Ku\,) \subset \Ku$ and we have existence of a unique function $u_h \in \orthiu$ such that $\gradEh (u_h)=0$. By Lemma \ref{lemma:characterization:Ehprime} we conclude that $u_h \in  V_{h} \cap \orthiu$ fulfills  
\begin{align*}
 \langle E^{\prime}(u_h),v_h \rangle = 0 \quad \mbox{for all } v_h \in  V_{h} \cap \orthiu. 
\end{align*}
Before we can verify that the test functions can be arbitrary in $V_{h}$ (and do not have to belong to $\orthiu$), we first verify the coercivity of $\langle E^{\prime\prime}(u_h)\,\cdot,\cdot \rangle$, i.e., property \eqref{secEuh-coercive}. To get the desired estimate, we use one more time Lemma \ref{lemma:continuity-secEh} to obtain
 \begin{eqnarray*}
  | \langle ( \gradEh^{\hspace{1pt}\prime}(u) -  \gradEh^{\hspace{1pt}\prime}(u_h))  v_h , v_h \rangle | 
&\lesssim& (  \kappa^{d/4}  \| u - u_h \|_{L^4(\Omega)} + \kappa^{d/2} \| u - u_h \|_{L^4(\Omega)}^2 ) \| v_h \|_{H^1_{\kappa}(\Omega)}^2  \\
 &\lesssim&
\tau \, \rho(\kappa)^{-1} \, \| v_h \|_{H^1_{\kappa}(\Omega)}^2.
 \hspace{170pt}
 \end{eqnarray*}
 
Since $\langle \gradEh^{\hspace{1pt}\prime}(w)  v_h , v_h \rangle=\langle E^{\prime\prime}(w)  v_h , v_h \rangle$ for all $v_h \in V_{h} \cap \orthiu$ and arbitrary $w\in \orthiu$ (cf. Lemma \ref{lemma:properties_Eh_sec}), we
obtain for sufficiently small $\tau$  that
\begin{eqnarray*}
\langle E^{\prime\prime}(u_h) v_h , v_h \rangle &=& \langle E^{\prime\prime}(u) v_h , v_h \rangle + \langle (E^{\prime\prime}(u_h)-E^{\prime\prime}(u)) v_h , v_h \rangle \\
&\ge& \rho(\kappa)^{-1} \|v_h \|_{H^1_{\kappa}(\Omega)}^2 
- |\langle (E^{\prime\prime}(u_h)-E^{\prime\prime}(u)) v_h , v_h \rangle | \\
&\ge&   \rho(\kappa)^{-1}(1-C \tau)  \| v_h \|_{H^1_{\kappa}(\Omega)}^2 
\,\, \gtrsim\,\,   \rho(\kappa)^{-1}  \| v_h \|_{H^1_{\kappa}(\Omega)}^2.
\end{eqnarray*}
It remains to verify that it holds $E^{\prime}(u_h)= 0$ on $V_{h}$. We already know that $E^{\prime}(u_h)= 0$ on $V_{h} \cap \orthiu$ and (by the coercivity) that $E^{\prime\prime}(u_h) $ has a positive spectrum on $V_{h} \cap \orthiu$. Consequently, $u_h$ is the unique local minimizer of $E$ in $V_{h} \cap \orthiu$ with $\rho(\kappa) \| u - u_h \|_{H^1_{\kappa}(\Omega)} \le \tau$ and $\kappa^{d/4}\rho(\kappa)\| u - u_h \|_{L^4(\Omega)} \le \tau$. However, the constraint $v_h \in \orthiu$ cannot change the (locally) minimal energy value, because for any $v_h \in V_{h}$ there exists some $\omega \in [-\pi,\pi)$ such that $\exp(\ci \omega) v_h \in \orthiu$ and $E(\exp(\ci \omega)v_h)=E(v_h)$. For that reason, $u_h$ must be a local minimizer both in $V_{h} \cap \orthiu$ and $V_{h}$. By the first order condition for local minimizers we therefore have $\langle E^{\prime}(u_h) , v_h \rangle = 0$ for all $v_h \in V_{h}$.
\end{proof}

\subsection{Proof of Theorem \ref{theorem:main-result}}
\label{subsection:proof:main-result}

We are now ready to combine the results of the previous subsections to prove Theorem \ref{theorem:main-result}. We first establish the main result in a more abstract form.

\begin{theorem}
\label{theorem:abstract-result}
Consider a (local) minimizer $u \in H^1(\Omega)$ of $E$ satisfying \ref{A4}. 
There exist constants $r_\star \gtrsim 1$,\,\, $\eta_\star \gtrsim 1$\,\,
and \, $M_\star \lesssim 1$\,\, such that if $\eta_h \leq \eta_\star$ and
\begin{equation}
\label{condition-r-star}
\kappa^{d/2} \rho(\kappa) \, \min_{v_h \in V_h} \|u-v_h\|_{H^1_\kappa(\Omega)} \,\,\,\leq \,\,\, r_\star,
\end{equation}
then there exists a unique discrete minimizer $u_h \in V_h \cap \orthiu$ in the neighborhood
\begin{equation*}
K
:=
\left \{
v \in H^1(\Omega)
\,\; | \;\,
\rho(\kappa) \|u-v\|_{H^1_\kappa(\Omega)}
\leq
M_\star
\,\,\text{ and }\,\,
\kappa^{d/4} \rho(\kappa) \|u-v\|_{L^4(\Omega)}
\leq
M_\star
\right \},
\end{equation*}
and we have
\begin{equation}
\label{eq_quasi_optimality_H1}
\|u-u_h\|_{H^1_\kappa(\Omega)}
\,\,\,\lesssim\,\,\,
\min_{v_h \in V_h} \|u-v_h\|_{H^1_\kappa(\Omega)}.
\end{equation}
In addition, the $L^2$-error estimate holds true
\begin{equation}
\label{eq_error_estimate_L2_H1}
\|u-u_h\|_{L^2(\Omega)}
\,\,\,\lesssim\,\,\,
\eta_h \min_{v_h \in V_h} \|u-v_h\|_{H^1_\kappa(\Omega)}
 \,+ \,
\rho(\kappa)  \,\, \kappa^{d/2} \left (\min_{v_h \in V_h} \|u-v_h\|_{H^1_\kappa(\Omega)}\right )^2.
\end{equation}
\end{theorem}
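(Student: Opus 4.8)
The plan is to run the decomposition $u - u_h = (u - \Rh(u)) + (\Rh(u) - u_h)$ and to glue together the three black boxes already established: the Ritz-projection estimates of Lemma~\ref{lemma:estimate-ritz-projection}, the asymptotic superconvergence of Conclusion~\ref{conclusion:estimate-uh-Rhu}, and the fixed-point existence result of Theorem~\ref{theorem:existence-of-local-discrete-minimizer}. Write $\delta_h := \min_{v_h \in V_h}\|u-v_h\|_{H^1_\kappa(\Omega)}$. First I would fix the constants: take $\tau_\star,c_\star$ from Theorem~\ref{theorem:existence-of-local-discrete-minimizer}, let $\eta_\star$ be small enough for all of Lemma~\ref{lemma:estimate-ritz-projection}, Conclusion~\ref{conclusion:estimate-uh-Rhu}, Lemma~\ref{lemma:continuity-secEh} and Theorem~\ref{theorem:existence-of-local-discrete-minimizer} to apply, set $M_\star := \tau_\star$, and choose $r_\star$ small enough that the Ritz bound $\|u-\Rh(u)\|_{H^1_\kappa(\Omega)} \lesssim \delta_h$ together with~\eqref{condition-r-star} forces $\kappa^{d/2}\rho(\kappa)\|u-\Rh(u)\|_{H^1_\kappa(\Omega)} \le c_\star \tau_\star$, i.e.\ hypothesis~\eqref{def-cstar}. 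Theorem~\ref{theorem:existence-of-local-discrete-minimizer} then delivers a $u_h \in V_h \cap \orthiu$ with $E'(u_h)=0$ on $V_h$, the coercivity~\eqref{secEuh-coercive}, and $\rho(\kappa)\|u-u_h\|_{H^1_\kappa(\Omega)} \le \tau_\star$, $\kappa^{d/4}\rho(\kappa)\|u-u_h\|_{L^4(\Omega)} \le \tau_\star$, so $u_h \in K$. For uniqueness in $K$ I would observe that any competitor in $K \cap V_h \cap \orthiu$ lies in $K_{\tau_\star,\kappa}(u)$ (since $M_\star=\tau_\star$) and, by Lemma~\ref{lemma:characterization:Ehprime}, is a zero of $E_h'$ in $\orthiu$, hence the unique fixed point $u_h$ of the contraction built in the proof of Theorem~\ref{theorem:existence-of-local-discrete-minimizer}.

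For the $H^1_\kappa$-estimate~\eqref{eq_quasi_optimality_H1}, I would bound $\|u-\Rh(u)\|_{H^1_\kappa(\Omega)} \lesssim \delta_h$ by Lemma~\ref{lemma:estimate-ritz-projection} and $\|\Rh(u)-u_h\|_{H^1_\kappa(\Omega)}$ by Conclusion~\ref{conclusion:estimate-uh-Rhu}. In the latter the prefactor $\rho(\kappa)\bigl(\kappa^{d/4}\|u-u_h\|_{L^4(\Omega)}+\kappa^{d/2}\|u-u_h\|_{L^4(\Omega)}^2\bigr)$ is $\lesssim \tau_\star$, because $\kappa^{d/4}\rho(\kappa)\|u-u_h\|_{L^4(\Omega)} \le \tau_\star$ and $\rho(\kappa) \gtrsim 1$ (so the quadratic term is $\lesssim \rho(\kappa)^{-1}\tau_\star^2$). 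Hence $\|\Rh(u)-u_h\|_{H^1_\kappa(\Omega)} \le \tfrac12 \|u-u_h\|_{H^1_\kappa(\Omega)}$ after one further shrinking of $\tau_\star$, and a triangle inequality plus absorption finishes this part.

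For the $L^2$-estimate~\eqref{eq_error_estimate_L2_H1}, I would split once more: $\|u-\Rh(u)\|_{L^2(\Omega)} \lesssim \eta_h\|u-\Rh(u)\|_{H^1_\kappa(\Omega)} \lesssim \eta_h\delta_h$ by the $L^2$-bound of Lemma~\ref{lemma:estimate-ritz-projection}, while for $\|\Rh(u)-u_h\|_{L^2(\Omega)} \le \|\Rh(u)-u_h\|_{H^1_\kappa(\Omega)}$ I would re-use Conclusion~\ref{conclusion:estimate-uh-Rhu}, but this time estimate the $L^4$-norms through Gagliardo--Nirenberg~\eqref{eq_gagliardo_nirenberg_L4} and the now-proven $\|u-u_h\|_{H^1_\kappa(\Omega)} \lesssim \delta_h$: this gives $\kappa^{d/4}\|u-u_h\|_{L^4(\Omega)} \lesssim \kappa^{d/2}\delta_h$, and using~\eqref{condition-r-star} to note $\kappa^{d/2}\delta_h \lesssim 1$ also $\kappa^{d/2}\|u-u_h\|_{L^4(\Omega)}^2 \lesssim (\kappa^{d/2}\delta_h)^2 \lesssim \kappa^{d/2}\delta_h$, whence $\|\Rh(u)-u_h\|_{H^1_\kappa(\Omega)} \lesssim \rho(\kappa)\kappa^{d/2}\delta_h^2$. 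Adding the two contributions yields the claim.

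The main obstacle is not a single estimate but the bookkeeping of the constants: one must verify that $r_\star,\tau_\star,M_\star,\eta_\star$ can be chosen simultaneously, all independent of $\kappa$ and $h$, so that (i) hypothesis~\eqref{def-cstar} of Theorem~\ref{theorem:existence-of-local-discrete-minimizer} is triggered, (ii) the superconvergence prefactor of Conclusion~\ref{conclusion:estimate-uh-Rhu} is genuinely a contraction factor permitting absorption, and (iii) $u_h$ lands inside $K$ while $K$ remains small enough for uniqueness. A secondary point needing care is the extraction of the factor $\kappa^{d/2}$ (rather than $\kappa^{d}$) in the $L^2$-estimate, which relies on using~\eqref{condition-r-star} to absorb $(\kappa^{d/2}\delta_h)^2$ into $\kappa^{d/2}\delta_h$.
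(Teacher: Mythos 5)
Your proposal is correct and follows essentially the same route as the paper: fix $\tau,c_\star,r_\star,M_\star$ so that condition \eqref{def-cstar} is triggered via Lemma \ref{lemma:estimate-ritz-projection}, invoke Theorem \ref{theorem:existence-of-local-discrete-minimizer} for existence/uniqueness, prove \eqref{eq_quasi_optimality_H1} by absorbing the superconvergence bound of Conclusion \ref{conclusion:estimate-uh-Rhu}, and obtain \eqref{eq_error_estimate_L2_H1} from the $L^2$-bound of the Ritz projection plus a Gagliardo--Nirenberg treatment of the quadratic terms under the resolution condition. The only cosmetic difference is that in the $L^2$-step you reuse Conclusion \ref{conclusion:estimate-uh-Rhu} (where the $\eta_h\|\Rh(u)-u_h\|_{L^2}$ term is already absorbed), whereas the paper returns to Lemma \ref{lemma:H1-est-Rh} and redoes that absorption explicitly; both yield the same bound.
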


\begin{proof}
We apply Theorem \ref{theorem:existence-of-local-discrete-minimizer}
with a small enough $\tau_\dagger \leq \tau_\star$ with $\tau_\dagger \gtrsim 1$ to be fixed.
Then, under the assumption that
\begin{equation}
\label{tmp_assumption_main_result}
\kappa^{d/2}  \, \rho(\kappa) \, \|u-R_h(u)\|_{H^1_\kappa(\Omega)} \, \le \, c_\star \tau_\dagger,
\end{equation}
there exists a unique local minimizer  $u_h \in V_{h} \cap \orthiu$ with $E^{\prime}(u_h) \vert_{V_{h}}=0$ and with
\begin{align}
\label{proof:main-result-step-1-eqn}
\kappa^{d/4} \rho(\kappa) \, \| u_h - u \|_{L^4(\Omega)} \,\,\le\,\, \tau_\dagger.
\end{align}
We split
\begin{align*}
\| u_h - u \|_{H^1_{\kappa}(\Omega)} \,\,\le\,\, \| u - \Rh(u) \|_{H^1_{\kappa}(\Omega)} + \| u_h - \Rh(u) \|_{H^1_{\kappa}(\Omega)},
\end{align*}
and work on the second term. Conclusion \ref{conclusion:estimate-uh-Rhu}
implies that for some constant $C>0$ it holds 
\begin{eqnarray*}
\| u_h - \Rh(u) \|_{H^1_{\kappa}(\Omega)} 
&\le& C\, 
\rho(\kappa) \left( \kappa^{d/4} \| u-u_h\|_{L^4(\Omega)} +  \kappa^{d/2}  \| u-u_h\|_{L^4(\Omega)}^2 \right)  \,  \| u-u_h\|_{H^1_{\kappa}(\Omega)} \\
&\overset{\eqref{proof:main-result-step-1-eqn}}{\le}& C\, 
\rho(\kappa) \left( \tau_\dagger \, \kappa^{d/4} \ \kappa^{-d/4} \rho(\kappa)^{-1} + \tau_\dagger^2\,  \kappa^{d/2}  \kappa^{-d/2} \ \rho(\kappa)^{-2}  \right)  \,  \| u-u_h\|_{H^1_{\kappa}(\Omega)} \\
&\le& 2\, C\, \tau_\dagger \, \| u-u_h\|_{H^1_{\kappa}(\Omega)}
\,\,\, \le \,\,\, \tfrac{1}{2} \|u-u_h\|_{H^1_\kappa(\Omega)} 
\end{eqnarray*}
for $\tau_\dagger = \min\{ \tfrac{1}{4C} , 1 , \tau_\star \}$.
Combining the previous two estimates yields 
$\| u_h - u \|_{H^1_{\kappa}(\Omega)} \,\,\le\,\, 2 \| u - \Rh(u) \|_{H^1_{\kappa}(\Omega)}$ whenever \eqref{tmp_assumption_main_result} is met for $\tau_\dagger$ as defined. Hence, it remains to verify that \eqref{tmp_assumption_main_result} is indeed fulfilled for a suitable $r_\star$.  Here we use that under the assumption
$\eta_h \leq \eta_\star$, we have proved in Lemma \ref{lemma:estimate-ritz-projection}
that there is a constant $C_\star>0$ such that
\begin{equation*}
\|u-R_h(u)\|_{H^1_\kappa(\Omega)}
\,\,\,\leq\,\,\,
C_\star \min_{v_h \in V_h} \|u-v_h\|_{H^1_\kappa(\Omega)}.
\end{equation*}
Consequently, \eqref{tmp_assumption_main_result} is satisfied if we select $r_{\star}$
in \eqref{condition-r-star} as $r_\star = c_\star \tau_\dagger/C_\star$.
In conclusion, a combination of the estimates yields \eqref{eq_quasi_optimality_H1}.
We can set $M_\star := \tau_\dagger$.

For the $L^2$-error estimate, to shorten notation, we introduce a minimizer $v_h^\star \in V_h$
such that
\begin{equation*}
\|u-v_h^\star\|_{H^1_\kappa(\Omega)}
=
\inf_{v_h \in V_h} \|u-v_h\|_{H^1_\kappa(\Omega)}.
\end{equation*}
Then, we first use again  Lemma \ref{lemma:estimate-ritz-projection} for $\eta_h \leq \eta_\star$ to obtain
\begin{equation*}
\| u - \Rh(u) \|_{L^2(\Omega)}
\,\,\,\lesssim\,\,\,
\eta_h \, \|u-v_h^\star\|_{H^1_\kappa(\Omega)}.
\end{equation*}
In total we obtain with the suboptimal estimate
$ \| u_h - \Rh(u) \|_{L^2(\Omega)} \lesssim \| u_h - \Rh(u) \|_{H^1_{\kappa}(\Omega)} $
and Lemma \ref{lemma:H1-est-Rh} that
\begin{eqnarray*}
\lefteqn{ \| u-u_h \|_{L^2(\Omega)} 
\,\,\,\lesssim\,\,\,
\eta_h \|u-v_h^\star\|_{H^1_\kappa(\Omega)}
+
\|u_h-R_h(u)\|_{H^1_\kappa(\Omega)} }
\\
&\lesssim&
\eta_h \|u-v_h^\star\|_{H^1_\kappa(\Omega)}
+
\eta_h \|R_h(u)-u_h\|_{L^2(\Omega)}
+
\rho(\kappa)\left (\|u-u_h\|_{L^4(\Omega)}^2
+
\kappa^{d/4}\|u-u_h\|_{L^4(\Omega)}^3
\right ).
\end{eqnarray*}
At that point, we split
\begin{equation*}
\|R_h(u)-u_h\|_{L^2(\Omega)}
\leq
\|u-u_h\|_{L^2(\Omega)}
+
\|u-R_h(u)\|_{L^2(\Omega)}
\lesssim
\|u-u_h\|_{L^2(\Omega)}
+
\|u-v_h^\star\|_{H^1_{\kappa}(\Omega)},
\end{equation*}
and for $\eta_\star$ sufficiently small, we have
\begin{eqnarray*}
\| u-u_h \|_{L^2(\Omega)} 
&\lesssim&
\eta_h \|u-v_h^\star\|_{H^1_\kappa(\Omega)}
+
\rho(\kappa)\left (\|u-u_h\|_{L^4(\Omega)}^2
+
\kappa^{d/4}\|u-u_h\|_{L^4(\Omega)}^3
\right ).
\end{eqnarray*}
For the last term, we employ the Gagliardo-Nirenberg estimate from \eqref{eq_gagliardo_nirenberg_L4}
which gives
\begin{eqnarray*}
\|u-u_h\|_{L^4(\Omega)}^2 + \kappa^{d/4}\|u-u_h\|_{L^4(\Omega)}^3
&\lesssim&
\kappa^{d/2}\|u-u_h\|_{H^1_\kappa(\Omega)}^2
+
\kappa^d \|u-u_h\|_{H^1_\kappa(\Omega)}^3
\\
&\lesssim&
\left (
1+\kappa^{d/2}\|u-u_h\|_{H^1_\kappa(\Omega)}
\right )
\kappa^{d/2}\|u-u_h\|_{H^1_\kappa(\Omega)}^2
\\
&\lesssim&
\left (
1+\kappa^{d/2}\|u-v_h^\star\|_{H^1_\kappa(\Omega)}
\right )
\kappa^{d/2}\|u-v_h^\star\|_{H^1_\kappa(\Omega)}^2
\\
&\lesssim&
\kappa^{d/2}\|u-v_h^\star\|_{H^1_\kappa(\Omega)}^2
\end{eqnarray*}
due to the resolution condition in the assumptions.
\end{proof} 

Now, Theorem \ref{theorem:main-result} simply follows from applying the explicit expressions
we previously obtained from $\eta_h$ and the best-approximation error.

\begin{proof}[Proof of Theorem \ref{theorem:main-result}]
From \eqref{est-eta-h-p} in Theorem \ref{theorem:eta-hp-estimate} we recall that
\begin{eqnarray}
\label{tmp_eta}
\eta_h &\lesssim& \kappa^\varepsilon \left (h\kappa + \rho(\kappa)(k\kappa)^p\right )
\end{eqnarray}
and
\begin{eqnarray}
\label{tmp_min}
\min_{v_h \in V_h} \|u-v_h\|_{H^1_\kappa(\Omega)} &\lesssim& (h\kappa)^p.
\end{eqnarray}
We now observe that for $K_\star$ small enough, the condition that
\begin{eqnarray*}
\kappa^{d/2} \rho(\kappa) (h\kappa)^p &\le& K_\star
\end{eqnarray*}
ensures that $\eta_h \leq \eta_\star$ as well as
\begin{eqnarray*}
\kappa^{d/2} \rho(\kappa) \min_{v_h \in V_h} \|u-v_h\|_{H^1_\kappa(\Omega)}
&\leq& r_\star
\end{eqnarray*}
for the constants $\eta_\star,r_\star \gtrsim 1$ of Theorem \ref{theorem:abstract-result}.
Here, we used the fact that $\kappa^\varepsilon \leq \kappa^{d/2}$.

We are therefore in the setting of Theorem \ref{theorem:abstract-result}, and the error estimates
\eqref{final-H1-estimate} and \eqref{final-L2-estimate} explicitly expressed
in terms of $h$ and $\kappa$ simply follow by plugging \eqref{tmp_eta} and
\eqref{tmp_min} in their abstract versions \eqref{eq_quasi_optimality_H1} and
\eqref{eq_error_estimate_L2_H1}.
\end{proof} 

\section{A nonlinear conjugate gradient method}
\label{section:nonlinear-cg-method}
In order to find minimizers of the energy $E$, we apply a nonlinear conjugate gradient (CG) method. 
We start from the general iterative update
\begin{align}
\label{general-iteration}
u^{n+1} = u^n + \tau_n d^n,
\end{align}
where $u^n \in H^1(\Omega)$ is the current approximation, $\tau_n>0$ is the step size (computed by a line search to guarantee sufficient energy decrease),
and $d^n \in H^1(\Omega)$ is a descent direction. 
In nonlinear CG methods, $d^n$ is recursively defined by
\begin{align}
\label{general-descent-direction}
d^n = - \nabla_X E(u^n) + \beta^n d^{n-1},
\end{align}
where $\beta^n$ is a dissipation parameter and $\nabla_X E$ denotes the Sobolev gradient
of $E$ with respect to a Hilbert space $X \subset H^1(\Omega)$
equipped with inner product $(\cdot,\cdot)_X$. Formally, for any $z \in X$, the Sobolev gradient $\nabla_X E(z) \in X$ is defined by
\begin{align*}
( \nabla_X E(z) , w )_X = \langle  E^{\prime}(z) , w \rangle 
\qquad \text{for all } w\in X.
\end{align*} 
Sobolev gradients thus represent the direction of steepest descent with respect to the metric $\|\cdot\|_X$. 
For an introduction we refer to Neuberger \cite{Neu97}, and exemplarily to 
\cite{AHYY26,APS22,CLLZ24,DaK10,DaP17,GaPe01,HeP20,NoPr23} for applications.
\medskip

In our experiments, we employ an \textit{energy-adaptive} inner product that depends on the current iterate $z$.
The goal is to choose $(\cdot,\cdot)_{X,z}$ such that the Sobolev gradient becomes as close as possible to the identity mapping, resulting in steeper descent directions.
Specifically, for $v,w \in H^1(\Omega)$ we set
\begin{align}
\label{def-au-vw}
(v,w)_{X,z} := ( \tfrac{\ci}{\kappa} \nabla v + \bfA v , \tfrac{\ci}{\kappa} \nabla w + \bfA w )_{L^2(\Omega)} 
+ \bigl((|z|^2 + |\bfA|^2) v , w \bigr)_{L^2(\Omega)}.
\end{align}
provided that $(z,\bfA)$ are such that $(\cdot,\cdot)_{X,z}$ is an inner product, the corresponding Sobolev gradient $\nabla_{X,z} E(z) \in H^1(\Omega)$ satisfies
\begin{align*}
( \nabla_{X,z} E(z) , w )_{X,z} = \langle E^{\prime}(z) , w \rangle 
\qquad\text{for all } w\in H^1(\Omega).
\end{align*}
Since 
$\langle E^{\prime}(z) , w \rangle = (z,w)_{X,z} -  \bigl((1 + |\bfA|^2) z , w \bigr)_{L^2(\Omega)}$,
it follows that 
$\delta_z := z - \nabla_{X,z} E(z) \in H^1(\Omega)$ solves the elliptic problem
\begin{align*}
( \delta_z , v )_{X,z} =  \bigl((1 + |\bfA|^2) z , v \bigr)_{L^2(\Omega)} 
\qquad \text{for all } v\in H^1(\Omega).
\end{align*}
Hence, the Sobolev gradient in the $(\cdot,\cdot)_{X,z}$-metric is given by
$\nabla_{X,z} E(z) = z - \delta_z$. Sometimes $\nabla_{X,z} E(z)$ is also called a \quotes{preconditioned gradient}. 
Inserting this expression into \eqref{general-iteration}--\eqref{general-descent-direction} and using the 
Polak--Ribi\'ere formula \cite{polak1969} for $\beta^n$ yields the following algorithm.

\begin{definition}[Energy-adaptive nonlinear CG method]\label{def-nonlinear-cg}
Given $u^0 \in H^1(\Omega)$ with $u^0 \neq 0$ and $d^0 := 0$, perform for $n=0,1,2,\dots$ the following steps:
\begin{enumerate}
\item \textbf{Compute $\delta_{u^n}$:} With $( \cdot , \cdot )_{X,u^n}$ as in \eqref{def-au-vw}, solve for $\delta_{u^n} \in H^1(\Omega)$ such that
\[
( \delta_{u^n} , v )_{X,u^n} = \bigl((1 + |\bfA|^2) u^n , v \bigr)_{L^2(\Omega)} 
\qquad \text{for all } v \in H^1(\Omega).
\]

\item \textbf{Update descent direction:} 
\[
d^n := \delta_{u^n} - u^n + \beta^{n} d^{n-1}, \qquad
\beta^{n} := \max \!\left\{ 0,\ 
\frac{ ( \delta_{u^n} - u^n ,\, 
\delta_{u^n} - u^n - (\delta_{u^{n-1}} - u^{n-1}) )_{X,u^{n}} }
{ ( \delta_{u^{n-1}} - u^{n-1} ,\, 
\delta_{u^{n-1}} - u^{n-1} )_{X,u^{n-1}} }
\right\}.
\]

\item \textbf{Line search:} Determine
\[
\tau_n := \underset{\tau > 0}{\arg\min}\; E(u^n + \tau d^n).
\]
Note that $E(u^n + \tau d^n)$ is a quartic polynomial in $\tau$ so that an exact line search can be performed efficiently.

\item \textbf{Update iterate:} Set
\[
u^{n+1} := u^n + \tau_n d^n.
\]
\end{enumerate}
\end{definition}
\noindent
The dominant computational cost per iteration is solving the linear elliptic problem for $\delta_{u^n}$.
The line search is inexpensive since the coefficients of the quartic polynomial can be obtained by matrix--vector products.

Suitable generic starting values for the gradient method (that we also used in our experiments) are for example $u^0(x)= \tfrac{4}{5} + \ci \hspace{1pt}\tfrac{3}{5}$ (hence $|u^0|=1$), $u^0(x_1,x_2)=\ci + x_1 - \frac{1}{2}$ or  $u^0(x_1,x_2) = (x_1+\ci x_2) \exp( - \tfrac{|x|^2}{2})$.

\section{Numerical experiments}
\label{section:numerical-experiments}
In this section we present numerical experiments to support our theoretical results for local minimizers that are presumably also global minimizers. For that we consider a setting previously studied in \cite{BDH25,DoeHe24} where the computational domain is given by $\Omega = [0,1]^2 \subset \R^2$ and the  divergence-free magnetic potential (with vanishing normal trace) by
$$
\bfA(x,y):= \sqrt{2} \begin{pmatrix} \sin (\pi x) \cos (\pi y)\\ -\cos (\pi x) \sin (\pi y) \end{pmatrix}.
$$
The Ginzburg-Landau parameter is kept variable, where we consider the cases $\kappa = 8,16,24,32,40$. Note that $\kappa=8,24$ were previously studied in \cite{DoeHe24} (for P1-FEM) and $\kappa=8,16,32$ were studied in \cite{BDH25} (for P1-FEM and LOD spaces). The case $\kappa=40$ in this setting is not yet documented in the literature. Our reference minimizers $u_{\text{\tiny ref}}$ for the different $\kappa$ values were computed with the nonlinear conjugate gradient method described in Definition \ref{def-nonlinear-cg}, together with P2 finite elements on a uniform triangular mesh with mesh size $h_{\text{\tiny ref}} = 2^{-10}$.

As a tolerance for the nonlinear conjugate gradient method we used that the difference in two
successive energy approximations should be smaller than $10^{-15}$. Furthermore, after a converged
state $\widetilde u$ was found, we computed $E^{\prime\prime}(\widetilde u)$ and checked
that the spectrum was indeed non-negative (which we recall as the necessary condition for
minimizers). If there was a negative eigenvalue, we took one step (of optimal step length) in
the direction of the corresponding eigenfunction, noting that it must be a descent direction.
From the obtained result, we then restarted the conjugate gradient method. This avoids that the
iterative solver gets stuck in a saddle point. It is worth mentioning that such a step was indeed
necessary for most $\kappa$ values.

The densities $|u_{\text{\tiny ref}}|$ of the corresponding minimizers that we found are depicted in Figure \ref{fig:vortices_reference} and the corresponding energy values in Table \ref{tab:energy_reference_and_ev}. In all our test cases, we found lower energy states than the ones documented in \cite{BDH25,DoeHe24}. We also note that our results are consistent with the findings of \cite{BDH25,DoeHe24} as we could identify the states from the aforementioned references (for $\kappa=8,16,24,32$) as local minimizers of $E$. Hence, there is no contradiction to previous results. For $\kappa=40$ we do not have any reference comparison. We verified that the computed states are indeed (local) minimizers, by checking the spectrum of $E^{\prime\prime}(u_{\text{\tiny ref}})$ and verifying that the smallest eigenvalue is zero (up to numerical precision) and all remaining eigenvalues are positive. For brevity we will write in the following $u$ instead of $u_{\mbox{\tiny ref}}$.

\begin{figure}[h]
\centering
\begin{minipage}{0.3\textwidth}
\includegraphics[scale=0.25]{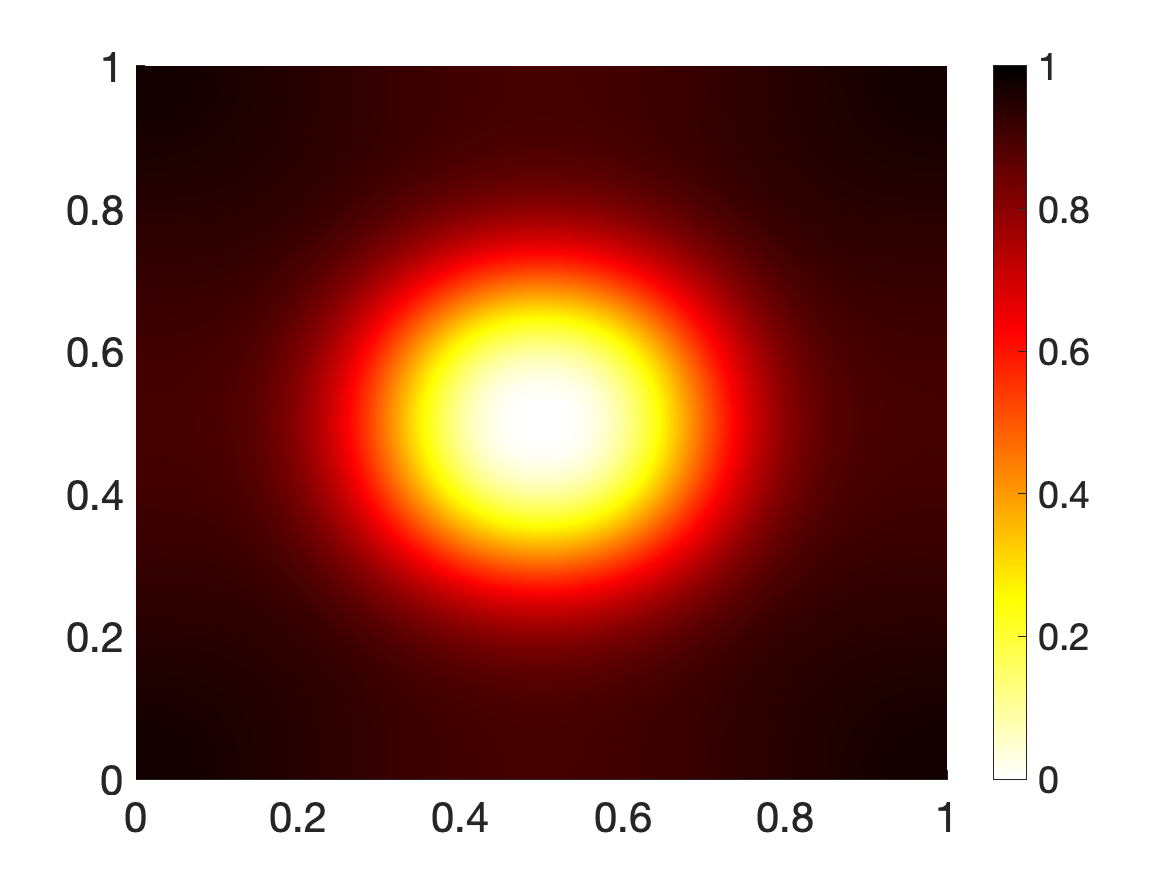}
\end{minipage}
\begin{minipage}{0.3\textwidth}
\includegraphics[scale=0.25]{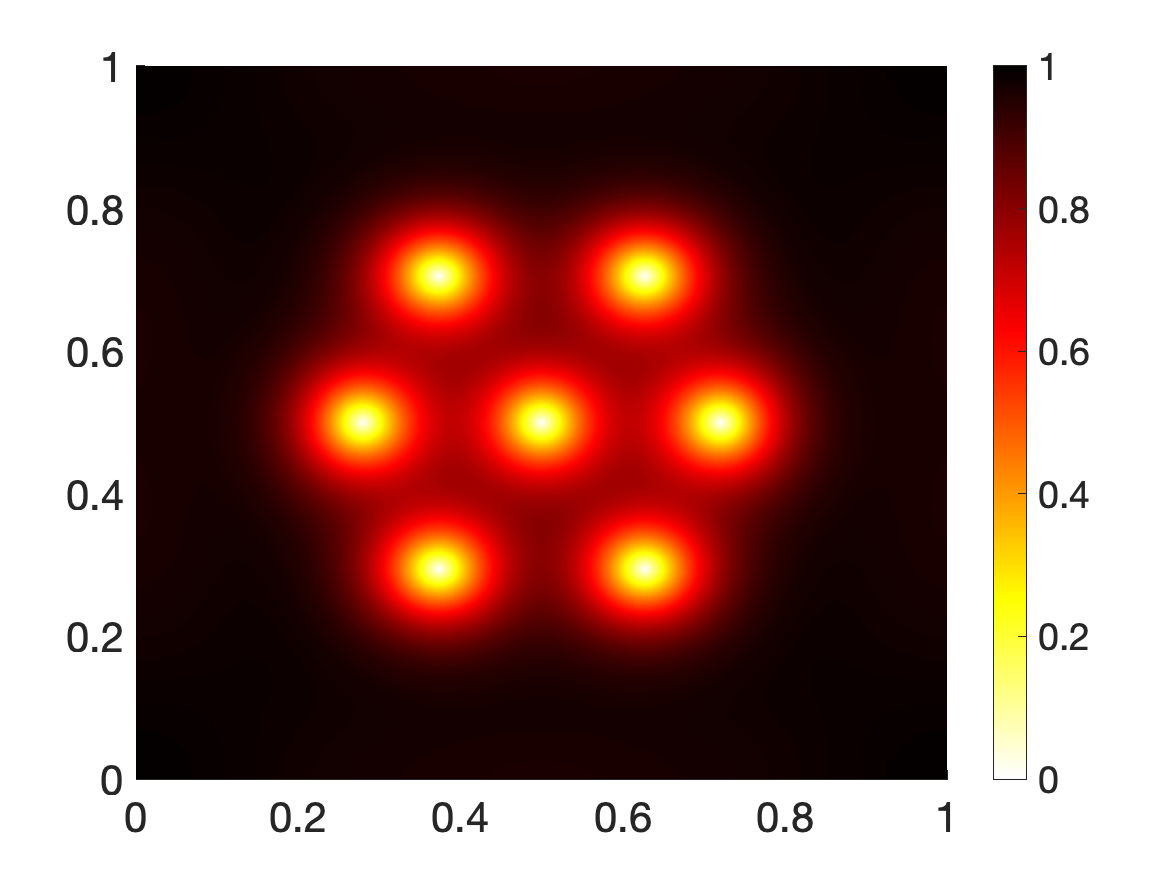}
\end{minipage}
\begin{minipage}{0.3\textwidth}
\includegraphics[scale=0.25]{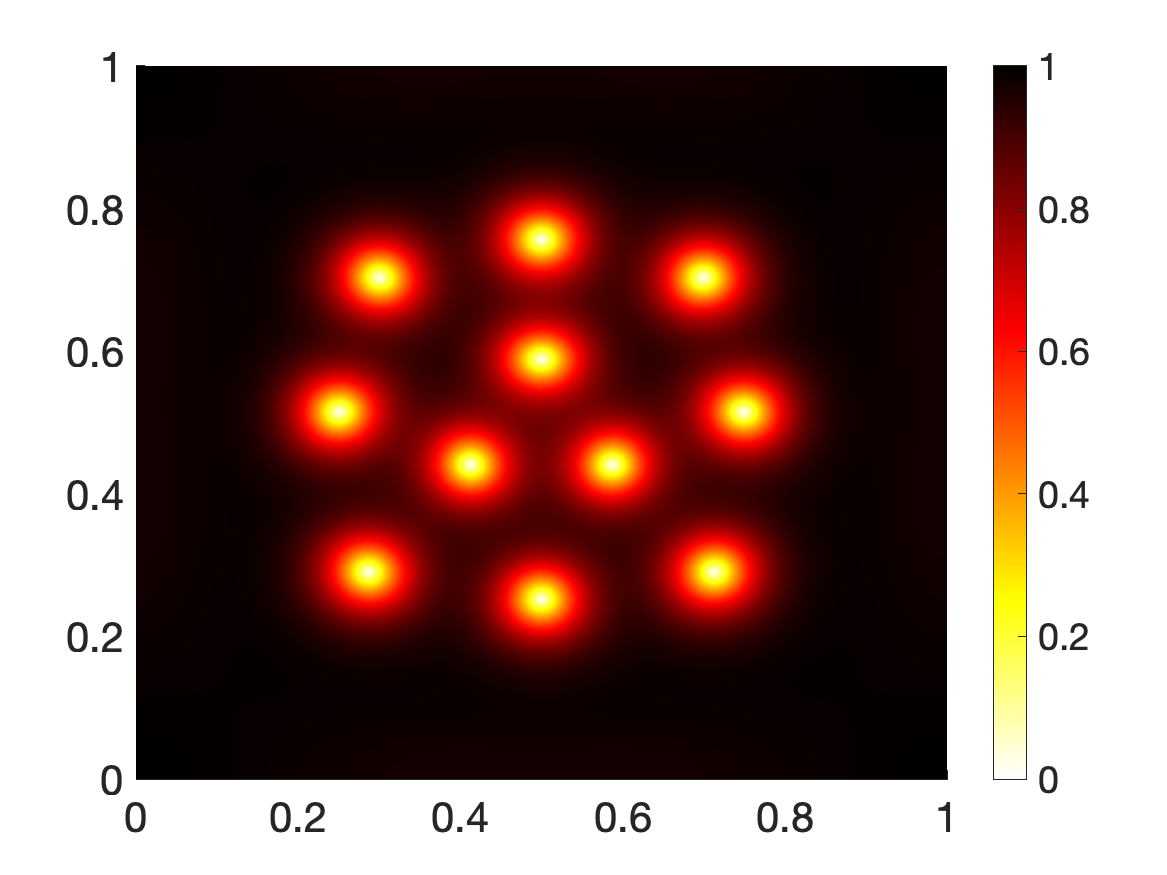}
\end{minipage} \\
\begin{minipage}{0.3\textwidth}
\includegraphics[scale=0.25]{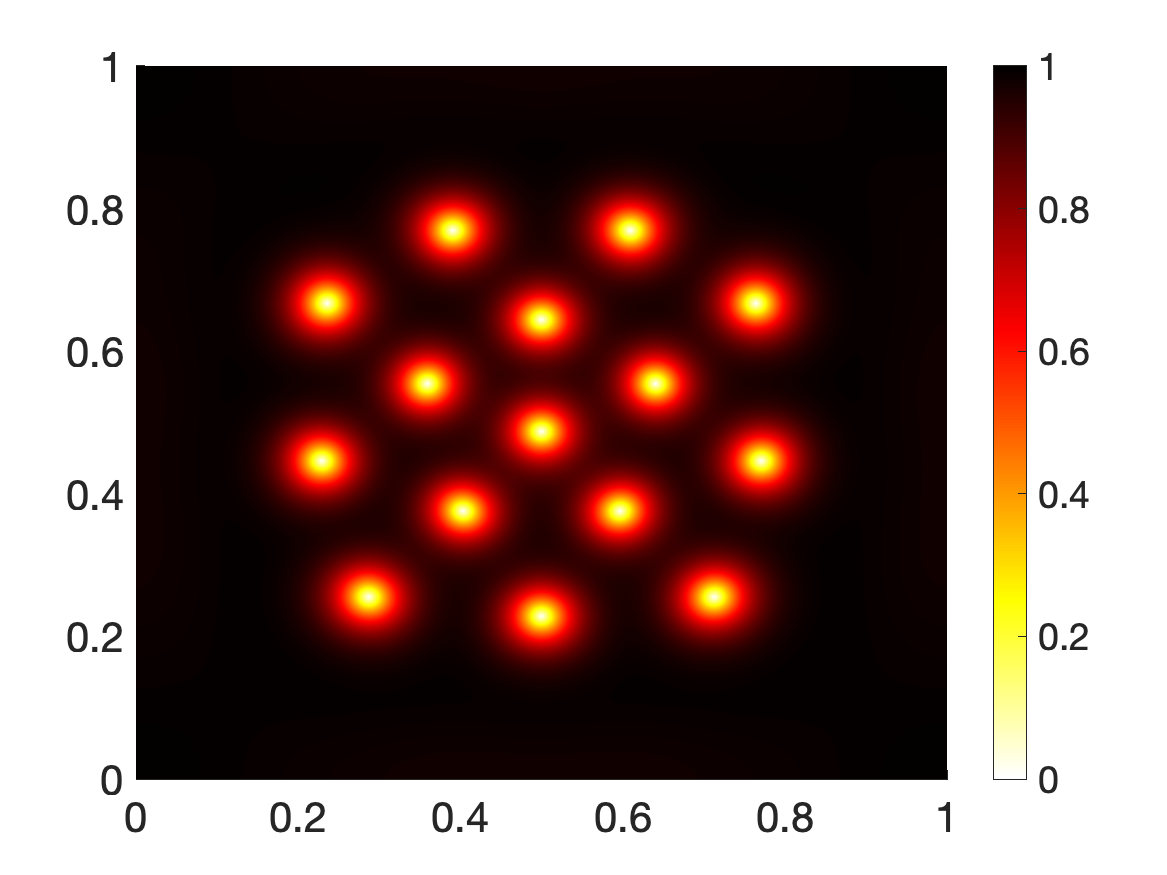}
\end{minipage}
\begin{minipage}{0.3\textwidth}
\includegraphics[scale=0.25]{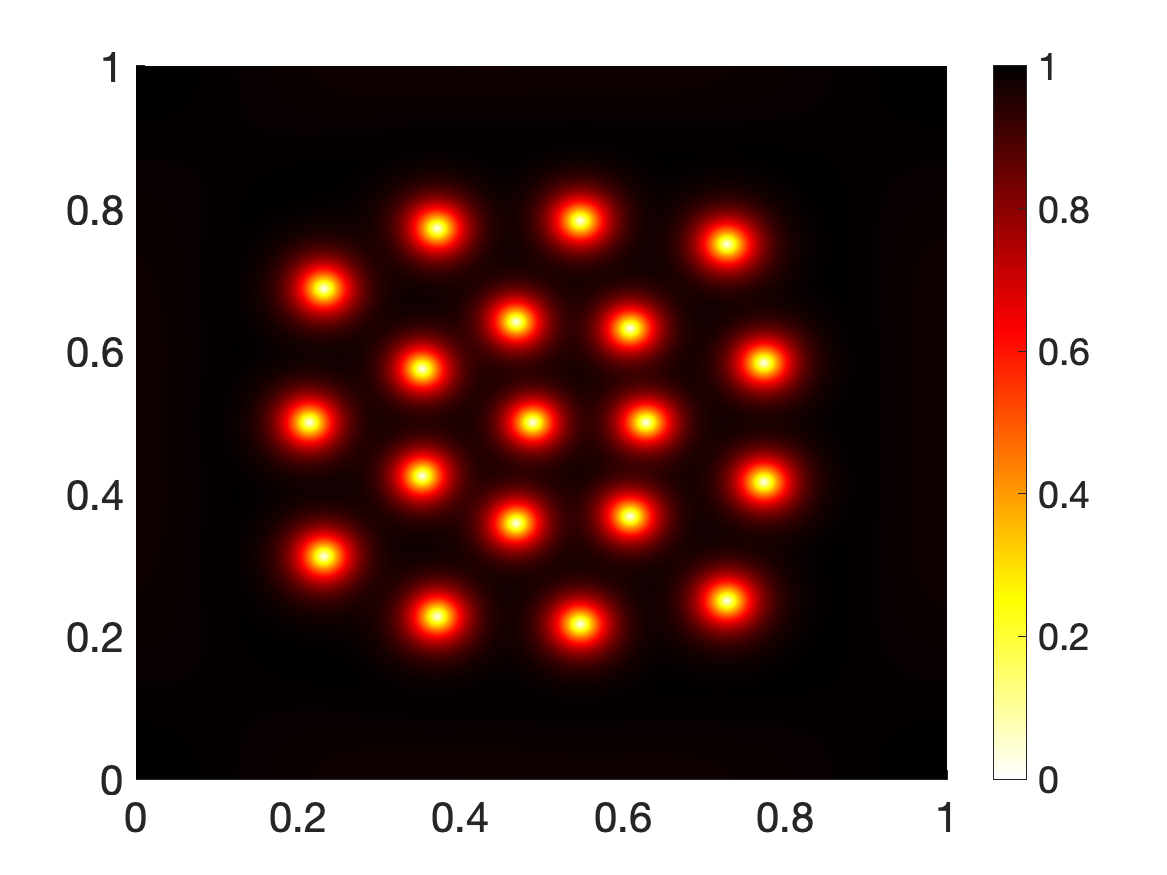}
\end{minipage}
	\caption{Vortex patterns of the reference solution $|u_{{\text{\tiny ref}}}|$ for $\kappa=8,16,24,32,40$ (from the upper left to the lower right picture).}
	\label{fig:vortices_reference}
\end{figure}

\renewcommand*{\arraystretch}{1.2}
\begin{table}[h]
	\centering
	\begin{tabular}[h]{ |c|c|c|c|c|c| }
	\hline
		$\kappa$ & 8 & 16 & 24 & 32 & 40 \\
		\hlinewd{1.2pt}
		$E(u_{\text{\tiny ref}})$
		& 1.164614e-01 
		& 8.314057e-02 
		& 6.599864e-02 
		& 5.530416e-02 
		& 4.789870e-02 \\ 
		\hline
	\end{tabular}
	\caption{Approximative minimal energy values $E(u_{\text{\tiny ref}})$ for the reference solution $u_{\text{\tiny ref}}$ for the values $\kappa=8,16,24,32,40$.}
	\label{tab:energy_reference_and_ev}
\end{table}

When comparing a reference minimizer $u$ with a discrete FE mininizer $u_h$ (for given mesh size $h$, polynomial order $p$ and fixed $\kappa$), the two states are ensured to be in phase so that the errors are not polluted by potential phase differences. For that, we compute the phase factors
\begin{align*}
\alpha_h := \frac{ \int_{\Omega} u\hspace{1pt} \overline{u_h} \hspace{1pt}\mbox{d}x }{ \left| \int_{\Omega} u \hspace{1pt} \overline{u_h} \hspace{1pt} \mbox{d}x \right|}, 
\end{align*}
which guarantee, by $|\alpha_h|=1$, that $|\alpha_h u_h | = |u_h|$ and $\int_{\Omega} u\hspace{1pt} \overline{\alpha_h u_h} \hspace{1pt}\mbox{d}x = |\alpha_h|^2=1$. Hence, $u$ and $\alpha_h u_h$ are in the same phase (that is $\alpha_h u_h \in \orthiu$) and we can compute the relevant errors $u - \alpha_h u_h$ as covered by our theory. For readability, we shall only write $u-u_h$ when presenting the results.

\subsection{Comparison of energy errors}

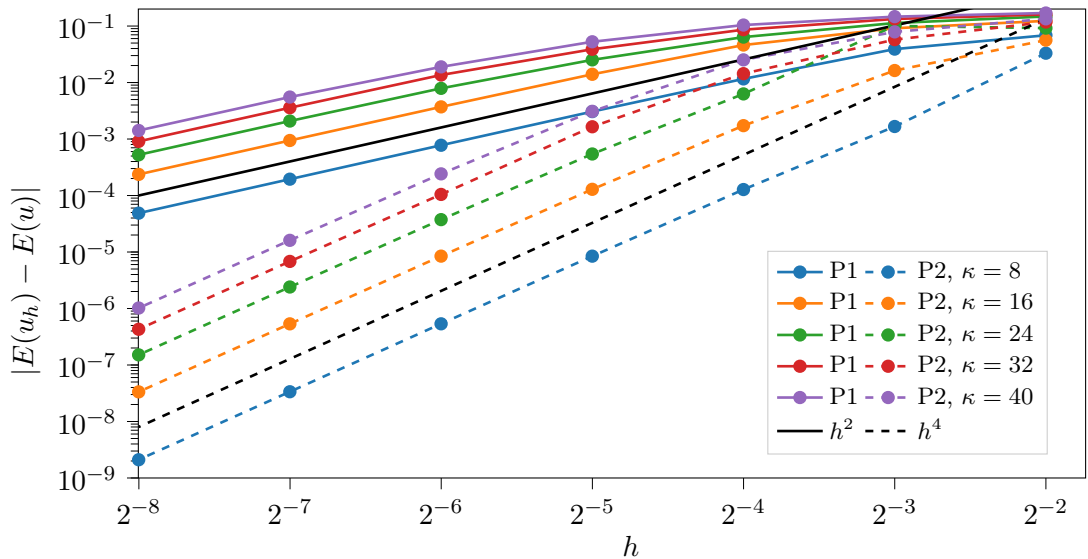
\begin{figure}[h]
	
	\centering{
\begin{tikzpicture}
   \begin{axis}[
        height=0.5\textwidth,
        width=0.9\textwidth,
	legend cell align={left},
	legend style={
	fill opacity=0.8,
	draw opacity=1,
	text opacity=1,
	at={(0.96,0.05)},
	anchor=south east,
	draw=white!80!black
	},
	log basis x={2},
	log basis y={10},
	tick align=outside,
	tick pos=left,
	x grid style={white!69.0196078431373!black},
	xmin=3.9e-03, xmax=3e-01,
	xmode=log,
	xtick style={color=black},
	xlabel={$h$},
	xlabel style={xshift=0.25cm, yshift=-0.1cm},
	y grid style={white!69.0196078431373!black},
	ymin=1e-09, ymax=2e-01,
	ymode=log,
	ytick style={color=black},
	ylabel={$|E(u_h) - E(u)|$},
	ylabel style={xshift=-0.5cm, yshift=0.25cm},
	legend columns=2,
	]
	\addplot [line width = 1, color0, mark=*, mark size=2, mark options={solid}]
	table {%
		2.500000000000000e-01     0.0686490469328192
 		1.250000000000000e-01     0.0390568536726402
		6.250000000000000e-02     0.0115935115276782
		3.125000000000000e-02     0.0030589454612752
		1.562500000000000e-02     0.0007760148732792
		7.812500000000000e-03     0.0001947321524892
                 3.906250000000000e-03     0.0000487289537062
	};
	\addlegendentry{\footnotesize P1}
	%
	\addplot [line width = 1, color0, dashed, mark=*, mark size=2, mark options={solid}]
	table {%
		2.500000000000000e-01     0.0330548049450732
 		1.250000000000000e-01     0.0016663780173582
		6.250000000000000e-02     0.0001276114785202
		3.125000000000000e-02     0.0000084454661782
		1.562500000000000e-02     0.0000005358387542
		7.812500000000000e-03     0.0000000336207712
                 3.906250000000000e-03     0.0000000021033002
	};
	\addlegendentry{\footnotesize P2, $\kappa = 8$}
	%
	\addplot [line width = 1, color1, mark=*, mark size=2, mark options={solid}]
	table {%
		2.500000000000000e-01     0.123487891940397133
 		1.250000000000000e-01     0.091431466872641133
		6.250000000000000e-02     0.046092562550846133
		3.125000000000000e-02     0.013994594743694133
		1.562500000000000e-02     0.003708252201141133
		7.812500000000000e-03     0.000942376944406133
                 3.906250000000000e-03     2.366028427345790e-04
	};
	\addlegendentry{\footnotesize P1}
	%
	\addplot [line width = 1, color1, dashed, mark=*, mark size=2, mark options={solid}]
	table {%
		2.500000000000000e-01     0.056249625494452133
 		1.250000000000000e-01     0.016322288116402133
		6.250000000000000e-02     0.001725626502027133
		3.125000000000000e-02     0.000128779066437133
		1.562500000000000e-02     0.000008454490098133
		7.812500000000000e-03     0.000000535256177133
                 3.906250000000000e-03     3.343599802585473e-08
	};
	\addlegendentry{\footnotesize P2, $\kappa=16$}
	%
	\addplot [line width = 1, color2, mark=*, mark size=2, mark options={solid}]
	table {%
		2.500000000000000e-01     0.147859004095639867  
 		1.250000000000000e-01     0.113058867802146867
		6.250000000000000e-02     0.063733685644205867
		3.125000000000000e-02     0.025184378558693867
		1.562500000000000e-02     0.007894355473875867 
		7.812500000000000e-03     0.002082006363042867
                 3.906250000000000e-03     0.000525210008332867 
	};
	\addlegendentry{\footnotesize P1}
	%
	\addplot [line width = 1, color2, dashed, mark=*, mark size=2, mark options={solid}]
	table {%
		2.500000000000000e-01     0.091876115118804867   
 		1.250000000000000e-01     0.100579938879802
		6.250000000000000e-02     0.006287765003780867
		3.125000000000000e-02     0.000543596270658867
		1.562500000000000e-02     0.000037443019637867 
		7.812500000000000e-03     0.000002401906390867
                 3.906250000000000e-03     0.000000151146060333
	};
	\addlegendentry{\footnotesize P2, $\kappa = 24$}
	%
	\addplot [line width = 1, color3, mark=*, mark size=2, mark options={solid}]
	table {%
		2.500000000000000e-01     0.160581419121754
 		1.250000000000000e-01     0.133429085941114
		6.250000000000000e-02     0.086200356430980    
		3.125000000000000e-02     0.038809463628050
		1.562500000000000e-02     0.013595166837361
		7.812500000000000e-03     0.003581780118932
                 3.906250000000000e-03     9.084448101686626e-04 
	};
	\addlegendentry{\footnotesize P1}
	%
	\addplot [line width = 1, color3, dashed, mark=*, mark size=2, mark options={solid}]
	table {%
		2.500000000000000e-01     0.117789581613689
 		1.250000000000000e-01     0.057478428962837
		6.250000000000000e-02     0.014362045342274
		3.125000000000000e-02     0.001648686947102
		1.562500000000000e-02     1.045590640125493e-04
		7.812500000000000e-03     6.816630762546128e-06
                 3.906250000000000e-03     4.292253561188897e-07 
	};
	\addlegendentry{\footnotesize P2, $\kappa = 32$}
	%
	\addplot [line width = 1, color4, mark=*, mark size=2, mark options={solid}]
	table {%
		2.500000000000000e-01     0.170116295547624
 		1.250000000000000e-01     0.146680750718399
		6.250000000000000e-02     0.104181336473454
		3.125000000000000e-02     0.052683385138651  
		1.562500000000000e-02     0.018940380684403
		7.812500000000000e-03     0.005535594196253
                 3.906250000000000e-03     0.001416179842608 
	};
	\addlegendentry{\footnotesize P1}
	%
	\addplot [line width = 1, color4, dashed, mark=*, mark size=2, mark options={solid}]
	table {%
		2.500000000000000e-01     0.133941226940954
 		1.250000000000000e-01     0.078852627056083
		6.250000000000000e-02     0.025210345838567
		3.125000000000000e-02     0.003093192809959
		1.562500000000000e-02     0.000241787990613
		7.812500000000000e-03     1.609798882507008e-05 
                 3.906250000000000e-03     1.019299650240058e-06 
	};
	\addlegendentry{\footnotesize P2, $\kappa = 40$}
	\addplot [line width = 1, black]
	table {%
		2.500000000000000e-01     0.4096
 		1.250000000000000e-01     0.1024
		6.250000000000000e-02     0.0256
		3.125000000000000e-02     0.0064
		1.562500000000000e-02     0.0016
		7.812500000000000e-03     0.0004
                 3.906250000000000e-03     0.0001
	};
	\addlegendentry{\footnotesize $h^2$}
	\addplot [line width = 1, black, dashed]
	table {%
		2.500000000000000e-01     0.134217728
 		1.250000000000000e-01     0.008388608
		6.250000000000000e-02     0.000524288
		3.125000000000000e-02     0.000032768
		1.562500000000000e-02     0.000002048
		7.812500000000000e-03     0.000000128
                 3.906250000000000e-03     0.000000008
	};
	\addlegendentry{\footnotesize $h^4$}
    \end{axis}
\end{tikzpicture}
	}
	\caption{Error in energy $|E(u)-E(u_h)|$ for $\kappa = 8,16,24,32,40$ between the reference minimal energy $E(u)$ and the minimal energy $E(u_h)$ in the finite element spaces for P1 (solid lines) and P2 (dashed lines) respectively.  
	}
	\label{fig:energy-error}
\end{figure}

We start with comparing the minimal energy level $E(u)$ with the discrete minimal energy levels $E(u_h)$ for P1 and P2 respectively. The corresponding results for $\kappa = 8,16,24,32,40$ are depicted in Figure \ref{fig:energy-error}. As predicted by Proposition \ref{prop-energy-error}, we can clearly see the second order convergence in $h$ for the P1 energy and fourth order convergence for the P2 energy. Furthermore, we can confirm that the resolution condition $h \kappa \lesssim 1$ is sufficient to observe the correct asymptotic convergence rates. As expected, the pre-asymptotic regime is still barely visible for $\kappa=8$ and it becomes slightly more pronounced with growing $\kappa$. For fixed $\kappa$, the asymptotic convergence regime starts essentially simultaneously for the P1 and P2 energy approximation. All in all, the experiments fully confirm the behavior of the energy error as stated in Proposition \ref{prop-energy-error}.
\subsection{Comparison of $L^2$- and $H^1_{\kappa}$-errors}
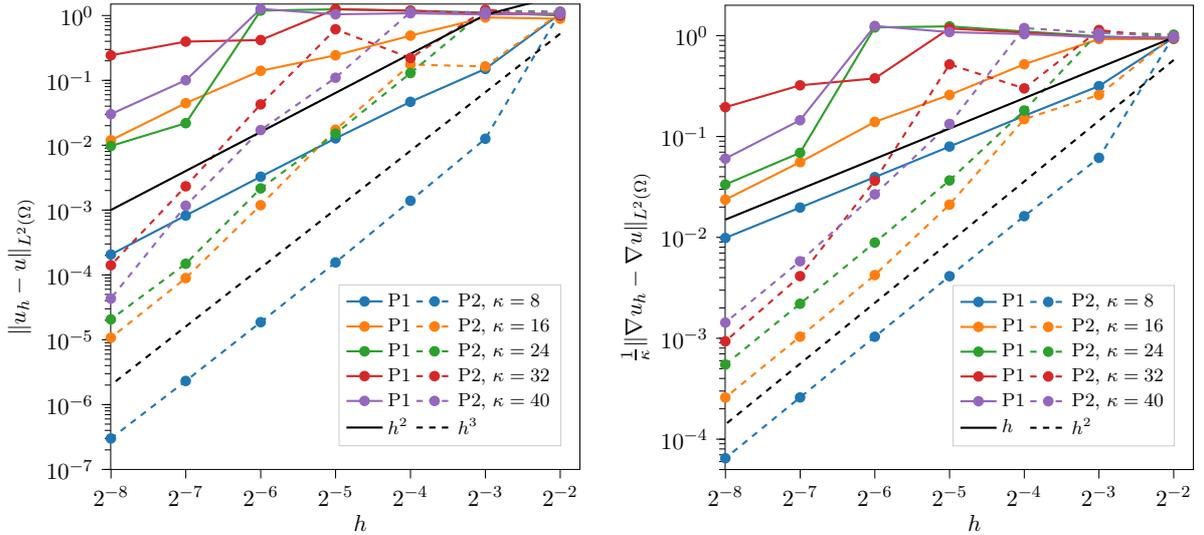
\begin{figure}[h]
	
	\begin{adjustbox}{width=0.5\linewidth}
\begin{tikzpicture}
   \begin{axis}[
        height=0.6\textwidth,
        width=0.6\textwidth,
	legend cell align={left},
	legend style={
	fill opacity=0.8,
	draw opacity=1,
	text opacity=1,
	at={(0.96,0.05)},
	anchor=south east,
	draw=white!80!black
	},
	log basis x={2},
	log basis y={10},
	tick align=outside,
	tick pos=left,
	x grid style={white!69.0196078431373!black},
	xmin=3.9e-03, xmax=3e-01,
	xmode=log,
	xtick style={color=black},
	xlabel={$h$},
	xlabel style={xshift=0.25cm, yshift=-0.1cm},
	y grid style={white!69.0196078431373!black},
	ymin=1e-07, ymax=1.5e+00,
	ymode=log,
	ytick style={color=black},
	ylabel={$\| u_h - u \|_{L^2(\Omega)}$},
	ylabel style={xshift=-0.5cm, yshift=0.25cm},
	legend columns=2,
	]
	\addplot [line width = 1, color0, mark=*, mark size=2, mark options={solid}]
	table {%
		2.500000000000000e-01     1.064764895563741
 		1.250000000000000e-01     0.150053830549995  
		6.250000000000000e-02     0.046639595574248
		3.125000000000000e-02     0.012736655685185 
		1.562500000000000e-02     0.003268538352030
		7.812500000000000e-03     8.224150953367682e-04
                 3.906250000000000e-03     2.060865457999999e-04
	};
	\addlegendentry{\footnotesize P1}
	%
	\addplot [line width = 1, color0, dashed, mark=*, mark size=2, mark options={solid}]
	table {%
		2.500000000000000e-01     1.128946663226182
 		1.250000000000000e-01     0.012572360996283
		6.250000000000000e-02     0.001389934373887
		3.125000000000000e-02     1.555442389674585e-04
		1.562500000000000e-02     1.865388762688436e-05
		7.812500000000000e-03     2.312902285107528e-06
                 3.906250000000000e-03     3.002797215051112e-07
	};
	\addlegendentry{\footnotesize P2, $\kappa = 8$}
	%
	\addplot [line width = 1, color1, mark=*, mark size=2, mark options={solid}]
	table {%
		2.500000000000000e-01     0.892740218075323
 		1.250000000000000e-01     0.932617449168569
		6.250000000000000e-02     0.488706888357458
		3.125000000000000e-02     0.243191716757446
		1.562500000000000e-02     0.140351845072075
		7.812500000000000e-03     0.044485542265553
                 3.906250000000000e-03     0.011909715312847
	};
	\addlegendentry{\footnotesize P1}
	%
	\addplot [line width = 1, color1, dashed, mark=*, mark size=2, mark options={solid}]
	table {%
		2.500000000000000e-01     1.093144319531300
 		1.250000000000000e-01     0.163850375908307
		6.250000000000000e-02     0.175114020363319
		3.125000000000000e-02     0.017281482855678
		1.562500000000000e-02     0.001188976980469
		7.812500000000000e-03     8.916024433462845e-05
                 3.906250000000000e-03     1.072862545144650e-05
	};
	\addlegendentry{\footnotesize P2, $\kappa=16$}
	%
	\addplot [line width = 1, color2, mark=*, mark size=2, mark options={solid}]
	table {%
		2.500000000000000e-01     1.000670541817422
 		1.250000000000000e-01     1.106592950587941
		6.250000000000000e-02     1.186677815395274
		3.125000000000000e-02     1.246725055610351
		1.562500000000000e-02     1.190326833565436
		7.812500000000000e-03     0.021850861140545
                 3.906250000000000e-03     0.009724102800119
	};
	\addlegendentry{\footnotesize P1}
	%
	\addplot [line width = 1, color2, dashed, mark=*, mark size=2, mark options={solid}]
	table {%
		2.500000000000000e-01     1.145234301880612
 		1.250000000000000e-01     1.180746801504670
		6.250000000000000e-02     0.129427288646972
		3.125000000000000e-02     0.015081395829735
		1.562500000000000e-02     0.002153382843506
		7.812500000000000e-03     1.486272229490621e-04
                 3.906250000000000e-03     2.061751405311361e-05
	};
	\addlegendentry{\footnotesize P2, $\kappa = 24$}
	%
	\addplot [line width = 1, color3, mark=*, mark size=2, mark options={solid}]
	table {%
		2.500000000000000e-01     1.058157776773067
 		1.250000000000000e-01     1.075251899056151
		6.250000000000000e-02     1.184885879499068
		3.125000000000000e-02     1.252953535138133
		1.562500000000000e-02     0.418134624053885 
		7.812500000000000e-03     0.395906576923468
                 3.906250000000000e-03     0.243331041755834
	};
	\addlegendentry{\footnotesize P1}
	%
	\addplot [line width = 1, color3, dashed, mark=*, mark size=2, mark options={solid}]
	table {%
		2.500000000000000e-01     1.005440845455322
 		1.250000000000000e-01     1.225719850732240
		6.250000000000000e-02     0.220339616360728
		3.125000000000000e-02     0.613430431015591
		1.562500000000000e-02     0.042568953214988 
		7.812500000000000e-03     0.002322784157689
                 3.906250000000000e-03     1.409927232990894e-04
	};
	\addlegendentry{\footnotesize P2, $\kappa = 32$}
	%
	\addplot [line width = 1, color4, mark=*, mark size=2, mark options={solid}]
	table {%
		2.500000000000000e-01     1.052298514405925
 		1.250000000000000e-01     1.036249721524094
		6.250000000000000e-02     1.082847814767705
		3.125000000000000e-02     1.043921143797822
		1.562500000000000e-02     1.266171720625219 
		7.812500000000000e-03     0.100428132420012
                 3.906250000000000e-03     0.030124310012315
	};
	\addlegendentry{\footnotesize P1}
	%
	\addplot [line width = 1, color4, dashed, mark=*, mark size=2, mark options={solid}]
	table {%
		2.500000000000000e-01     1.134804204290670
 		1.250000000000000e-01     1.127614987300715
		6.250000000000000e-02     1.127787095478574
		3.125000000000000e-02     0.109731527992190
		1.562500000000000e-02     0.017121091437065 
		7.812500000000000e-03     0.001166844714540
                 3.906250000000000e-03     4.316707992032286e-05
	};
	\addlegendentry{\footnotesize P2, $\kappa = 40$}
	\addplot [line width = 1, black]
	table {%
		2.500000000000000e-01     2.048
 		1.250000000000000e-01     1.024
		6.250000000000000e-02     0.256
		3.125000000000000e-02     0.064
		1.562500000000000e-02     0.016
		7.812500000000000e-03     0.004
                 3.906250000000000e-03     0.001
	};
	\addlegendentry{\footnotesize $h^2$}
	%
	\addplot [line width = 1, black, dashed]
	table {%
		2.500000000000000e-01    0.524288
 		1.250000000000000e-01    0.065536
		6.250000000000000e-02    0.008192
		3.125000000000000e-02    0.001024
		1.562500000000000e-02    0.000128
		7.812500000000000e-03    0.000016
                 3.906250000000000e-03    2e-06
	};
	\addlegendentry{\footnotesize $h^3$}
    \end{axis}
\end{tikzpicture}	
	\end{adjustbox}
	\hspace{0.1cm}
	\begin{adjustbox}{width=0.5\linewidth}
\begin{tikzpicture}
   \begin{axis}[
        height=0.6\textwidth,
        width=0.6\textwidth,
	legend cell align={left},
	legend style={
	fill opacity=0.8,
	draw opacity=1,
	text opacity=1,
	at={(0.96,0.05)},
	anchor=south east,
	draw=white!80!black
	},
	log basis x={2},
	log basis y={10},
	tick align=outside,
	tick pos=left,
	x grid style={white!69.0196078431373!black},
	xmin=3.9e-03, xmax=3e-01,
	xmode=log,
	xtick style={color=black},
	xlabel={$h$},
	xlabel style={xshift=0.25cm, yshift=-0.1cm},
	y grid style={white!69.0196078431373!black},
	ymin=5e-05, ymax=2e+00,
	ymode=log,
	ytick style={color=black},
	ylabel={$\tfrac{1}{\kappa} \| \nabla u_h - \nabla u\|_{L^2(\Omega)}$},
	ylabel style={xshift=-0.5cm, yshift=0.25cm},
	legend columns=2,
	]
	\addplot [line width = 1, color0, mark=*, mark size=2, mark options={solid}]
	table {%
		2.500000000000000e-01     0.954571782073502
 		1.250000000000000e-01     0.316034705183352
		6.250000000000000e-02     0.160451816313347
		3.125000000000000e-02     0.079514387841140
		1.562500000000000e-02     0.039569856401940
		7.812500000000000e-03     0.019756980646044
                 3.906250000000000e-03     0.009874874405080
	};
	\addlegendentry{\footnotesize P1}
	%
	\addplot [line width = 1, color0, dashed, mark=*, mark size=2, mark options={solid}]
	table {%
		2.500000000000000e-01     0.993676334498462
 		1.250000000000000e-01     0.061496426721215
		6.250000000000000e-02     0.016240581511332
		3.125000000000000e-02     0.004126520207998
		1.562500000000000e-02     0.001036250133490
		7.812500000000000e-03     2.593531171514743e-04
                 3.906250000000000e-03     6.473834157063776e-05
	};
	\addlegendentry{\footnotesize P2, $\kappa = 8$}
	%
	\addplot [line width = 1, color1, mark=*, mark size=2, mark options={solid}]
	table {%
		2.500000000000000e-01     0.925138866062612
 		1.250000000000000e-01     0.924462082391346
		6.250000000000000e-02     0.520088773918308
		3.125000000000000e-02     0.258311402708613
		1.562500000000000e-02     0.139473492216770
		7.812500000000000e-03     0.055634316913861
                 3.906250000000000e-03     0.023663351850619
	};
	\addlegendentry{\footnotesize P1}
	%
	\addplot [line width = 1, color1, dashed, mark=*, mark size=2, mark options={solid}]
	table {%
		2.500000000000000e-01     0.996156011926759
 		1.250000000000000e-01     0.258595910173735
		6.250000000000000e-02     0.148648416080996
		3.125000000000000e-02     0.021130424947878
		1.562500000000000e-02     0.004232577998943
		7.812500000000000e-03     0.001038024069348
                 3.906250000000000e-03     2.588036207124510e-04
	};
	\addlegendentry{\footnotesize P2, $\kappa=16$}
	%
	\addplot [line width = 1, color2, mark=*, mark size=2, mark options={solid}]
	table {%
		2.500000000000000e-01     0.933086664329161
 		1.250000000000000e-01     0.984244837207600
		6.250000000000000e-02     1.099296392981997
		3.125000000000000e-02     1.235478104319384
		1.562500000000000e-02     1.205324335586780
		7.812500000000000e-03     0.068977367734170
                 3.906250000000000e-03     0.033426383500327
	};
	\addlegendentry{\footnotesize P1}
	%
	\addplot [line width = 1, color2, dashed, mark=*, mark size=2, mark options={solid}]
	table {%
		2.500000000000000e-01     1.022784814107536
 		1.250000000000000e-01     1.081854721766629
		6.250000000000000e-02     0.181011116739935
		3.125000000000000e-02     0.036701196260005
		1.562500000000000e-02     0.008888630637450
		7.812500000000000e-03     0.002200250049548
                 3.906250000000000e-03     5.492949380883303e-04
	};
	\addlegendentry{\footnotesize P2, $\kappa = 24$}
	%
	\addplot [line width = 1, color3, mark=*, mark size=2, mark options={solid}]
	table {%
		2.500000000000000e-01     0.949657026411681
 		1.250000000000000e-01     0.965282011872039
		6.250000000000000e-02     1.069267152932629
		3.125000000000000e-02     1.178450088413673
		1.562500000000000e-02     0.376764231371264
		7.812500000000000e-03     0.321647260333476
                 3.906250000000000e-03     0.195195059768914
	};
	\addlegendentry{\footnotesize P1}
	%
	\addplot [line width = 1, color3, dashed, mark=*, mark size=2, mark options={solid}]
	table {%
		2.500000000000000e-01     0.944018132726887
 		1.250000000000000e-01     1.135175200943707
		6.250000000000000e-02     0.299757648993629
		3.125000000000000e-02     0.518802201395388
		1.562500000000000e-02     0.036478703085470
		7.812500000000000e-03     0.004133884911989
                 3.906250000000000e-03    9.342467185912502e-04
	};
	\addlegendentry{\footnotesize P2, $\kappa = 32$}
	%
	\addplot [line width = 1, color4, mark=*, mark size=2, mark options={solid}]
	table {%
		2.500000000000000e-01     0.951849338650917
 		1.250000000000000e-01     0.966260652265844
		6.250000000000000e-02     1.033527521614415
		3.125000000000000e-02     1.083665263574138
		1.562500000000000e-02     1.246237282635738
		7.812500000000000e-03     0.145026092049195
                 3.906250000000000e-03     0.060359293290927
	};
	\addlegendentry{\footnotesize P1}
	%
	\addplot [line width = 1, color4, dashed, mark=*, mark size=2, mark options={solid}]
	table {%
		2.500000000000000e-01     0.989079937764740
 		1.250000000000000e-01     1.063451646184564
		6.250000000000000e-02     1.188506461824488
		3.125000000000000e-02     0.133079856945677
		1.562500000000000e-02     0.026710145111903
		7.812500000000000e-03     0.005799591088139
                 3.906250000000000e-03     0.001431157456014
	};
	\addlegendentry{\footnotesize P2, $\kappa = 40$}
	%
	\addplot [line width = 1, black]
	table {%
		2.500000000000000e-01     0.96
 		1.250000000000000e-01     0.48
		6.250000000000000e-02     0.24
		3.125000000000000e-02     0.12
		1.562500000000000e-02     0.06
		7.812500000000000e-03     0.03
                 3.906250000000000e-03     0.015
	};
	\addlegendentry{\footnotesize $h$}
	\addplot [line width = 1, black, dashed]
	table {%
		2.500000000000000e-01     0.57344
 		1.250000000000000e-01     0.14336
		6.250000000000000e-02     0.03584
		3.125000000000000e-02     0.00896
		1.562500000000000e-02     0.00224
		7.812500000000000e-03     0.00056
                 3.906250000000000e-03     1.4e-04
	};
	\addlegendentry{\footnotesize $h^2$}
    \end{axis}
\end{tikzpicture}
	\end{adjustbox}
	\caption{
	Comparison of the errors $\| u - u_{h} \|_{L^2(\Omega)}$ and $\tfrac{1}{\kappa}\| \nabla u - \nabla u_{h} \|_{L^2(\Omega)}$ for the discrete minimizers $u_h$ in the FEM spaces $V_{h,1}$ (P1, solid lines) and $V_{h,2}$ (P2, dashed lines), where $\kappa = 8,16,24,32,40$.}
	\label{fig:L2-H1-errors}
\end{figure}

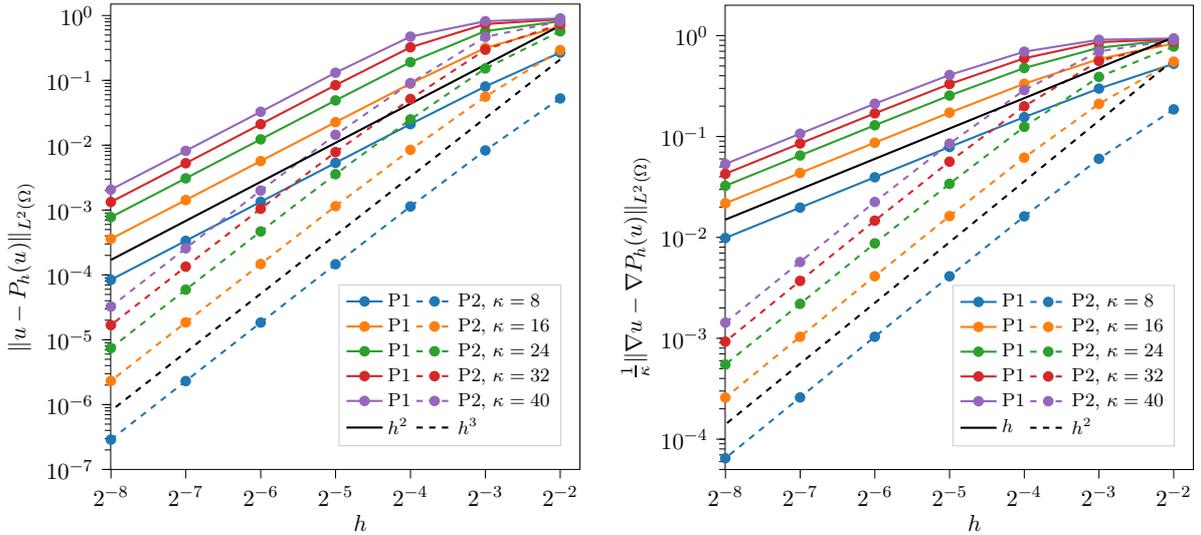
\begin{figure}[h]
	\begin{adjustbox}{width=0.5\linewidth}
\begin{tikzpicture}
   \begin{axis}[
        height=0.6\textwidth,
        width=0.6\textwidth,
	legend cell align={left},
	legend style={
	fill opacity=0.8,
	draw opacity=1,
	text opacity=1,
	at={(0.96,0.05)},
	anchor=south east,
	draw=white!80!black
	},
	log basis x={2},
	log basis y={10},
	tick align=outside,
	tick pos=left,
	x grid style={white!69.0196078431373!black},
	xmin=3.9e-03, xmax=3e-01,
	xmode=log,
	xtick style={color=black},
	xlabel={$h$},
	xlabel style={xshift=0.25cm, yshift=-0.1cm},
	y grid style={white!69.0196078431373!black},
	ymin=1e-07, ymax=1.5e+00,
	ymode=log,
	ytick style={color=black},
	ylabel={$\|  u - P_h(u)  \|_{L^2(\Omega)}$},
	ylabel style={xshift=-0.5cm, yshift=0.25cm},
	legend columns=2,
	]
	\addplot [line width = 1, color0, mark=*, mark size=2, mark options={solid}]
	table {%
		2.500000000000000e-01     0.267613271960889
 		1.250000000000000e-01     0.080478547856377
		6.250000000000000e-02     0.021109660980896
		3.125000000000000e-02     0.005337359020718
		1.562500000000000e-02     0.001338170016526
		7.812500000000000e-03     3.349665939707012e-04
                 3.906250000000000e-03     8.397166591310588e-05
	};
	\addlegendentry{\footnotesize P1}
	%
	\addplot [line width = 1, color0, dashed, mark=*, mark size=2, mark options={solid}]
	table {%
		2.500000000000000e-01     0.052619567451180
 		1.250000000000000e-01     0.008306294097603
		6.250000000000000e-02     0.001133971225346
		3.125000000000000e-02     1.459332124684916e-04
		1.562500000000000e-02     1.839060189494005e-05
		7.812500000000000e-03     2.303852435684596e-06
                 3.906250000000000e-03     2.881209507290185e-07
	};
	\addlegendentry{\footnotesize P2, $\kappa = 8$}
	%
	\addplot [line width = 1, color1, mark=*, mark size=2, mark options={solid}]
	table {%
		2.500000000000000e-01     0.679922318443323
 		1.250000000000000e-01     0.316164191577368
		6.250000000000000e-02     0.088959867014587
		3.125000000000000e-02     0.022669116870144
		1.562500000000000e-02     0.005689921136596
		7.812500000000000e-03     0.001425474575499
                 3.906250000000000e-03     3.583030730269583e-04
	};
	\addlegendentry{\footnotesize P1}
	%
	\addplot [line width = 1, color1, dashed, mark=*, mark size=2, mark options={solid}]
	table {%
		2.500000000000000e-01     0.292624311220371
 		1.250000000000000e-01     0.055984420107598
		6.250000000000000e-02     0.008483407387361
		3.125000000000000e-02     0.001147515130218
		1.562500000000000e-02     1.469877525256945e-04
		7.812500000000000e-03     1.849726305521159e-05
                 3.906250000000000e-03     2.316114092025825e-06
	};
	\addlegendentry{\footnotesize P2, $\kappa=16$}
	%
	\addplot [line width = 1, color2, mark=*, mark size=2, mark options={solid}]
	table {%
		2.500000000000000e-01     0.816162921894385
 		1.250000000000000e-01     0.574478142494099
		6.250000000000000e-02     0.190840464694219
		3.125000000000000e-02     0.049046297203338
		1.562500000000000e-02     0.012302615560233
		7.812500000000000e-03     0.003082603607512
                 3.906250000000000e-03     7.764723805213879e-04
	};
	\addlegendentry{\footnotesize P1}
	%
	\addplot [line width = 1, color2, dashed, mark=*, mark size=2, mark options={solid}]
	table {%
		2.500000000000000e-01     0.569849237036193
 		1.250000000000000e-01     0.152545503264869
		6.250000000000000e-02     0.024995287939261
		3.125000000000000e-02     0.003572402728049
		1.562500000000000e-02     4.673969596210482e-04
		7.812500000000000e-03     5.918470777177497e-05
                 3.906250000000000e-03     7.423069473790016e-06
	};
	\addlegendentry{\footnotesize P2, $\kappa = 24$}
	%
	\addplot [line width = 1, color3, mark=*, mark size=2, mark options={solid}]
	table {%
		2.500000000000000e-01     0.871417397649451
 		1.250000000000000e-01     0.738234576883724
		6.250000000000000e-02     0.323245325866659
		3.125000000000000e-02     0.084293481115998
		1.562500000000000e-02     0.021071906534248
		7.812500000000000e-03     0.005275185884030
                 3.906250000000000e-03     0.001330473955117
	};
	\addlegendentry{\footnotesize P1}
	%
	\addplot [line width = 1, color3, dashed, mark=*, mark size=2, mark options={solid}]
	table {%
		2.500000000000000e-01     0.740295204344505
 		1.250000000000000e-01     0.296700974576537
		6.250000000000000e-02     0.051534627337121
		3.125000000000000e-02     0.007823128315731
		1.562500000000000e-02     0.001048455045913
		7.812500000000000e-03     1.337498478118587e-04
                 3.906250000000000e-03     1.680896318282327e-05
	};
	\addlegendentry{\footnotesize P2, $\kappa = 32$}
	%
	\addplot [line width = 1, color4, mark=*, mark size=2, mark options={solid}]
	table {%
		2.500000000000000e-01     0.903828694452771
 		1.250000000000000e-01     0.815638132058588
		6.250000000000000e-02     0.472502239945422
		3.125000000000000e-02     0.131118352918641
		1.562500000000000e-02     0.032740389529451
		7.812500000000000e-03     0.008186818429141
                 3.906250000000000e-03     0.002065129626249
	};
	\addlegendentry{\footnotesize P1}
	%
	\addplot [line width = 1, color4, dashed, mark=*, mark size=2, mark options={solid}]
	table {%
		2.500000000000000e-01     0.812415059895561
 		1.250000000000000e-01     0.466711027340966
		6.250000000000000e-02     0.090722945834030
		3.125000000000000e-02     0.014476830747287
		1.562500000000000e-02     0.001997437590813
		7.812500000000000e-03     2.572266722888303e-04
                 3.906250000000000e-03     3.241080473958940e-05
	};
	\addlegendentry{\footnotesize P2, $\kappa = 40$}
	\addplot [line width = 1, black]
	table {%
		2.500000000000000e-01     0.69632
 		1.250000000000000e-01     0.17408
		6.250000000000000e-02     0.04352
		3.125000000000000e-02     0.01088
		1.562500000000000e-02     0.00272
		7.812500000000000e-03     0.00068
                 3.906250000000000e-03     0.00017
	};
	\addlegendentry{\footnotesize $h^2$}
	%
	\addplot [line width = 1, black, dashed]
	table {%
		2.500000000000000e-01    0.2097152
 		1.250000000000000e-01    0.0262144
		6.250000000000000e-02    0.0032768
		3.125000000000000e-02    0.0004096
		1.562500000000000e-02    0.0000512
		7.812500000000000e-03    0.0000064
                 3.906250000000000e-03    0.8e-06
	};
	\addlegendentry{\footnotesize $h^3$}
    \end{axis}
\end{tikzpicture}	
	\end{adjustbox}
	\hspace{0.1cm}
	\begin{adjustbox}{width=0.5\linewidth}
\begin{tikzpicture}
   \begin{axis}[
        height=0.6\textwidth,
        width=0.6\textwidth,
	legend cell align={left},
	legend style={
	fill opacity=0.8,
	draw opacity=1,
	text opacity=1,
	at={(0.96,0.05)},
	anchor=south east,
	draw=white!80!black
	},
	log basis x={2},
	log basis y={10},
	tick align=outside,
	tick pos=left,
	x grid style={white!69.0196078431373!black},
	xmin=3.9e-03, xmax=3e-01,
	xmode=log,
	xtick style={color=black},
	xlabel={$h$},
	xlabel style={xshift=0.25cm, yshift=-0.1cm},
	y grid style={white!69.0196078431373!black},
	ymin=5e-05, ymax=2e+00,
	ymode=log,
	ytick style={color=black},
	ylabel={$\tfrac{1}{\kappa} \| \nabla u - \nabla P_h(u) \|_{L^2(\Omega)}$},
	ylabel style={xshift=-0.5cm, yshift=0.25cm},
	legend columns=2,
	]
	\addplot [line width = 1, color0, mark=*, mark size=2, mark options={solid}]
	table {%
		2.500000000000000e-01     0.525972208681660
 		1.250000000000000e-01     0.298635763001205
		6.250000000000000e-02     0.155722185389541
		3.125000000000000e-02     0.078705115280097
		1.562500000000000e-02     0.039458967855574
		7.812500000000000e-03     0.019742811532197
                 3.906250000000000e-03     0.009873071433921
	};
	\addlegendentry{\footnotesize P1}
	%
	\addplot [line width = 1, color0, dashed, mark=*, mark size=2, mark options={solid}]
	table {%
		2.500000000000000e-01     0.185860873793322
 		1.250000000000000e-01     0.060169689093632
		6.250000000000000e-02     0.016162029828644
		3.125000000000000e-02     0.004122142414938
		1.562500000000000e-02     0.001035989191916
		7.812500000000000e-03     2.593273795343448e-04
                 3.906250000000000e-03     6.473727098480643e-05
	};
	\addlegendentry{\footnotesize P2, $\kappa = 8$}
	%
	\addplot [line width = 1, color1, mark=*, mark size=2, mark options={solid}]
	table {%
		2.500000000000000e-01     0.836292330814837
 		1.250000000000000e-01     0.586390351299707
		6.250000000000000e-02     0.333381689825550
		3.125000000000000e-02     0.172286329916609
		1.562500000000000e-02     0.086842928527487
		7.812500000000000e-03     0.043508794508445
                 3.906250000000000e-03     0.021765297335069
	};
	\addlegendentry{\footnotesize P1}
	%
	\addplot [line width = 1, color1, dashed, mark=*, mark size=2, mark options={solid}]
	table {%
		2.500000000000000e-01     0.551990681311794
 		1.250000000000000e-01     0.210682070110334
		6.250000000000000e-02     0.061798552490580
		3.125000000000000e-02     0.016266053109485
		1.562500000000000e-02     0.004126023485311
		7.812500000000000e-03     0.001035410062727
                 3.906250000000000e-03     2.586507666649759e-04
	};
	\addlegendentry{\footnotesize P2, $\kappa=16$}
	%
	\addplot [line width = 1, color2, mark=*, mark size=2, mark options={solid}]
	table {%
		2.500000000000000e-01     0.912199202598424
 		1.250000000000000e-01     0.758337331866406
		6.250000000000000e-02     0.477161463931670
		3.125000000000000e-02     0.254465518237875
		1.562500000000000e-02     0.129191190728689
		7.812500000000000e-03     0.064838125806809
                 3.906250000000000e-03     0.032449262946545
	};
	\addlegendentry{\footnotesize P1}
	%
	\addplot [line width = 1, color2, dashed, mark=*, mark size=2, mark options={solid}]
	table {%
		2.500000000000000e-01     0.778955353504647
 		1.250000000000000e-01     0.389615420258144
		6.250000000000000e-02     0.124594332750756
		3.125000000000000e-02     0.033966374398184
		1.562500000000000e-02     0.008720403913464
		7.812500000000000e-03     0.002195744748202
                 3.906250000000000e-03     5.489991410125348e-04
	};
	\addlegendentry{\footnotesize P2, $\kappa = 24$}
	%
	\addplot [line width = 1, color3, mark=*, mark size=2, mark options={solid}]
	table {%
		2.500000000000000e-01     0.921444160139615
 		1.250000000000000e-01     0.861247082659407
		6.250000000000000e-02     0.595730313664706
		3.125000000000000e-02     0.331141366977008
		1.562500000000000e-02     0.169614165278683
		7.812500000000000e-03     0.085300542085921
                 3.906250000000000e-03     0.042711460931600
	};
	\addlegendentry{\footnotesize P1}
	%
	\addplot [line width = 1, color3, dashed, mark=*, mark size=2, mark options={solid}]
	table {%
		2.500000000000000e-01     0.869000421837383
 		1.250000000000000e-01     0.560478262087220
		6.250000000000000e-02     0.199355028881317
		3.125000000000000e-02     0.056332113647795
		1.562500000000000e-02     0.014656104698015
		7.812500000000000e-03     0.003704911426015
                 3.906250000000000e-03     9.273122998924553e-04
	};
	\addlegendentry{\footnotesize P2, $\kappa = 32$}
	%
	\addplot [line width = 1, color4, mark=*, mark size=2, mark options={solid}]
	table {%
		2.500000000000000e-01     0.939074216696428
 		1.250000000000000e-01     0.909543007603891
		6.250000000000000e-02     0.693128700474052
		3.125000000000000e-02     0.407256746711859
		1.562500000000000e-02     0.211284394187172
		7.812500000000000e-03     0.106559536249405
                 3.906250000000000e-03     0.053392833567884
	};
	\addlegendentry{\footnotesize P1}
	%
	\addplot [line width = 1, color4, dashed, mark=*, mark size=2, mark options={solid}]
	table {%
		2.500000000000000e-01     0.900291790259411
 		1.250000000000000e-01     0.697624153337859
		6.250000000000000e-02     0.288346888613562
		3.125000000000000e-02     0.084935152802582
		1.562500000000000e-02     0.022454723333854
		7.812500000000000e-03     0.005704512612941
                 3.906250000000000e-03     0.001429697002490
	};
	\addlegendentry{\footnotesize P2, $\kappa = 40$}
	%
	\addplot [line width = 1, black]
	table {%
		2.500000000000000e-01     0.96
 		1.250000000000000e-01     0.48
		6.250000000000000e-02     0.24
		3.125000000000000e-02     0.12
		1.562500000000000e-02     0.06
		7.812500000000000e-03     0.03
                 3.906250000000000e-03     0.015
	};
	\addlegendentry{\footnotesize $h$}
	\addplot [line width = 1, black, dashed]
	table {%
		2.500000000000000e-01     0.57344
 		1.250000000000000e-01     0.14336
		6.250000000000000e-02     0.03584
		3.125000000000000e-02     0.00896
		1.562500000000000e-02     0.00224
		7.812500000000000e-03     0.00056
                 3.906250000000000e-03     1.4e-04
	};
	\addlegendentry{\footnotesize $h^2$}
    \end{axis}
\end{tikzpicture}
	\end{adjustbox}
	\caption{
	Comparison of the errors $\| u - P_h(u) \|_{L^2(\Omega)}$ and $\tfrac{1}{\kappa}\| \nabla u - \nabla P_h(u) \|_{L^2(\Omega)}$ for the $H^1_\kappa$-best-approximation $P_h(u)$ of $u$ given by \eqref{def-Phu-projec} for $V_{h,1}$ (P1, solid lines) and $V_{h,2}$ (P2, dashed lines). The errors include the cases $\kappa = 8,16,24,32,40$.		
	}
	\label{fig:L2-H1-errors-bestapprox}
\end{figure}

Next, we compare the actual $L^2$- and $H^1_{\kappa}$-errors for discrete minimizers $u_h$ for P1 and P2 respectively. The results are depicted in Figure \ref{fig:L2-H1-errors}. As predicted by Theorem \ref{theorem:main-result}, the resolution condition $h \kappa \lesssim 1$ is no longer sufficient to guarantee reasonable approximations and we can identify a very pronounced pre-asymptotic regime. Postulating an algebraic growth of $\rho(\kappa)$ in $\kappa$, the analytical results of Theorem \ref{theorem:main-result} imply an effective resolution condition of the form
$\kappa^m (h\kappa)^p \lesssim 1$ for some unknown power $m\ge 1$. This is fully consistent with the numerical results in Figure \ref{fig:L2-H1-errors}. On the one hand, the pre-asymptotic regime is much larger than what we observed for the energy error in Figure \ref{fig:energy-error} (where $h \kappa \lesssim 1$) and on the other hand, we can also observe that the pre-asymptotic regime is signficantly reduced for the P2-FEM approximations compared to P1-FEM. This confirms that the usage of higher order finite elements can reduce the pollution effect significantly. 

Asymptotically, we observe for the P1-FEM approximation a convergence of order $\mathcal{O}(h)$ for the $H^1$-error and $\mathcal{O}(h^2)$ for the $L^2$-error. For P2-FEM we observe the asymptotic convergence of order $\mathcal{O}(h^2)$ for the $H^1$-error and $\mathcal{O}(h^3)$ for the $L^2$-error. These are the natural expected rates.

It is also worth mentioning that the errors for $\kappa=40$ appear asymptotically slightly better than the errors for $\kappa=32$. This might indicate that the state that we found for $\kappa=40$ is only a local minimizer, but not a global minimizer. However, even extensive tests with numerous starting values and path-tracking strategies did not lead to a state of smaller energy. Hence, it remains open why the errors for $\kappa=40$ are asymptotically smaller than for $\kappa=32$.

To complete the investigation of the pre-asymptotic effect, we also study the behaviour of the $H^1_\kappa$-best-approximation $P_h(u)$ of $u$. Here, $P_h(u)$ is given by the $H^1_\kappa$-projection $P_h : H^1(\Omega) \rightarrow V_{h}$ with
\begin{align}
\label{def-Phu-projec}
\tfrac{1}{\kappa^2} (\nabla P_h(u) - \nabla u , \nabla v_h )_{L^2(\Omega)} +  ( P_h(u) - u , v_h )_{L^2(\Omega)} 
\,\,= \,\, 0
\qquad \mbox{for all } v_h \in V_{h}.
\end{align}
The corresponding results are presented in Figure \ref{fig:L2-H1-errors-bestapprox}, where we observe the same convergence regime as for the energy error. This is exactly what we expect since the best-approximation is not affected by the pollution effect and convergence is obtained as soon as $h \kappa \lesssim 1$. Indeed, after a short initial phase, both the $H^1_{\kappa}$- and $L^2$-errors (for P1 and P2) are converging with the expected rate.

In conclusion, our numerical experiments fully support our theoretical predictions and in particular the existence of a significant numerical pollution effect that is decreased by the use of higher order finite elements.

\def\cprime{$'$}

\appendix

\section{Quasi-interpolation on curved meshes}

\label{appendix:section:quasi-interpolation}

\subsection{Setting}

In this appendix, we consider a domain $\Omega \subset \mathbb R^d$, with $d=2$ or $3$.
We denote by $L$ the diameter of $\Omega$. $\GD \subset \partial \Omega$ is a relatively
open subset of the boundary that is assumed to have a (possible empty) Lipschitz boundary
$\partial \GD$.

\subsubsection{Reference simplicies}

In this section we fix a (closed) reference $d$ simplex $\widehat{K}$. We also fix a
reference $d-1$ simplex $\widehat{F}_{d-1} \subset \mathbb R^{d-1}$. For simplicity, we assume that
$|\widehat{K}| \leq 1$, which is the case for the standard choice of the convex hull
formed from the canonical basis vectors of $\mathbb R^d$. We denote by $\widehat{\mathcal{F}}$
the set of faces of $\widehat{K}$. For each face $\widehat{F} \in \widehat{\mathcal{F}}$,
there exists a finite number of bijective affine mappings $A: \widehat{F} \to \widehat{F}_{d-1}$.
A specific affine mapping is selected by specifying on which vertex of $\widehat{F}_{d-1}$
is mapped each vertex of $\widehat{F}$.

\subsubsection{Curved meshes}

\begin{definition}[Curved matching mesh]
\label{definition_curved_mesh}
A curved matching mesh $\mathcal{T}_h$ of $\Omega$ is a finite collection of elements
$K$ with associated mappings $\mathcal{F}_K: \widehat{K} \to K$ having the following properties.
We first ask that the elements are non-degenerate, meaning that for each
$K \in \mathcal{T}_h$ the mapping $\mathcal{F}_K: \widehat{K} \to K$ is bi-Lipschitz.
We further demand that $\mathcal{T}_h$ is non-overlapping, i.e., that for any two distinct
elements $K_\pm \in \mathcal{T}_h$, the intersection $K_- \cap K_+$ is either empty, a single
point, or an $\ell$-manifold with boundary for some $\ell \in \{1,\dots,d-1\}$.
We also require that the mappings are compatible, meaning that if $F := K_- \cap K_+$
is a $d-1$ manifold for some $K_\pm \in \mathcal{T}_h$ then
$\widehat{F}_\pm := \mathcal{F}_{K_\pm}^{-1}(F) \in \widehat{\mathcal{F}}$
and there exist bijective affine mappings $A_\pm: \widehat{F}_{d-1} \to \widehat{F}_\pm$
such that
\begin{equation}
\label{eq_compatible_mappings}
\mathcal{F}_{K_-} \circ A_- = \mathcal{F}_{K_+} \circ A_+
\end{equation}
as maps from $\widehat{F}_{d-1}$ to $F$. Finally, we demand that the mesh covers the domain, i.e.,
\begin{equation*}
\overline{\Omega} = \bigcup \mathcal{T}_h,
\end{equation*}
and that the mesh is compatible with the boundary condition, meaning that every boundary face
either entirely belongs to $\overline{\GD}$ or $\partial \Omega \setminus \GD$.
\end{definition}

In what follows we consider such a curved matching mesh $\mathcal{T}_h$ of $\Omega$.
Notice that Definition~\ref{definition_curved_mesh} implies that the elements
$K \in \mathcal{T}_h$ are closed sets. It is also clear that if the maps $\mathcal{F}_K$
are affine, then our definition is the standard definition of a matching mesh
(see e.g.~\cite[Definition 6.1.1]{ern_guermond_2021a}).

The notion of ``patch'' will be important in what follows. For $K \in \mathcal{T}_h$,
we finally employ the notation
\begin{equation*}
\mathcal{T}_h^K := \left \{
K' \in \mathcal{T}_h \; | \; K' \cap K \neq \emptyset
\right \}
\end{equation*}
for the patch of elements around $K$.

We now introduce a few metrics that measure the fineness and regularity of the mesh.
Classically, the fineness of the mesh is quantified with the elements diameters
\begin{equation*}
h_K := \max_{\bx,\by \in K} |\bx-\by| \qquad \forall K \in \mathcal{T}_h,
\end{equation*}
and $h := \max_{K \in \mathcal{T}_h} h_K$ is known as the mesh size.
The ``shape regularity'' of each element $K \in \mathcal{T}_h$, is
measured by the quantity
\begin{equation*}
M_K := \max(h_K \|(J\mathcal{F}_K)^{-1}\|_{L^\infty(\widehat{K})},h_K^{-1} \|J\mathcal{F}_K\|_{L^\infty(\widehat{K})}),
\end{equation*}
where $J\mathcal{F}_K$ denotes the Jacobian of $\mathcal{F}_K$ and where, for each $\widehat{\bx} \in \widehat{K}$, the matrix norm
associated with the Euclidean metric is considered. For $\mathcal{T} \subset \mathcal{T}_h$,
we also use the notation $M_{\mathcal{T}} := \max_{K \in \mathcal{T}} M_K$. These
metrics are the natural extensions of the ones used for straight elements.
The fact that $M_{\mathcal{T}_h}$ is well-defined and finite is a consequence of
Definition~\ref{definition_curved_mesh}.
We will sometimes need more regularity on the element mappings.
We will say that the element $K \in \mathcal{T}_h$ is $r$-regular
if $\mathcal{F}_K \in C^r(\widehat{K},K)$. In this case, we will
write
\begin{equation}
\label{eq_dJFK}
M_K(r)
:=
\max_{|\alpha| \leq r}
\left (\frac{h_K}{L}\right )^{|\alpha|}
\frac{\|\partial^\alpha \mathcal{F}_K\|_{L^\infty(\widehat{K})}}{L}.
\end{equation}
The mesh $\mathcal{T}_h$ is $r$-regular if all its elements are, and we then write
$M_{\mathcal{T}}(r) := \max_{K \in \mathcal{T}} M_K(r)$ for all $\mathcal{T} \subset \mathcal{T}_h$.

For later reference, we note that for all $K \in \mathcal{T}_h$,
there exists a constant $\mu_K \geq 1$ solely depending on $M_K$
such that
\begin{equation}
\label{eq_muK}
\mu_K^{-1} h_K^d \leq |(\det J\mathcal{F}_K)(\widehat{\bx})| \leq \mu_K h_K^d
\qquad
\forall \widehat{\bx} \in \widehat{K}.
\end{equation}

\subsubsection{Finite element space}

In the remainder, we fix $p \geq 1$,  and consider a finite element space
\begin{equation*}
V_h := \left \{
v_h \in H^1_{\GD}(\Omega) \; | \;
v_h|_K \circ \mathcal{F}_K \in \mathcal{P}_p(\widehat{K})
\right \}
\end{equation*}
where $\mathcal{P}_p(\widehat{K})$ is the space of (complex-valued) polynomials
of degree at most $p$ defined on the reference simplex $\widehat{K}$. The
notation
\begin{equation*}
\mathcal{P}(K) := \left \{
v \in H^1(K) \; | \; v \circ \mathcal{F}_K \in \mathcal{P}_p(\widehat{K}),
\right \}
\end{equation*}
will sometimes be useful. We also denote by
\begin{equation*}
V_h^\flat
:=
\left \{
v_h \in L^2(\Omega) \; | \; v_h|_K \in \mathcal{P}(K) \; \forall K \in \mathcal{T}_h
\right \}.
\end{equation*}
the ``broken'' counterpart of $V_h$.

\subsubsection{Sobolev spaces}

For a submesh $\mathcal{T} \subset \mathcal{T}_h$ covering a domain $D$,
if $s \geq 1$ and $1 \leq q \leq +\infty$, we introduce the broken Sobolev space
\begin{equation*}
W^{s,q}(\mathcal{T})
:=
\left \{
v \in L^q(D) \; | \; v|_K \in W^{s,q}(K) \; \forall K \in \mathcal{T}
\right \}
\end{equation*}
equipped with the semi-norm
\begin{equation*}
|v|_{W^{s,q}(\mathcal{T})}
:=
\left (
\sum_{K \in \mathcal{T}} \sum_{|\alpha| \leq s} \|\partial^\alpha v\|_{L^q(K)}^q
\right )^{1/q}
\end{equation*}
and the norm
\begin{equation*}
\|v\|_{W^{s,q}(\mathcal{T})}
=
\sum_{t=0}^s L^t |v|_{W^{t,q}(\mathcal{T})}.
\end{equation*}

As usual, we employ the notation $H^1(\mathcal{T}) = W^{1,2}(\mathcal{T})$,
for the Hilbert space and for the subscript in the associated norm and semi-norm.

\subsection{Main result}

\begin{theorem}[Quasi-interpolation]
\label{theorem_quasi_interpolant}
If $\mathcal{T}_h$ is a curved matching mesh of $\Omega$, then there exists a projection
$I_h: H^1_{\GD}(\Omega) \to V_h$ with the following properties.
If $v \in H^1_{\GD}(\Omega) \cap W^{s+1,q}(\mathcal{T}_h)$ for some
$2 \leq q \leq +\infty$ and $0 \leq s \leq p$ and if $\mathcal{T}_h$
is $s+1$ regular, then for each $K \in \mathcal{T}_h$, we have
\begin{equation}
\label{eq_error_estimate}
h_K^{-1} \|v-I_h(v)\|_{L^r(K)}+|v-I_h(v)|_{W^{1,r}(K)}
\lesssim
h_K^{d/r-d/q}
\left (\frac{h_K}{L}\right )^s
\sum_{t=0}^s L^t|v|_{W^{t+1,q}(\mathcal{T}_h^K)}.
\end{equation}
for all $1 \leq r \leq q$, with a hidden constant
depending on $p$, $r$, $q$, $M_{\mathcal{T}_h^K}$
and $M_{\mathcal{T}_h^K}(s+1)$.  It follows in particular that
\begin{equation}
\label{eq_error_estimate_global}
h^{-1}\|v-I_h(v)\|_{L^r(\Omega)} + |v-I_h(v)|_{W^{1,r}(\Omega)}
\lesssim
h^{d/r-d/q} h^s \|v\|_{W^{s+1,q}(\mathcal{T}_h)}
\end{equation}
with a hidden constant depending on $p$, $r$, $q$,
$M_{\mathcal{T}_h}$ and $M_{\mathcal{T}_h}(s+1)$.
\end{theorem}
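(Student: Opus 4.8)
The plan is to take for $I_h$ an Oswald-type averaging operator and to reduce every estimate to the reference simplex through the element maps $\mathcal{F}_K$. Fix once and for all the $\mathbb{P}_p$ Lagrange nodes on $\widehat K$ and a projection $\widehat\Pi_p : W^{1,q}(\widehat K)\to\mathbb{P}_p(\widehat K)$ that reproduces $\mathbb{P}_p(\widehat K)$ and is bounded on $W^{1,q}(\widehat K)$ for every $q\in[2,+\infty]$ (the $L^2(\widehat K)$-orthogonal projection or a Dupont--Scott averaged Taylor polynomial works, using $|\widehat K|\le1$). For $v\in H^1_{\GD}(\Omega)$ associate to each $K\in\mathcal{T}_h$ the pulled-back local polynomial $v_K:=\big(\widehat\Pi_p(v\circ\mathcal{F}_K)\big)\circ\mathcal{F}_K^{-1}\in\mathcal{P}(K)$. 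Each Lagrange node $a$ of $V_h$ is the common image, under the maps of the elements containing it, of reference nodes identified across shared facets by the compatibility relation~\eqref{eq_compatible_mappings}; denote by $\mathcal{T}_h^a$ this set of elements and set
\[
I_h(v)(a):=\frac{1}{|\mathcal{T}_h^a|}\sum_{K\in\mathcal{T}_h^a} v_K(a)\quad\text{if }a\notin\overline{\GD},\qquad I_h(v)(a):=0\quad\text{if }a\in\overline{\GD}.
\]
Because $\widehat\Pi_p$ reproduces polynomials and functions in $V_h$ have single-valued nodal values, $I_h$ is a projection onto $V_h$, and the prescription on $\overline{\GD}$ forces the trace of $I_h(v)$ to vanish on $\GD$, so $I_h(v)\in H^1_{\GD}(\Omega)$.

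For the error bound, fix $K\in\mathcal{T}_h$ and split $v-I_h(v)=(v-v_K)+(v_K-I_h(v))$ on $K$. The term $v-v_K$ is handled by a Bramble--Hilbert (Deny--Lions) argument for $\widehat\Pi_p$ on $\widehat K$ combined with change-of-variables estimates relating $W^{t,q}$-seminorms on $K$ and on $\widehat K$ for $0\le t\le s+1$; Hölder's inequality on the bounded set $K$ then turns the $L^q(K)$-bound into an $L^r(K)$-bound and produces the factor $h_K^{d/r-d/q}$. The term $w:=v_K-I_h(v)$ lies in $\mathcal{P}(K)$, so an inverse inequality on $\widehat K$ and scaling give $h_K^{-1}\|w\|_{L^r(K)}+|w|_{W^{1,r}(K)}\lesssim h_K^{d/r-1}\max_a|w(a)|$, and each $w(a)$ is an average over $K'\in\mathcal{T}_h^a$ of the jumps $v_K(a)-v_{K'}(a)$.

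A single jump is controlled by restricting to the facet $F$ common to $K$ and $K'$, on which $v_K-v_{K'}$ is a polynomial: a scaled inverse estimate gives $|v_K(a)-v_{K'}(a)|\lesssim|F|^{-1/q}\|v_K-v_{K'}\|_{L^q(F)}$, and inserting $v$ yields $\|v_K-v_{K'}\|_{L^q(F)}\le\|v_K-v\|_{L^q(F)}+\|v-v_{K'}\|_{L^q(F)}$, which is legitimate since $v\in W^{1,q}$ has well-defined $L^q$-traces on the curved faces (a scaled trace inequality transported from $\widehat K$). A further scaled trace inequality followed by the same Bramble--Hilbert bound on $K$ and on $K'$ estimates each piece in terms of $\sum_{t=0}^s L^t|v|_{W^{t+1,q}}$ over those elements. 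When $a\in\overline{\GD}$ the term $\|v-v_{K'}\|_{L^q(F\cap\overline{\GD})}$ is replaced by $\|v_{K'}\|_{L^q(F\cap\overline{\GD})}=\|v_{K'}-v\|_{L^q(F\cap\overline{\GD})}$ using the vanishing trace of $v$ on $\GD$, and is treated identically. Collecting the finitely many contributions over the patch $\mathcal{T}_h^K$ (the number of patch elements is bounded through $M_{\mathcal{T}_h^K}$) gives~\eqref{eq_error_estimate}, and~\eqref{eq_error_estimate_global} follows by summing over $K$, using the uniformly bounded overlap of patches, $h_K\le h$, and discrete Hölder inequalities.

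The main obstacle is the change-of-variables step on curved elements. Since $\mathcal{F}_K$ is only bilipschitz and $(s+1)$-regular, the Jacobian is non-constant and the higher-order chain rule (Fa\`{a} di Bruno formula) expresses a derivative of order $t\le s+1$ of $v\circ\mathcal{F}_K$ through derivatives of $v$ of orders $\le t$ paired with products of derivatives of $\mathcal{F}_K$ of orders $\le t$. One must show that the constants appearing when bounding $\|v\circ\mathcal{F}_K\|_{W^{t,q}(\widehat K)}$ by $h_K$-weighted seminorms of $v$ on $K$ (and conversely for $\mathcal{F}_K^{-1}$, invoking~\eqref{eq_muK}) depend only on $M_{\mathcal{T}_h^K}$ and $M_{\mathcal{T}_h^K}(s+1)$, hence are uniform in $h$; this is precisely what the quantities~\eqref{eq_dJFK} and~\eqref{eq_muK} are designed to track, and it also underlies the uniformity of the inverse and curved trace inequalities used above. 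Once this uniform transfer is in hand, all remaining ingredients are the classical ones for averaging operators on affine meshes.
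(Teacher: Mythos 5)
Your proposal is correct and follows essentially the same route as the paper: an Oswald-type averaging of element-wise polynomial projections, a split of the error into a local Bramble--Hilbert part and a conformity-enforcement part controlled through nodal jumps across faces, with all constants transported to the reference simplex and tracked through $M_K$ and $M_K(s+1)$ (the paper uses the physical $H^1(K)$-orthogonal projection and runs the jump estimate through $L^2$ plus a change of exponent, but these are cosmetic variations). The only substantive omission is that two elements of $\mathcal{T}_h^{\ba}$ need not share a facet --- they may meet only at the node $\ba$ --- so the single-jump estimate must be chained through a sequence of face-adjacent elements in the patch, as the paper does in its dofs-mismatch lemma; this is routine and does not affect the argument.
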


For easy reference, we highlight the following direct consequence
of Theorem~\ref{theorem_quasi_interpolant}. The proof simply follows
by selecting $r=q=2$ and $s=0$, and applying a triangular inequality.

\begin{corollary}[Stability in energy norm]
Let $v \in H^1_{\GD}(\Omega)$. For all $K \in \mathcal{T}_h$ we have
\begin{equation*}
|I_h(v)|_{H^1(K)}
\lesssim
|v|_{H^1(\mathcal{T}_h^K)},
\end{equation*}
In particular,
\begin{equation*}
|I_h(v)|_{H^1(\Omega)}
\lesssim
|v|_{H^1(\Omega)}.
\end{equation*}
Here, $I_h$ is the projection from Theorem~\ref{theorem_quasi_interpolant}
and the hidden constants only depend on $M_{\mathcal{T}_h}$.
\end{corollary}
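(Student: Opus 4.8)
The plan is to derive both inequalities as immediate specializations of Theorem~\ref{theorem_quasi_interpolant}, followed by a local-to-global summation. First I would apply the local error bound \eqref{eq_error_estimate} with the exponents $r = q = 2$ and the smoothness index $s = 0$; the hypotheses needed are automatic here, since $v \in H^1_{\GD}(\Omega) \subset W^{1,2}(\mathcal{T}_h)$ and the required regularity is the case $s+1 = 1$. With these choices the geometric prefactor $h_K^{d/r-d/q}(h_K/L)^s$ equals $1$, and the sum $\sum_{t=0}^{s} L^t |v|_{W^{t+1,q}(\mathcal{T}_h^K)}$ collapses to the single term $|v|_{W^{1,2}(\mathcal{T}_h^K)} = |v|_{H^1(\mathcal{T}_h^K)}$. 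Hence \eqref{eq_error_estimate} becomes, for every $K \in \mathcal{T}_h$,
\[
h_K^{-1}\|v - I_h(v)\|_{L^2(K)} + |v - I_h(v)|_{H^1(K)} \;\lesssim\; |v|_{H^1(\mathcal{T}_h^K)},
\]
with a hidden constant depending only on the shape regularity of the patch $\mathcal{T}_h^K$, hence, since $M_{\mathcal{T}_h^K} \leq M_{\mathcal{T}_h}$, only on $M_{\mathcal{T}_h}$.

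Next I would use a triangle inequality on each element. Because $K \cap K \neq \emptyset$ we have $K \in \mathcal{T}_h^K$, so $|v|_{H^1(K)} \leq |v|_{H^1(\mathcal{T}_h^K)}$, and therefore
\[
|I_h(v)|_{H^1(K)} \;\leq\; |v - I_h(v)|_{H^1(K)} + |v|_{H^1(K)} \;\lesssim\; |v|_{H^1(\mathcal{T}_h^K)},
\]
which is the first assertion. For the global statement I would square over all $K \in \mathcal{T}_h$ and exchange the order of summation:
\[
|I_h(v)|_{H^1(\Omega)}^2 = \sum_{K \in \mathcal{T}_h} |I_h(v)|_{H^1(K)}^2 \;\lesssim\; \sum_{K \in \mathcal{T}_h} \sum_{K' \in \mathcal{T}_h^K} |v|_{H^1(K')}^2 = \sum_{K' \in \mathcal{T}_h} \#\{ K \in \mathcal{T}_h : K' \in \mathcal{T}_h^K\}\, |v|_{H^1(K')}^2 .
\]
Since $K' \in \mathcal{T}_h^K$ means precisely that $K$ meets $K'$, the multiplicity $\#\{ K : K' \in \mathcal{T}_h^K\} = \#\mathcal{T}_h^{K'}$ is bounded by a constant depending only on $M_{\mathcal{T}_h}$, whence the right-hand side is $\lesssim \sum_{K'} |v|_{H^1(K')}^2 = |v|_{H^1(\Omega)}^2$. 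This gives the second assertion.

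The computation is essentially bookkeeping, so the only substantive input beyond Theorem~\ref{theorem_quasi_interpolant} is the uniform finite overlap of the patches, i.e., that $\#\mathcal{T}_h^{K'}$ is bounded solely in terms of $M_{\mathcal{T}_h}$; this is where the shape-regularity assumption is actually used, through a standard volume-counting argument relying on \eqref{eq_muK} together with the non-overlapping property from Definition~\ref{definition_curved_mesh}. I do not expect any genuine obstacle; in particular, the projection property of $I_h$ plays no role, only its approximation estimate does.
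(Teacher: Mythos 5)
Your proposal is correct and follows the same route as the paper, which proves the corollary exactly by taking $r=q=2$, $s=0$ in Theorem~\ref{theorem_quasi_interpolant} and applying the triangle inequality; the finite-overlap summation you spell out for the global bound is the same counting argument the paper uses (via Lemma~\ref{lemma_patch}) in its proof of the global estimate \eqref{eq_error_estimate_global}. No gaps.
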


\subsection{Consequences of mesh regularity}

Here, we show that the number of elements in a patch is bounded
by a constant that only depends on $M_{\mathcal{T}_h}$, and
that they all have similar diameters. These results are standard
for straight elements and are extended here to curved ones.

\begin{lemma}[Element patches]
\label{lemma_patch}
There exists a constant $N_{\mathcal{T}_h}$ only depending on $M_{\mathcal{T}_h}$
such that the cardinality of $\mathcal{T}_h^K$ is less than
$N_{\mathcal{T}_h}$ for all $K \in \mathcal{T}_h$. In addition, we have
\begin{equation}
\label{eq_comparable_diameters}
h_{K'} \simeq h_K
\end{equation}
with hidden constants only depending on $M_{\mathcal{T}_h}$.
\end{lemma}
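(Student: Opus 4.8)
The plan is to convert the scale‑free shape‑regularity bound $M_K$ into genuine geometric information about each element, and then to propagate it through the mesh using the compatibility of the element maps in Definition~\ref{definition_curved_mesh}. All constants produced must be tracked to depend on the $\mathcal F_K$ only through $M_{\mathcal T_h}$ (together with the fixed reference simplex $\widehat K$ and the dimension $d$).

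First I would record the elementary bi‑Lipschitz control. Since $\widehat K$ is convex and $\|J\mathcal F_K\|_{L^\infty(\widehat K)}\le M_Kh_K$, integration along segments gives $|\mathcal F_K(\widehat{\bx})-\mathcal F_K(\widehat{\by})|\le M_Kh_K|\widehat{\bx}-\widehat{\by}|$, and $\|(J\mathcal F_K)^{-1}\|_{L^\infty(\widehat K)}\le M_K/h_K$ likewise gives $|\mathcal F_K(\widehat{\bx})-\mathcal F_K(\widehat{\by})|\ge M_K^{-1}h_K|\widehat{\bx}-\widehat{\by}|$. In particular, every face $F$ of $K$ is the image under $\mathcal F_K$ of a face of $\widehat K$, which has diameter $\simeq 1$, so $\diam(F)\simeq h_K$. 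Combined with the compatibility relation \eqref{eq_compatible_mappings}, this yields the key local comparability of \emph{face-neighbours}: if $K_-,K_+\in\mathcal T_h$ share a $(d-1)$-face $F=\mathcal F_{K_\pm}(\widehat F_\pm)$ with $\widehat F_\pm\in\widehat{\mathcal F}$, then $\diam(F)\simeq h_{K_-}$ and $\diam(F)\simeq h_{K_+}$, hence $h_{K_-}\simeq h_{K_+}$ with constants depending only on $M_{\mathcal T_h}$.

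Next I would bound, for a fixed mesh vertex $\bx$, the number $N_{\bx}$ of elements $K'$ with $\bx\in K'$. Writing $h_*:=\min\{h_{K'}\,:\,\bx\in K'\}$ and picking $r\simeq h_*$ small enough, I claim $|K'\cap B(\bx,r)|\gtrsim r^d$ for every such $K'$, with constant depending only on $M_{\mathcal T_h}$: indeed $\bx=\mathcal F_{K'}(\widehat{\bx})$ for a vertex $\widehat{\bx}$ of $\widehat K$, the set $\widehat B:=B(\widehat{\bx},s)\cap\widehat K$ satisfies $|\widehat B|\gtrsim s^d$ for small $s$ (a vertex estimate on the fixed simplex), one has $\mathcal F_{K'}(\widehat B)\subset K'\cap B(\bx,M_{K'}h_{K'}s)$ by the upper Lipschitz bound, and $|\mathcal F_{K'}(\widehat B)|=\int_{\widehat B}|\det J\mathcal F_{K'}|\gtrsim h_{K'}^d|\widehat B|$ by \eqref{eq_muK}; taking $s:=r/(M_{K'}h_{K'})\le\tfrac12$ gives the claim. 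Since the elements at $\bx$ have pairwise disjoint interiors and all of $K'\cap B(\bx,r)$ lie in $B(\bx,r)$, summing the lower bounds yields $N_{\bx}\,(c\,r^d)\le|B(\bx,r)|$, i.e. $N_{\bx}\lesssim 1$. Because $K\cap K'\neq\emptyset$ forces (by the conformity built into Definition~\ref{definition_curved_mesh}) that $K'$ contains a vertex of $K$, we get $\#\mathcal T_h^K\le(d+1)\max_{\bx}N_{\bx}=:N_{\mathcal T_h}$, which is the first assertion.

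Finally, for \eqref{eq_comparable_diameters} I would combine the two previous steps. Fix a vertex $\bx$ common to $K$ and $K'\in\mathcal T_h^K$, and consider the graph on $\{K''\in\mathcal T_h\,:\,\bx\in K''\}$ whose edges join two elements sharing a $(d-1)$-face through $\bx$. This graph is connected (it is the face-adjacency/dual graph of the link of $\bx$, a connected triangulated manifold for a conforming mesh of a Lipschitz domain) and has at most $N_{\bx}$ vertices, hence diameter at most $N_{\bx}-1$; along each edge the diameter changes by a factor bounded by $M_{\mathcal T_h}^2$ via the face-neighbour comparability, so iterating at most $N_{\bx}-1$ times gives $h_{K'}\simeq h_K$ with a constant depending only on $M_{\mathcal T_h}$. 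I expect the main obstacle to be the topological bookkeeping rather than any hard estimate: one must be careful that $\mathcal F_{K'}$ sends vertices of $\widehat K$ to $\partial K'$, that the link of a vertex is a connected complex, and above all that no hidden dependence on the element maps beyond $M_{\mathcal T_h}$ creeps into the constants.
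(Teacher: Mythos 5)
Your proof is correct and reaches both assertions, but the route you take for the cardinality bound is genuinely different from the paper's. For the diameter comparability of face-neighbours, both arguments are essentially the same: you compare $\diam(F)$ of the shared face to $h_{K_\pm}$ from both sides using the bi-Lipschitz control and the compatibility relation \eqref{eq_compatible_mappings}, whereas the paper maps a fixed pair of points of $\widehat{F}_{d-1}$ through both parametrizations; these are two phrasings of one estimate (and both share the same mild gap, namely that the lower Lipschitz bound for $\mathcal{F}_K$ on a possibly non-convex curved $K$ is taken for granted). Where you genuinely diverge is the bound on the number of elements meeting a vertex: the paper linearizes each $\mathcal{F}_{K'}$ at the vertex to produce a patch of \emph{straight} simplices and invokes the standard counting result for straight shape-regular vertex patches, while you run a direct measure-theoretic packing argument, showing $|K'\cap B(\bx,r)|\gtrsim r^d$ for $r\simeq\min h_{K'}$ via \eqref{eq_muK} and summing over the disjoint interiors inside $B(\bx,r)$. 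Your version is more self-contained and arguably more robust, since it works with the actual (pairwise non-overlapping) curved elements rather than their linearizations, which need not themselves form a non-overlapping patch; the paper's version is shorter by citation. The final chain argument for \eqref{eq_comparable_diameters} coincides with the paper's (a face-connected chain of at most $N_{\mathcal{T}_h}$ elements, with the two-element estimate iterated), and both proofs rely on the same unproven combinatorial facts — that intersecting elements share a vertex of $K$ and that the elements around a vertex are face-connected — which you at least flag explicitly.
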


\begin{proof}
Consider an element $K \in \mathcal{T}_h$ and one of its vertices
$\ba$ (i.e. the image of a vertex of $\widehat{K}$ through $\mathcal{F}_K$).
We can then consider, for each $K' \in \mathcal{T}_h$ with $\ba \in K'$ the affine map
\begin{equation*}
F_{K'}: \widehat{K} \ni \widehat{\bx} \to \ba+(J\mathcal{F}_{K'})(\ba) \,\widehat{\bx} \in K_\dagger',
\end{equation*}
where $K_\dagger'$ is a straight tetrahedron with shape-regularity measure only depending
on $M_{K'}$.  We now observe that because the mappings are matching, the collection of all
straight elements $K_\dagger'$ obtained from $K' \in \mathcal{T}_h$ with $\ba \in K'$ is a
standard shape-regular vertex patch of straight elements, with shape-regularity measure only
controlled by $M_{\mathcal{T}_h}$. For a straight vertex patch,
it is a standard result that the number of elements is bounded by a constant that
only depends on the shape-regularity measure, which shows that the number of elements
$K$ sharing $\ba$ is bounded. Since $K$ has only $d+1$ vertices and all $K' \in \mathcal{T}_h^K$,
must have at least one of those vertices, this concludes the existence of the constant
$N_{\mathcal{T}_h}$ controlling the cardinality of $\mathcal{T}_h^K$.

To prove~\eqref{eq_comparable_diameters}, we first consider two elements
$K_\pm \in \mathcal{T}_h$ sharing a face $F$. We let
$\widehat{F}_\pm := \mathcal{F}_{K_\pm}^{-1}(F)$, and denote by
$A_\pm: \widehat{F}_{d-1} \to \widehat{F}_\pm$ the affine maps from~\eqref{eq_compatible_mappings}.
We now pick two distinct points $\widehat{\bx}_{d-1},\widehat{\by}_{d-1} \in \widehat{F}_{d-1}$.
We let $\widehat{\bx}_\pm := A_\pm(\widehat{\bx}_{d-1})$
and    $\widehat{\by}_\pm := A_\pm(\widehat{\by}_{d-1})$.
Since the $A_\pm$ is a bijective affine mapping, we have
\begin{equation*}
|\widehat{\bx}_\pm-\widehat{\by}_\pm| \simeq |\widehat{\bx}-\widehat{\by}|.
\end{equation*}
We then let $\bx := \mathcal{F}_{K_\pm}(\widehat{\bx}_\pm)$
and         $\by := \mathcal{F}_{K_\pm}(\widehat{\by}_\pm)$,
where the definition is not ambiguous due to~\eqref{eq_compatible_mappings}.
We can then write on the one hand that
\begin{equation*}
|\bx-\by|
=
|\mathcal{F}_{K_+}(\widehat{\bx}_+)-\mathcal{F}_+(\widehat{\by}_+)|
\lesssim
h_{K_+}|\widehat{\bx}_+-\widehat{\by}_+|
\lesssim
h_{K_+}|\widehat{\bx}_{d-1}-\widehat{\by}_{d-1}|,
\end{equation*}
where the hidden constant only depends on $M_{K_+}$ and on the other hand that
\begin{equation*}
|\widehat{\bx}_{d-1}-\widehat{\by}_{d-1}|
\lesssim
|\widehat{\bx}_--\widehat{\by}_-|
=
|\mathcal{F}_{K_-}^{-1}(\bx)-\mathcal{F}_{K_-}^{-1}(\by)|
\lesssim
h_{K_-}^{-1}|\bx-\by|,
\end{equation*}
with a hidden constant solely depending on $M_{K_-}$. This is eventually leads to
$h_{K_-} \lesssim h_{K_+}$, and we can conclude that
\begin{equation}
\label{tmp_comparable_diameters}
h_{K_-} \simeq h_{K_+}
\end{equation}
by flipping the roles of $K_-$ and $K_+$, where the hidden constant
depends only on $M_{\mathcal{T}_h}$.

Now, if $K \in \mathcal{T}_h$ and $K' \in \mathcal{T}_h^K$, then
we can join $K$ to $K'$ by a chain $K = K_1,\dots,K_n = K'$ of
$n \leq N_{\mathcal{T}_h}$ elements connected through a face.
And we obtain~\eqref{eq_comparable_diameters} by applying~\eqref{tmp_comparable_diameters}.
This is indeed concludes the proof since the constants in~\eqref{tmp_comparable_diameters}
are then exponentiated at most to power $N_{\mathcal{T}_h}$.
\end{proof}

\subsection{Change of variables properties}

As per usual, our error estimates will rely on scaling arguments,
which ultimately hinge on changes of variables. Such reasoning is
completely standard for straight elements, and is extended to curved
elements with an effort of using minimal assumptions.

\begin{lemma}[Change of variable for Lebesgue spaces]
Let $K \in \mathcal{T}_h$. We have
\begin{equation}
\label{eq_change_of_variable}
\mu_K^{-1/r} h_K^{d/r} \|v \circ \mathcal{F}_K\|_{L^r(\widehat{K})}
\leq
\|v\|_{L^r(K)}
\leq
\mu_K^{1/r}h_K^{d/r} \|v \circ \mathcal{F}_K\|_{L^r(\widehat{K})}
\end{equation}
for all $1 \leq r \leq +\infty$ and all $v \in L^r(K)$.
\end{lemma}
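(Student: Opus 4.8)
The plan is to invoke the standard change-of-variables formula for bilipschitz maps together with the pointwise bounds on the Jacobian determinant recorded in~\eqref{eq_muK}. I would first dispose of the case $1 \leq r < +\infty$. Since $\mathcal{F}_K : \widehat{K} \to K$ is bilipschitz, it is differentiable almost everywhere by Rademacher's theorem, and the area formula gives
\begin{equation*}
\|v\|_{L^r(K)}^r
=
\int_K |v(\bx)|^r \d \bx
=
\int_{\widehat{K}} |v(\mathcal{F}_K(\widehat{\bx}))|^r \, |\det J\mathcal{F}_K(\widehat{\bx})| \d \widehat{\bx}
\end{equation*}
for every $v \in L^r(K)$, both sides being simultaneously finite or infinite.

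Next I would insert the two-sided bound $\mu_K^{-1} h_K^d \leq |\det J\mathcal{F}_K(\widehat{\bx})| \leq \mu_K h_K^d$ from~\eqref{eq_muK}, valid for a.e. $\widehat{\bx} \in \widehat{K}$, to obtain
\begin{equation*}
\mu_K^{-1} h_K^d \int_{\widehat{K}} |v \circ \mathcal{F}_K|^r \d \widehat{\bx}
\leq
\|v\|_{L^r(K)}^r
\leq
\mu_K h_K^d \int_{\widehat{K}} |v \circ \mathcal{F}_K|^r \d \widehat{\bx},
\end{equation*}
and the claimed estimate~\eqref{eq_change_of_variable} then follows by taking $r$-th roots.

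Finally, for $r = +\infty$ the prefactors $\mu_K^{\pm 1/r} h_K^{d/r}$ both reduce to $1$, so the statement asserts $\|v\|_{L^\infty(K)} = \|v \circ \mathcal{F}_K\|_{L^\infty(\widehat{K})}$. This is immediate once one observes that a bilipschitz bijection maps Lebesgue-null sets to Lebesgue-null sets and conversely, so that $|v| \leq C$ a.e.\ on $K$ if and only if $|v \circ \mathcal{F}_K| \leq C$ a.e.\ on $\widehat{K}$. There is essentially no obstacle here; the only point deserving a word of justification is the applicability of the area/change-of-variables formula for a merely bilipschitz (as opposed to $C^1$) element map $\mathcal{F}_K$, which is classical.
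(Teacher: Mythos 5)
Your proposal is correct and follows essentially the same route as the paper: change of variables to the reference element followed by the two-sided Jacobian bound~\eqref{eq_muK}, with the $r=+\infty$ case handled separately. The only cosmetic difference is that you obtain the lower bound by using the lower Jacobian bound directly in the same integral identity, whereas the paper changes variables in the reverse direction via $|\det(J\mathcal{F}_K^{-1})| \leq \mu_K h_K^{-d}$; the two are equivalent.
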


\begin{proof}
The statement is obvious for $r = +\infty$. If $1 \leq r < +\infty$,
we introduce the short-hand notation
$\widehat{v} := v \circ \mathcal{F}_K \in L^q(\widehat{K})$.
We have
\begin{equation*}
\|v\|_{L^r(K)}^r
=
\int_K |v(\bx)|^r d\bx
=
\int_{\widehat K} |\widehat{v}(\widehat{\bx})|^r
|\det J\mathcal{F}_K(\widehat{\bx})| d\widehat{\bx}
\leq
\mu_K h_K^d \|\widehat{v}\|_{L^r(\widehat{K})}^r
\end{equation*}
which is the upper bound in~\eqref{eq_change_of_variable}.
The lower bound is obtained similarly, after observing that~\eqref{eq_muK}
implies that
\begin{equation*}
|\det(J\mathcal{F}_K^{-1})(\bx)| \leq \mu_K h_K^{-d}
\end{equation*}
for all $\bx \in K$.
\end{proof}

\begin{lemma}[Change of variables for Sobolev spaces]
Let $K \in \mathcal{T}_h$ and $v \in W^{1,q}(K)$. Then, we have
\begin{equation}
\label{eq_W1q_K_hK}
h_K |v|_{W^{1,q}(K)}
\leq
M_K \mu_K^{1/q} h_K^{d/q} |v \circ \mathcal{F}_K|_{W^{1,q}(\widehat{K})}.
\end{equation}
In addition, if~\eqref{eq_dJFK} holds true with $r = s$ 
and $v \in W^{s,q}(K)$ for some $s \geq 1$, then
\begin{equation}
\label{eq_Wsq_hK_K}
h_K^{d/q} |v \circ \mathcal{F}_K|_{W^{s,q}(\widehat{K})}
\lesssim
\left (\frac{h_K}{L} \right )^s
\sum_{t=1}^s
L^t|v|_{W^{t,q}(K)}
\end{equation}
where the hidden constant depends on $M_K$, $M_K(s)$, $s$ and $q$.
\end{lemma}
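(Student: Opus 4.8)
Both inequalities are change-of-variables estimates for the composition $\widehat v := v \circ \mathcal{F}_K$, and the strategy is the same in spirit: differentiate $\widehat v$ via the (higher-order) chain rule, bound the derivatives of $\mathcal{F}_K$ that appear by the mesh-regularity quantities $M_K$ and $M_K(s)$, and transfer Lebesgue norms between $K$ and $\widehat{K}$ through the change-of-variables estimate~\eqref{eq_change_of_variable}. In both cases I would first establish the bound for $v \in C^\infty(\overline{K})$ and then pass to the stated Sobolev class by density, which is legitimate because $\mathcal{F}_K$ is bilipschitz for~\eqref{eq_W1q_K_hK} and $C^s$ for~\eqref{eq_Wsq_hK_K}.

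For~\eqref{eq_W1q_K_hK}, first I would use the chain rule $\widehat{\nabla}\,\widehat v = (J\mathcal{F}_K)^{\top}\,(\nabla v)\circ \mathcal{F}_K$, i.e.\ $(\nabla v)\circ \mathcal{F}_K = (J\mathcal{F}_K)^{-\top}\,\widehat{\nabla}\,\widehat v$ pointwise a.e.\ on $\widehat{K}$. Taking Euclidean norms and invoking the definition of $M_K$, which gives $\|(J\mathcal{F}_K)^{-1}\|_{L^\infty(\widehat{K})} \le M_K / h_K$, yields $|(\nabla v)\circ \mathcal{F}_K| \le (M_K/h_K)\,|\widehat{\nabla}\,\widehat v|$ a.e.\ on $\widehat{K}$. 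Then, applying the upper bound in~\eqref{eq_change_of_variable} with $r=q$ to each first-order partial derivative of $v$, I get $|v|_{W^{1,q}(K)} \le \mu_K^{1/q} h_K^{d/q}\,\|(\nabla v)\circ \mathcal{F}_K\|_{L^q(\widehat{K})} \le \mu_K^{1/q} h_K^{d/q}\,(M_K/h_K)\,|\widehat v|_{W^{1,q}(\widehat{K})}$, and multiplying by $h_K$ gives~\eqref{eq_W1q_K_hK}.

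For~\eqref{eq_Wsq_hK_K}, the plan is to use the Fa\`a di Bruno formula: for a multi-index $\alpha$ with $|\alpha| = s$, $\partial^\alpha \widehat v$ is a finite linear combination, with coefficients depending only on $s$, of terms of the form $\bigl((\partial^\beta v)\circ \mathcal{F}_K\bigr)\,\prod_{j=1}^{|\beta|}\partial^{\gamma_j}(\mathcal{F}_K)_{i_j}$, where $1 \le |\beta| \le s$, each $|\gamma_j| \ge 1$, and $\sum_j \gamma_j = \alpha$ (so $\sum_j |\gamma_j| = s$). I would bound each factor by $\|\partial^{\gamma_j}\mathcal{F}_K\|_{L^\infty(\widehat{K})} \lesssim M_K(s)\,L\,(h_K/L)^{|\gamma_j|}$ from~\eqref{eq_dJFK}, so that the product of the $\mathcal{F}_K$-factors is $\lesssim M_K(s)^{|\beta|}\,L^{|\beta|}\,(h_K/L)^{s}$, and bound the remaining factor by $\|(\partial^\beta v)\circ \mathcal{F}_K\|_{L^q(\widehat{K})} \le \mu_K^{1/q} h_K^{-d/q}\,\|\partial^\beta v\|_{L^q(K)}$, which is~\eqref{eq_change_of_variable} rearranged. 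Collecting these, summing over the finitely many Fa\`a di Bruno terms and over $|\alpha| = s$, using $\sum_{|\beta| = m}\|\partial^\beta v\|_{L^q(K)} \lesssim_{m,d} |v|_{W^{m,q}(K)}$, and finally multiplying by $h_K^{d/q}$, leads to $h_K^{d/q}\,|\widehat v|_{W^{s,q}(\widehat{K})} \lesssim (h_K/L)^s \sum_{m=1}^{s} L^m\,|v|_{W^{m,q}(K)}$ with hidden constant depending only on $s$, $q$, $M_K$ (through $\mu_K$) and $M_K(s)$.

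\textbf{Main obstacle.} The delicate part is the second estimate. The hard part will be the Fa\`a di Bruno bookkeeping: organising the sum over all partition types of $\alpha$, keeping track of the exact powers of $h_K$ and $L$, and checking that the weights collapse to exactly $(h_K/L)^s L^m$ on the $|\beta| = m$ contribution — this is where the normalisation in~\eqref{eq_dJFK} enters in an essential way, since it is tuned precisely so that each additional derivative falling on $\mathcal{F}_K$ costs a factor $h_K/L$ while each differentiation of $v$ costs a factor $L$. A secondary, routine point in both parts is justifying the chain rule and the Fa\`a di Bruno formula for Sobolev (rather than smooth) $v$ under a merely bilipschitz, respectively $C^s$, change of variables, which is handled by approximation.
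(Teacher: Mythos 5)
Your proof is correct and follows essentially the same route as the paper: the first estimate is obtained exactly as in the paper (chain rule, $\|(J\mathcal{F}_K)^{-1}\|_{L^\infty}\le M_K/h_K$, then the Lebesgue change-of-variables bound~\eqref{eq_change_of_variable}), and for the second the paper simply cites an external lemma for the pointwise bound $|\partial^\alpha(v\circ\mathcal{F}_K)|\lesssim\sum_{\beta\le\alpha,|\beta|\ge1}L^{|\beta|}(h_K/L)^{|\alpha|}|(\partial^\beta v)\circ\mathcal{F}_K|$, which is precisely what your Fa\`a di Bruno bookkeeping produces before the same application of~\eqref{eq_change_of_variable}. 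Note only that your use of~\eqref{eq_dJFK} presumes the normalisation $\|\partial^\gamma\mathcal{F}_K\|_{L^\infty}\lesssim M_K(s)\,L\,(h_K/L)^{|\gamma|}$, which is the scaling the argument (and the paper's quoted pointwise estimate) actually requires, whereas the displayed definition of $M_K(r)$ appears to have the ratio $h_K/L$ inverted.
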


\begin{proof}
Let us first establish~\eqref{eq_W1q_K_hK}. For that, we let
$g := \nabla v \in L^q(K)$, $\widehat{v} := v \circ \mathcal{F}_K \in W^{1,q}(\widehat{K})$
and $\widehat{g} := \nabla \widehat{v} \in L^q(\widehat{K})$, so that
$\|g\|_{L^q(K)} = |v|_{W^{1,q}(K)}$ and
$\|\widehat{g}\|_{L^q(\widehat{K})} = |\widehat{v}|_{W^{1,q}(\widehat{K})}$.
Due to the chain rule, we have
\begin{equation*}
\widehat{g} = (J\mathcal{F}_K)^T \cdot (g \circ \mathcal{F}_K)
\end{equation*}
and it follows that
\begin{equation*}
\|g \circ \mathcal{F}_K\|_{L^q(\widehat{K})}
=
\|J\mathcal{F}_K^{-T} \cdot \widehat{g}\|_{L^q(\widehat{K})}
\leq
M_K h_K^{-1} \|\widehat{g}\|_{L^q(\widehat{K})}.
\end{equation*}
We then conclude the proof with the estimate
\begin{equation*}
\|g\|_{L^q(K)} \leq \mu_K^{1/q} h_K^{d/q}\|g \circ \mathcal{F}_K\|_{L^q(\widehat{K})}
\end{equation*}
which we obtain by applying~\eqref{eq_change_of_variable} component wise.

The estimate in~\eqref{eq_Wsq_hK_K} is established with similar arguments,
though more technical. For shortness, we do not repeat those here, and
follow the proof of~\cite[Lemma B.3]{chaumontfrelet_galkowski_spence_2024a}
where the estimate
\begin{equation*}
|\partial^\alpha(v \circ \mathcal{F}_K)|
\lesssim
\sum_{\substack{\beta \leq \alpha \\ |\beta| \geq 1}}
L^{|\beta|} \left (\frac{h}{L}\right )^{|\alpha|}
|(\partial^{\beta} v) \circ \mathcal{F}_K|
\end{equation*}
is established a.e. in $\widehat{K}$ for all multi-indices
$\alpha$ with $|\alpha| = s$ with hidden constant depending on
$\alpha$ and $M_K(s)$. It also follows from the triangular inequality that
\begin{equation*}
\|\partial^\alpha(v \circ \mathcal{F}_K)\|_{L^q(\widehat{K})}
\lesssim
\sum_{\substack{\beta \leq \alpha \\ |\beta| \geq 1}}
L^{|\beta|} \left (\frac{h}{L}\right )^{|\alpha|}
\|(\partial^{\beta} v) \circ \mathcal{F}_K\|_{L^q(\widehat{K)}}
\end{equation*}
and~\eqref{eq_Wsq_hK_K} follows from~\eqref{eq_change_of_variable}.
\end{proof}

\begin{lemma}[Change of exponent]
Assume that $1 \leq r \leq q \leq +\infty$. Then, for all $v \in L^q(K)$, we have
\begin{equation}
\label{eq_Lr_Lq}
\|v\|_{L^r(K)} \leq \mu_K^{1/r+1/q} h_K^{d/r-d/q} \|v\|_{L^q(K)}.
\end{equation}
\end{lemma}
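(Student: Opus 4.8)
The plan is to transport everything to the reference simplex $\widehat{K}$, on which the inclusion $L^q(\widehat{K}) \hookrightarrow L^r(\widehat{K})$ is elementary because $|\widehat{K}| \leq 1$, and then pull back using the change-of-variables estimate \eqref{eq_change_of_variable}.

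First I would dispose of the case $q = +\infty$ separately, where the statement is immediate: $\|v\|_{L^r(K)} \leq |K|^{1/r}\|v\|_{L^\infty(K)}$, and $|K| = \int_{\widehat{K}} |\det J\mathcal{F}_K| \leq \mu_K h_K^d$ by \eqref{eq_muK} together with $|\widehat{K}| \leq 1$; this already yields the claimed bound, since for $q=+\infty$ one has $d/r - d/q = d/r$ and $\mu_K^{1/r+1/q} = \mu_K^{1/r}$. For $1 \leq r \leq q < +\infty$, set $\widehat{v} := v \circ \mathcal{F}_K$. Applying the upper bound in \eqref{eq_change_of_variable} on the $K$-side gives
\[
\|v\|_{L^r(K)} \leq \mu_K^{1/r} h_K^{d/r}\, \|\widehat{v}\|_{L^r(\widehat{K})},
\]
while applying the lower bound in \eqref{eq_change_of_variable} with exponent $q$ gives
\[
\|\widehat{v}\|_{L^q(\widehat{K})} \leq \mu_K^{1/q} h_K^{-d/q}\, \|v\|_{L^q(K)}.
\]
It therefore remains only to compare $\|\widehat{v}\|_{L^r(\widehat{K})}$ with $\|\widehat{v}\|_{L^q(\widehat{K})}$.

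This last step is the only genuine ingredient, and it is an immediate application of Hölder's inequality on the bounded set $\widehat{K}$: writing $1 = r/q + (1-r/q)$ and using Hölder with conjugate exponents $q/r$ and $q/(q-r)$ yields $\|\widehat{v}\|_{L^r(\widehat{K})} \leq |\widehat{K}|^{1/r-1/q}\|\widehat{v}\|_{L^q(\widehat{K})} \leq \|\widehat{v}\|_{L^q(\widehat{K})}$, where the final inequality uses $|\widehat{K}| \leq 1$ and $1/r - 1/q \geq 0$. Chaining the three displayed inequalities produces exactly the factor $\mu_K^{1/r+1/q}\, h_K^{d/r-d/q}$. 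I do not anticipate any real obstacle; the only point deserving a moment's care is orienting the two applications of \eqref{eq_change_of_variable} correctly — the upper bound on the $K$-side and the lower bound on the $\widehat{K}$-side — so that the powers of $\mu_K$ combine additively rather than cancelling.
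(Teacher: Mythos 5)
Your proposal is correct and follows essentially the same route as the paper: reduce to the reference simplex via the two orientations of \eqref{eq_change_of_variable}, and use H\"older's inequality together with $|\widehat{K}|\leq 1$ to get $\|\widehat{v}\|_{L^r(\widehat{K})}\leq\|\widehat{v}\|_{L^q(\widehat{K})}$. The separate treatment of $q=+\infty$ is a harmless refinement the paper leaves implicit.
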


\begin{proof}
Here too, we introduce the short-hand notation
$\widehat{v} := v \circ \mathcal{F}_K \in L^q(\widehat{K})$.
We are first going to show that
\begin{equation}
\label{tmp_Lr_Lq_hK}
\|\widehat{v}\|_{L^r(\widehat{K})} \leq \|\widehat{v}\|_{L^q(\widehat{K})}
\end{equation}
on the reference element. Let us first observe that~\eqref{tmp_Lr_Lq_hK}
clearly holds true if $r=q$. We may therefore assume that $1 \leq r < q$. Then,
since $q/r > 1$, H\"older
inequality gives that
\begin{equation*}
\|\widehat{v}\|_{L^r(\widehat{K})}^r
=
\||\widehat{v}|^r\|_{L^1(\widehat{K)}}
\leq
\|1\|_{L^{q/(q-r)}\widehat{K}}\||\widehat{v}|^r\|_{L^{q/r}(\widehat{K})}
=
|\widehat{K}|^{(q-r)/q}\|\widehat{v}\|_{L^q(\widehat{K})}^r,
\end{equation*}
which indeed gives~\eqref{tmp_Lr_Lq_hK} since $|\widehat{K}| \leq 1$ by assumption and
$(q-r)/(qr) \geq 0$. Now,~\eqref{eq_Lr_Lq} simply follows by applying the upper bound
in~\eqref{eq_change_of_variable},~\eqref{tmp_Lr_Lq_hK} and the lower bound in~\eqref{eq_change_of_variable}.
\end{proof}

\subsection{Properties of the element-wise projection}

Our quasi-interpolation operator hinges on an element-wise
$H^1$ orthogonal projection operator. Specifically, for $K \in \mathcal{T}_h$
and $v \in H^1(K)$ we define the elementwise projection $\Pi_Kv \in \mathcal{P}(K)$
by requiring that
\begin{equation*}
(v-\Pi_Kv,1)_K = 0,
\qquad
(\nabla(v-\Pi_Kv)),\nabla w_K) = 0 \quad \forall w_K \in \mathcal{P}(K).
\end{equation*}
In this section, we established some key properties of this elementwise projection.
These results are elementary for straight elements $K$ with affine mappings
$\mathcal{F}_K$. The extension to curved elements under suitable regularity
assumptions for the mapping is straightforward, but we include it here for completeness.

\begin{lemma}[Poincar\'e inequality]
For all $K \in \mathcal{T}_h$ and $w \in H^1(K)$ with $(w,1)_K = 0$, we have
\begin{equation}
\label{eq_poincare_mean}
\|w\|_{L^2(K)} \lesssim h_K|w|_{H^1(K)}
\end{equation}
where the hidden constant only depends on $M_K$. It follows in particular that
\begin{equation}
\label{eq_poincare}
\|v-\Pi_K v\|_{L^2(K)} \lesssim h_K|v-\Pi_K v|_{H^1(K)}
\end{equation}
for all $v \in H^1(K)$.
\end{lemma}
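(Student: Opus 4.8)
The plan is to reduce the estimate to the fixed reference element $\widehat K$ by a standard scaling argument, the only subtlety being that the change of variables $\mathcal F_K$ turns the constraint $(w,1)_K = 0$ into a \emph{weighted} mean-zero condition on $\widehat K$ rather than the usual one. Everything else is routine scaling and relies on the change-of-variables estimates already established above.

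First I would pull back, setting $\widehat w := w\circ\mathcal F_K \in H^1(\widehat K)$ and $\rho := |\det J\mathcal F_K|$, so that $0 = \int_K w = \int_{\widehat K}\widehat w\,\rho$, with $\mu_K^{-1}h_K^d \le \rho \le \mu_K h_K^d$ by \eqref{eq_muK}. Denoting by $m := |\widehat K|^{-1}\int_{\widehat K}\widehat w$ the unweighted mean of $\widehat w$ over $\widehat K$, I would invoke the classical mean-value Poincar\'e inequality on the fixed bounded convex Lipschitz domain $\widehat K$, namely $\|\widehat w - m\|_{L^2(\widehat K)} \lesssim |\widehat w|_{H^1(\widehat K)}$ with an absolute constant. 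The weighted constraint then controls the mean: since $\int_{\widehat K}(\widehat w - m)\rho = -m\int_{\widehat K}\rho$, H\"older's inequality gives $|m|\int_{\widehat K}\rho \le \|\widehat w - m\|_{L^2(\widehat K)}\,|\widehat K|^{1/2}\,\|\rho\|_{L^\infty(\widehat K)}$, and combining with $\int_{\widehat K}\rho \ge \mu_K^{-1}h_K^d|\widehat K|$ yields $|m|\,|\widehat K|^{1/2} \le \mu_K^2\,\|\widehat w - m\|_{L^2(\widehat K)}$. Hence $\|\widehat w\|_{L^2(\widehat K)} \le \|\widehat w - m\|_{L^2(\widehat K)} + |m|\,|\widehat K|^{1/2} \lesssim (1+\mu_K^2)\,|\widehat w|_{H^1(\widehat K)}$.

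It then remains to transfer this back to $K$. On the one hand, the upper bound in \eqref{eq_change_of_variable} gives $\|w\|_{L^2(K)} \le \mu_K^{1/2}h_K^{d/2}\|\widehat w\|_{L^2(\widehat K)}$. On the other hand, the chain rule $\nabla\widehat w = (J\mathcal F_K)^T(\nabla w)\circ\mathcal F_K$ together with $\|J\mathcal F_K\|_{L^\infty(\widehat K)} \le M_K h_K$ and the lower bound in \eqref{eq_change_of_variable} (applied componentwise to $\nabla w$) gives $|\widehat w|_{H^1(\widehat K)} \lesssim M_K h_K\,\|(\nabla w)\circ\mathcal F_K\|_{L^2(\widehat K)} \lesssim M_K\mu_K^{1/2}h_K^{1-d/2}\,|w|_{H^1(K)}$. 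Chaining the three displays gives $\|w\|_{L^2(K)} \lesssim (1+\mu_K^2)M_K\,h_K\,|w|_{H^1(K)}$, and since $\mu_K$ depends only on $M_K$ by \eqref{eq_muK}, this is precisely \eqref{eq_poincare_mean}. Finally, \eqref{eq_poincare} follows at once by applying \eqref{eq_poincare_mean} to $w := v - \Pi_K v$, whose element mean vanishes by the very definition of $\Pi_K$.

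The only real obstacle is the one flagged above, namely recovering control of the unweighted mean $m$ of $\widehat w$ from the weighted orthogonality $\int_{\widehat K}\widehat w\,\rho = 0$; this is handled cleanly by the two-sided bound \eqref{eq_muK} on $\det J\mathcal F_K$, and no further difficulty arises.
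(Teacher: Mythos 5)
Your proof is correct, and it follows the same overall strategy as the paper: pull back to the reference element, invoke the mean-value Poincar\'e inequality there, and return to $K$ via the change-of-variables bounds \eqref{eq_change_of_variable}, \eqref{eq_muK} and the chain rule. The one place where you diverge is the step you flag as ``the only real obstacle'': transferring the constraint $(w,1)_K=0$ to the reference element, where it becomes a weighted mean-zero condition. You resolve this by explicitly bounding the unweighted mean $m$ of $\widehat w$ through the weighted orthogonality and the two-sided Jacobian bounds, which is correct but costs a few lines and an extra factor $(1+\mu_K^2)$. The paper sidesteps the issue entirely with a one-line observation: since $w$ is $L^2(K)$-orthogonal to constants, $\|w\|_{L^2(K)}\le\|w-k\|_{L^2(K)}$ for \emph{every} constant $k$, so one is free to choose $k$ to be the unweighted mean of $w\circ\mathcal F_K$ over $\widehat K$ after pulling back, and the standard reference-element Poincar\'e inequality applies directly. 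Both arguments are valid and yield the same dependence of the constant on $M_K$; the paper's trick is worth remembering as it removes the weighted-mean bookkeeping altogether.
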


\begin{proof}
We first observe that since $(w,1)_K = 0$, we have
\begin{equation*}
\|w\|_{L^2(K)} \leq \|w-k\|_{L^2(K)}
\end{equation*}
for any choice of constant $k \in \mathbb C$. We now apply~\eqref{eq_change_of_variable},
which gives
\begin{equation*}
\|w-k\|_{L^2(K)}
\lesssim
h_K^{d/2} \|w \circ \mathcal{F}_K-k\|_{L^2(\widehat{K})}.
\end{equation*}
We can now choose $k := (w\circ\mathcal{F}_K,1)_{\widehat{K}}$ so that
\begin{equation*}
\|w \circ \mathcal{F}_K-k\|_{L^2(\widehat{K})}
\lesssim
|w \circ \mathcal{F}_K|_{H^1(\widehat{K})},
\end{equation*}
where the hidden constant only depends on the choice of the reference
element $\widehat{K}$. Then,~\eqref{eq_poincare_mean} follows from~\eqref{eq_Wsq_hK_K}.
\end{proof}

\begin{lemma}[Stability]
Let $K \in \mathcal{T}_h$. For all $q \geq 2$, we have
\begin{equation}
\label{eq_stability_Lq}
\|\Pi_K v\|_{L^q(K)} \lesssim \|v\|_{L^q(K)} + h_K|v|_{W^{1,q}(K)}
\end{equation}
and
\begin{equation}
\label{eq_stability_W1q}
|\Pi_K v|_{W^{1,q}(K)} \lesssim |v|_{W^{1,q}(K)}
\end{equation}
for all $v \in W^{1,q}(K)$, where the hidden constants depend on $p$, $q$ and $M_K$.
\end{lemma}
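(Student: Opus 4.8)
The plan is to run the standard scaling argument: pull both $v$ and $\Pi_K v$ back to the reference simplex $\widehat K$, prove the two bounds there---where $|\widehat K|\le 1$ and $\mathcal P_p(\widehat K)$ is a fixed finite-dimensional space---and transport them back to $K$, checking that the powers of $h_K$ produced by the changes of variables cancel. Set $\widehat v:=v\circ\mathcal F_K$ (which lies in $H^1(\widehat K)$ since $q\ge2$) and $\widehat w:=(\Pi_K v)\circ\mathcal F_K\in\mathcal P_p(\widehat K)$. Changing variables in the defining orthogonality $(\nabla(v-\Pi_K v),\nabla w_K)_K=0$, $w_K\in\mathcal P(K)$, and using $\nabla\phi\circ\mathcal F_K=(J\mathcal F_K)^{-T}\nabla(\phi\circ\mathcal F_K)$, I would first show that $\widehat w$ satisfies the Galerkin orthogonality $\widehat a(\widehat v-\widehat w,\widehat\phi)=0$ for all $\widehat\phi\in\mathcal P_p(\widehat K)$, where $\widehat a(\widehat u,\widehat\phi):=\int_{\widehat K}M\,\nabla\widehat u\cdot\overline{\nabla\widehat\phi}$ with $M:=|\det J\mathcal F_K|\,(J\mathcal F_K)^{-1}(J\mathcal F_K)^{-T}$. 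The matrix field $M$ is Hermitian positive definite, and combining \eqref{eq_muK} with the definition of $M_K$ one sees that its eigenvalues lie in $[c_1h_K^{d-2},c_2h_K^{d-2}]$ for constants $c_1,c_2$ depending only on $M_K$. Hence $\widehat a$ is a positive semidefinite Hermitian form whose kernel is exactly the constants, and the Cauchy--Schwarz inequality for $\widehat a$ together with the orthogonality gives $\widehat a(\widehat w,\widehat w)\le\widehat a(\widehat v,\widehat v)$; the factor $h_K^{d-2}$ cancels and yields $|\widehat w|_{H^1(\widehat K)}\lesssim|\widehat v|_{H^1(\widehat K)}$ with a constant depending only on $M_K$. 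Since $\widehat w\in\mathcal P_p(\widehat K)$ and $|\widehat K|\le1$, equivalence of seminorms on the finite-dimensional quotient $\mathcal P_p(\widehat K)/\mathbb C$ and Hölder's inequality upgrade this to $|\widehat w|_{W^{1,q}(\widehat K)}\lesssim_{p,q}|\widehat w|_{H^1(\widehat K)}\le|\widehat v|_{W^{1,q}(\widehat K)}$.

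Pushing this back to $K$ proves \eqref{eq_stability_W1q}: \eqref{eq_W1q_K_hK} gives $|\Pi_K v|_{W^{1,q}(K)}\lesssim h_K^{d/q-1}|\widehat w|_{W^{1,q}(\widehat K)}$, while the same change-of-variables computation run in the opposite direction (chain rule together with the lower bound in \eqref{eq_change_of_variable}) gives $|\widehat v|_{W^{1,q}(\widehat K)}\lesssim h_K^{1-d/q}|v|_{W^{1,q}(K)}$, so the two powers of $h_K$ cancel and $|\Pi_K v|_{W^{1,q}(K)}\lesssim|v|_{W^{1,q}(K)}$.

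For \eqref{eq_stability_Lq} I would split off the mean $m:=|\widehat K|^{-1}\int_{\widehat K}\widehat w$. Poincaré's inequality on the fixed domain $\widehat K$ gives $\|\widehat w-m\|_{L^q(\widehat K)}\lesssim_q|\widehat w|_{W^{1,q}(\widehat K)}$, and the size of $m$ is controlled through the mean-preservation property $\int_K\Pi_Kv=\int_Kv$: writing $\widehat w=(\widehat w-m)+m$ and changing variables gives $m\,|K|=\int_Kv-\int_{\widehat K}(\widehat w-m)\,|\det J\mathcal F_K|$, whence by \eqref{eq_muK} (which yields $\|\det J\mathcal F_K\|_{L^\infty(\widehat K)}\lesssim_{M_K}|K|$ and $|K|^{-1}\lesssim_{M_K}h_K^{-d}$) and Hölder's inequality, $|m|\lesssim_{M_K,q}|K|^{-1/q}\|v\|_{L^q(K)}+|\widehat w|_{W^{1,q}(\widehat K)}$. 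Therefore $\|\widehat w\|_{L^q(\widehat K)}\lesssim|\widehat w|_{W^{1,q}(\widehat K)}+|K|^{-1/q}\|v\|_{L^q(K)}$; transporting back with the upper bound in \eqref{eq_change_of_variable}, using $h_K^{d/q}|K|^{-1/q}\lesssim_{M_K}1$, and invoking the $W^{1,q}$-bound on $\widehat w$ from the first paragraph, one arrives at $\|\Pi_K v\|_{L^q(K)}\lesssim\|v\|_{L^q(K)}+h_K|v|_{W^{1,q}(K)}$.

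The part that requires care is the bookkeeping of the $h_K$-exponents: every conversion between an $H^1$ quantity and a $W^{1,q}$ one, or between an $L^2$ mean and an $L^q$ norm, is carried out on $\widehat K$, where $|\widehat K|\le1$ renders the associated constant harmless, so the only genuine $h_K$-dependence enters through the two changes of variables and cancels. The one genuinely non-mechanical step is pinning down the ellipticity window $[c_1h_K^{d-2},c_2h_K^{d-2}]$ for the pulled-back coefficient $M$ and checking that $c_1,c_2$ depend on $M_K$ alone; everything else is a disciplined application of the change-of-variables lemmas already established.
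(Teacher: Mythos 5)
Your argument is correct and rests on the same ingredients as the paper's proof: the orthogonality defining $\Pi_K$ (which the paper exploits directly as $|\Pi_K v|_{H^1(K)}\le|v|_{H^1(K)}$ on the physical element, whereas you re-derive it on $\widehat K$ via the pulled-back form and its ellipticity window), finite-dimensionality of $\mathcal P_p(\widehat K)$ to switch exponents, and the mean-preservation property combined with a Poincar\'e inequality to control the $L^q$ norm. The remaining differences are only in where the change of variables is performed (the paper bounds $\|\Pi_Kv\|_{L^2(K)}$ on $K$ and then invokes \eqref{eq_Lr_Lq}, while you track the mean of $\widehat w$ on $\widehat K$), so this counts as essentially the paper's proof with equivalent bookkeeping.
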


\begin{proof}
We start with~\eqref{eq_stability_Lq}. 
By the change of exponent
formula in~\eqref{eq_change_of_variable}, we first observe that
\begin{equation*}
\|\Pi_K v\|_{L^q(K)}
\lesssim
h_K^{d/q} \|(\Pi_K v) \circ \mathcal{F}_K\|_{L^q(\widehat{K})}.
\end{equation*}
Since $(\Pi_K v) \circ \mathcal{F}_K \in \mathcal{P}_p(\widehat{K})$
by assumption, we have
\begin{equation*}
\|(\Pi_K v) \circ \mathcal{F}_K \|_{L^q(\widehat{K})}
\lesssim
\|(\Pi_K v) \circ \mathcal{F}_K \|_{L^2(\widehat{K})}
\end{equation*}
since $\mathcal{P}_p(\widehat{K})$ is a finite dimensional space.
We apply again the change of variable formula from~\eqref{eq_change_of_variable},
leading to
\begin{equation*}
\|(\Pi_K v) \circ \mathcal{F}_K \|_{L^2(\widehat{K})}
\lesssim
h_K^{-d/2} \|\Pi_K v\|_{L^2(K)},
\end{equation*}
and all in all
\begin{equation}
\label{tmp_Lq_L2}
\|\Pi_K v\|_{L^q(K)} \lesssim h^{d/q-d/2} \|\Pi_K v \|_{L^2(K)}.
\end{equation}
We continue with the Poincar\'e inequality in~\eqref{eq_poincare}, giving us that
\begin{equation*}
\|\Pi_K v\|_{L^2(K)}
\leq
\|v\|_{L^2(K)}+\|v-\Pi_K v\|_{L^2(K)}
\lesssim
\|v\|_{L^2(K)} + h_K|v|_{H^1(K)}.
\end{equation*}
We now apply~\eqref{eq_Lr_Lq} to $v$ and each $\partial^\alpha v$ with $|\alpha| = 1$, yielding
\begin{equation}
\label{tmp_L2_W1q}
\|\Pi_K v\|_{L^2(K)}
\leq
C_q h_K^{d/2-d/q}\left (
\|v\|_{L^q(K)}+h_K|v|_{W^{1,q}(K)}
\right ),
\end{equation}
which is possible since $q \geq 2$. Combining~\eqref{tmp_Lq_L2} and~\eqref{tmp_L2_W1q}
proves~\eqref{eq_stability_Lq}.

We now turn to~\eqref{eq_stability_W1q}. For simplicity, we introduce the notation
$\widehat{g} := \nabla( \Pi_K v\circ\mathcal{F}_K) \in L^q(\widehat{K})$.
Due to~\eqref{eq_W1q_K_hK}, we have
\begin{equation*}
|\Pi_K v|_{W^{1,q}(K)}
\lesssim
h_K^{-1} h_K^{d/q} |\Pi_K v \circ \mathcal{F}_K|_{W^{1,q}(\widehat{K})}
=
h_K^{-1}h_K^{d/q}\|\widehat{g}\|_{L^q(\widehat{K})}.
\end{equation*}
Since each component $\widehat{g}$ is a polynomial of degree $p-1$, we
can again apply a finite-dimensional argument to show that
\begin{equation*}
|\Pi_K v|_{W^{1,q}(K)}
\lesssim
h_K^{-1} h_K^{d/q}\|\widehat{g}\|_{L^2(\widehat{K})}
=
h_K^{-1} h_K^{d/q}|\Pi_K v \circ \mathcal{F}_K|_{H^1(\widehat{K})}
\lesssim
h_K^{d/q-d/2} |\Pi_K v|_{H^1(K)}
\end{equation*}
where we emloyed~\eqref{eq_Wsq_hK_K} in the last estimate.
Since $\Pi_K$ is the $H^1(K)$ orthogonal projection onto $\mathcal{P}(K)$,
we now have
\begin{equation*}
|\Pi_K v|_{H^1(K)} \leq |v|_{H^1(K)},
\end{equation*}
and we conclude by applying~\eqref{eq_Lr_Lq} to each $\partial^\alpha v$, with $|\alpha| = 1$.
\end{proof}

\begin{theorem}[Error estimate]
\label{lemma_bramble_hilbert}
Let $1 \leq r \leq +\infty$, $\max(2,r) \leq q \leq +\infty$ and $0 \leq s \leq p$.
Assume that $K \in \mathcal{T}_h$ is $(s+1)$-regular, then
\begin{equation}
\label{eq_bramble_hilbert}
h_K^{-1} \|v-\Pi_K v\|_{L^r(K)} +|v-\Pi_K v|_{W^{1,r}(K)}
\lesssim
h_K^{d/r-d/q} \left (\frac{h_K}{L}\right )^s
\sum_{t=0}^{s} L^{t} |v|_{W^{t+1,q}(K)}
\end{equation}
for all $v \in W^{s+1,q}(K)$, where the hidden constants depend on $p$, $r$, $q$, $s$,
$M_K$ and $M_K(s+1)$.
\end{theorem}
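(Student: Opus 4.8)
The plan is to prove \eqref{eq_bramble_hilbert} by a classical scaling argument: transport to the fixed reference simplex $\widehat{K}$, apply a Bramble--Hilbert / Deny--Lions estimate there, and transport back, using the change-of-variable inequalities of the previous subsections to absorb all powers of $h_K$ and $L$. The only twist relative to the affine case is that $\Pi_K$ is \emph{not} the pullback through $\mathcal{F}_K$ of a reference projection, since the inner products on $K$ carry the Jacobian of $\mathcal{F}_K$; I therefore route through the stability of $\Pi_K$ — already available in \eqref{eq_stability_Lq}--\eqref{eq_stability_W1q} — rather than transferring the operator itself.

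First I would fix an arbitrary $w_K \in \mathcal{P}(K)$ and record that $\Pi_K w_K = w_K$: taking $e := w_K - \Pi_K w_K \in \mathcal{P}(K)$ as test function in the second defining relation of $\Pi_K$ shows $\nabla e = 0$, and its zero mean then forces $e = 0$. Writing $v - \Pi_K v = (v-w_K) - \Pi_K(v-w_K)$ and applying the triangle inequality reduces the task to bounding the two pieces. For the polynomial piece, since $1 \le r \le q$ and both $\Pi_K(v-w_K)$ and its gradient are polynomials of bounded degree, the exponent-change inequality \eqref{eq_Lr_Lq} (used componentwise on the gradient) converts $L^r(K)$-norms into $L^q(K)$-norms at the price of a factor $h_K^{d/r-d/q}$, after which the stability bounds \eqref{eq_stability_Lq} and \eqref{eq_stability_W1q} — which only require $q \ge 2$, guaranteed by $q \ge \max(2,r)$ — yield
\[
h_K^{-1}\|\Pi_K(v-w_K)\|_{L^r(K)} + |\Pi_K(v-w_K)|_{W^{1,r}(K)}
\lesssim
h_K^{d/r-d/q}\left(h_K^{-1}\|v-w_K\|_{L^q(K)} + |v-w_K|_{W^{1,q}(K)}\right).
\]
The remaining piece $h_K^{-1}\|v-w_K\|_{L^r(K)} + |v-w_K|_{W^{1,r}(K)}$ is handled by the same factor through \eqref{eq_Lr_Lq} directly. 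Hence the full left-hand side of \eqref{eq_bramble_hilbert} is controlled by $h_K^{d/r-d/q}$ times $\inf_{w_K \in \mathcal{P}(K)}\bigl(h_K^{-1}\|v-w_K\|_{L^q(K)} + |v-w_K|_{W^{1,q}(K)}\bigr)$.

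It then remains to bound this best-approximation quantity. Pushing it to $\widehat{K}$ with $\widehat{v} := v \circ \mathcal{F}_K$ and $\widehat{w} := w_K \circ \mathcal{F}_K$, the change-of-variable inequalities \eqref{eq_change_of_variable} and \eqref{eq_W1q_K_hK} give the bound $h_K^{d/q-1}\bigl(\|\widehat{v}-\widehat{w}\|_{L^q(\widehat{K})} + |\widehat{v}-\widehat{w}|_{W^{1,q}(\widehat{K})}\bigr)$. Since $s \le p$, I may take $\widehat{w}$ in the space $\mathcal{P}_s(\widehat{K})$ of polynomials of degree at most $s$, which is contained in $\mathcal{P}_p(\widehat{K})$, and the Bramble--Hilbert lemma on the fixed Lipschitz domain $\widehat{K}$ (see e.g. \cite{Cia02,ern_guermond_2021a}) provides such a $\widehat{w}$ with $\|\widehat{v}-\widehat{w}\|_{L^q(\widehat{K})} + |\widehat{v}-\widehat{w}|_{W^{1,q}(\widehat{K})} \lesssim |\widehat{v}|_{W^{s+1,q}(\widehat{K})}$, the constant depending only on $\widehat{K}$, $s$, $q$. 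Finally, \eqref{eq_Wsq_hK_K} applied with $s$ replaced by $s+1$ — this is where $(s+1)$-regularity of $K$ and the dependence on $M_K(s+1)$ enter — gives $|\widehat{v}|_{W^{s+1,q}(\widehat{K})} \lesssim h_K^{-d/q}(h_K/L)^s h_K \sum_{t=0}^s L^t |v|_{W^{t+1,q}(K)}$. Chaining the three estimates collapses the accumulated powers of $h_K$ and $L$ to exactly $h_K^{d/r-d/q}(h_K/L)^s$, which is \eqref{eq_bramble_hilbert}.

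Conceptually the argument is just bookkeeping once the change-of-variable and stability lemmas are in place, so the main obstacle is not an idea but a discipline: keeping the interlocking exponents of $h_K$ and $L$ straight across the three transfers, correctly invoking \eqref{eq_Lr_Lq} for polynomials to accommodate the case $r < 2$ (the stability lemmas being stated only for $q \ge 2$), and checking that the limiting values $r = +\infty$ and $q = +\infty$ cause no trouble — the cited change-of-variable and Bramble--Hilbert results still apply, with the $L^q$-norms read accordingly.
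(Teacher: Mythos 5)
Your proof is correct and follows essentially the same scaling argument as the paper: triangle inequality through an arbitrary $w_K\in\mathcal{P}(K)$ using that $\Pi_K$ is a projection, stability of $\Pi_K$, transfer to $\widehat{K}$, Bramble--Hilbert there, and return via \eqref{eq_Wsq_hK_K}, with the exponent bookkeeping matching \eqref{eq_bramble_hilbert} exactly. The one (minor but welcome) difference is that you perform the $L^r\to L^q$ exponent change via \eqref{eq_Lr_Lq} on the physical element \emph{before} invoking \eqref{eq_stability_Lq}--\eqref{eq_stability_W1q}, so the stability lemmas are only ever used with exponent $q\geq 2$ as stated, whereas the paper applies them directly in exponent $r$ and does the exponent switch implicitly on the reference element -- your ordering is the cleaner one when $r<2$.
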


\begin{proof}
Since $\Pi_K$ is a projection onto
$\mathcal{P}(K)$, we have
\begin{equation*}
\|v-\Pi_K v\|_{L^r(K)}
=
\|v-v_h-\Pi_K(v-v_h)\|_{L^r(K)}
\leq
\|v-v_h\|_{L^r(K)}+\|\Pi_K(v-v_h)\|_{L^r(K)},
\end{equation*}
and similarly
\begin{equation*}
|v-\Pi_K v|_{W^{1,r}(K)}
\leq
|v-v_h|_{W^{1,r}(K)}+|\Pi_K(v-v_h)|_{W^{1,r}(K)},
\end{equation*}
for all $v_h \in \mathcal{P}(K)$. It then follows from~\eqref{eq_stability_Lq}
and~\eqref{eq_stability_W1q} that
\begin{equation*}
h_K^{-1} \|v-\Pi_K v\|_{L^r(K)} + |v-\Pi_K v|_{W^{1,r}(K)}
\lesssim
h_K^{-1} \|v-v_h\|_{L^r(K)}+|v-v_h|_{W^{1,r}(K)}.
\end{equation*}
We next apply~\eqref{eq_change_of_variable} and~\eqref{eq_W1q_K_hK} which give
\begin{equation*}
\|v-\Pi_K v\|_{L^r(K)}
\lesssim h^{d/r} h_K^{-1}
\left (\|\widehat{v}-v_h \circ \mathcal{F}_K\|_{L^r(\widehat{K})}
+
|\widehat{v}-v_h \circ \mathcal{F}_K|_{W^{1,r}(\widehat{K})}
\right )
\end{equation*}
where $\widehat{v} := v \circ \mathcal{F}_K$. Since $v_h \in \mathcal{P}_K$
was arbitrary, it follows that
\begin{equation*}
\|v-\Pi_K v\|_{L^r(K)}
\lesssim
h^{d/r} h_K^{-1}
\min_{\widehat{v}_h \in \mathcal P_p(\widehat{K})}
\left (\|\widehat{v}-\widehat{v}_h\|_{L^r(\widehat{K})}
+
|\widehat{v}-\widehat{v}_h|_{W^{1,r}(\widehat{K})}
\right )
\end{equation*}
and it follows from~\cite[Theorem 3.1.1]{Cia02} that
\begin{equation*}
\|v-\Pi_K v\|_{L^r(K)}
\lesssim
h^{d/r} h_K^{-1}
|\widehat{v}|_{W^{s+1,q}(\widehat{K})}.
\end{equation*}
At that point,~\eqref{eq_bramble_hilbert} follows from~\eqref{eq_Wsq_hK_K}.
\end{proof}

\subsection{Construction of the quasi-interpolation operator}

We are almost ready to introduce a quasi-interpolation operator satisfying
Theorem~\ref{theorem_quasi_interpolant}. Before that, we need to recall some
standard facts about the nodal basis of $V_h$.

There exists a finite set of nodes $\mathcal{D}_h \subset \overline{\Omega} \setminus \GD$
and an associated basis $\{\phi_{\ba}\}_{\ba \in \mathcal{D}_h}$ of $V_h$ such that
\begin{equation*}
\phi_{\ba}(\bb) = \delta_{\ba,\bb}
\end{equation*}
for all $\ba,\bb \in \mathcal{D}_h$.
These basis functions are obtained in each physical element from reference basis functions,
meaning that there exists a finite set $\widehat{\Phi} \subset \mathcal{P}_p(\widehat{K})$
such that for each $\ba \in \mathcal{D}_h$ and $K \in \mathcal{T}_h$, we have
\begin{equation}
\label{eq_phia_mapping}
\phi_{\ba}|_K = \widehat{\phi} \circ \mathcal{F}_K
\end{equation}
for some $\widehat{\phi} \in \widehat{\Phi}$. The nodes in $\mathcal{D}_h$ are associated
with vertices, edges or interiors of cells of the mesh $\mathcal{T}_h$, and the basis functions
have local support.

We denote by $\mathcal{D}_h(K) := \mathcal{D}_h \cap K$ the set of nodes $\ba \in \mathcal{D}_h$
with non-vanishing basis function $\phi_{\ba}$ on $K$. In addtion, for each $\ba \in \mathcal{D}_h$
\begin{equation*}
\mathcal{T}_h^{\ba} := \left \{
K \in \mathcal{T}_h \; | \; \ba \in \mathcal{D}_h(K)
\right \}
\end{equation*}
is the set of elements in the support of $\phi_{\ba}$, and we use the short hand notation
$\sharp \ba$ for the cardinal of $\mathcal{T}_h^{\ba}$.

We are now ready to introduce our quasi-interpolation operator.
It is an ``Oswald'' interpolation operator in the spirit of the seminal
contribution~\cite{oswald_1999a}. Our quasi-interpolation operator
$I_h: H^1_{\GD}(\Omega) \to V_h$, is defined by requiring that
\begin{equation*}
I_h(v)(\ba) = \frac{1}{\sharp \ba} \sum_{K \in \mathcal{T}_h^{\ba}} (\Pi_K v)(\ba)
\end{equation*}
for all degree of freedom nodes $\ba \in \mathcal{D}_h$. In fact,
as we already alluded to, our analysis essentially extends the
one in~\cite{ern_guermond_2017a}.

The following shorthand notation will be useful. We introduce the broken projection
$I_h^\flat: H^1_{\GD}(\Omega) \to V_h^\flat$ by setting $I_h^\flat(v)|_K = \Pi_K(v)$ for all
$K \in \mathcal{T}_h$.

\begin{lemma}[Dofs mismatch]
Let $\ba \in \mathcal{D}_h$. Then, for all pairs $K_\pm \in \mathcal{T}_h^{\ba}$ of elements
sharing a face $F = \partial K_+ \cap \partial K_-$, we have
\begin{equation}
\label{eq_mismatch_face}
|(\Pi_{K_+}v)(\ba)-(\Pi_{K_-}v)(\ba)|
\lesssim
h_{K_+}^{1-d/2} |v-\Pi_{K_+}v|_{H^1(K_+)}
+
h_{K_-}^{1-d/2} |v-\Pi_{K_-} v|_{H^1(K_-)}
\end{equation}
for all $v \in H^1_{\GD}(\Omega)$ with hidden constants depending on $M_{K_\pm}$.
Besides, if $K \in \mathcal{T}_h$ and $\ba \in \mathcal{D}_h(K)$, we have
\begin{equation}
\label{eq_mismatch}
|(\Pi_Kv)(\ba)-(\Pi_{K'}v)(\ba)|
\lesssim
h_K^{1-d/2} |v-I_h^\flat(v)|_{H^1(\mathcal{T}_h^K)}
\end{equation}
for all $K' \in \mathcal{T}_h^{\ba}$. Here, the hidden constant
only depends on $M_{\mathcal{T}_h^K}$.
\end{lemma}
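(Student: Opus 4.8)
The plan is to establish~\eqref{eq_mismatch_face} by a scaling argument on the shared face $F$, and then to deduce~\eqref{eq_mismatch} by telescoping along a bounded chain of elements around $\ba$.

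For~\eqref{eq_mismatch_face}, set $w := (\Pi_{K_+}v - \Pi_{K_-}v)|_F$. Because of the compatibility~\eqref{eq_compatible_mappings}, the pullback $w \circ \mathcal{F}_{K_+} \circ A_+ = w \circ \mathcal{F}_{K_-} \circ A_-$ is a polynomial of degree at most $p$ on the reference face $\widehat{F}_{d-1}$; in particular $w$ has a well-defined value at the node $\ba \in F$. A finite-dimensional norm equivalence on $\widehat{F}_{d-1}$, transported to $F$ by a surface change of variables (the induced Jacobian being comparable to $h_{K_\pm}^{d-1}$ up to $M_{K_\pm}$-dependent constants, in the spirit of~\eqref{eq_change_of_variable}) together with~\eqref{eq_comparable_diameters}, gives $|w(\ba)| \lesssim h_K^{-(d-1)/2}\|w\|_{L^2(F)}$. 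Since $v \in H^1_{\GD}(\Omega)$, the traces of $v|_{K_+}$ and $v|_{K_-}$ on $F$ coincide, so $w = (v - \Pi_{K_-}v)|_F - (v - \Pi_{K_+}v)|_F$ in $L^2(F)$. I would then bound each $\|v - \Pi_{K_\pm}v\|_{L^2(F)}$ using the scaled trace inequality $\|\varphi\|_{L^2(F)}^2 \lesssim h_{K_\pm}^{-1}\|\varphi\|_{L^2(K_\pm)}^2 + h_{K_\pm}|\varphi|_{H^1(K_\pm)}^2$ (valid for $\varphi \in H^1(K_\pm)$, obtained by scaling to $\widehat{K}$ via the change-of-variable estimates of the previous subsection) combined with the Poincaré bound~\eqref{eq_poincare}, which yields $\|v - \Pi_{K_\pm}v\|_{L^2(F)} \lesssim h_{K_\pm}^{1/2}|v - \Pi_{K_\pm}v|_{H^1(K_\pm)}$. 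Plugging in and using $h_K^{-(d-1)/2}h_{K_\pm}^{1/2} \simeq h_{K_\pm}^{1-d/2}$ (again by~\eqref{eq_comparable_diameters}) gives~\eqref{eq_mismatch_face}.

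For~\eqref{eq_mismatch}, observe that $\ba \in \mathcal{D}_h(K)$ and $K' \in \mathcal{T}_h^{\ba}$ both lie in $\mathcal{T}_h^{\ba}$, and since $\ba \in K$ we have $\mathcal{T}_h^{\ba} \subseteq \mathcal{T}_h^K$, so $\sharp \ba$ is bounded by $N_{\mathcal{T}_h}$ via Lemma~\ref{lemma_patch}. The elements of $\mathcal{T}_h^{\ba}$ form the star of the node $\ba$, which is face-connected (through faces containing $\ba$): this is immediate for interior nodes ($\mathcal{T}_h^{\ba} = \{K\}$) and face nodes ($\sharp\ba = 2$), and for edge and vertex nodes it follows from connectedness of the link of $\ba$ in the affinely straightened patch, whose combinatorics is controlled by $M_{\mathcal{T}_h}$ exactly as in the proof of Lemma~\ref{lemma_patch}. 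Hence there is a chain $K = K_1,\dots,K_n = K'$ with $n \le N_{\mathcal{T}_h}$, each $K_i \in \mathcal{T}_h^{\ba}$, and $K_i,K_{i+1}$ sharing a face. Telescoping, $|(\Pi_K v)(\ba) - (\Pi_{K'}v)(\ba)| \le \sum_{i=1}^{n-1} |(\Pi_{K_i}v)(\ba) - (\Pi_{K_{i+1}}v)(\ba)|$, I would bound each summand by~\eqref{eq_mismatch_face}, replace every element diameter by $h_K$ using~\eqref{eq_comparable_diameters}, and absorb the boundedly many seminorms $|v - \Pi_{K_i}v|_{H^1(K_i)} = |v - I_h^\flat(v)|_{H^1(K_i)}$ into $|v - I_h^\flat(v)|_{H^1(\mathcal{T}_h^K)}$, all $K_i$ being in $\mathcal{T}_h^K$.

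I expect the two main obstacles to be: (i) making the surface change of variables for curved faces precise (the trace inequality and the face-value inverse estimate require the analogue on faces of the cell-wise change-of-variable lemmas, which is routine given~\eqref{eq_compatible_mappings} and the regularity of the element maps, but somewhat technical); and (ii) the combinatorial claim that the star $\mathcal{T}_h^{\ba}$ of a Lagrange node is face-connected with chain length bounded in terms of $M_{\mathcal{T}_h}$, which I would reduce to connectedness of vertex/edge links in a shape-regular straight patch.
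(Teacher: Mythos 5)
Your proposal is correct and follows essentially the same route as the paper's proof: an $L^\infty$--$L^2$ inverse inequality for the face polynomial at the node, the coincidence of traces of $v$ from both sides, a trace inequality plus the Poincar\'e bound~\eqref{eq_poincare} to return to volume seminorms, and then telescoping~\eqref{eq_mismatch_face} along a face-connected chain in the node star bounded via Lemma~\ref{lemma_patch} and~\eqref{eq_comparable_diameters}. The only differences are presentational (you scale on the physical face where the paper pulls everything back to the reference face and simplex), and you are in fact slightly more explicit than the paper about why the chain can be taken inside $\mathcal{T}_h^{\ba}$.
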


\begin{proof}
We first focus on~\eqref{eq_mismatch_face}, and the following notation will be useful.
We let $\widehat{F}_\pm := \mathcal{F}_{K_\pm}^{-1}(F) \in \widehat{\mathcal{F}}$
and denote by $A_\pm$ the bijective affine maps between $\widehat{F}_{d-1}$ and $\widehat{F}_{\pm}$
used in~\eqref{eq_compatible_mappings}. We further write $v_\pm := \Pi_{K_\pm} v \in \mathcal{P}(K_\pm)$,
$\widehat{v}_\pm := v_\pm \circ \mathcal{F}_{K_\pm} \circ A_\pm \in L^2(\widehat{F}_{d-1})$
and $\widehat{\ba} = (\mathcal{F}_{K_\pm} \circ A_\pm)^{-1}(\ba) \in \widehat{F}_{d-1}$.
We further point out that $\widehat{v}_\pm \in \mathcal{P}_p(\widehat{F}_{d-1)}$ since
$A_\pm$ are affine maps, and that the definition of $\widehat{\ba}$ is not ambiguous
due to~\eqref{eq_compatible_mappings}.

We start by writing that
\begin{equation*}
|v_+(\ba)-v_-(\ba)|
=
|\widehat{v}_+(\widehat \ba)-\widehat{v}_-(\widehat \ba)|
\leq
\|\widehat{v}_+-\widehat{v}_-\|_{L^\infty(\widehat{F}_{d-1})}
\lesssim
\|\widehat{v}_+-\widehat{v}_-\|_{L^2(\widehat{F}_{d-1})}
\end{equation*}
where we used a standard inverse inequality on the face $\widehat{F}_{d-1}$,
which is possible since $\widehat{v}_\pm$ belong to the finite-dimensional space
$\mathcal{P}_p(\widehat{F}_{d-1})$. We then introduce
$\widehat{v} := v \circ \mathcal{F}_{K_\pm} \circ A_\pm \in L^2(\widehat{F}_{d-1})$
and we write that
\begin{equation*}
\|\widehat{v}_+-\widehat{v}_-\|_{L^2(\widehat{F}_{d-1})}
=
\|\widehat{v}_+-\widehat{v}+(\widehat{v}-\widehat{v}_-)\|_{L^2(\widehat{F}_{d-1})}
\leq
\|\widehat{v}_+-\widehat{v}\| _{L^2(\widehat{F}_{d-1})}
+
\|\widehat{v}-\widehat{v}_-\|_{L^2(\widehat{F}_{d-1})}.
\end{equation*}
We can now write
\begin{equation*}
\|\widehat{v}-\widehat{v}_\pm\|_{L^2(\widehat{F}_{d-1})}
\lesssim
\|(v-v_\pm) \circ \mathcal{F}_{K_\pm}\|_{L^2(\widehat{F}_\pm)}
\end{equation*}
since the map $A_\pm$ is affine and invertible and only depends on the choice
of reference simplices. A trace inequality on the reference $d$-simplex
then gives that
\begin{equation*}
\|\widehat{v}-\widehat{v}_\pm\|_{L^2(\widehat{F}_{d-1})}
\lesssim
\|(v-v_\pm) \circ \mathcal{F}_{K_\pm}\|_{L^2(\widehat{K})}
+
|(v-v_\pm) \circ \mathcal{F}_{K_\pm}|_{H^1(\widehat{K})},
\end{equation*}
and we conclude with~\eqref{eq_change_of_variable},~\eqref{eq_Wsq_hK_K} and~\eqref{eq_poincare}.

We now prove~\eqref{eq_mismatch}. If $K = K'$, there is nothing to show, and
if $K$ and $K'$ share a face, then~\eqref{eq_mismatch} is a direct consequence
of~\eqref{eq_mismatch_face}. In the general case, however, we might only have
$K \cap K' = \ba$ if $\ba$ is vertex of $\mathcal{T}_h$.
Nevertheless, in this case, there exists
a chain of $n$ elements $K = K_1, \ldots, K_n = K'$ that are all connected by a face,
and~\eqref{eq_mismatch} then follows from the triangular inequality
\begin{equation*}
|(\Pi_K v)(\ba)-(\Pi_{K'}v)(\ba)|
\leq
\sum_{j=1}^{n-1} |(\Pi_{K_{j+1}} v)(\ba)-(\Pi_{K_j}v)(\ba)|
\end{equation*}
by applying~\eqref{eq_mismatch_face} to each summand and observing that
$h_{K_j} \lesssim h_K$ for $j=1,\dots,n$ due to~\eqref{eq_comparable_diameters}
\end{proof}

\begin{lemma}[Basis function scaling]
For all $\ba \in \mathcal{D}_h$ and $K \in \mathcal{T}_h$, we have
\begin{equation}
\label{eq_phia_scaling}
\|\phi_{\ba}\|_{L^r(K)} + h_K|\phi_{\ba}|_{W^{1,r}(K)}
\lesssim
h_K^{d/r}
\end{equation}
where the hidden constant depends on $p$ and $M_K$.
\end{lemma}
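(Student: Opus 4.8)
The plan is to transport the estimate to the fixed reference simplex via the change-of-variables lemmas already proved, and then invoke the finiteness of the reference nodal basis. First I would fix $\ba \in \mathcal{D}_h$ and $K \in \mathcal{T}_h$; if $\phi_{\ba}$ vanishes identically on $K$ there is nothing to prove, so I may assume $K \in \mathcal{T}_h^{\ba}$. Then, by~\eqref{eq_phia_mapping}, there is some $\widehat{\phi} \in \widehat{\Phi}$ with $\phi_{\ba}|_K = \widehat{\phi} \circ \mathcal{F}_K$.

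The next step is to apply~\eqref{eq_change_of_variable} with exponent $r$, which gives
\[
\|\phi_{\ba}\|_{L^r(K)} \leq \mu_K^{1/r} h_K^{d/r} \|\widehat{\phi}\|_{L^r(\widehat{K})},
\]
and~\eqref{eq_W1q_K_hK} with $q$ replaced by $r$, which gives
\[
h_K |\phi_{\ba}|_{W^{1,r}(K)} \leq M_K \mu_K^{1/r} h_K^{d/r} |\widehat{\phi}|_{W^{1,r}(\widehat{K})}.
\]
Summing these two inequalities reduces the claim to bounding $\|\widehat{\phi}\|_{L^r(\widehat{K})} + |\widehat{\phi}|_{W^{1,r}(\widehat{K})}$ by a constant independent of $h$, $K$ and $\ba$.

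For this last point I would use that $\widehat{\Phi}$ is a \emph{finite} subset of $\mathcal{P}_p(\widehat{K})$, fixed once and for all together with the reference simplex; hence each $\widehat{\phi} \in \widehat{\Phi}$ and its gradient are bounded in $L^\infty(\widehat{K})$ by a constant $C$ depending only on $p$ (and the fixed reference element). Since $|\widehat{K}| \leq 1$ by assumption, we have $\|\psi\|_{L^r(\widehat{K})} \leq \|\psi\|_{L^\infty(\widehat{K})}$ for every $1 \leq r \leq +\infty$, so that $\|\widehat{\phi}\|_{L^r(\widehat{K})} + |\widehat{\phi}|_{W^{1,r}(\widehat{K})} \lesssim_p 1$ uniformly in $r$. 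Recalling from~\eqref{eq_muK} that $\mu_K \geq 1$ depends only on $M_K$, so that $\mu_K^{1/r} \leq \mu_K$, the two displayed bounds combine to
\[
\|\phi_{\ba}\|_{L^r(K)} + h_K |\phi_{\ba}|_{W^{1,r}(K)} \lesssim (1 + M_K)\,\mu_K\, h_K^{d/r} \lesssim h_K^{d/r}
\]
with a hidden constant depending only on $p$ and $M_K$, which is the assertion.

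There is no genuine obstacle here: the estimate is entirely routine once the change-of-variables lemmas are in place. The only minor points worth flagging are the uniformity in the exponent $r$, for which the already-imposed normalization $|\widehat{K}| \leq 1$ is exactly what is needed, and the crude absorption $\mu_K^{1/r} \leq \mu_K$ which is licit precisely because $\mu_K \geq 1$.
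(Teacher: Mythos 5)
Your proof is correct and follows exactly the route the paper intends: its one-line proof simply cites~\eqref{eq_phia_mapping} together with the change-of-variables formulas~\eqref{eq_change_of_variable} and~\eqref{eq_W1q_K_hK}, and your write-up fills in the same steps, including the uniform bound on the finite reference set $\widehat{\Phi}$. No issues.
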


\begin{proof}
It is a simple combination of~\eqref{eq_phia_mapping} and the change
of coordinates formulas~\eqref{eq_change_of_variable}
and~\eqref{eq_W1q_K_hK}.
\end{proof}

\begin{theorem}[Error in conformity enforcement]
Let $v \in H^1_{\GD}(\Omega)$. Then, for all $K \in \mathcal{T}_h$, we have
\begin{equation}
\label{eq_Ihb_Ih}
h_K^{-1}\|I_h^\flat(v)-I_h(v)\|_{L^r(K)}+|I_h^\flat(v)-I_h(v)|_{W^{1,r}(K)}
\lesssim
h_K^{d/r-d/2} |v-I_h^\flat(v)|_{H^1(\mathcal{T}_h^K)}
\end{equation}
with a hidden constant solely depending on $M_{\mathcal{T}_h^K}$.
\end{theorem}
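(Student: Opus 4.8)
The plan is to expand $w:=I_h^\flat(v)-I_h(v)$ on a fixed element $K$ in the reference Lagrange basis and to control the resulting coefficients by the nodal-value mismatches provided by the ``Dofs mismatch'' lemma, after which \eqref{eq_Ihb_Ih} follows from the scaling bound \eqref{eq_phia_scaling} and the boundedness of the number of Lagrange nodes. First I would note that $w|_K=\Pi_K v-I_h(v)|_K$ belongs to $\mathcal{P}(K)$, so pulling back to $\widehat{K}$ and using the reference Lagrange basis (cf.\ \eqref{eq_phia_mapping}) one may write
$$
w|_K \;=\; \sum_{\ba}\bigl[(\Pi_K v)(\ba)-I_h(v)(\ba)\bigr]\,\varphi_{\ba},
$$
the sum running over the (boundedly many) Lagrange nodes $\ba$ of $K$, with $\varphi_{\ba}$ the corresponding pull-back of a reference Lagrange function; for $\ba\in\mathcal{D}_h(K)$ one has $\varphi_{\ba}=\phi_{\ba}|_K$, while for a node $\ba$ of $K$ lying on $\GD$ the value $I_h(v)(\ba)$ is zero because $I_h(v)\in V_h\subset H^1_{\GD}(\Omega)$.

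Next I would estimate the coefficients. For an interior node $\ba\in\mathcal{D}_h(K)$, the definition of the Oswald average gives
$$
(\Pi_K v)(\ba)-I_h(v)(\ba)
=\frac{1}{\sharp\ba}\sum_{K'\in\mathcal{T}_h^{\ba}}\bigl[(\Pi_K v)(\ba)-(\Pi_{K'}v)(\ba)\bigr],
$$
and since each term is controlled by \eqref{eq_mismatch}, the coefficient is $\lesssim h_K^{1-d/2}\,|v-I_h^\flat(v)|_{H^1(\mathcal{T}_h^K)}$. For a node $\ba$ of $K$ lying on $\GD$ the coefficient equals $(\Pi_K v)(\ba)$, which I would bound by exactly the reasoning used in the proof of \eqref{eq_mismatch_face}: a finite-dimensional inverse inequality on the relevant reference face, a trace inequality on $\widehat{K}$, the change-of-variables formulas \eqref{eq_change_of_variable}--\eqref{eq_Wsq_hK_K} and the Poincar\'e estimate \eqref{eq_poincare}, together with the vanishing of the trace of $v$ on $\GD$, yield $|(\Pi_K v)(\ba)|\lesssim h_K^{1-d/2}|v-\Pi_K v|_{H^1(K)}\le h_K^{1-d/2}|v-I_h^\flat(v)|_{H^1(\mathcal{T}_h^K)}$ as well.

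Finally I would assemble the bound. Every $\varphi_{\ba}$ obeys the scaling \eqref{eq_phia_scaling}, namely $\|\varphi_{\ba}\|_{L^r(K)}+h_K|\varphi_{\ba}|_{W^{1,r}(K)}\lesssim h_K^{d/r}$, and the number of Lagrange nodes of $K$ equals $\dim\mathcal{P}_p(\widehat{K})$, hence is controlled by $p$ and $d$ alone. Applying the triangle inequality to the above expansion and inserting the coefficient bounds gives $\|w\|_{L^r(K)}\lesssim h_K^{1-d/2+d/r}|v-I_h^\flat(v)|_{H^1(\mathcal{T}_h^K)}$ and $|w|_{W^{1,r}(K)}\lesssim h_K^{-d/2+d/r}|v-I_h^\flat(v)|_{H^1(\mathcal{T}_h^K)}$; dividing the first by $h_K$ and adding the two proves \eqref{eq_Ihb_Ih}, with hidden constants depending only on $p$ and $M_{\mathcal{T}_h^K}$ (through the shape-regularity of the patch and its bounded cardinality). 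I expect the only genuinely delicate step to be the treatment of the Dirichlet boundary nodes, which fall outside the scope of \eqref{eq_mismatch} as stated and require the short trace-inequality detour above; the rest is routine scaling bookkeeping.
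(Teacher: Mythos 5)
Your proof is correct and follows essentially the same route as the paper's: expand $I_h^\flat(v)-I_h(v)$ on $K$ in the nodal basis, rewrite each coefficient as an average of mismatches $(\Pi_K v)(\ba)-(\Pi_{K'}v)(\ba)$, bound these by \eqref{eq_mismatch}, and conclude with the basis-function scaling \eqref{eq_phia_scaling} and the bounded patch cardinality from Lemma~\ref{lemma_patch}. Your extra treatment of the Lagrange nodes lying on $\GD$ (where the coefficient reduces to $(\Pi_K v)(\ba)$ and is bounded by the trace/Poincar\'e argument using $v|_{\GD}=0$) is a genuine refinement: the paper's displayed identity sums only over $\ba\in\mathcal{D}_h(K)\subset\overline{\Omega}\setminus\GD$ and silently drops these terms, so your version closes a small gap for elements touching the Dirichlet boundary.
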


\begin{proof}
Let us first obersve that for a fixed $K \in \mathcal{T}_h$, we have
\begin{align*}
\left .\left ( I^\flat(v)-I_h(v) \right )\right |_K
&=
\sum_{\ba \in \mathcal{D}_h(K)}
\left \{
(\Pi_K v)(\ba)
-
\frac{1}{\sharp \ba}
\sum_{K' \in \mathcal{T}_h^{\ba}}
(\Pi_{K'}v)(\ba)
\right \}
\phi_{\ba}|_K
\\
&=
\sum_{\ba \in \mathcal{D}_h(K)}
\frac{1}{\sharp \ba}
\sum_{K' \in \mathcal{T}_h^{\ba}}
\left \{
(\Pi_K v)(\ba)
-
(\Pi_{K'}v)(\ba)
\right \}
\phi_{\ba}|_K,
\end{align*}
so that
\begin{equation*}
\|I_h^\flat(v)-I_h(v)\|_{L^r(K)}
\leq
\sum_{\ba \in \mathcal{D}_h(K)}
\sum_{K' \in \mathcal{T}_h^{\ba}}
|
(\Pi_K v)(\ba)
-
(\Pi_{K'}v)(\ba)
|
\|\phi_{\ba}\|_{L^r(K)}
\end{equation*}
and
\begin{equation*}
|I_h^\flat(v)-I_h(v)|_{W^{1,r}(K)}
\leq
\sum_{\ba \in \mathcal{D}_h(K)}
\sum_{K' \in \mathcal{T}_h^{\ba}}
|
(\Pi_K v)(\ba)
-
(\Pi_{K'}v)(\ba)
|
|\phi_{\ba}|_{W^{1,r}(K)}.
\end{equation*}
It then follows from~\eqref{eq_phia_scaling} that
\begin{multline*}
h_K^{-1}\|I_h^\flat(v)-I_h(v)\|_{L^r(K)} + |I_h^\flat(v)-I_h(v)|_{W^{1,r}(K)}
\lesssim
\\
h_K^{d/r-1}
\sum_{\ba \in \mathcal{D}_h(K)}
\sum_{K' \in \mathcal{T}_h^{\ba}}
|
(\Pi_K v)(\ba)
-
(\Pi_{K'}v)(\ba)
|,
\end{multline*}
and~\eqref{eq_Ihb_Ih} follows from~\eqref{eq_mismatch} and Lemma~\ref{lemma_patch}.
\end{proof}

We are now ready to establish our main result.

\begin{proof}[Proof of Theorem~\ref{theorem_quasi_interpolant}]
We first observe that~\eqref{eq_error_estimate_global} follows from~\eqref{eq_error_estimate}
due to Lemma~\ref{lemma_patch} by observing that
\begin{multline*}
\sum_{K \in \mathcal{T}_h}
|\phi|_{W^{t+1,q}(\mathcal{T}_h^K)}^q
=
\sum_{K \in \mathcal{T}_h}
\sum_{\substack{K' \in \mathcal{T}_h \\ K' \cap K \neq \emptyset}}
|\phi|_{W^{t+1,q}(K')}^q
\\
=
\sum_{K' \in \mathcal{T}_h}
\sum_{\substack{K \in \mathcal{T}_h \\ K \cap K' \neq \emptyset}}
|\phi|_{W^{t+1,q}(K')}^q
\leq
N |\phi|_{W^{t+1,q}(\mathcal{T}_h)}^q
\lesssim
|\phi|_{W^{t+1,q}(\mathcal{T}_h)}^q
\end{multline*}
for all $t \geq 0$, $q \geq 2$ and $\phi \in W^{t+1,q}(\mathcal{T}_h)$.

We therefore fix $K \in \mathcal{T}_h$ and establish~\eqref{eq_error_estimate}.
We start with the triangular inequality
\begin{align*}
h_K^{-1}\|v-I_h(v)\|_{L^r(K)}
+
|v-I_h(v)|_{W^{1,r}(K)}
&\leq
h_K^{-1}\|v-I_h^\flat(v)\|_{L^r(K)}
+
|v-I_h^\flat(v)|_{W^{1,r}(K)}
\\
&+
h_K^{-1}\|I_h^\flat(v)-I_h(v)\|_{L^r(K)}
+
|I_h^\flat(v)-I_h(v)|_{W^{1,r}(K)}.
\end{align*}
Since $v-I_h^\flat(v)|_K = v-\Pi_K v$, we can immediatly
apply~\eqref{eq_bramble_hilbert} for the corresonding terms.
For the other terms, we involve~\eqref{eq_Ihb_Ih}, to write
\begin{equation*}
h_K^{-1}\|I_h^\flat(v)-I_h(v)\|_{L^r(K)}
+
|I_h^\flat(v)-I_h(v)|_{W^{1,r}(K)}
\lesssim
h_K^{d/r-d/2}|v-I_h^\flat(v)|_{H^1(\mathcal{T}_h^K)}.
\end{equation*}
At that point,~\eqref{eq_error_estimate} follows from~\eqref{eq_bramble_hilbert}.
\end{proof}


\begin{thebibliography}{10}

\bibitem{Abr04}
A.~A. Abrikosov.
\newblock Nobel lecture: Type-{II} superconductors and the vortex lattice.
\newblock {\em Rev. Mod. Phys.}, 76(3,1):975--979, 2004.

\bibitem{agmon_douglis_nirenberg_1959a}
S.~Agmon, A.~Douglis, and L.~Nirenberg.
\newblock Estimates near the boundary for solutions of elliptic partial
  differential equations satisfying general boundary conditions. i.
\newblock {\em Comm. Pure Appl. Math.}, 12(4):623--727, 1959.

\bibitem{AHYY26}
Y.~Ai, P.~Henning, M.~Yadav, and S.~Yuan.
\newblock Riemannian conjugate {S}obolev gradients and their application to
  compute ground states of {BEC}s.
\newblock {\em J. Comput. Appl. Math.}, 473:Paper No. 116866, 17, 2026.

\bibitem{APS22}
R.~Altmann, D.~Peterseim, and T.~Stykel.
\newblock Energy-adaptive {R}iemannian optimization on the {S}tiefel manifold.
\newblock {\em ESAIM Math. Model. Numer. Anal.}, 56(5):1629--1653, 2022.

\bibitem{BaS97}
I.~M. Babu{\v s}ka and S.~A. Sauter.
\newblock Is the pollution effect of the {FEM} avoidable for the {H}elmholtz
  equation considering high wave numbers?
\newblock {\em SIAM J. Numer. Anal.}, 34(6):2392--2423, 1997.

\bibitem{bernardi_1989a}
C.~Bernardi.
\newblock Optimal finite-element interpolation on curved domains.
\newblock {\em SIAM J. Numer. Anal.}, 26(5):1212--1240, 1989.

\bibitem{bernkopf_chaumontfrelet_melenk_2025a}
M.~Bernkopf, T.~Chaumont-Frelet, and J.~Melenk.
\newblock Wavenumber-explicit stability and convergence analysis of $hp$ finite
  element discretizations of helmholtz problems in piecewise smooth media.
\newblock {\em Math. Comp.}, 94(351):73--122, 2025.

\bibitem{BDH25}
M.~Blum, C.~D\"oding, and P.~Henning.
\newblock Vortex-capturing multiscale spaces for the {G}inzburg-{L}andau
  equation.
\newblock {\em Multiscale Model. Simul.}, 23(1):339--373, 2025.

\bibitem{BrezisMironescu18}
H.~Brezis and P.~Mironescu.
\newblock Gagliardo-{N}irenberg inequalities and non-inequalities: the full
  story.
\newblock {\em Ann. Inst. H. Poincar\'e{} C Anal. Non Lin\'eaire},
  35(5):1355--1376, 2018.

\bibitem{CantorMatovsky85}
M.~R. Cantor and J.~C. Matovsky.
\newblock Helmholtz decomposition of {$W^{p,s}$} vector fields.
\newblock In {\em Differential geometry, calculus of variations, and their
  applications}, volume 100 of {\em Lecture Notes in Pure and Appl. Math.},
  pages 139--147. Dekker, New York, 1985.

\bibitem{chaumontfrelet_galkowski_spence_2024a}
T.~Chaumont-Frelet, J.~Galkowski, and E.~Spence.
\newblock Sharp error bounds for edge-element discretisations of the
  high-frequency {M}axwell equations.
\newblock arXiv:2408.04507, 2024.

\bibitem{CFN20}
T.~Chaumont-Frelet and S.~Nicaise.
\newblock Wavenumber explicit convergence analysis for finite element
  discretizations of general wave propagation problems.
\newblock {\em IMA J. Numer. Anal.}, 40(2):1503--1543, 2020.

\bibitem{chaumontfrelet_spence_2024a}
T.~Chaumont-Frelet and E.~Spence.
\newblock The geometric error is less than the pollution error when solving the
  high-frequency {H}elmholtz equation with high-order {FEM} on curved domains.
\newblock arXiv:2401.16413, 2024.

\bibitem{Chen97}
Z.~Chen.
\newblock Mixed finite element methods for a dynamical {G}inzburg-{L}andau
  model in superconductivity.
\newblock {\em Numer. Math.}, 76(3):323--353, 1997.

\bibitem{CLLZ24}
Z.~Chen, J.~Lu, Y.~Lu, and X.~Zhang.
\newblock On the convergence of {S}obolev gradient flow for the
  {G}ross-{P}itaevskii eigenvalue problem.
\newblock {\em SIAM J. Numer. Anal.}, 62(2):667--691, 2024.

\bibitem{Cia02}
P.~G. Ciarlet.
\newblock {\em The finite element method for elliptic problems.}, volume~40 of
  {\em Classics Appl. Math.}
\newblock SIAM, Philadelphia, 2002.

\bibitem{DaK10}
I.~Danaila and P.~Kazemi.
\newblock A new {S}obolev gradient method for direct minimization of the
  {G}ross-{P}itaevskii energy with rotation.
\newblock {\em SIAM J. Sci. Comput.}, 32(5):2447--2467, 2010.

\bibitem{DaP17}
I.~Danaila and B.~Protas.
\newblock Computation of ground states of the {G}ross-{P}itaevskii functional
  via {R}iemannian optimization.
\newblock {\em SIAM J. Sci. Comput.}, 39(6):B1102--B1129, 2017.

\bibitem{DDH24}
C.~D\"oding, B.~D\"orich, and P.~Henning.
\newblock A multiscale approach to the stationary ginzburg-landau equations of
  superconductivity.
\newblock ArXiv e-print 2409.12023, 2024.

\bibitem{DoeHe24}
B.~D\"orich and P.~Henning.
\newblock Error bounds for discrete minimizers of the {G}inzburg-{L}andau
  energy in the high-{$\kappa$} regime.
\newblock {\em SIAM J. Numer. Anal.}, 62(3):1313--1343, 2024.

\bibitem{Du94b}
Q.~Du.
\newblock Finite element methods for the time-dependent {G}inzburg-{L}andau
  model of superconductivity.
\newblock {\em Comput. Math. Appl.}, 27(12):119--133, 1994.

\bibitem{Du97}
Q.~Du.
\newblock Discrete gauge invariant approximations of a time dependent
  {G}inzburg-{L}andau model of superconductivity.
\newblock {\em Math. Comp.}, 67(223):965--986, 1998.

\bibitem{DuGuPe}
Q.~Du, M.~D. Gunzburger, and J.~S. Peterson.
\newblock Analysis and approximation of the {G}inzburg-{L}andau model of
  superconductivity.
\newblock {\em SIAM Rev.}, 34(1):54--81, 1992.

\bibitem{ern_guermond_2017a}
A.~Ern and J.~Guermond.
\newblock Finite element quasi-interpolation and best approximation.
\newblock {\em ESAIM Math. Model. Numer. Anal.}, 51:1367--1385, 2017.

\bibitem{ern_guermond_2021a}
A.~Ern and J.~Guermond.
\newblock {\em Finite elements {I}: basic theory and practice}.
\newblock Springer, 2021.

\bibitem{galkowski2025sharp}
J.~Galkowski and E.~A. Spence.
\newblock Sharp preasymptotic error bounds for the {H}elmholtz {$h$}-{FEM}.
\newblock {\em SIAM J. Numer. Anal.}, 63(1):1--22, 2025.

\bibitem{GJX19}
H.~Gao, L.~Ju, and W.~Xie.
\newblock A stabilized semi-implicit {E}uler gauge-invariant method for the
  time-dependent {G}inzburg-{L}andau equations.
\newblock {\em J. Sci. Comput.}, 80(2):1083--1115, 2019.

\bibitem{GaoS18}
H.~Gao and W.~Sun.
\newblock Analysis of linearized {G}alerkin-mixed {FEM}s for the time-dependent
  {G}inzburg-{L}andau equations of superconductivity.
\newblock {\em Adv. Comput. Math.}, 44(3):923--949, 2018.

\bibitem{GaPe01}
J.~J. Garc\'ia-Ripoll and V.~M. P\'erez-Garc\'ia.
\newblock Optimizing {S}chr\"odinger functionals using {S}obolev gradients:
  applications to quantum mechanics and nonlinear optics.
\newblock {\em SIAM J. Sci. Comput.}, 23(4):1316--1334, 2001.

\bibitem{Ginzburg1955}
V.~L. Ginzburg.
\newblock On the {T}heory of {S}uperconductivity.
\newblock {\em Nuovo Cimento}, 2(6):1235--1250, 1955.

\bibitem{HeP20}
P.~Henning and D.~Peterseim.
\newblock Sobolev gradient flow for the {G}ross-{P}itaevskii eigenvalue
  problem: global convergence and computational efficiency.
\newblock {\em SIAM J. Numer. Anal.}, 58(3):1744--1772, 2020.

\bibitem{lafontaine2022wavenumber}
D.~Lafontaine, E.~Spence, and J.~Wunsch.
\newblock Wavenumber-explicit convergence of the {$hp$-FEM} for the full-space
  heterogeneous helmholtz equation with smooth coefficients.
\newblock {\em Comput. Math. Appl.}, 113:59--69, 2022.

\bibitem{Landau1965}
L.~Landau.
\newblock On the {T}heory of {S}uperconductivity.
\newblock In D.~{ter Haar}, editor, {\em Collected Papers of L.D. Landau},
  pages 217--225. Pergamon, 1965.

\bibitem{lenoir_1986a}
M.~Lenoir.
\newblock Optimal isoparametric finite elements and error estimates for domains
  involving curved boundaries.
\newblock {\em SIAM J. Numer. Anal.}, 23(3):562--580, 1986.

\bibitem{Li17}
B.~Li.
\newblock Convergence of a decoupled mixed {FEM} for the dynamic
  {G}inzburg-{L}andau equations in nonsmooth domains with incompatible initial
  data.
\newblock {\em Calcolo}, 54(4):1441--1480, 2017.

\bibitem{LiZ15}
B.~Li and Z.~Zhang.
\newblock A new approach for numerical simulation of the time-dependent
  {G}inzburg-{L}andau equations.
\newblock {\em J. Comput. Phys.}, 303:238--250, 2015.

\bibitem{LiZ17}
B.~Li and Z.~Zhang.
\newblock Mathematical and numerical analysis of the time-dependent
  {G}inzburg-{L}andau equations in nonconvex polygons based on {H}odge
  decomposition.
\newblock {\em Math. Comp.}, 86(306):1579--1608, 2017.

\bibitem{li2025higher}
Y.~Li and H.~Wu.
\newblock Higher-order {FEM} and {CIP}-{FEM} for {H}elmholtz equation with high
  wave number and perfectly matched layer truncation.
\newblock {\em J. Sci. Comput.}, 104(2):Paper No. 47, 25, 2025.

\bibitem{MaQ23}
L.~Ma and Z.~Qiao.
\newblock An energy stable and maximum bound principle preserving scheme for
  the dynamic {G}inzburg-{L}andau equations under the temporal gauge.
\newblock {\em SIAM J. Numer. Anal.}, 61(6):2695--2717, 2023.

\bibitem{mclean_2000a}
W.~C.~H. McLean.
\newblock {\em Strongly elliptic systems and boundary integral equations}.
\newblock Cambridge university press, 2000.

\bibitem{MeS10}
J.~M. Melenk and S.~Sauter.
\newblock Convergence analysis for finite element discretizations of the
  {H}elmholtz equation with {D}irichlet-to-{N}eumann boundary conditions.
\newblock {\em Math. Comp.}, 79(272):1871--1914, 2010.

\bibitem{Mikhailov}
V.~P. Mikha{\u i}lov.
\newblock {\em Partial differential equations}.
\newblock ``Mir'', Moscow; distributed by Imported Publications, Chicago, IL,
  1978.
\newblock Translated from the Russian by P. C. Sinha.

\bibitem{Neu97}
J.~W. Neuberger.
\newblock {\em Sobolev gradients and differential equations}, volume 1670 of
  {\em Lecture Notes in Mathematics}.
\newblock Springer-Verlag, Berlin, 1997.

\bibitem{nezza_palatucci_valdinoci_2012a}
E.~Nezza, G.~Palatucci, and E.~Valdinoci.
\newblock Hitchhiker's guide to the fractional {S}obolev spaces.
\newblock {\em Bull. Sci. math.}, 136:521--573, 2012.

\bibitem{NoPr23}
A.~Novruzi and B.~Protas.
\newblock An accelerated {S}obolev gradient method for unconstrained
  optimization problems based on variable inner products.
\newblock {\em J. Comput. Appl. Math.}, 420:Paper No. 114833, 18, 2023.

\bibitem{oswald_1999a}
P.~Oswald.
\newblock On a {BPX}-preconditioner for {P1} elements.
\newblock {\em Computing}, 51(2):125--133, 1993.

\bibitem{Pet17}
D.~Peterseim.
\newblock Eliminating the pollution effect in {H}elmholtz problems by local
  subscale correction.
\newblock {\em Math. Comp.}, 86(305):1005--1036, 2017.

\bibitem{polak1969}
E.~Polak and G.~Ribi\`ere.
\newblock Note sur la convergence de m\'ethodes de directions conjugu\'ees.
\newblock {\em Rev. Fran\c caise Informat. Recherche Op\'erationnelle},
  3(16):35--43, 1969.

\bibitem{PoRa94}
J.~Pousin and J.~Rappaz.
\newblock Consistency, stability, a priori and a posteriori errors for
  {P}etrov-{G}alerkin methods applied to nonlinear problems.
\newblock {\em Numer. Math.}, 69(2):213--231, 1994.

\bibitem{Sch74}
A.~H. Schatz.
\newblock An observation concerning {R}itz-{G}alerkin methods with indefinite
  bilinear forms.
\newblock {\em Math. Comp.}, 28:959--962, 1974.

\bibitem{tartar_2007a}
T.~Tartar.
\newblock {\em An introduction to Sobolev spaces and interpolation spaces},
  volume~3.
\newblock Springer Science \& Business Media, 2007.

\end{thebibliography}
\end{document}